\crefname{equation}{}{}
\Crefname{equation}{}{}
\newtheoremstyle{mythmstyle}
  {8 pt} 
  {3 pt} 
  {} 
  {} 
  {\bfseries} 
  {.} 
  {.5em} 
  {} 
\theoremstyle{plain}
\def\thm@space@setup{%
  \thm@preskip=6pt plus 1pt minus 1pt
  \thm@postskip=\thm@preskip 
}
\newtheorem{theorem}{Theorem}[section]
\newtheorem{lemma}[theorem]{Lemma}
\newtheorem{corollary}[theorem]{Corollary}
\newtheorem{proposition}[theorem]{Proposition}
\newtheorem{remark}[theorem]{Remark}
\newtheorem{example}[theorem]{Example}
\newtheorem*{example*}{Example}
\newtheorem{definition}[theorem]{Definition}
\newtheorem*{definition*}{Definition}
\newtheorem{assumption}[theorem]{Assumption}
\newtheorem*{remark*}{Remark}
\crefname{definition}{\textbf{definition}}{definitions}
\Crefname{definition}{Definition}{Definitions}
\crefname{assumption}{\textbf{assumption}}{assumptions}
\Crefname{assumption}{Assumption}{Assumptions}
\newcommand{\cv}{{\mathrm{cv}}}
\newcommand{\ssp}{{\mathrm{ss}}}
\newcommand{\aux}{{\mathrm{aux}}}
\newcommand{\tr}{{\mathrm{tr}}}
\newcommand{\te}{{\mathrm{te}}}
\newcommand{\D}{{\mathcal{D}}}
\newcommand{\E}{{\mathcal{E}}}
\renewcommand{\S}{{\mathcal{S}}}
\newcommand{\X}{{\mathcal{X}}}
\newcommand{\var}{\mathrm{Var}}
\begin{document}
\allowdisplaybreaks
\title{A Modern Theory of Cross-Validation through the Lens of Stability}

\author[1]{Jing Lei}
\affil[1]{Carnegie Mellon University}

\maketitle

\begin{abstract}
Modern data analysis and statistical learning are characterized by two defining features: complex data structures and black-box algorithms. The complexity of data structures arises from advanced data collection technologies and data-sharing infrastructures, such as imaging, remote sensing, wearable devices, and genomic sequencing. In parallel, black-box algorithms—particularly those stemming from advances in deep neural networks—have demonstrated remarkable success on modern datasets. This confluence of complex data and opaque models introduces new challenges for uncertainty quantification and statistical inference, a problem we refer to as ``black-box inference''.

The difficulty of black-box inference lies in the absence of traditional parametric or nonparametric modeling assumptions, as well as the intractability of the algorithmic behavior underlying many modern estimators. These factors make it difficult to precisely characterize the sampling distribution of estimation errors. A common approach to address this issue is post-hoc randomization, which includes permutation, resampling, sample splitting, cross-validation, and noise injection. When combined with mild assumptions, such as exchangeability in the data-generating process, these methods can yield valid inference and uncertainty quantification.

Post-hoc randomization methods have a rich history, ranging from classical techniques like permutation tests, the jackknife, and the bootstrap, to more recent developments such as conformal inference. These approaches typically require minimal knowledge about the underlying data distribution or the inner workings of the estimation procedure. While originally designed for varied purposes, many of these techniques rely, either implicitly or explicitly, on the assumption that the estimation procedure behaves similarly under small perturbations to the data. This idea, now formalized under the concept of \emph{stability}, has become a foundational principle in modern data science. Over the past few decades, stability has emerged as a central research focus in both statistics and machine learning, playing critical roles in areas such as generalization error, data privacy, and adaptive inference.

In this article, we investigate one of the most widely used resampling techniques for model comparison and evaluation---cross-validation (CV)---through the lens of stability. We begin by reviewing recent theoretical developments in CV for generalization error estimation and model selection under stability assumptions. We then explore more refined results concerning uncertainty quantification for CV-based risk estimates. By integrating these research directions, we uncover new theoretical insights and methodological tools. Finally, we illustrate their utility across both classical and contemporary topics, including model selection, selective inference, and conformal prediction.
\end{abstract}

\paragraph{Acknowledgements} This monograph grew out of a short course I taught at Seoul National University in Fall 2024. I am deeply grateful to the series editors, Rina Barber and Ryan Tibshirani, for inviting me to write this monograph and for their continued support throughout the process. I also thank the anonymous reviewers for their careful reading and constructive feedback. Finally, I am indebted to Yuchen Chen, Hao Lee, Zhixin Lai, Tianyu Zhang, Lorenzo Testa, and Julian Braganza for their help in proofreading the draft. Of course, I am solely responsible for any remaining errors. This monograph was written while I was supported by NSF grants DMS-2515687 and DMS-2310764.

\tableofcontents

\newpage

\section{Introduction}

The primary goal of data science and statistics is to uncover and understand the underlying
structures in observed data. Mathematically, these structures are often represented by a
parameter that belongs to a set of possible values known as the parameter space. 
Because parameter estimates are derived from data subject to random noise, it is crucial
to assess their quality and quantify the associated uncertainty.

There are two widely used approaches for accuracy assessment and uncertainty
quantification. The first focuses on measuring the distance between the estimated parameter
and the true value, and on characterizing the variability of the estimate around this target.
Although conceptually straightforward, this approach typically relies on correct specification
and parameterization of the data-generating process, making it sensitive to model
misspecification.

An alternative approach emphasizes evaluating the predictive performance of the estimated parameter on future, unseen data. In his influential paper, \cite{breiman2001statistical}, Leo Breiman strongly advocated for this perspective, arguing that prediction should be the central focus of modeling in data science. Within this framework, a key task is to assess the predictive accuracy of a fitted model or parameter on new data points. A fitted model is commonly obtained by minimizing the empirical risk over a class of candidate parameters using training data. However, the model’s performance on future data---referred to as the out-of-sample risk or test error---is typically worse than its performance on the training data, known as the in-sample risk or training error. This gap arises because the noise in training data influences both the fitting procedure and the evaluation of in-sample performance. The discrepancy between training and test error, referred to as optimism by \cite{efron1986biased}, has become a central topic in the study of model selection and generalization error in both statistics and machine learning. This article focuses on this predictive performance evaluation approach, as it relies on weaker modeling assumptions and has broader applicability.

\subsection{Cross-validation: A Brief History}
The concept of cross-validation has existed for nearly a century. By definition, the test error can be estimated without bias if an independent dataset is available for evaluating the risk of a fitted model. In practice, this is commonly achieved by splitting the data into two parts: one for model fitting and the other for risk evaluation. This idea of sample splitting is the foundation of cross-validation and can be traced back as early as the 1930s. Larson \citep{larson1931shrinkage} was among the first to formally study this approach, noting the inherent optimism in regression analysis and emphasizing the importance of out-of-sample accuracy. As he observed:
\begin{quote}
   \emph{\ldots\ ordinarily the practical employment of a regression equation involves its use with data other than those from which it was derived.}
\end{quote}

Since Larson’s initial proposal of sample splitting for out-of-sample risk estimation, the concept of cross-validation (CV) has emerged, evolved, and found application in various contexts. An early and clear formulation of leave-one-out cross-validation was provided by Mosteller and Tukey \citep{mosteller1968data}. Building on these developments, CV was later formalized as a general method for model assessment and selection in the works of \cite{Stone74,Geisser75}, which also provide additional historical insights into the use of CV during the 1950s and 1960s.  
Interestingly, cross-validation originated with an emphasis on \emph{predictive} accuracy rather than parameter estimation, as captured by Geisser’s observation \cite{Geisser75}:
\begin{quote}
\emph{\ldots\ the prediction of observables or potential observables is of much greater relevance than the estimation of what are often artificial constructs—parameters.}
\end{quote}

Since the mid-1970s, cross-validation (CV) has continued to find applications across nearly all areas of statistical inference and machine learning. Because CV requires only an estimator and a loss function, it is arguably the most broadly applicable method for assessing and comparing model quality. In contrast, alternative approaches often require specifying a likelihood function and/or a complexity measure for the candidate models. This simplicity has made CV a subject of study in its own right, with significant efforts devoted to understanding and enhancing its theoretical foundations and practical performance. The body of literature on the application, theory, and refinement of CV is extensive; therefore, we do not attempt a comprehensive review in this article. For a thorough survey of the CV literature from the 1970s through the 2010s, readers are encouraged to consult \cite{arlot2010survey}.

\subsection{Risk Estimation and Model Selection Properties of CV}\label{subsec:1.2-estimation-selection}
In the theoretical study of cross-validation (CV), two of the most extensively examined properties are risk estimation and model selection.

\emph{Risk estimation} refers to the ability to quantify the predictive risk of a fitted model on future, unseen data. Alternatively, one might consider the estimation risk relative to a true underlying parameter, which requires assuming the existence of a true model for the data-generating process. In this article, we focus on predictive risk estimation, also known as generalization error estimation in the machine learning literature. 

General results of CV risk consistency mostly rely on empirical process theory \citep{devroye1988automatic,vaart2006oracle,lecue2012oracle}.
Results in specific regression settings include nonparametric regression and density estimation \citep{devroye2001combinatorial,gyorfi2006distribution} and the Lasso \citep{homrighausen2017risk}. 

\emph{Model selection} concerns the ability to identify the best, or nearly the best, model from a set of candidates. This task is often divided into two variants: (i) identifying the true model that generated the data, and (ii) selecting the model with the best predictive performance, regardless of whether a true model exists within the candidate set. These are referred to as \emph{model selection for identification} and \emph{model selection for estimation}, respectively, in \cite{arlot2010survey}. Our focus in this article is primarily on the latter, as it requires fewer assumptions and subsumes the identification problem as a special case when the true model yields the smallest predictive risk.

The model selection properties of CV are generally more challenging to analyze. Early theoretical results indicate that conventional versions of CV---such as leave-one-out and $K$-fold CV---can be inconsistent even in simple linear regression settings \citep{Zhang93,Shao93}. These inconsistencies were later extended to more general frameworks in \cite{yang2007consistency}. The underlying reason is intuitive: standard CV procedures struggle to distinguish between the best parametric model and a slightly overfitting alternative, as both models tend to have estimation errors on the order of $1/\sqrt{n}$. Evaluating their performance on a sample of size $O(n)$ leads to a constant-order probability of selection error. To address this limitation, CV must be applied in a nonstandard way using a vanishing fraction of the data for model fitting and the remainder for risk evaluation.

Additionally, it has been widely observed that CV often favors models with slight under-smoothing. While such models may yield near-optimal predictive accuracy, they frequently suffer from reduced interpretability. To counteract this under-regularization tendency, several heuristic adjustments have been proposed, including the “.632 bootstrap” \citep{efron1997improvements} and the “1-standard-deviation rule” \citep{tibshirani2009bias}. Other examples and variants include \cite{yu2014modified,feng2019restricted}.

\subsection{Stability and Cross-validation}
In a seminal work, \cite{bousquet2002stability} established the risk estimation consistency of cross-validation (CV) under the assumption of algorithmic stability. Broadly speaking, stability refers to the property that the output of a learning algorithm remains relatively unchanged when small perturbations are made to the training data. This notion is closely related to robustness \citep{huber2011robust} and aligns with the concept of sensitivity in differential privacy \citep{dwork2009differential,dwork2014algorithmic}. As a technical condition, stability is natural and often satisfied by reasonable estimators and learning procedures.

The significance of \cite{bousquet2002stability} lies in demonstrating that stability alone can yield meaningful guarantees for CV-based risk estimates---without the need for complexity measures or empirical process theory. Subsequent work has further cemented stability as a central concept in modern statistical learning. Notably, it has been linked to learnability \citep{shalev2010learnability,wang2016learning} and plays an important role in broader statistical methodology \citep{yu2013stability,yu2020veridical}. Leveraging stability assumptions, \cite{zhang2023online} extended the model selection consistency results of \cite{yang2007consistency} to an online variant of cross-validation.

Another major theoretical advancement was contributed by \cite{bayle2020cross,austern2020}, who established central limit theorems for CV-based risk estimates under general stability conditions. These results enable the construction of confidence sets for the best candidate model, as further developed in \cite{lei2020cross,kissel2022high}, connecting cross-validation to the model confidence set literature. In the context of model confidence sets, the goal is not to identify a single best model, but to find the subset of models whose predictive risks are statistically indistinguishable from the minimum. This inferential target has a rich history in econometrics and statistics \citep{jiang2008fence,hansen2011model,gunes2012confidence,ferrari2015confidence}.

Model confidence sets are especially useful when model selection consistency is difficult to achieve, such as in settings with a limited signal-to-noise ratio. They offer a flexible and uncertainty-aware alternative to point selection and can be viewed as a rigorous counterpart to heuristic CV adjustments mentioned above. For instance, \cite{lei2020cross} showed that selecting the most parsimonious model from the CV-based confidence set can yield consistent model selection in classical problems---such as best subset selection in linear regression—where standard $K$-fold CV fails.

\subsection{Cross-validation Beyond Model Selection and Evaluation}
Beyond model comparison and selection, the cross-validation principle---estimating a parameter and evaluating a functional on disjoint subsamples---has played a pivotal role in nonparametric functional estimation and semiparametric inference.

Consider a general setup where the goal is to estimate the expected value of $g(X;\gamma)$---denoted as $\theta = \mathbb{E}[g(X; \gamma)]$---from independent and identically distributed (i.i.d.) observations $(X_1, \ldots, X_n)$, where $g(\cdot, \cdot)$ is a known function and $\gamma$ is a (possibly infinite-dimensional) parameter of the data-generating distribution. For example, if $X_i$ is a one-dimensional variable with unknown density $f(x)$, and we define $\gamma = f$, $g(x, \gamma) = -\log \gamma(x)$, then $\theta = -\mathbb{E}[\log f(X)]$ corresponds to the entropy of the distribution.

A common strategy for estimating such a functional is through influence function-based methods. This involves identifying an influence function $\varphi(x)$ associated with the functional $\theta$, and estimating it from data. The final estimator takes the form of a sample average: $\frac{1}{n} \sum_{i=1}^n \hat\varphi(X_i)$, where $\hat\varphi$ is a data-driven approximation of $\varphi$. Since $\hat\varphi$ depends on the data, inference must account for this randomness. A widely adopted solution is cross-fitting, which, like CV, uses disjoint subsamples to separately estimate $\hat\varphi$ and evaluate its expectation. This idea dates back to early work in nonparametric and semiparametric statistics \citep{hasminskii1979nonparametric,pfanzagl1985contributions,schick1986asymptotically,bickel1988estimating}, and has been extensively developed in recent literature \citep{chernozhukov2019inference,kennedy2024semiparametric}.

In cross-fitting, a crucial requirement is that the estimated influence function $\hat\varphi$ converges sufficiently fast to a well-defined target $\varphi$. However, with the advent of algorithmic stability, it has been shown that explicit sample-splitting may not be necessary if $\hat\varphi$ is stable \citep{chen2022debiased}. More recently, \cite{zhang2024winners} demonstrated that even when $\hat\varphi$ does not converge to a fixed parameter, stability can still suffice to ensure valid one-sided inference for the target functional.

\subsection{Outline of This Article}
In \Cref{sec:bousquet}, we introduce the foundational setup of cross-validation along with the relevant stability conditions, and present the risk consistency and concentration results established by \cite{bousquet2002stability}. In addition, we provide a collection of general probabilistic tools concerning sub-Weibull random variables and their concentration properties. These tools can be used to extend the results of \cite{bousquet2002stability} to settings involving more general, potentially unbounded loss functions. As such, they will serve as a key component in the analysis presented in subsequent sections.

In \Cref{sec:3-model-selection}, we present results on the model selection consistency of cross-validation. We begin with batch mode, revisiting the consistency result of \cite{yang2007consistency}, and provide a simplified proof.  As an extension, we introduce an online model selection framework and propose an online variant of cross-validation \citep{zhang2023online}. While this online method resembles classical leave-one-out cross-validation---which is known to be inconsistent in the batch setting---it can, somewhat unexpectedly, achieve consistent model selection under appropriate stability conditions.

\Cref{sec:4-clt} focuses on central limit theorems (CLTs) for cross-validation risk estimates. We begin by presenting the CLT results under two different centering schemes, originally developed by \cite{bayle2020cross} and \cite{austern2020}, while streamlining and simplifying some of the proofs. In our treatment of the deterministic centering results, several technical conditions from the original work are either weakened or removed. We also unify the two centering approaches by identifying a condition under which they become equivalent. Finally, we discuss the extensions in \cite{kissel2022high}, which develop high-dimensional Gaussian approximation results in the spirit of \cite{chernozhukov2013gaussian,chernozhukov2022improved}.

\Cref{sec:5-applications} presents several important applications of the theoretical developments in \Cref{sec:bousquet} and \Cref{sec:4-clt} across various contexts. The first application introduces a model confidence set based on Gaussian comparisons of CV risk estimates, representing a methodological advancement that emphasizes constructing confidence intervals for the \emph{differences} in model performance, rather than for each model individually. Building on this framework, we establish a model selection consistency result for selecting the most parsimonious model within the $K$-fold CV model confidence set, offering a principled correction for the well-documented overfitting tendency of CV. In \Cref{subsec:5.3-argmin}, we leverage the CLT results to propose a novel method for high-dimensional mean testing, providing a simple alternative to selective inference techniques. Extending this line of inquiry, \Cref{subsec:5.4-argmin-conf} presents a procedure for constructing confidence sets for the location of the minimum entry in a vector observed with noise. Finally, in \Cref{subsec:5.5-cross-conf}, we establish an asymptotic coverage guarantee for the cross-conformal prediction method under appropriate stability conditions.

\Cref{sec:6-stability} presents several examples of stable estimators. \Cref{subsec:6.1-stab-sgd-1,subsec:6.2-stab-sgd-2} focus on the stochastic gradient descent (SGD) algorithm under general regularity conditions, providing batch-mode stability results needed for theorems in \Cref{sec:bousquet,sec:4-clt}. \Cref{subsec:6.3-stab-diff} examines the stability of the difference between two competing estimators in a prototypical nonparametric regression setting. Finally, \Cref{subsec:6.4-stab-online} explores the stability properties of various SGD variants in the online setting, as required for the online model selection consistency results in \Cref{subsec:3.3-rv}.

\subsection{Other Related Topics}
 This article focuses on recent developments in cross-validation theory, particularly through the lens of stability conditions. While cross-validation is a widely used tool for model selection and risk estimation, it is by no means the only approach. Classical alternatives include regularized likelihood and data compression-based methods, such as AIC, BIC, Mallow's \( C_p \), and the Minimum Description Length (MDL) principle. For comprehensive reviews and discussions on these methods and their relationships to cross-validation, see \cite{shao1997asymptotic,ding2018model}. Another important line of work is penalized empirical risk minimization, developed under the framework of uniform convergence and concentration inequalities \citep{bartlett2002model}.

In this article, we focus on model comparison and risk estimation in the setting of independent and identically distributed (i.i.d.) data. However, cross-validation has also been adapted to settings involving dependent data, such as time series and spatio-temporal data. A brief survey and references to foundational works in this area can be found in Section~8.1 of \cite{arlot2010survey}. Moreover, cross-validation techniques have been extended to data matrices with dependent entries---for instance, in determining the number of principal components in singular value decomposition \citep{owen2009bi,hoff2007modeling}, and in model selection for network data \citep{chen2018network,li2020network,qin2022consistent}.

Stability plays a vital role across nearly all areas of data science. It is one of the three foundational principles of the PCS framework---\textbf{P}redictability, \textbf{C}omputability, and \textbf{S}tability---introduced in \citet{yu2020veridical}. For practical examples of how stability impacts real-world data analyses, as well as systematic procedures for achieving it through perturbation analysis, see \citet{yu2024veridical}. In the theory and method literature, stability has also proven useful in various aspects of predictive inference beyond cross-validation. These include analyses of generalization error \citep{buhlmann2002analyzing,shalev2010learnability,wang2016learning}, variable selection \citep{meinshausen2010stability,liu2010stability}, jackknife methods \citep{barber2021predictive,steinberger2023conditional}, and conformal prediction \citep{lei2018distribution,angelopoulos2024theoretical}.

\subsection{Notation}
The mathematical notation used in this article is mostly standard, transparent, and self-contained.  Most of the symbols have explanations near their appearances.   Here we collect some of the most commonly used notation and symbols for the reader's convenience. The notation specifically related to the sample-splitting scheme of cross-validation is introduced in \Cref{subsec:prelim}.
\begin{itemize}
\item We use $(X_i:i\ge 0)$ to denote an infinite sequence of i.i.d. (independent and identically distributed) sample points, and $\D_n=(X_i:1\le i\le n)$ the dataset consisting of the first $n$ sample points.  
\item $\mathcal X$ denotes the space of a single sample point, and $\mathcal X^{\otimes n}$ the $n$-fold product space, so that $\D_n\in \mathcal X^{\otimes n}$.
\item For a positive integer $n$, $[n]\coloneqq \{1,2,...,n\}$.
    \item $\mathbb E_X(\cdot)$ means taking expectation over the randomness of $X$, conditioning on everything else.
    \item $\mathbb R^d$ denotes the $d$-dimensional Euclidean space. 
    \item $\|\cdot\|_q$ denotes the $L_q$-norm of a random variable. More concretely, $\|X\|_q=(\mathbb E|X|^q)^{1/q}$ if $1\le q<\infty$, and $\|\cdot\|_\infty$ is the essential supremum of a random variable.  For a function $h$ acting on a random variable $X$, $\|h\|_{q,X}=\|h(X)\|_q$.
    \item $\|\cdot\|$ denotes the Euclidean, Hilbert, or Banach-space norm, depending on the context, and $\|\cdot\|_{\rm op}$ denotes the operator norm of a matrix or linear operator.
    \item For two non-random sequences $a_n$ and $b_n$ such that $b_n\ge 0$,
    \begin{itemize}
        \item   $a_n=o(b_n)$ means $|a_n|/b_n\rightarrow 0$;
        \item $a_n=O(b_n)$ means $|a_n|/b_n\le c$ for a positive constant $c$. This is sometimes written as $a_n\lesssim b_n$;
        \item $a_n=\omega(b_n)$ means $b_n=o(|a_n|)$;
        \item $a_n=\Omega(b_n)$ means $b_n=O(|a_n|)$, or equivalently $|a_n|\gtrsim b_n$;
        \item $a_n\asymp b_n$ means $|a_n|\lesssim b_n \lesssim |a_n|$;
        \item $a_n=\tilde o(b_n)$ means $|a_n|=o(b_n\cdot c_n)$, where $c_n$ is a finite-degree polynomial of $\log n$ and perhaps the logarithm of other model quantities such as the dimensionality of the data and/or the number of models being compared.
    \end{itemize}
    \item $\stackrel{P}{\rightarrow}$ denotes convergence in probability.
    \item $\rightsquigarrow$ denotes convergence in distribution.
    \item For two random variables $U$, $V$ defined on the same space and $q\ge 1$, $d_{W_q}(U,V)$ denotes the Wasserstein-$q$ distance between $U$ and $V$.
\item For two sequences of possibly random variables $U_n$ and $V_n$ such that $V_n\ge 0$,
\begin{itemize}
    \item $U_n=o_P(V_n)$ means $U_n/|V_n|\stackrel{P}{\rightarrow}0$;
    \item $U_n=O_P(V_n)$ means that for each $\epsilon>0$ there exists $M>0$ such that $\sup_n \mathbb P(|U_n/V_n|\ge M)\le \epsilon$.
\end{itemize}
    \item For a positive integer $m$: $\mathbb S_{m-1}$ denotes the $(m-1)$-dimensional simplex in $\mathbb R^m$: $\{(w_1,...,w_m)\in \mathbb R^m:~w_r\ge 0,~\sum_{r=1}^m w_r=1\}$.
    \item For a smooth function $h$ defined on a Euclidean or Hilbert space, $\dot h(\cdot)$ and $\ddot h(\cdot)$ denote the gradient and Hessian of $h$, respectively.
\end{itemize}

\newpage


\section{Risk Consistency of CV by Stability}\label{sec:bousquet}
In this section, we introduce the relevant notations for cross-validation (\Cref{subsec:prelim}) and present results from \cite{bousquet2002stability} on the risk consistency (\Cref{subsec:bousquet_consistency}) and concentration (\Cref{subsec:bousquet_concentration}) properties of cross-validation under stability conditions. In addition, we prove several useful concentration inequalities that will be employed in later analysis (\Cref{subsec:concentration_ineq}).

\subsection{The Basic Setup}\label{subsec:prelim}
In this article, we focus on the abstract setting of independent, identically distributed (i.i.d.) data.
Let $\D_n=(X_1,...,X_n)$ be a dataset consisting of $n$ i.i.d. sample points, where each $X_i\in\X$ is generated from a common probability measure $P_X$.

In a statistical learning task, the goal is to use the data $\D$ to learn a parameter $f$ 
within a parameter space $\mathcal{F}$. In classical parametric inference, 
$\mathcal{F}$ is typically a finite-dimensional Euclidean space. In contrast, 
nonparametric inference allows $\mathcal{F}$ to be an infinite-dimensional functional space. 
The general theory developed in this article encompasses both settings; however, 
the results are often more relevant and yield richer insights in high-dimensional 
or infinite-dimensional cases, where the bias–variance trade-off becomes more subtle.

In applications of cross-validation, the estimation of $f$ may be carried out on datasets with different sizes. To keep track of the potentially varying fitting sample size, we consider
a collection of estimators indexed by different sample sizes. Let $\hat f:\bigcup_{n=1}^\infty \X^{\otimes n}\mapsto \mathcal F$ be an estimation procedure that maps a sample of any size to the parameter space.  We assume that $\hat f$ is permutation invariant (or symmetric).
\begin{definition}[Symmetric Estimator]\label{asm:symmetric-f}
We say an estimator $\hat f:\bigcup_{n=1}^\infty \X^{\otimes n}\mapsto \mathcal F$ is symmetric if
$\hat f(x_1,...x_n)=\hat f(x_{\sigma(1)},...,x_{\sigma(n)})$ for all $n\ge 1$, $(x_1,...,x_n)\in \X^{\otimes n}$, and permutations $\sigma:[n]\mapsto[n]$.    
\end{definition}

Let $\ell:\mathcal \X\times\mathcal F\mapsto [-\infty,\infty]$ be a loss function used to evaluate the quality of model fit.  This notation covers both supervised learning and unsupervised learning.

Examples in supervised learning include regression and classification.
\begin{itemize}
    \item Regression: $X=(Y,Z)\in\mathbb R^1\times \mathbb R^p$, and $f$ is a function mapping $Z$ to $f(Z)\in\mathbb R^1$ that predicts the value of $Y$ using $Z$. A common choice of $\ell$ is the squared error: $\ell((y,z),f)=(y-f(z))^2$.
    \item Classification: $X=(Y,Z)\in\{0,1\}\times \mathbb R^p$, and $f$ is a function mapping $Z$ to $[0,1]$ that predicts the conditional probability of $Y=1$ given $Z$. A common choice of $\ell$ in classification is the negative logistic likelihood  $\ell((y,z),f)=-y\log(f(z)/(1-f(z)))+\log(1-f(z))$.
\end{itemize}

Examples in unsupervised learning include principal components analysis and mixture models.
\begin{itemize}
    \item Principal components analysis (PCA): $X\in\mathbb R^p$ and $f:\mathbb R^p\mapsto\mathbb R^p$ is a $d$-dimensional linear projection that maps $x\in\mathbb R^p$ to $f(x)$ in a $d$-dimensional linear subspace of $\mathbb R^p$, for some $1\le d<p$. The loss function is the residual squared norm $\ell(x,f)=\|x-f(x)\|^2$, where $\|\cdot\|$ denotes the Euclidean norm.
    \item Mixture models: $X\in\mathbb R^p$, $f$ is a density function (e.g., in a reproducing kernel Hilbert space or a Gaussian mixture), and $\ell(x,f)=-\log f(x)$.
\end{itemize}

The overall quality of a fitted model $f$ under the loss function $\ell(\cdot,\cdot)$ is the risk\footnote{Here we adopt the machine-learning convention, in which ``risk'' denotes the expected loss of a fixed predictor \(f\) with respect to the randomness of the evaluation (test) observation. This contrasts with decision theory, where ``risk'' refers to the overall expected loss of a procedure, averaging over both estimation (training) and evaluation (testing) randomness.
} $R(f)$:
$$R(f)=\mathbb E_X \ell(X,f)\,.$$
Here the notation $\mathbb E_{X}(\cdot)$ means taking conditional expectation over the randomness of $X$ only.
We will often consider the risk of a fitted model $\hat f=\hat f(\D_n)$, which is obtained by applying the fitting procedure $\hat f$ to the input dataset $\D_n$. To simplify notation, we
write $R(\hat f(\D_n))$ as $R(\D_n)$ and $\ell(\cdot,\hat f(\D_n))$ as $\ell(\cdot,\D_n)$ when there is no ambiguity on the fitting procedure.

Under such a setting, there are two natural quantities of interest.
The first is $R(\D_n)$, the risk of the estimated parameter obtained by applying a particular fitting procedure $\hat f$ to the fitting dataset $\D_n$. This is a random quantity, because it depends on the random fitting dataset $\D_n$.
Another quantity of interest is the population version of $R(\D_n)$:
\begin{equation}\label{eq:2.1-mu_n}
\mu_n=\mathbb E R(\D_n)\,,    
\end{equation}
which is a deterministic quantity that depends only on the fitting procedure $\hat f$, the data distribution $P_X$, and the sample size $n$.

Neither $R(\D_n)$ nor $\mu_n$ can be straightforwardly estimated without additional assumptions, such as uniform convergence over the function class or, as we will see later this article, stability of $\hat f$.  

A common challenge in estimating the risk $R(\D_n)$ or the average risk $\mu_n$ is to handle simultaneously the randomness in $\hat f(\D_n)$ (estimation) and in approximating the risk of $\hat f(\D_n)$ (evaluation). The cross-validation technique does this by splitting the dataset to create a holdout sample for empirical evaluation of the risk, and further reducing the estimation variability by repeated sample splitting.  In this article, we focus on the commonly used
$K$-fold cross-validation estimate, described as follows.
\begin{enumerate}
    \item Fix $K$ such that $n_\te\coloneqq n/K$, the test subsample size, is an integer\footnote{We assume $n/K$ to be an integer throughout this article to simplify the notation.  The scenario in which $n/K$ is not an integer does not make any qualitative difference to the methods and results.}.  Let $n_\tr=n-n_\te$ be the training subsample size.
   \item  For $k=1,...,K$, let $I_{n,k}=\{n_\te(k-1)+1,...,n_\te k\}$ be the indices of the $k$th fold of the data, $\D_{n,k}=\{X_i:i\in I_{n,k}\}$ be the corresponding $k$th fold of the data, and $\D_{n,-k}=\D_n\setminus \D_{n,k}$.   Define $I_{n,-k}=[n]\setminus I_{n,k}$.
\item Define the $K$-fold cross-validation risk estimate
\begin{equation}\label{eq:2.1-cv-risk-est}
\hat R_{\cv,n,K} = n^{-1}\sum_{k=1}^K \sum_{i\in I_{n,k}}\ell(X_i,\D_{n,-k})\,.    
\end{equation}
\end{enumerate}
We may drop the dependence on $n$ and $K$ and simply write $\hat R_{\cv}$.  We will consider the following two cases.
\begin{itemize}
    \item $K=O(1)$. This corresponds to the commonly used 5-fold or 10-fold CV.
    \item $K=K_n\rightarrow\infty$. The most typical case is $K=n$, which corresponds to the leave-one-out (LOO) CV.
\end{itemize}

While we might intuitively expect $\hat R_{\cv,n,K}$ to approximate $R(\D_n)$ or $\mu_n$, it is important to note that, by construction, $\hat R_{\cv}$ represents the risk of models trained on samples of size $n_\tr$ rather than $n$, since each term in \eqref{eq:2.1-cv-risk-est} is a loss evaluated at a parameter estimate fitted on a dataset of size $n_\tr$.

With a sample size $n_\tr$, the counterpart of $\mu_n$ is simply $\mu_{n_\tr}$.  But for the risk of the fitted model $R(\D)$, the CV procedure creates a total of $K$ versions at the sample size $n_\tr$: $\{R(\D_{n,-k}):1\le k\le K\}$.  Hence the counterpart of $R(\D_n)$ at  sample size $n_\tr$ targeted by the CV risk estimate $\hat R_{\cv,n,K}$ in \eqref{eq:2.1-cv-risk-est} should be their average:
\begin{equation}\label{eq:2.1-R-bar}
\bar R_{\cv,n,K} \coloneqq \frac{1}{K}\sum_{k=1}^K R(\D_{n,-k})\,.
\end{equation}
It is straightforward to check that $\hat R_{\cv,n,K}$ is unbiased for $\bar R_{\cv, n,K}$ and $\mu_{n_\tr}$, in the sense that $\mathbb E(\hat R_{\cv,n,K}) = \mathbb E \bar R_{\cv, n,K}=\mu_{n_\tr}$.  
The following picture describes the chain of approximation among the quantities of interest.
\begin{equation}
    \label{eq:cv_graph}
    \begin{array}{ccccc}
       \hat R_{\cv,n,K}  & \xrightarrow{(a)} & \bar R_{\cv, n,K} & \xrightarrow{(b)} & \mu_{n_\tr}\\
       &  &  & & \downarrow\text{\scriptsize $(c)$} \\
       & & R(\D_n) &\xrightarrow{(b')} & \mu_n
    \end{array}
\end{equation}
where the approximation in each arrow has its own interpretation.
\begin{enumerate}
    \item [(a)]: the sampling variability in the evaluation of empirical risk;
    \item [(b,b$'$)]: the concentration of the risk of estimated models around the expected value;
    \item [(c)]: the continuity of the average risk as the sample size changes.
\end{enumerate}
As we will see in this article, all three approximations can be established using appropriate stability conditions.

\subsection{Stability Conditions}\label{subsec:stability_overview}
Now we describe the specific notions of stability to be considered in this article.  Intuitively, stability refers to a form of continuity of the fitting procedure $\hat f$, such that $\hat f(\D)\approx \hat f(\D')$ whenever $\mathcal D\approx \mathcal D'$.  
Because $\hat f$ is random, stability must be stated in a stochastic manner, such as via a norm or tail-probability bound on a random variable. 

We will consider two particular types of pairs $(\D,\D')$ that are close to each other. 
In the first, $\D'$ is obtained by removing one element in $\D$. Recall that $\D_n=\{X_1,...,X_n\}$ consists of a collection of i.i.d. sample points. We can use $\D_{n-1}$ to denote the dataset obtained by removing one sample point from $\D_n$. This leads to the \emph{leave-one-out stability}.
\begin{definition}[Leave-One-Out Stability in $L_q$-Norm]\label{def:2.2-loo-stab-lq}
    Let $h:[0,1]\times\bigcup_{n=1}^\infty \X^{\otimes n}\mapsto\mathbb R^1$ be a function that maps a dataset of any size and an auxiliary input in $[0,1]$ to a real number. 
For $q\in[1,\infty]$, define the Leave-One-Out $L_q$ stability (LOO-$L_q$) of $h$ at sample size $n$ as
    $$
  \mathcal S_{q,n}^{\rm loo}(h)\coloneqq  \|h(U,\D_n)-h(U,\D_{n-1})\|_q\,,
    $$
    where $U\sim {\rm Unif}(0,1)$ is independent of $\mathcal D_n$.
\end{definition}

In this definition, we allow $h$ to involve additional randomness besides the dataset $\D_n$, through $U\sim {\rm Unif}(0,1)$. There is no loss of generality by assuming $U\sim{\rm Unif}(0,1)$ since we can use $U$ to generate random vectors of arbitrary distribution.  In particular, this definition covers the loss function $\ell(X_0,\D_n)$, which involves the random evaluation point $X_0$.  For example, in \Cref{thm:bousquet-consistency} below, we will consider
$\mathcal S_{q,n}^{\rm loo}(\ell(X_0,\cdot))$, which equals
$\|\ell(X_0,\mathcal D_n)-\ell(X_0,\mathcal D_{n-1})\|_q$, where the $\ell_q$ norm is with respect to the joint randomness of the $n+1$ independent sample points $X_0,...,X_n$.

In another type of neighboring pairs $(\D,\D')$, $\D'$ is obtained by replacing one entry of $\D$ by an i.i.d. copy.  
For each $i\in\{0,1,...,n\}$ we let $X_i'$ be an i.i.d. copy of $X_i$, independent of everything else.  For any $i\in[n]$ and function $h:\bigcup_{n=1}^\infty \X^{\otimes n}\mapsto\mathbb R^1$ which may involve additional randomness, we define the perturb-one difference operator
\begin{equation}\label{eq:2.2-nabla}
\nabla_i h(\D_n) = h(\D_n) - h(\D_n^i)
\end{equation}
where $\D_n^i$ is obtained by replacing the $i$th entry of $\D_n$ by $X_i'$.

The perturb-one stability is defined as follows.
\begin{definition}[Perturb-One Stability in $L_q$-Norm]\label{def:2.2-po-stab-lq}
    Let $h:[0,1]\times \bigcup_{n=1}^\infty \X^{\otimes n}\mapsto\mathbb R^1$ be a  function that maps a dataset of any size and an auxiliary input in $[0,1]$ to a real number. 
For $q\in[1,\infty]$, define the Perturb-One $L_q$ (PO-$L_q$) stability of $h$ at sample size $n$ as
    $$
\S_{q,n}^{\rm po}   \coloneqq \|\nabla_i h(U,\D_n)\|_q\,,
    $$
    where $U\sim {\rm Unif}(0,1)$ is independent of $\D_n$.
\end{definition}

Although from a practical point of view the difference is not substantial, mathematically the Perturb-One stability is weaker than the Leave-One-Out stability. Roughly speaking, Perturb-One Stability only requires that each individual sample point contributes a small proportion to the output, while Leave-One-Out stability also requires continuity with respect to sample size.  In particular, a simple application of triangle inequality implies that the PO-$L_q$ stability is no larger than twice the LOO-$L_q$ stability .
\begin{proposition}[LOO Stability $\Rightarrow$ PO Stability]\label{pro:2.2-loo-imply-po}
     $\S_{q,n}^{\rm po}(h)\le 2\S_{q,n}^{\rm loo}(h)$.
\end{proposition}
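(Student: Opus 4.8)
The plan is to insert a leave-one-out dataset as an intermediate term and apply the triangle inequality. Fix $i\in[n]$ and let $\D_{n-1}$ denote the dataset obtained from $\D_n$ by deleting the $i$th sample point; this is a legitimate instance of the object in \Cref{def:2.2-loo-stab-lq}. The perturbed dataset $\D_n^i$ --- where $X_i$ is replaced by its independent copy $X_i'$ --- is precisely $\D_{n-1}$ with $X_i'$ reinserted into the $i$th slot. Hence
\[
\nabla_i h(\D_n)=h(\D_n)-h(\D_n^i)=\bigl(h(\D_n)-h(\D_{n-1})\bigr)+\bigl(h(\D_{n-1})-h(\D_n^i)\bigr),
\]
and the $L_q$-norm of the first parenthesis is exactly $\S_{q,n}^{\rm loo}(h)$ by definition.

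For the second parenthesis I would use a distributional identity. Let $\zeta$ collect any auxiliary randomness carried by $h$ (for instance an evaluation point $X_0$ inside a loss $\ell(X_0,\cdot)$), which, by the convention underlying \Cref{def:2.2-loo-stab-lq,def:2.2-po-stab-lq}, is held common to the two terms of any such difference. Since $X_i'$ is i.i.d. with $X_i$ and independent of both $\D_{n-1}$ and $\zeta$, the triple $(\D_{n-1},\D_n^i,\zeta)$ has the same joint law as $(\D_{n-1},\D_n,\zeta)$ --- in each case $\D_{n-1}$ and $\zeta$ are unchanged while a single fresh i.i.d. draw occupies the $i$th position. Therefore $\|h(\D_{n-1})-h(\D_n^i)\|_q=\|h(\D_{n-1})-h(\D_n)\|_q=\S_{q,n}^{\rm loo}(h)$ as well. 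Adding the two contributions through the triangle inequality for $\|\cdot\|_q$ gives $\S_{q,n}^{\rm po}(h)=\|\nabla_i h(\D_n)\|_q\le 2\,\S_{q,n}^{\rm loo}(h)$.

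The argument is short and has no real obstacle; the only step meriting a little care is the treatment of the auxiliary randomness --- one must check that in both \Cref{def:2.2-loo-stab-lq,def:2.2-po-stab-lq} the extra randomness enters identically in the two differenced terms, so that the distributional identity is valid. A minor cosmetic point is that $\S_{q,n}^{\rm po}(h)$ should be independent of the chosen index $i$, and $\S_{q,n}^{\rm loo}(h)$ independent of which point is removed; both follow from exchangeability of the i.i.d. sample and are already implicit in the statements of the two definitions.
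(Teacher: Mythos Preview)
Your proof is correct and matches the paper's approach exactly: the paper simply states that the result follows from ``a simple application of triangle inequality'' without writing out the details, and your decomposition $h(\D_n)-h(\D_n^i)=(h(\D_n)-h(\D_{n-1}))+(h(\D_{n-1})-h(\D_n^i))$ together with the distributional identity is precisely that argument. The same trick reappears verbatim later in the paper's proof of \Cref{thm:bousquet-consistency}.
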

Conversely, PO-$L_q$ stability does not necessarily imply LOO-$L_q$ stability.  For  example, consider the following $h$ which does not use the auxiliary input: 
\begin{align*}
    h(\D_n) = \left\{\begin{array}{cc}
        \frac{1}{n}\sum_{i=1}^n X_i & \text{ if $n$ is even,} \\
        1+ \frac{1}{n}\sum_{i=1}^n X_i & \text{ if $n$ is odd.} 
    \end{array}\right.
\end{align*}
Clearly, $h$ satisfies PO-$L_q$ stability at rate $O(1/n)$ whenever $\|X_1\|_{q}<\infty$ but $h$ does not satisfy LOO-$L_q$ stability at the same rate.  




\subsection{Risk Consistency of LOOCV}\label{subsec:bousquet_consistency}
Now we present a risk consistency result for the leave-one-out cross-validation under the LOO-$L_q$ stability condition of the loss function.

\begin{theorem}\label{thm:bousquet-consistency}
Let $1\le q\le r\le \infty$ be such that $1/q+1/r=1$.
Assume the estimator $\hat f$ is symmetric, the loss function has bounded $L_r$-norm: $\|\ell(X_0,\D_n)\|_r\le B<\infty$ uniformly for all $n$, and
    $\S_{q,n}^{\rm loo}[\ell(X_0,\cdot)]\le \epsilon_n$. Then
$$\mathbb E(\hat R_{\cv,n,n}-R(\D_n))^2\le n^{-1}B^2+6 B \epsilon_n\,.$$
\end{theorem}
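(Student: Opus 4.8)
The plan is to expand the squared error $\mathbb{E}(\hat R_{\cv,n,n}-R(\D_n))^2$ by inserting the intermediate centering quantity $\bar R_{\cv,n,n}=\frac1n\sum_{k=1}^n R(\D_{n,-k})=\frac1n\sum_{k=1}^n R(\D_{n-1}^{(k)})$, where $\D_{n-1}^{(k)}$ denotes $\D_n$ with the $k$th point removed. Writing $\hat R_{\cv}-R(\D_n) = (\hat R_{\cv}-\bar R_{\cv}) + (\bar R_{\cv}-R(\D_n))$ and using $(a+b)^2\le 2a^2+2b^2$ is the crude route, but to land the sharper constants in the statement it is better to keep the cross term and bound each piece directly. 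So first I would control $\mathbb{E}(\bar R_{\cv,n,n}-R(\D_n))^2$: since $\hat f$ is symmetric, each $R(\D_{n,-k})$ has the same distribution, and $R(\D_{n-1}^{(k)}) - R(\D_n)$ is exactly the leave-one-out difference of the function $h(\cdot)=R(\hat f(\cdot))=\mathbb{E}_{X_0}\ell(X_0,\cdot)$; by Jensen, $\mathcal S_{q,n}^{\rm loo}(h)\le \mathcal S_{q,n}^{\rm loo}[\ell(X_0,\cdot)]\le\epsilon_n$. Averaging over $k$ and applying the triangle inequality in $L_2$ (noting $q\le 2\le r$ is not assumed, so one uses $\|\cdot\|_2\le\|\cdot\|_q$ only if $q\ge 2$ — instead I would keep the bound in terms of $\epsilon_n$ via $\|\bar R_{\cv}-R(\D_n)\|_2 \le \frac1n\sum_k \|R(\D_{n,-k})-R(\D_n)\|_2$, and then relate the $L_2$ norm to $\epsilon_n$: here the cleanest path is to observe this term contributes at the $O(\epsilon_n)$ scale, which will be absorbed into the $6B\epsilon_n$).

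The main work is the term $\mathbb{E}(\hat R_{\cv,n,n}-\bar R_{\cv,n,n})^2$, which is $\mathbb{E}\big[\big(\frac1n\sum_{k=1}^n Z_k\big)^2\big]$ with $Z_k = \ell(X_k,\D_{n,-k}) - R(\D_{n,-k})$. Note $\mathbb{E}[Z_k\mid \D_{n,-k}]=0$, so each $Z_k$ is centered, and $\|Z_k\|_2\le 2B$ (or one can do slightly better using $\|\ell(X_0,\D_{n-1})\|_r\le B$ and the conjugacy $1/q+1/r=1$). Expanding the square gives $\frac1{n^2}\sum_k \mathbb{E}Z_k^2 + \frac1{n^2}\sum_{k\ne k'}\mathbb{E}[Z_kZ_{k'}]$. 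The diagonal contributes $O(B^2/n)$. The off-diagonal terms are where stability enters: $X_k$ is independent of $\D_{n,-k}$ but $X_{k'}$ (for $k'\ne k$) is one of the points in $\D_{n,-k}$, so $Z_k$ and $Z_{k'}$ are not independent. The trick, standard since Bousquet–Elisseeff, is to replace $X_{k'}$ inside $\D_{n,-k}$ by an i.i.d. copy $X_{k'}'$: after this swap the modified $\tilde Z_k$ becomes (conditionally) independent of $Z_{k'}$, so $\mathbb{E}[\tilde Z_k Z_{k'}]=0$, and $|\mathbb{E}[Z_kZ_{k'}]| = |\mathbb{E}[(Z_k-\tilde Z_k)Z_{k'}]| \le \|Z_k-\tilde Z_k\|_q\,\|Z_{k'}\|_r$ by Hölder. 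The perturbation $Z_k-\tilde Z_k$ is controlled by the LOO stability (via Proposition~\ref{pro:2.2-loo-imply-po}, since replacing a point is a perturb-one operation on the size-$n_\tr=n-1$ dataset $\D_{n,-k}$, hence bounded by $2\epsilon_{n-1}$ or, after a harmless reindexing, by $2\epsilon_n$), while $\|Z_{k'}\|_r\le 2B$ — or more carefully $\|Z_{k'}\|_r\le\|\ell(X_{k'},\D_{n,-k'})\|_r+\|R(\D_{n,-k'})\|_r\le 2B$. Summing the $n(n-1)$ off-diagonal terms and dividing by $n^2$ yields a bound of order $B\epsilon_n$, with the constant chosen to make the total come out to $6B\epsilon_n$ after combining with the $\bar R_{\cv}-R(\D_n)$ contribution.

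The step I expect to be the main obstacle is bookkeeping the constants so that everything telescopes into exactly $n^{-1}B^2 + 6B\epsilon_n$: one has to be careful that (i) the stability at training size $n-1$ is what appears, and argue it is $\le$ (a constant times) $\epsilon_n$ — the theorem's hypothesis is stated as $\mathcal S^{\rm loo}_{q,n}\le\epsilon_n$ for all $n$, so $\epsilon_{n-1}$ is also $\le\epsilon_n$-type only if $\epsilon_n$ is assumed nondecreasing, or one simply keeps $\epsilon_{n-1}$ and notes it is bounded by the stated uniform bound; (ii) the Hölder pairing uses precisely the conjugate exponents $1/q+1/r=1$ from the hypothesis, and the factor of $2$ from the perturb-one vs. leave-one-out conversion plus the factor of $2$ from $\|Z_{k'}\|_r\le 2B$ multiply through correctly; and (iii) the cross term between the two pieces in the outer decomposition is handled without losing a factor of $2$ — most cleanly by applying the $L_2$ triangle inequality $\|\hat R_{\cv}-R(\D_n)\|_2\le \|\hat R_{\cv}-\bar R_{\cv}\|_2 + \|\bar R_{\cv}-R(\D_n)\|_2$ at the very end and then squaring, or by absorbing the $\bar R_{\cv}-R(\D_n)$ term's second moment (which is $\le\epsilon_n^2$, lower order than $B\epsilon_n$ when $\epsilon_n\le B$, which holds since $\epsilon_n\le\mathcal S^{\rm loo}_{q,n}[\ell]\le 2B$) directly into the constant. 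None of these is deep; the content is entirely in the swapping argument, and the rest is careful accounting.
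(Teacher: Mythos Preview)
Your swapping idea is the right engine, but the route you take is genuinely different from the paper's, and two of your acknowledged ``bookkeeping'' issues are real rather than cosmetic.

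\textbf{What the paper does differently.} The paper does not insert $\bar R_{\cv,n,n}$ at all. It expands $\mathbb E(\hat R_{\cv}-R(\D_n))^2$ directly as $\mathbb E R(\D_n)^2 - 2\mathbb E[R(\D_n)\hat R_{\cv}] + \mathbb E\hat R_{\cv}^2$ and rewrites each as an expectation of a product of two loss evaluations, e.g.\ $\mathbb E R(\D_n)^2 = \mathbb E[\ell(X_0,\D_n)\ell(X_0',\D_n)]$. After collecting terms, the residual is a sum of two expressions of the form $\mathbb E[\ell(X_0,\D_n)\ell(X_0',\D_n) - \ell(X_0,\D_n)\ell(X_1,\D_{n,-1})]$. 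The key move is a \emph{label swap}: since $(X_0',X_1)$ are exchangeable and $X_1\notin\D_n^1$, one has $\mathbb E[\ell(X_0,\D_n)\ell(X_0',\D_n)] = \mathbb E[\ell(X_0,\D_n^1)\ell(X_1,\D_n^1)]$. This turns the difference into pieces bounded by $\|\ell(X_0,\D_n^1)-\ell(X_0,\D_n)\|_q\le 2\epsilon_n$ and $\|\ell(X_1,\D_n^1)-\ell(X_1,\D_{n,-1})\|_q\le \epsilon_n$, each paired via H\"older with a factor bounded by $B$ in $L_r$. That gives $3B\epsilon_n$ per block, hence $6B\epsilon_n$ total.

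\textbf{The size-$n$ vs size-$(n{-}1)$ issue is a genuine gap.} The hypothesis is $\mathcal S_{q,n}^{\rm loo}\le\epsilon_n$ at the specific sample size $n$ (only the $L_r$ bound is assumed uniform in $n$). The paper's label swap is engineered so that every stability difference is between a size-$n$ dataset and a size-$(n{-}1)$ dataset, i.e.\ exactly LOO at size $n$. Your swap perturbs a point inside $\D_{n,-k}$, a dataset of size $n{-}1$, so you invoke PO (hence LOO) at size $n{-}1$. That is not the stated hypothesis, and there is no general relation between $\mathcal S_{q,n}^{\rm loo}$ and $\mathcal S_{q,n-1}^{\rm loo}$ (cf.\ the parity example after Proposition~\ref{pro:2.2-loo-imply-po}). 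So as written your argument proves a slightly different theorem, with $\epsilon_{n-1}$ in the bound.

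\textbf{The constants do not fall out as $6$.} Even granting stability at size $n{-}1$, your route gives: off-diagonal $|\mathbb E Z_kZ_{k'}|\le \|Z_k-\tilde Z_k\|_q\|Z_{k'}\|_r$ with $\|Z_k-\tilde Z_k\|_q\le 4\epsilon$ (two PO terms, one for $\ell$ and one for $R$) and $\|Z_{k'}\|_r\le 2B$, contributing $8B\epsilon$; the cross term $2\mathbb E[(\hat R-\bar R)(\bar R - R)]$ contributes another $4B\epsilon$; and $\mathbb E(\bar R-R)^2\le 2B\epsilon$. That is $14B\epsilon$, not $6B\epsilon$. The paper's direct expansion and label swap are what make the constant $6$ appear; in your decomposition the $R(\D_{n,-k})$ centering inside $Z_k$ costs you extra factors that do not telescope away.
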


The proof of \Cref{thm:bousquet-consistency} is quite elementary and elegant. The key step is a ``label-swapping trick'' that makes use of the symmetry of data, which further leads to the application of the stability condition.

\begin{proof}
The proof is based on a direct expansion of the squared difference between $\hat R_{\cv,n,n}$ and $R(\D_n)$:
\begin{align}
    \mathbb E (\hat R_\cv-R(\D_n))^2 = \mathbb E R(\D_n)^2- 2\mathbb E \left[R(\D_n)\hat R_{\cv,n,n}\right] + \mathbb E \hat R_{\cv,n,n}^2\,. \label{eq:loo-risk-expansion}
\end{align}
Next, we expand each of the three terms.

For the first term, we have
    \begin{align}
    \mathbb E R(\D_n)^2=\mathbb E_{\D_n}\left[\mathbb E_{X_0}\ell(X_0,\D_n)\right]^2 = \mathbb E \left[\ell(X_0,\D_n)\ell(X_0',\D_n)\right]\,.\label{eq:loo-risk-expansion1}
\end{align}

For the second term, we have, by definition of $\hat R_{\cv,n,n}$, symmetry of $\hat f$ and exchangeability of sample points,
\begin{align}
    \mathbb E \left[R(\D_n) \hat R_{\cv,n,n}\right] = & \mathbb E \left\{\mathbb E_X \ell(X,\D_n)\left[n^{-1}\sum_{i=1}^n\ell(X_i,\D_{n,-i})\right]\right\}\nonumber\\
    = &  \mathbb E \left\{\ell(X_0,\D_n)\ell(X_1, \D_{n,-1})\right\}\,.\label{eq:loo-risk-expansion2}
\end{align}

For the third term, note that the assumptions $1\le q\le r\le\infty$ and $1/q+1/r=1$ imply that $r\ge 2$, so
\begin{align}
    \mathbb E \hat R_{\cv,n,n}^2 = & n^{-2}\sum_{i=1}^n\mathbb E\ell^2(X_i,\D_{n,-i})+n^{-2}\sum_{1\le i\neq j\le n}\mathbb E \ell(X_i,\D_{n,-i})\ell(X_j,\D_{n,-j})\nonumber\\
    \le & n^{-1}B^2+(1-n^{-1})\mathbb E\ell(X_1,\D_{n,-1})\ell(X_2,\D_{n,-2})\nonumber\\
    \le & \mathbb E[\ell(X_1,\D_{n,-1})\ell(X_2,\D_{n,-2})]+n^{-1}B^2\,,\label{eq:loo-risk-expansion3}
\end{align}
where the first inequality uses the fact that $\|\ell(X_i,\D_{n,-i})\|_2^2\le \|\ell(X_i,\D_{n,-i})\|_r^2\le B^2$, and symmetry of $\hat f$ and the data points.

Putting \eqref{eq:loo-risk-expansion1}, \eqref{eq:loo-risk-expansion2}, and \eqref{eq:loo-risk-expansion3} in \eqref{eq:loo-risk-expansion}, we obtain
\begin{align}
    &\mathbb E (\hat R_{\cv,n,n}-R(\D_n))^2-n^{-1}B^2\nonumber\\ 
    = &\mathbb E R(\D_n)^2- 2\mathbb E R(\D_n)\hat R_{\cv,n,n} + \mathbb E \hat R_{\cv,n,n}^2-n^{-1}B^2\nonumber\\
    \le & \mathbb E\ell(X_0,\D_n)\ell(X_0',\D_n)-2\mathbb E \ell(X_0,\D_n)\ell(X_1, \D_{n,-1})+\mathbb E\ell(X_1,\D_{n,-1})\ell(X_2,\D_{n,-2})\nonumber\\
    = & \mathbb E\left[\ell(X_0,\D_n)\ell(X_0',\D_n)-\ell(X_0,\D_n)\ell(X_1,\D_{n,-1})\right]\nonumber\\
     &~~+ \mathbb E \left[\ell(X_1,\D_{n,-1})\ell(X_2,\D_{n,-2})-\ell(X_1,\D_{n,-1})\ell(X_0,\D_n)\right]\,.\label{eq:roo-risk-exp-simple}
\end{align}

Recall that $\D_n^i$ is the perturb-one dataset obtained by replacing the $i$th entry of $\D_n$ by an i.i.d. copy $X_i$. We have the following key equation (the ``label-swapping trick'').
\begin{equation}\label{eq:2.3-label-swapping}\mathbb E\ell(X_0,\D_n)\ell(X_0',\D_n)=\mathbb E\ell(X_0,\D_{n}^1)\ell(X_0',\D_{n}^1)=\mathbb E \ell(X_0,\D_{n}^1)\ell(X_1,\D_{n}^1)\,,\end{equation}
where the first equality holds due to exchangeability between $X_1$ and $X_1'$, and the second equality holds due to exchangeability between $X_0'$ and $X_1$.

As a result
\begin{align*}
   & \mathbb E\left[\ell(X_0,\D_n)\ell(X_0',\D_n)-\ell(X_0,\D_n)\ell(X_1,\D_{n,-1})\right]\\
   = & \mathbb E\left[\ell(X_0,\D_n^1)\ell(X_1,\D_n^1)-\ell(X_0,\D_n)\ell(X_1,\D_{n,-1})\right]\\
   = &\mathbb E\big[\ell(X_0,\D_n^1)\ell(X_1,\D_n^1)-\ell(X_0,\D_n)\ell(X_1,\D_n^1)\\
   &~~+\ell(X_0,\D_n)\ell(X_1,\D_n^1)-\ell(X_0,\D_n)\ell(X_1,\D_{n,-1})\big]\\
   = & \mathbb E\left[\ell(X_0,\D_n^1)-\ell(X_0,\D_n)\right]\ell(X_1,\D_n^1)+\mathbb E\ell(X_0,\D_n)\left[\ell(X_1,\D_n^1)-\ell(X_1,\D_{n,-1})\right]\,.
\end{align*}
By the LOO-$L_q$ stability of $\ell(X_0,\D_n)$ we have
$$\|\ell(X_1,\D_n^1)-\ell(X_1,\D_{n,-1})\|_q\le \epsilon_n\,$$
and by triangle inequality
$$
\|\ell(X_0,\D_n^1)-\ell(X_0,\D_n)\|_q\le 2\epsilon_n\,.
$$
Thus H\"{o}lder's inequality implies that
$$\left|\mathbb E\left[\ell(X_0,\D_n)\ell(X_0',\D_n)-\ell(X_0,\D_n)\ell(X_1,\D_{n,-1})\right]\right|\le 3 B\epsilon_n\,.$$

A similar treatment of swapping $X_2$, $X_2'$, and $X_0$ in the term 
$$\mathbb E \left[\ell(X_1,\D_{n,-1})\ell(X_2,\D_{n,-2})-\ell(X_1,\D_{n,-1})\ell(X_0,\D_n)\right]$$
leads to the same upper bound of $3B\epsilon_n$. Therefore the RHS of \eqref{eq:roo-risk-exp-simple} is bounded by $6B\epsilon_n$, which leads to the claimed result.
\end{proof}

The ``label-swapping trick'' \eqref{eq:2.3-label-swapping} is a special case of a more general technique called the ``free nabla trick'' (\Cref{lem:4.4-free-nabla}).  Besides its crucial role in proving CLTs for CV risk estimates, the free nabla trick also allows us to extend \Cref{thm:bousquet-consistency} by providing upper bounds on $|\hat R_{\cv,n,K}-\bar R_{\cv,n,K}|$ for general values of $K$ under a Perturb-One stability condition.  An application of this upper bound is presented in \Cref{subsec:5.5-cross-conf}, in the context of conformal prediction.

\subsection{Concentration of CV Risk Estimate}\label{subsec:bousquet_concentration}
\Cref{thm:bousquet-consistency} gives a general consistency result for the LOO-CV risk estimates. In this subsection, we study the concentration bounds for such estimates.  The main challenge is that many mean-zero terms that disappear under expectation or are combined in the proof of \Cref{thm:bousquet-consistency}, such as those in \eqref{eq:loo-risk-expansion1} and \eqref{eq:loo-risk-expansion2}, have complicated correlations that contribute to the random fluctuation of $\hat R_{\cv,n,n}$ around $R(\D_n)$.  Therefore, we will need stronger stability conditions to establish exponential concentration inequalities for CV risk estimates.

The starting point is the following well-known bounded-difference concentration result.
\begin{theorem}[McDiarmid's Inequality]\label{thm:mcdiarmid}
For any function $h$ that maps $\D_n$ to a real number, if 
there exist constants $c_1,...,c_n$ such that $|\nabla_i h|\le c_i$ almost surely for each $i\in[n]$, then
$$\mathbb P(h-\mathbb E h\ge t)\le \exp\left(-\frac{2t^2}{\sum_{i=1}^n c_i^2}\right)\,,$$
for all $t>0$.
\end{theorem}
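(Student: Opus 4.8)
The plan is to obtain McDiarmid's inequality from the method of bounded differences: construct the Doob martingale associated with $h(\D_n)$, control its increments using the hypothesis $|\nabla_i h|\le c_i$, and then run the Azuma--Hoeffding moment-generating-function argument. Fix $\lambda>0$; the intermediate target is the bound $\mathbb{E}\,e^{\lambda(h-\mathbb{E}h)}\le \exp\!\big(\tfrac{\lambda^2}{8}\sum_{i=1}^n c_i^2\big)$ (writing $h$ for $h(\D_n)$), after which a Chernoff bound together with the choice $\lambda=4t/\sum_{i=1}^n c_i^2$ yields $\mathbb{P}(h-\mathbb{E}h\ge t)\le \exp\!\left(-\frac{2t^2}{\sum_{i=1}^n c_i^2}\right)$.

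First I would set up the martingale. Let $Z_i=\mathbb{E}[h(\D_n)\mid X_1,\dots,X_i]$ for $i=0,1,\dots,n$, so that $Z_0=\mathbb{E}h$, $Z_n=h$, and $h-\mathbb{E}h=\sum_{i=1}^n D_i$ with $D_i=Z_i-Z_{i-1}$. Using independence of the $X_j$, set $g_i(x_1,\dots,x_i)=\mathbb{E}[h(x_1,\dots,x_i,X_{i+1},\dots,X_n)]$, so that $Z_i=g_i(X_1,\dots,X_i)$; conditionally on $X_1,\dots,X_{i-1}$ the increment $D_i$ is then a centered function of $X_i$ alone, namely $D_i=g_i(X_1,\dots,X_{i-1},X_i)-\mathbb{E}_{X_i}g_i(X_1,\dots,X_{i-1},X_i)$.

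The key step is the conditional-range bound: conditionally on $X_1,\dots,X_{i-1}$, the increment $D_i$ lies in an interval of length at most $c_i$. Since $D_i$ is a centered function of $X_i$, this reduces to showing that the oscillation $g_i(X_1,\dots,X_{i-1},x)-g_i(X_1,\dots,X_{i-1},x')$ over $x,x'$ is bounded by $c_i$. That difference equals $\mathbb{E}\big[h(\dots,x,\dots)-h(\dots,x',\dots)\big]$, an expectation over $X_{i+1},\dots,X_n$ of a perturb-one difference in the $i$th coordinate; and the hypothesis $|\nabla_i h|\le c_i$ almost surely---which says precisely that replacing the $i$th coordinate by an i.i.d.\ copy changes $h$ by at most $c_i$ with probability one---upgrades, via a routine measure-zero argument using that $X_i$ and $X_i'$ range over the support of $P_X$, to the pointwise bound $|h(\dots,x,\dots)-h(\dots,x',\dots)|\le c_i$. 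Hence the conditional oscillation of $D_i$, and in particular its conditional range, is at most $c_i$. I expect this translation from the almost-sure perturb-one statement to the deterministic oscillation bound to be the only genuinely delicate point in the argument.

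With the conditional-range bound in hand the rest is standard. Hoeffding's lemma, applied conditionally on $X_1,\dots,X_{i-1}$ to the centered increment $D_i$, gives $\mathbb{E}[e^{\lambda D_i}\mid X_1,\dots,X_{i-1}]\le e^{\lambda^2 c_i^2/8}$. Peeling the conditional expectations from the inside out via the tower property then yields $\mathbb{E}\,e^{\lambda(h-\mathbb{E}h)}\le \prod_{i=1}^n e^{\lambda^2 c_i^2/8}=\exp\!\big(\tfrac{\lambda^2}{8}\sum_{i=1}^n c_i^2\big)$, and Markov's inequality gives $\mathbb{P}(h-\mathbb{E}h\ge t)\le \exp\!\big(-\lambda t+\tfrac{\lambda^2}{8}\sum_{i=1}^n c_i^2\big)$; optimizing over $\lambda>0$ completes the proof. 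I would cite Hoeffding's lemma and the martingale peeling step rather than reprove them here.
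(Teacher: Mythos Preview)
The paper does not actually prove \Cref{thm:mcdiarmid}; it is stated as a ``well-known result of bounded difference concentration'' and used as a black box. Your proposal supplies the standard Doob-martingale/Azuma--Hoeffding proof, which is correct and is exactly the argument one would expect here. Your identification of the one subtle point---upgrading the almost-sure bound $|\nabla_i h|\le c_i$ (where $\nabla_i$ replaces $X_i$ by an independent copy $X_i'$) to a deterministic oscillation bound over the support of $P_X$---is apt, since the paper's formulation is slightly weaker than the usual pointwise bounded-difference hypothesis; your measure-zero argument handles it.
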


The bounded difference condition in McDiarmid's inequality corresponds to the PO-$L_\infty$ bound.
Therefore, we consider the following perturb-one stability in the $L_\infty$-norm condition on the loss function of the fitted model:
\begin{equation}\label{eq:PO_infty_bound}
\S_{\infty,n-1}^{\rm po}(\ell(X_0,\hat f(\cdot))) \le \epsilon_n\,.    
\end{equation}

Again, in this subsection, we focus on the LOO-CV, where $K=n$ and $\D_{n,-i}$ denotes the dataset obtained by removing the $i$th entry.

Under condition \eqref{eq:PO_infty_bound}, for any $1\le i\neq j\le n$
\begin{align*}
    |\nabla_i\ell(X_j,\D_{n,-j})| \le \epsilon_n\,. 
\end{align*}
Therefore,
\begin{align*}
\nabla_i \hat R_{\cv,n,n} = n^{-1}\left[\ell(X_i,\D_{n,-i})-\ell(X_i',\D_{n,-i})\right]
+n^{-1}\sum_{j\neq i} \nabla_i \ell(X_j,\D_{n,-j})\,.
\end{align*}
If $\|\ell(X_i,\mathcal D_{n,-i})\|_\infty=B<\infty$, then
the triangle inequality implies that $\hat R_{\cv,n,n}$ satisfies the bounded difference assumption in McDiarmid's inequality with $c_i=2Bn^{-1}+\epsilon_n$. Therefore, we have the following result.
\begin{proposition}\label{pro:bousquest_concentration}
Assume $\|\ell(X_0,\D_{n-1})\|_\infty=B<\infty$.
\begin{enumerate}
    \item If the loss function $\ell(\cdot,\cdot)$ and fitting procedure $\hat f$  satisfy \eqref{eq:PO_infty_bound}, we have, for any $t>0$,
    $$
    \mathbb P\left(|\hat R_{\cv,n,n}-\mu_{n-1}|\ge t\right)\le 2\exp\left(-\frac{2t^2}{n\left(\frac{2B}{n}+\epsilon_n\right)^2}\right)\,.
    $$
    \item If the loss function $\ell(\cdot,\cdot)$ and fitting procedure $\hat f$  satisfy $\S_{\infty,n}^{\rm loo}(\ell(X_0,\hat f(\cdot)))\le \epsilon_n$, then we have, for any $t>0$,
    $$
    \mathbb P\left(|\hat R_{\cv,n,n}-\mu_{n-1}|\ge t\right)\le 2\exp\left(-\frac{2t^2}{n\left(\frac{2B}{n}+2\epsilon_n\right)^2}\right)\,,
    $$
    and
    $$
    \mathbb P\left(|\hat R_{\cv,n,n}-\mu_{n}|\ge t+\epsilon_n\right)\le 2\exp\left(-\frac{2t^2}{n\left(\frac{2B}{n}+2\epsilon_n\right)^2}\right)\,.
    $$
\end{enumerate}
\end{proposition}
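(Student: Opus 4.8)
The plan is to derive both parts from McDiarmid's inequality (\Cref{thm:mcdiarmid}) applied to $h=\hat R_{\cv,n,n}$, building directly on the bounded-difference computation in the display preceding the statement.

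For part~1, I would first recall that $\hat R_{\cv,n,n}$ is unbiased for $\mu_{n_\tr}=\mu_{n-1}$ (the unbiasedness noted after \eqref{eq:2.1-R-bar}, with $n_\tr=n-1$ for LOO). Writing $\nabla_i\hat R_{\cv,n,n}$ as in the display above, the diagonal term $n^{-1}[\ell(X_i,\D_{n,-i})-\ell(X_i',\D_{n,-i})]$ is bounded by $B/n$ via the uniform bound on the loss, while each summand of the off-diagonal part $n^{-1}\sum_{j\neq i}\nabla_i\ell(X_j,\D_{n,-j})$ is a perturb-one difference of $\ell(X_0,\hat f(\cdot))$ on a dataset of size $n-1$, hence at most $\epsilon_n$ by \eqref{eq:PO_infty_bound} --- here one uses that $X_j$ is independent of $\D_{n,-j}$ and has the law of $X_0$, so it may legitimately play the role of the fresh evaluation point. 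The triangle inequality then gives $|\nabla_i\hat R_{\cv,n,n}|\le B/n+\epsilon_n$ almost surely, so the bounded-difference hypothesis holds with $c_i=B/n+\epsilon_n$ and $\sum_{i=1}^n c_i^2=n(B/n+\epsilon_n)^2$; applying \Cref{thm:mcdiarmid} to $h$ and to $-h$ and combining by a union bound yields the stated two-sided bound.

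For part~2, I would reduce to part~1. By \Cref{pro:2.2-loo-imply-po} (or, explicitly, a one-line triangle-inequality argument passing through a reduced copy of the dataset), the hypothesis $\S^{\rm loo}_{\infty,n}(\ell(X_0,\hat f(\cdot)))\le\epsilon_n$ upgrades to the perturb-one bound \eqref{eq:PO_infty_bound} with $\epsilon_n$ replaced by $2\epsilon_n$, which gives the first displayed inequality. For the second, I would shift the centering from $\mu_{n-1}$ to $\mu_n$: since $R(\D_n)-R(\D_{n-1})=\mathbb E_{X_0}[\ell(X_0,\D_n)-\ell(X_0,\D_{n-1})]$ and conditional expectation is an $L_\infty$-contraction, $\|R(\D_n)-R(\D_{n-1})\|_\infty\le\S^{\rm loo}_{\infty,n}(\ell(X_0,\hat f(\cdot)))\le\epsilon_n$, hence $|\mu_n-\mu_{n-1}|\le\epsilon_n$; consequently $\{|\hat R_{\cv,n,n}-\mu_n|\ge t+\epsilon_n\}\subseteq\{|\hat R_{\cv,n,n}-\mu_{n-1}|\ge t\}$ by the triangle inequality, and the bound follows from the first part.

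The proof is essentially routine, and the only points requiring attention are bookkeeping ones: the off-diagonal perturbations act on datasets of size $n-1$, so it is stability ``at $n-1$'' that is really invoked, and one should confirm the constants $\epsilon_n$ in the hypotheses are intended to cover the one-lower sample size; and the (lossless) passage from stability of the loss $\ell(X_0,\hat f(\cdot))$ to stability of the risk $R(\hat f(\cdot))$ via contractivity of conditional expectation is what permits trading the two natural centering constants $\mu_{n-1}$ and $\mu_n$ against a single additive $\epsilon_n$. I do not anticipate a genuine obstacle beyond this.
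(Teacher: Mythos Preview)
Your proposal is correct and follows essentially the same route as the paper: the bounded-difference computation and application of McDiarmid's inequality that you describe for Part~1 is exactly the argument given in the paper's text immediately preceding the proposition. For Part~2, which the paper leaves implicit, your reduction via \Cref{pro:2.2-loo-imply-po} and the centering shift through $|\mu_n-\mu_{n-1}|\le\epsilon_n$ is the intended (and natural) argument; your flag about the sample-size indexing in the stability hypothesis is also apt.
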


In other words, $\hat R_{\cv,n,n}$ concentrates around $\mu_n$ at an $O(n^{-1/2}+n^{1/2}\epsilon_n)$ scale and a sub-Gaussian tail.  This result can be extended to general $K$-fold cross-validation to bound $|\hat R_{\cv,n,K}-\mu_{n_\tr}|$.

\begin{remark}\label{rem:2.4-epsilon-o(sqrt(n))}
The concentration bounds based on McDiarmid's inequality presented in \Cref{pro:bousquest_concentration} have a much stronger requirement on the stability bound $\epsilon_n$ than in the consistency result \Cref{thm:bousquet-consistency}.
In the consistency result, \Cref{thm:bousquet-consistency} implies that $\hat R_{\cv,n,n}-R(\D_n)=o_P(1)$ as long as $\epsilon_n=o(1)$.
In contrast, the concentration results in \Cref{pro:bousquest_concentration} require
$\epsilon_n=o(n^{-1/2})$ in order to have a vanishing bound.  Such a difference reflects the subtle nature of the dependence between the cross-validated losses, which is a major challenge in uncertainty quantification for CV model evaluation and comparison.
\end{remark} 

\subsection{Extensions to Sub-Weibull Stability}\label{subsec:concentration_ineq}
The concentration result \Cref{pro:bousquest_concentration} requires the perturb-one 
loss stability to be uniformly bounded, which may be too restrictive, as it precludes 
many common scenarios involving unbounded support, such as linear regression with 
Gaussian noise under the squared loss.
Unlike the consistency result, which can avoid the uniform bound condition by leveraging 
different combinations of conjugate indices in H\"{o}lder's inequality, relaxing the 
$L_\infty$-norm stability assumption in the concentration result is less straightforward, 
as it typically requires extending the bounded-difference inequality to the case of 
infinite support.
Here, we present an approach based on sub-Weibull tail conditions. Some results and 
properties of sub-Weibull random variables may be of more general interest.

\begin{definition}[Sub-Weibull Random Variables] For a pair of fixed positive real numbers $(\kappa,\alpha)$, we say a random variable $Z$ is $(\kappa,\alpha)$-sub-Weibull (or $(\kappa,\alpha)$-SW) if
\begin{equation}\label{eq:2.5-sw-def-prob}
\mathbb P\left(\frac{|Z|}{\kappa}\ge t\right)\le 2\exp(-t^{1/\alpha})\,,~~\forall~t>0\,.
\end{equation}
\end{definition}
The definition of sub-Weibull random variables is concerned with the tail behavior. It involves two parameters: a scaling parameter $\kappa$ and an exponent parameter $\alpha$.  A larger $\alpha$ corresponds to a heavier tail. In particular, a random variable $Z$ is $(\kappa,\alpha)$-SW if and only if $(Z/\kappa)^{1/\alpha}$ has a sub-exponential tail.   The sub-Gaussian tail condition with standard deviation $\kappa$ corresponds to $(\kappa,1/2)$-SW, and the sub-exponential condition corresponds to $(\kappa,1)$-SW.   It may also be helpful to interpret the extreme case of $\alpha=0$ as uniformly bounded.

Like sub-Gaussian and sub-exponential tail conditions, we can bound the $L_q$-norms of a sub-Weibull random variable proportionally to the scale parameter.
\begin{proposition}[Sub-Weibull Condition in $L_q$-Norm]\label{pro:2.5-sw-equiv}
    If a random variable $Z$ satisfies \eqref{eq:2.5-sw-def-prob} then
    \begin{equation}\label{eq:2.5-sw-def-lq}
        \|Z\|_q\le \kappa' q^\alpha\,,~~\forall q\ge 1\,,
    \end{equation}
    with a constant $\kappa'\le 6 e^\alpha(1+\alpha)^\alpha \kappa$.  Conversely, if a random variable $Z$ satisfies \eqref{eq:2.5-sw-def-lq} for some $(\kappa',\alpha)$ then it satisfies
    \eqref{eq:2.5-sw-def-prob} with $\kappa \le c_\alpha \kappa'$ for a positive constant $c_\alpha$ depending only on $\alpha$.
\end{proposition}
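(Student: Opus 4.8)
The plan is to prove the two directions separately, each by the standard "tail-bound $\leftrightarrow$ moment-bound" dictionary, being careful to track the constants so that they come out in the stated form.

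For the forward direction, assume $Z$ satisfies the tail bound \eqref{eq:2.5-sw-def-prob}. I would use the layer-cake formula $\mathbb E|Z|^q = \int_0^\infty q s^{q-1}\mathbb P(|Z|\ge s)\,ds$. Substituting $s = \kappa t$ and plugging in the tail bound gives $\mathbb E|Z|^q \le 2\kappa^q q\int_0^\infty t^{q-1}e^{-t^{1/\alpha}}\,dt$. The change of variables $u = t^{1/\alpha}$ (so $t = u^\alpha$, $dt = \alpha u^{\alpha-1}du$) turns the integral into $\alpha\int_0^\infty u^{\alpha q - 1}e^{-u}\,du = \alpha\,\Gamma(\alpha q)$. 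Hence $\|Z\|_q \le \kappa\,(2q\alpha\,\Gamma(\alpha q))^{1/q}$. The remaining work is purely to bound $(2q\alpha\,\Gamma(\alpha q))^{1/q}$ by $\kappa'' q^\alpha / \kappa$ with $\kappa'' = 6e^\alpha(1+\alpha)^\alpha$; this follows from Stirling's bound $\Gamma(x) \le x^x$ (valid for $x\ge1$, with a small correction for $x<1$), giving $\Gamma(\alpha q)^{1/q} \le (\alpha q)^\alpha$, together with the elementary fact that $(2q\alpha)^{1/q}$ is bounded by an absolute constant times $e^\alpha$ or so. I expect this constant-chasing to be the one slightly fiddly part; one has to be a little careful when $\alpha q < 1$ (i.e.\ small $q$ and small $\alpha$), but there one can instead bound crudely since $\Gamma$ is bounded below on $(0,1]$ and the whole expression is $O(1)$.

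For the converse, assume the moment bound $\|Z\|_q \le \kappa' q^\alpha$ for all $q\ge 1$. I would use Markov's inequality at the $q$-th moment: for any $q\ge1$ and $s>0$,
\[
\mathbb P(|Z|\ge s) \le \frac{\mathbb E|Z|^q}{s^q} \le \left(\frac{\kappa' q^\alpha}{s}\right)^q.
\]
Then, given $t>0$, optimize over $q$ by choosing $q$ proportional to $(s/\kappa')^{1/\alpha}$ — concretely, writing $s = c_\alpha \kappa' t$ and $q = t^{1/\alpha}$ (when this is $\ge1$), the bound becomes $(c_\alpha^{-1})^{t^{1/\alpha}}$, which is $\le 2e^{-t^{1/\alpha}}$ once $c_\alpha$ is chosen large enough (e.g.\ $c_\alpha \ge e$ handles the exponent, and a slightly larger constant absorbs the factor $2$ and the regime $t^{1/\alpha}<1$). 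This shows \eqref{eq:2.5-sw-def-prob} holds with $\kappa \le c_\alpha\kappa'$.

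The main obstacle, such as it is, is not conceptual but bookkeeping: ensuring the explicit constant $6e^\alpha(1+\alpha)^\alpha$ genuinely dominates $(2q\alpha\Gamma(\alpha q))^{1/q} q^{-\alpha}$ uniformly in $q\ge1$, across both the $\alpha q\ge1$ and $\alpha q<1$ regimes, and likewise that a single $\alpha$-dependent $c_\alpha$ works for all $t>0$ in the converse (the small-$t$ regime, where $t^{1/\alpha}<1$, forces $q=1$ and must be checked separately). Both are routine once the regimes are split, so I would state the integral/Markov computations cleanly and relegate the constant verification to a short remark.
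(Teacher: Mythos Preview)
Your forward direction is essentially the paper's argument: same layer-cake integral, same change of variables, same Gamma function. One small organizational point---the paper writes the integral as $2\kappa^q\Gamma(1+\alpha q)$ rather than $2\kappa^q q\alpha\Gamma(\alpha q)$ (of course $x\Gamma(x)=\Gamma(1+x)$), which has the advantage that the argument $1+\alpha q$ is always $\ge 1$, so the Stirling-type bound $\Gamma(x)\le 3x^x$ applies uniformly and your worry about the regime $\alpha q<1$ disappears. The constant then drops out cleanly via $(1+\alpha q)^{1/q}=\bigl[(1+\alpha q)^{1/(\alpha q)}\bigr]^\alpha\le e^\alpha$ and $(1+\alpha q)^\alpha\le (1+\alpha)^\alpha q^\alpha$ for $q\ge 1$.

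Your converse is correct but takes a different route from the paper. You use Markov at the $q$th moment and optimize over $q\approx t^{1/\alpha}$; the paper instead bounds the Orlicz-type moment generating function $\mathbb E\exp\bigl((\lambda|Z|)^{1/\alpha}\bigr)$ via its Taylor series (using the moment bound on $\mathbb E|Z|^{k/\alpha}$ for integer $k$), shows it is $\le 2$ for a suitable $\lambda=1/(c_\alpha\kappa')$, and then applies Markov to the exponential. Your approach is more direct and avoids the series manipulation; the paper's approach has the side benefit of exhibiting the third equivalent sub-Weibull characterization (finiteness of the $\psi_{1/\alpha}$-Orlicz norm) along the way, and it handles all $t>0$ in one stroke without needing to split off the small-$t$ regime.
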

In most cases, we will only focus on the rates of the scale parameters of sub-Weibull residual terms and usually do not distinguish two scaling parameters differing by a constant factor. Therefore, we will use the two sub-Weibull conditions \eqref{eq:2.5-sw-def-prob} and \eqref{eq:2.5-sw-def-lq} interchangeably.  

\begin{proof}[Proof of \Cref{pro:2.5-sw-equiv}]
For $q\ge 1$,
\begin{align*}
\|Z\|_q^q = & \mathbb E |Z|^q =\int_0^\infty \mathbb P(|Z|^q>t)dt = \int_0^\infty \mathbb P(|Z|/\kappa>t^{1/q}/\kappa)dt\\
\le & \int_0^\infty 2\exp\left(-t^{1/(\alpha q)}\kappa^{-1/\alpha}\right)dt\\
(t^{1/(\alpha q)}\kappa^{-1/\alpha}=u)~~~\le & 2\kappa^q(\alpha q)\int_0^\infty e^{-u} u^{\alpha q-1}du\\
=& 2\kappa^q(\alpha q)\Gamma(\alpha q)\\
=&2\kappa^q\Gamma(1+\alpha q)\\
\le & 6\kappa^q (1+\alpha q)^{1+\alpha q}\,,
\end{align*}
where $\Gamma(z)=\int_0^\infty t^{z-1}e^{-t}dt$ is the gamma function.
So for $q\ge 1$ we have
\begin{align*}
    \|Z\|_q \le 6 \kappa(1+\alpha q)^{1/q}(1+\alpha q)^\alpha \le 6 e^\alpha (1+\alpha)^\alpha \kappa q^\alpha\,,
\end{align*}
because $\sup_{x>0}(1+x)^{1/x}=e$.

Now we prove the converse. If $Z$ satisfies \eqref{eq:2.5-sw-def-lq} with constants $(\kappa,\alpha)$, then for $k\ge \alpha$
\begin{align*}
    \mathbb E |Z|^{k/\alpha} =\|Z\|_{k/\alpha}^{k/\alpha}\le \left( \kappa(k/\alpha)^\alpha\right)^{k/\alpha}= \kappa^{k/\alpha}(k/\alpha)^k\,.
\end{align*}
If $1\le k<\alpha$, then by Jensen's inequality
\begin{align*}
    \mathbb E |Z|^{k/\alpha} \le \left(\mathbb E |Z|\right)^{k/\alpha}= \|Z\|_1^{k/\alpha}\le \kappa^{k/\alpha}=\kappa^{k/\alpha}(k/\alpha)^k (\alpha/k)^k\le \alpha^\alpha\kappa^{k/\alpha}(k/\alpha)^k\,.
\end{align*}
So in either case we have
$$\mathbb E |Z|^{k/\alpha}\le c_\alpha\kappa^{k/\alpha}(k/\alpha)^k$$
with $c_\alpha = 1\vee \alpha^\alpha$.

Now using $k!\ge (k/e)^k$ for all $k\ge 1$ we have
\begin{align*}
    \mathbb E e^{(\lambda|Z|)^{1/\alpha}} = & 1+\sum_{k=1}^\infty \frac{\lambda^{k/\alpha} \mathbb E|Z|^{k/\alpha}}{k!}\\
    \le & 1+c_\alpha\sum_{k=1}^\infty\frac{\lambda^{k/\alpha} \kappa^{k/\alpha} (k/\alpha)^k}{(k/e)^k}\\
     \le & 1+\frac{c_\alpha e(\lambda \kappa)^{1/\alpha}/\alpha}{1-e(\lambda \kappa)^{1/\alpha}/\alpha}
     \le 2\,,
\end{align*}
with the choice $\lambda =\kappa^{-1}\left[\frac{\alpha}{e(1+c_\alpha)}\right]^\alpha$.

Let $\kappa'=1/\lambda=\kappa \left[\frac{e(1+c_\alpha)}{\alpha}\right]^\alpha$.
We have by Markov's inequality applied to $e^{(\lambda |Z|)^{1/\alpha}}$ for any $t>0$
\begin{align*}
    &\mathbb P\left(\frac{|Z|}{\kappa'}\ge t\right)=\mathbb P\left(\lambda |Z|\ge t\right)
    =\mathbb P\left((\lambda |Z|)^{1/\alpha}\ge t^{1/\alpha}\right)
    \le 2\exp(-t^{1/\alpha})\,. \qedhere
\end{align*}
\end{proof}

The sum and product of sub-Weibull random variables are also sub-Weibull, with the scale parameter preserved through the sum and product, and a slightly different exponent parameter.
\begin{proposition}[Basic Properties of SW Random Variables]\label{pro:2.5-sum-prod-sw}
    If $Z_i$ is $(\kappa_i,\alpha_i)$-SW, for $i=1,2$, then
    \begin{enumerate}
        \item $Z_1Z_2$ is $(c_{\alpha_1,\alpha_2}\kappa_1\kappa_2,\alpha_1+\alpha_2)$-SW, where $c_{\alpha_1,\alpha_2}$ is a constant depending on $(\alpha_1,\alpha_2)$ only.
        \item $Z_1+Z_2$ is $(\kappa_1+\kappa_2, \alpha_1\vee\alpha_2)$-SW. 
        \item $\mathbb E(Z_1|\mathcal C)$ is $(\kappa_1,\alpha_1)$-SW for any sub-$\sigma$-field $\mathcal C$. 
        \item For $\beta>0$, $|Z_1|^\beta$ is $[(1\vee\beta^{\alpha_1\beta})\kappa_1^\beta,\alpha_1\beta]$-SW.
    \end{enumerate}
\end{proposition}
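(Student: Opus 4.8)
The plan is to reduce all four statements to the $L_q$-norm form of the sub-Weibull condition and then invoke \Cref{pro:2.5-sw-equiv} in both directions. Recall that, up to a multiplicative factor depending only on $\alpha$, the tail bound \eqref{eq:2.5-sw-def-prob} with parameters $(\kappa,\alpha)$ is equivalent to $\|Z\|_q\le\kappa q^{\alpha}$ for all $q\ge 1$; since scale parameters differing by such a factor are never distinguished, it suffices to verify, for each claim, a moment bound $\|(\cdot)\|_q\lesssim(\text{claimed scale})\cdot q^{\,\text{claimed exponent}}$ for all $q\ge 1$ and then pass back to the probabilistic form via the converse half of \Cref{pro:2.5-sw-equiv}. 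Each verification is one line.

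For part~2 I would use the triangle inequality in $L_q$: $\|Z_1+Z_2\|_q\le\|Z_1\|_q+\|Z_2\|_q\le\kappa_1 q^{\alpha_1}+\kappa_2 q^{\alpha_2}\le(\kappa_1+\kappa_2)q^{\alpha_1\vee\alpha_2}$, where the last step uses $q\ge 1$. (A direct alternative: on the event $\{|Z_1+Z_2|\ge(\kappa_1+\kappa_2)t\}$ at least one of $|Z_i|\ge\kappa_i t$ holds, so a union bound on the two sub-Weibull tails together with $t^{1/\alpha_i}\ge t^{1/(\alpha_1\vee\alpha_2)}$ for $t\ge 1$ gives the claim.) For part~3, conditional Jensen gives $|\mathbb E(Z_1\mid\mathcal C)|^q\le\mathbb E(|Z_1|^q\mid\mathcal C)$ pointwise, hence $\|\mathbb E(Z_1\mid\mathcal C)\|_q\le\|Z_1\|_q\le\kappa_1 q^{\alpha_1}$, leaving both parameters unchanged. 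For part~1, Cauchy--Schwarz applied to $|Z_1|^q$ and $|Z_2|^q$ yields $\|Z_1Z_2\|_q\le\|Z_1\|_{2q}\|Z_2\|_{2q}\le\kappa_1(2q)^{\alpha_1}\kappa_2(2q)^{\alpha_2}=2^{\alpha_1+\alpha_2}\kappa_1\kappa_2\,q^{\alpha_1+\alpha_2}$, valid since $2q\ge 1$; the factor $2^{\alpha_1+\alpha_2}$, together with the constants picked up in the two passes through \Cref{pro:2.5-sw-equiv}, is what becomes the $c_{\alpha_1,\alpha_2}$ in the statement.

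Part~4 follows the same template but needs the same small case split as in the converse proof of \Cref{pro:2.5-sw-equiv}, because $\||Z_1|^{\beta}\|_q=\|Z_1\|_{\beta q}^{\beta}$ and $\beta q$ need not be $\ge 1$. When $\beta q\ge 1$ I get $\||Z_1|^{\beta}\|_q\le(\kappa_1(\beta q)^{\alpha_1})^{\beta}=\beta^{\alpha_1\beta}\kappa_1^{\beta}\,q^{\alpha_1\beta}$ directly; when $\beta<1$ and $1\le q<1/\beta$, monotonicity of $L_p$-norms gives $\|Z_1\|_{\beta q}\le\|Z_1\|_1\le\kappa_1$, hence $\||Z_1|^{\beta}\|_q\le\kappa_1^{\beta}$, which differs from $\beta^{\alpha_1\beta}\kappa_1^{\beta}q^{\alpha_1\beta}$ only by the factor $(\beta q)^{-\alpha_1\beta}$, bounded over $\beta\in(0,1)$ and $q\ge 1$. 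Either way the moment bound holds with scale $\beta^{\alpha_1\beta}\kappa_1^{\beta}$ and exponent $\alpha_1\beta$ up to an $(\alpha_1,\beta)$-dependent constant, and \Cref{pro:2.5-sw-equiv} converts it back.

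I do not expect a genuine obstacle: once one commits to the $L_q$ viewpoint the argument is mechanical. The only points needing mild care are keeping the moment index at which each bound $\|Z_i\|_q\le\kappa_i q^{\alpha_i}$ is invoked at least $1$ --- automatic in parts~1--3 but the reason for the case split in part~4 --- and being explicit that scale parameters are tracked only up to $\alpha$-dependent constants, which is precisely why part~1 carries an explicit $c_{\alpha_1,\alpha_2}$ while the scale-preserving parts~2--4 are stated without one, under the convention recorded after \Cref{pro:2.5-sw-equiv}.
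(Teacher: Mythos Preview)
Your proposal is correct and follows essentially the same $L_q$-norm route as the paper: parts~2--4 are verbatim the paper's arguments (triangle inequality, conditional Jensen, and the same $q\gtrless 1/\beta$ case split), while for part~1 you use Cauchy--Schwarz where the paper uses general H\"older with conjugate indices $r=1+\alpha_2/\alpha_1$, $s=1+\alpha_1/\alpha_2$ and then optimizes to get the sharper constant $(\alpha_1+\alpha_2)^{\alpha_1+\alpha_2}/(\alpha_1^{\alpha_1}\alpha_2^{\alpha_2})$ in place of your $2^{\alpha_1+\alpha_2}$. Since the statement only asks for some $c_{\alpha_1,\alpha_2}$, your simpler choice is perfectly adequate.
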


\begin{proof}
    For the first part, take $r,s>1$ such that $1/r+1/s=1$
    \begin{align*}
        \|Z_1 Z_2\|_q =& \left[\mathbb E |Z_1|^q |Z_2|^q\right]^{1/q}\\
        \le& \left[\left(\mathbb E |Z_1|^{qr}\right)^{1/r} \left(\mathbb E|Z_2|^{qs}\right)^{1/s}\right]^{1/q}\\
        =& \left[\|Z_1\|_{qr}^q\|Z_2\|_{qs}^q\right]^{1/q}\\
        =& \|Z_1\|_{qr}\|Z_2\|_{qs}\\
        \le & \kappa_1 (qr)^{\alpha_1}\kappa_2(qs)^{\alpha_2}\\
         = &\kappa_1\kappa_2 q^{\alpha_1+\alpha_2} r^{\alpha_1}s^{\alpha_2}\\
         \le & \frac{(\alpha_1+\alpha_2)^{\alpha_1+\alpha_2}}{\alpha_1^{\alpha_1}\alpha_2^{\alpha_2}}\kappa_1\kappa_2 q^{\alpha_1+\alpha_2}
    \end{align*}
    where the last inequality follows from minimizing $r^{\alpha_1}s^{\alpha_2}$ over all $(r,s)$ such that $1/r+1/s=1$, which is achieved at $r=1+\alpha_2/\alpha_1$, $s=1+\alpha_1/\alpha_2$.

    For the second part,
    \begin{align*}
        \|Z_1+Z_2\|_q\le \|Z_1\|_q+\|Z_2\|_q\le & \kappa_1 q^{\alpha_1}+\kappa_2 q^{\alpha_2}\le (\kappa_1+\kappa_2)q^{\alpha_1\vee\alpha_2}\,.
    \end{align*}

    The third part is a direct consequence of Jensen's inequality.

    To prove the fourth part, without loss of generality we assume $Z_1\ge 0$ almost surely. Let $q\ge 1$ be a positive number. Consider two cases.  First, if $1\le q < 1/\beta$, then
    $$\| Z_1^\beta\|_q\le \|Z_1^\beta\|_{\frac{1}{\beta}} = \|Z_1\|_1^\beta\le \kappa_1^\beta\le \kappa_1^\beta q^{\alpha_1\beta}\,.$$
    Second, if $q\ge 1/\beta$ then
    $$
    \|Z_1^\beta\|_q = \|Z_1\|_{\beta q}^\beta\le \kappa_1^\beta (\beta q)^{\alpha_1\beta}\,.
    $$
    So in either case we have $\|Z_1^\beta\|_q\le (1\vee\beta^{\alpha_1\beta} )\kappa_1^\beta q^{\alpha_1\beta}$.
\end{proof}

The following result will be useful in providing tail probability bounds for a collection of sub-Weibull random variables.
\begin{lemma}
    [Maximal Inequality]
    \label{lem:2.5-sw-maximal}
    Let $Z_1,...,Z_m$ be $(\kappa,\alpha)$-SW random variables.
    Then there exists a constant $c_\alpha$ depending only on $\alpha$, such that $\max_{1\le i\le m} |Z_i|$ is  $(c_\alpha (\log m)^\alpha \kappa,\alpha)$-SW.

    More generally, if each $Z_i$ is $(\kappa_i,\alpha_i)$-SW, then the result holds with $\kappa=\max_i \kappa_i$ and $\alpha=\max_i\alpha_i$.
\end{lemma}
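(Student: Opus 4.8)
The plan is to bound the $L_q$-norm of $\max_{1\le i\le m}|Z_i|$ for every $q\ge 1$ and then read off the sub-Weibull scale from the converse half of \Cref{pro:2.5-sw-equiv}. Two preliminary observations streamline this. First, no independence among the $Z_i$ is used anywhere: the union bound and the inequality $\mathbb E\max_i|Z_i|^q\le\sum_i\mathbb E|Z_i|^q$ both hold for arbitrary joint laws. Second, for the general statement it suffices to reduce to a single pair of parameters: if $Z_i$ is $(\kappa_i,\alpha_i)$-SW then, by the forward direction of \Cref{pro:2.5-sw-equiv} together with $\kappa_i\le\kappa$, $\alpha_i\le\alpha$, $q^{\alpha_i}\le q^{\alpha}$ for $q\ge1$, and the monotonicity of $x\mapsto e^{x}(1+x)^{x}$, one gets $\|Z_i\|_q\le\tilde\kappa\,q^{\alpha}$ for all $q\ge1$ with $\tilde\kappa=6e^{\alpha}(1+\alpha)^{\alpha}\kappa$, $\kappa=\max_i\kappa_i$, $\alpha=\max_i\alpha_i$. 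This detour through $L_q$-norms is genuinely needed, since being $(\kappa_i,\alpha_i)$-SW does \emph{not} imply being $(\kappa,\alpha)$-SW with the same scale once $\alpha>\alpha_i$. So from now on assume every $Z_i$ satisfies $\|Z_i\|_q\le\tilde\kappa\,q^{\alpha}$.

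The core estimate is then immediate: for any $q\ge1$,
\[
\Big\|\max_{1\le i\le m}|Z_i|\Big\|_q
=\Big(\mathbb E\max_i|Z_i|^q\Big)^{1/q}
\le\Big(\textstyle\sum_{i=1}^m\mathbb E|Z_i|^q\Big)^{1/q}
\le m^{1/q}\max_i\|Z_i\|_q
\le m^{1/q}\,\tilde\kappa\,q^{\alpha}.
\]
For $q\ge\log m$ we have $m^{1/q}\le m^{1/\log m}=e$, so the right-hand side is $\le e\tilde\kappa\,q^{\alpha}$. For $1\le q<\log m$ (possible only when $m\ge3$), I would instead use monotonicity of $L_q$-norms in $q$ (Lyapunov's inequality), $\|\cdot\|_q\le\|\cdot\|_{\log m}$, together with the previous bound at $q=\log m$, obtaining $\|\max_i|Z_i|\|_q\le e\tilde\kappa(\log m)^{\alpha}\le e\tilde\kappa(\log m)^{\alpha}q^{\alpha}$. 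Combining the two ranges, $\|\max_i|Z_i|\|_q\le\kappa''q^{\alpha}$ for all $q\ge1$ with $\kappa''=e\tilde\kappa\,(1\vee\log m)^{\alpha}$, and the converse part of \Cref{pro:2.5-sw-equiv} then yields that $\max_i|Z_i|$ is $(c_\alpha\kappa'',\alpha)$-SW, i.e.\ $(c'_\alpha(\log m)^{\alpha}\kappa,\alpha)$-SW with $c'_\alpha$ depending on $\alpha$ only (for $m\ge3$; the cases $m\in\{1,2\}$ are trivial, the latter e.g.\ via part~2 of \Cref{pro:2.5-sum-prod-sw}).

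An equivalent, more elementary route is to argue directly from tails: a union bound gives $\mathbb P(\max_i|Z_i|\ge s)\le 2m\exp(-(s/\kappa)^{1/\alpha})$, and one compares this, together with the trivial bound $\le1$, against $2\exp(-(s/\kappa')^{1/\alpha})$ with $\kappa'=c_\alpha(\log m)^{\alpha}\kappa$; the comparison reduces to checking $(s/\kappa)^{1/\alpha}-(s/\kappa')^{1/\alpha}\ge\log m$ on the range of $s$ where the union bound is informative, which holds once $c_\alpha$ is a sufficiently large power of an absolute constant. Either way, the only mildly delicate point is the moderate-deviation window — small $q$, or $s$ just above the threshold $\kappa(\log m)^{\alpha}$ where the union bound drops below $1$ — where the factor $m^{1/q}$ (resp.\ $m$) is not yet absorbed; this is exactly where one rounds $q$ up to $\log m$ (resp.\ invokes the trivial probability bound), and it is the step I would be most careful to state cleanly.
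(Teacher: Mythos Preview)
Your proof is correct and takes a genuinely different route from the paper's. The paper transforms $\tilde Z_i=|Z_i/\kappa|^{1/\alpha}$ to reduce to the sub-exponential case, invokes the Orlicz-norm maximal inequality (Lemma 2.2.2 of van der Vaart--Wellner) to get $\max_i\tilde Z_i$ is $(c_\alpha''\log m,1)$-SW, and then transforms back via Part~4 of \Cref{pro:2.5-sum-prod-sw}. You instead work directly with $L_q$-norms, using the crude bound $\mathbb E\max_i|Z_i|^q\le\sum_i\mathbb E|Z_i|^q$ together with the split at $q=\log m$ (so that $m^{1/q}\le e$) and monotonicity in $q$ for smaller $q$. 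Your argument is fully self-contained within the paper's framework and avoids the external reference; the paper's is shorter on the page but leans on a black-box maximal inequality. Your handling of the general case with varying $(\kappa_i,\alpha_i)$ via the $L_q$ characterization is also cleaner than what the paper sketches. The two approaches are close cousins---your $L_q$ splitting is essentially one way to prove the Orlicz maximal inequality---so neither is markedly more general, but yours has the pedagogical advantage of making the $(\log m)^\alpha$ factor appear transparently from $m^{1/q}q^\alpha$ evaluated at $q=\log m$.
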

\begin{proof}
    [Proof of \Cref{lem:2.5-sw-maximal}]
    Let $\tilde Z_i=|Z_i/\kappa|^{1/\alpha}$. Then $\tilde Z_i$ is $(1/\alpha,1)$-SW by Part (4) of \Cref{pro:2.5-sum-prod-sw}. Hence $\mathbb E e^{\tilde Z_i/c_\alpha}-1=1$ for a constant $c_\alpha>0$ depending only on $\alpha$.  Lemma 2.2.2 of \cite{van1996weak} implies that, for another $c_\alpha'$, $\max_i \tilde Z_i / (c_\alpha'\log m)$ is sub-exponential, i.e., $\max_i \tilde Z_i$ is $(c_\alpha''\log m,1)$-SW for another constant $c_\alpha''$. Therefore, the claimed result follows from
    Part (4) of \Cref{pro:2.5-sum-prod-sw}.
\end{proof}

Now we return to the bounded-difference concentration inequality. For a function $h$ acting on a dataset $\D_n$ consisting of i.i.d. sample points, we want to generalize McDiarmid's inequality by establishing concentration of $h(\D_n)$ around its mean value under sub-Weibull tail conditions of the perturb-one differences $\nabla_i h(\D_n)$.

We begin with the standard martingale decomposition of $h(\D_n)$. 
With the convention $\D_0=\emptyset$, we have
\begin{align}
h(\D_n)- \mathbb E h(\D_n)  = & \sum_{i=1}^n \mathbb E\left[h(\D_n) |\D_{i}\right]-\mathbb E\left[h(\D_n)|\D_{i-1}\right]\nonumber\\
 =&\sum_{i=1}^n \mathbb E\left[\nabla_i h(\D_n)|\D_i\right]\,.\label{eq:2.5-nabla-telescope}
\end{align}

Similar to the proof of McDiarmid's inequality, the summand $\{\mathbb E(\nabla_i h(\D_n)|\D_i):1\le i\le n\}$ is a martingale increment sequence adapted to the filtration of $\sigma$-fields generated by $\{\D_i:1\le i\le n\}$.  On the other hand, Part (3) of \Cref{pro:2.5-sum-prod-sw} implies that if all the differences $\nabla_i h(\D_n)$ are sub-Weibull then the summand $\mathbb E[\nabla_i h(\D_n)|\D_i]$ are also sub-Weibull. However, a direct application of Part (2) of \Cref{pro:2.5-sum-prod-sw} may not yield a useful upper bound, since the resulting scaling parameter may involve $n$ in an undesirable way.

The following theorem offers a good upper bound for sums of martingale increments with sub-Weibull tails.
\begin{theorem}[Rio's Inequality]\label{thm:rio}
    Let $q\ge 2$ and $(S_n)_{n\ge 0}$ be a sequence of random variables with finite $L_q$-norm.
    Let $M_k=S_k-S_{k-1}$. If $\mathbb E(M_k|S_{k-1})=0$ a.s. for all $k$ then
    $$
    \|S_n\|_q^2\le \|S_0\|_q^2+(q-1)\sum_{i=1}^n\|M_i\|_q^2\,.
    $$
\end{theorem}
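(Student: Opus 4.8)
The plan is an induction on $n$ in which all the real work is concentrated in the case $n=1$; that case I would handle by a second-order Taylor expansion of the \emph{squared} $L_q$-norm along the segment joining $S_{k-1}$ to $S_k$. The reduction is immediate: for $n=0$ the bound reads $\|S_0\|_q^2\le\|S_0\|_q^2$, and for the inductive step it suffices to prove that any pair $U,V$ of $L_q$ random variables with $\mathbb E(V\mid U)=0$ obeys $\|U+V\|_q^2\le\|U\|_q^2+(q-1)\|V\|_q^2$; applying this with $U=S_{n-1}$, $V=M_n$ — the hypothesis $\mathbb E(M_n\mid S_{n-1})=0$ being precisely $\mathbb E(V\mid U)=0$ — and adding the induction hypothesis for $\|S_{n-1}\|_q^2$ gives the full statement.

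For the two-term inequality I would set $m(t)=\mathbb E|U+tV|^q$ and $F(t)=\|U+tV\|_q^2=m(t)^{2/q}$ for $t\in[0,1]$, the goal being $F(1)\le F(0)+(q-1)\|V\|_q^2$. Discarding the trivial degenerate cases $U\equiv 0$ and $V\equiv 0$, $m$ is strictly positive on $[0,1]$, and since $u\mapsto|u|^q$ is $C^2$ for $q\ge 2$ and $|U|,|V|\in L_q$ supply integrable dominating functions, one may differentiate under the expectation to obtain $m,F\in C^2([0,1])$ with $m'(t)=q\,\mathbb E[\,|U+tV|^{q-2}(U+tV)V\,]$ and $m''(t)=q(q-1)\,\mathbb E[\,|U+tV|^{q-2}V^2\,]$. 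Two facts then close the argument. First, $F'(0)=\tfrac2q m(0)^{2/q-1}m'(0)$ vanishes, because $m'(0)=q\,\mathbb E[\,|U|^{q-2}U\cdot\mathbb E(V\mid U)\,]=0$ as $|U|^{q-2}U$ is a function of $U$. Second, $F''$ is bounded uniformly in $t$: from $F''=\tfrac2q\big(\tfrac2q-1\big)m^{2/q-2}(m')^2+\tfrac2q m^{2/q-1}m''$ the first summand is $\le 0$ because $q\ge2$, while H\"older's inequality with exponents $\tfrac{q}{q-2}$ and $\tfrac q2$ gives $m''(t)\le q(q-1)\,m(t)^{(q-2)/q}\|V\|_q^2$ (a direct computation when $q=2$), so that after substitution the powers of $m(t)$ cancel exactly and $F''(t)\le 2(q-1)\|V\|_q^2$. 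Taylor's formula with integral remainder then gives $F(1)=F(0)+F'(0)+\int_0^1(1-t)F''(t)\,dt\le F(0)+2(q-1)\|V\|_q^2\int_0^1(1-t)\,dt=F(0)+(q-1)\|V\|_q^2$, as desired.

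The hard part is producing the \emph{sharp} constant $q-1$. The obvious route — a pointwise Taylor expansion $|x+y|^q=|x|^q+q|x|^{q-2}x\,y+\tfrac{q(q-1)}{2}|\xi|^{q-2}y^2$ followed by a crude bound like $|\xi|^{q-2}\le(|x|+|y|)^{q-2}$ on the remainder — yields a constant strictly larger than $q-1$ (for instance, at $x=0$ it already fails for $q=3$). The device that restores the exact value is to differentiate the square $m^{2/q}$ rather than $m$ itself: the chain rule then produces the correction term $(\tfrac2q-1)m^{2/q-2}(m')^2$, which is nonpositive exactly when $q\ge 2$ and hence can be dropped, and after the H\"older step the surviving powers of $m(t)$ cancel identically, so the bound on $F''$ is independent of $t$ — precisely what the integral remainder requires. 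The remaining points, all routine, are the justification of differentiation under the expectation for $q\ge 2$ and the separate (immediate) treatment of the degenerate cases.
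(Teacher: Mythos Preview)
The paper does not prove this theorem; it simply refers the reader to \cite{rio2009moment}. Your argument is correct and is in fact essentially Rio's original proof: the induction to the two-term case and the device of differentiating $F(t)=m(t)^{2/q}$ rather than $m(t)$ itself --- so that the nonpositive correction term $(\tfrac2q-1)m^{2/q-2}(m')^2$ can be discarded and the H\"older step leaves a $t$-independent bound on $F''$ --- are exactly the ingredients that yield the sharp constant $q-1$.
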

We refer readers to \cite{rio2009moment} for the proof of \Cref{thm:rio}. Using Rio's inequality we can prove a bounded difference inequality under sub-Weibull conditions.
\begin{theorem}[Sub-Weibull Difference Concentration]\label{thm:2.5-sw-mcdiarmid}
Let $h$ be a real-valued function acting on $\X^{\otimes n}$ and $\D_n$ a set of $n$ independent sample points in $\X$.
    If $\nabla_i h(\D_n)$ is $(\kappa_i,\alpha_i)$-SW in the sense of \eqref{eq:2.5-sw-def-lq}, then 
    $\mathbb E[h(\D_n)]-h(\D_n)$ is $(\kappa,\alpha)$-SW with
    $$\kappa=2^{\bar\alpha+1/2}\left(\sum_{i=1}^n\kappa_i^2\right)^{1/2}\,,~~~\alpha = \bar\alpha+1/2\,,$$
    where $\bar\alpha=\max_{1\le i\le n}\alpha_i$.
\end{theorem}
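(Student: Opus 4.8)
The plan is to feed the martingale decomposition recorded in \eqref{eq:2.5-nabla-telescope} into Rio's inequality (\Cref{thm:rio}) and then translate the resulting $L_q$ bound into the sub-Weibull form via \Cref{pro:2.5-sw-equiv}. Set $M_i=\mathbb{E}[h(\D_n)\mid\D_i]-\mathbb{E}[h(\D_n)\mid\D_{i-1}]$ and $S_k=\sum_{i=1}^k M_i$, with the convention $\D_0=\emptyset$, so that $S_0=0$ and $S_n=h(\D_n)-\mathbb{E}h(\D_n)$. Since each $M_i$ is a difference of nested conditional expectations, $(M_i)$ is automatically a mean-zero martingale increment sequence with respect to the filtration generated by $(\D_i)$; in particular $\mathbb{E}[M_i\mid S_{i-1}]=0$ because $S_{i-1}$ is $\D_{i-1}$-measurable. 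Moreover \eqref{eq:2.5-nabla-telescope} identifies $M_i=\mathbb{E}[\nabla_i h(\D_n)\mid\D_i]$, so by the $L_q$-contraction property of conditional expectation (Jensen) and the hypothesis in the form \eqref{eq:2.5-sw-def-lq},
\[
\|M_i\|_q\le\|\nabla_i h(\D_n)\|_q\le\kappa_i q^{\alpha_i}\le\kappa_i q^{\bar\alpha}\qquad\text{for all }q\ge1 .
\]

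Next I would apply \Cref{thm:rio} to this martingale: for $q\ge2$,
\[
\|S_n\|_q^2\le\|S_0\|_q^2+(q-1)\sum_{i=1}^n\|M_i\|_q^2\le(q-1)q^{2\bar\alpha}\sum_{i=1}^n\kappa_i^2\le q^{2\bar\alpha+1}\sum_{i=1}^n\kappa_i^2 ,
\]
hence $\|S_n\|_q\le q^{\bar\alpha+1/2}\big(\sum_{i=1}^n\kappa_i^2\big)^{1/2}$ for every $q\ge2$. It then remains to cover the range $1\le q<2$ and to reconcile the constant: by monotonicity of $L_q$-norms, for $1\le q<2$ we have $\|S_n\|_q\le\|S_n\|_2\le2^{\bar\alpha+1/2}\big(\sum_{i=1}^n\kappa_i^2\big)^{1/2}\le2^{\bar\alpha+1/2}\big(\sum_{i=1}^n\kappa_i^2\big)^{1/2}q^{\bar\alpha+1/2}$ (using $q\ge1$), while for $q\ge2$ the bound of the previous display is likewise $\le2^{\bar\alpha+1/2}\big(\sum_{i=1}^n\kappa_i^2\big)^{1/2}q^{\bar\alpha+1/2}$ since $2^{\bar\alpha+1/2}\ge1$. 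Therefore $\|\mathbb{E}h(\D_n)-h(\D_n)\|_q=\|S_n\|_q\le\kappa q^{\alpha}$ for all $q\ge1$ with $\kappa=2^{\bar\alpha+1/2}\big(\sum_{i=1}^n\kappa_i^2\big)^{1/2}$ and $\alpha=\bar\alpha+1/2$, which is precisely \eqref{eq:2.5-sw-def-lq}; by \Cref{pro:2.5-sw-equiv} this is equivalent, up to the $\alpha$-dependent constant absorbed per the convention of the text, to the tail form \eqref{eq:2.5-sw-def-prob}, and sub-Weibullness depends only on $|S_n|$ so the sign of $S_n$ is immaterial.

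I do not expect any real obstacle here. The only point needing a moment's care is that $M_i$ is genuinely a mean-zero martingale increment whose $L_q$-norm is controlled by that of $\nabla_i h(\D_n)$, which is exactly \eqref{eq:2.5-nabla-telescope} combined with Jensen's inequality. The mild subtlety worth flagging is that \Cref{thm:rio} is only available for $q\ge2$, which forces the regime $1\le q<2$ to be handled separately by norm monotonicity, and this detour is what produces the factor $2^{\bar\alpha+1/2}$ in $\kappa$ rather than the cleaner $\big(\sum_i\kappa_i^2\big)^{1/2}$ that the $q\ge2$ computation alone would suggest.
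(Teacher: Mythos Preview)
Your proposal is correct and follows essentially the same approach as the paper: both use the martingale decomposition \eqref{eq:2.5-nabla-telescope}, control each increment via Jensen and the $(\kappa_i,\alpha_i)$-SW hypothesis, apply Rio's inequality for $q\ge 2$, and then handle $1\le q<2$ by monotonicity of $L_q$-norms to pick up the factor $2^{\bar\alpha+1/2}$. Your write-up is in fact slightly more explicit than the paper's about why the extra factor $2^{\bar\alpha+1/2}$ is needed to cover both regimes uniformly.
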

\begin{proof}[Proof of \Cref{thm:2.5-sw-mcdiarmid}]
    Let $S_0=0$, $S_i=\sum_{j=1}^i M_j$ with $M_i=\mathbb E[ h(\D_n) |\D_{i-1}]-\mathbb E[h(\D_n)|\D_i]=\mathbb E[\nabla_i h(\D_n)|\D_i]$.
    Then $S_n=\mathbb E[h(\D_n)]-h(\D_n)$.

    Using Rio's inequality and the sub-Weibull condition of $\nabla_i h(\D_n)$ we have, for $q>2$,
    \begin{align*}
        \|\mathbb E[h(\D_n)]-h(\D_n)\|_q\le & (q-1)^{1/2}\left(\sum_{i=1}^n \kappa_i^2 q^{2\alpha_i}\right)^{1/2}\\
        \le & q^{\bar\alpha+1/2} \left(\sum_{i=1}^n \kappa_i^2\right)^{1/2}\,.
    \end{align*}
    When $1\le q\le 2$, we have
    \begin{align*}
        \|\mathbb E[h(\D_n)]-h(\D_n)\|_q\le& \|\mathbb E[h(\D_n)]-h(\D_n)\|_2\le 2^{\bar\alpha+1/2} \left(\sum_{i=1}^n \kappa_i^2\right)^{1/2}\\
        =&q^{\bar\alpha+1/2}(2/q)^{\bar\alpha+1/2} \left(\sum_{i=1}^n \kappa_i^2\right)^{1/2}\,.
    \end{align*}
    The claim holds since $(2/q)\le 2$.
\end{proof}

This result shows that $h(\D_n)$ deviates from the mean value on the correct order of $\sqrt{\kappa_1^2+...+\kappa_n^2}$.   The original McDiarmid's inequality can be stated as a special case of \Cref{thm:2.5-sw-mcdiarmid} with $\alpha_i=0$ and $c_i=\kappa_i$.

Now we have a concentration inequality for cross-validation risk estimates under a relaxed version of perturb-one stability.
\begin{definition}[Perturb-One Sub-Weibull Stability]\label{def:2.5-po-sw-stab}
       Let $h:[0,1]\times \bigcup_{n=1}^\infty \X^{\otimes n}\mapsto\mathbb R^1$ be a  function that maps a dataset of any size and an auxiliary input in $[0,1]$ to a real number.  
We say $h$ satisfies the Perturb-One Sub-Weibull (PO-SW) stability with parameter $(\kappa_n,\alpha)$ at sample size $n$ if $\nabla_i h(U,\D_n)$ is $(\kappa_n,\alpha)$-SW in the sense of \eqref{eq:2.5-sw-def-lq} for $U\sim {\rm Unif}(0,1)$ independent of $\D_n$.
\end{definition}

\begin{theorem}[Concentration of LOO-CV Risk Under Sub-Weibull Stability]\label{thm:2.5-loocv-concentration-sw}
Assuming i.i.d. data, 
if $\ell(X_0,\cdot)$ satisfies PO-SW stability with parameter $(\epsilon_n,\alpha)$ at sample size $n-1$, and $\ell(X_0,\D_{n-1})$ is $(B,\alpha)$-SW, then  
$\hat R_{\cv,n,n}-\mu_{n-1}$ is $(2^{\alpha+1/2}(2B/\sqrt{n}+\sqrt{n}\epsilon_n),~\alpha+1/2)$-SW.
\end{theorem}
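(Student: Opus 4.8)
The plan is to apply the sub-Weibull bounded difference inequality (\Cref{thm:2.5-sw-mcdiarmid}) to the function $h(\D_n)=\hat R_{\cv,n,n}$, which reduces everything to controlling the sub-Weibull parameters of the perturb-one differences $\nabla_i\hat R_{\cv,n,n}$. First I would reuse the elementary decomposition already recorded in \Cref{subsec:bousquet_concentration},
$$\nabla_i\hat R_{\cv,n,n}=n^{-1}\big[\ell(X_i,\D_{n,-i})-\ell(X_i',\D_{n,-i})\big]+n^{-1}\sum_{j\neq i}\nabla_i\ell(X_j,\D_{n,-j})\,,$$
and bound each piece. The diagonal term is a difference of two copies of $\ell(X_0,\D_{n-1})$: indeed $\D_{n,-i}$ has $n-1$ points and is independent of both $X_i$ and $X_i'$, so each of $\ell(X_i,\D_{n,-i})$ and $\ell(X_i',\D_{n,-i})$ is $(B,\alpha)$-SW by hypothesis; Part (2) of \Cref{pro:2.5-sum-prod-sw} followed by scalar multiplication by $n^{-1}$ makes this term $(2B/n,\alpha)$-SW. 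For the off-diagonal terms $j\neq i$, the dataset $\D_{n,-j}$ contains $X_i$ and has size $n-1$, so $\nabla_i\ell(X_j,\D_{n,-j})$ is literally a perturb-one perturbation of the loss \emph{at sample size $n-1$}, hence $(\epsilon_n,\alpha)$-SW by the PO-SW stability assumption imposed at sample size $n-1$. Summing the $n-1$ such terms and dividing by $n$, Part (2) of \Cref{pro:2.5-sum-prod-sw} gives a $\big(\tfrac{n-1}{n}\epsilon_n,\alpha\big)$-SW variable, a fortiori $(\epsilon_n,\alpha)$-SW; one more application of Part (2) yields that $\nabla_i\hat R_{\cv,n,n}$ is $(2B/n+\epsilon_n,\alpha)$-SW, uniformly in $i$.

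Next I would invoke \Cref{thm:2.5-sw-mcdiarmid} with $\kappa_i=2B/n+\epsilon_n$ and $\alpha_i=\alpha$ for every $i\in[n]$, so that $\bar\alpha=\alpha$. The theorem then asserts that $\mathbb E[\hat R_{\cv,n,n}]-\hat R_{\cv,n,n}$ is $(\kappa,\alpha+1/2)$-SW with
$$\kappa=2^{\alpha+1/2}\Big(\sum_{i=1}^n(2B/n+\epsilon_n)^2\Big)^{1/2}=2^{\alpha+1/2}\sqrt n\,(2B/n+\epsilon_n)=2^{\alpha+1/2}\big(2B/\sqrt n+\sqrt n\,\epsilon_n\big)\,.$$
Finally, for the $K=n$ fold scheme one has $n_\tr=n-1$, and $\mathbb E\hat R_{\cv,n,n}=\mu_{n_\tr}=\mu_{n-1}$ was noted in \Cref{subsec:prelim}; since the sub-Weibull property depends only on $|Z|$ and is therefore invariant under negation, it follows that $\hat R_{\cv,n,n}-\mu_{n-1}$ is $(2^{\alpha+1/2}(2B/\sqrt n+\sqrt n\,\epsilon_n),\alpha+1/2)$-SW, which is the claim.

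The argument is mostly bookkeeping once \Cref{thm:2.5-sw-mcdiarmid} is in hand; the one point that genuinely requires care is the identification of $\nabla_i\ell(X_j,\D_{n,-j})$ for $j\neq i$ as a perturb-one change at the \emph{reduced} sample size $n-1$ rather than $n$ — this is precisely why the stability hypothesis is stated at sample size $n-1$ — together with the observation that the held-out index contributes a structurally different term of size $O(B/n)$ that must be separated from the $O(\epsilon_n)$ stability terms. A secondary subtlety is that all the sub-Weibull manipulations here (sums and scalar multiples) should be carried out through the $L_q$-norm formulation \eqref{eq:2.5-sw-def-lq}, in which Part (2) of \Cref{pro:2.5-sum-prod-sw} loses no constant, so that the scale parameter comes out exactly as stated rather than only up to an $\alpha$-dependent factor.
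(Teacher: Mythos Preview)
Your proposal is correct and follows essentially the same route as the paper: decompose $\nabla_i\hat R_{\cv,n,n}$ into the diagonal term and the $n-1$ off-diagonal terms, bound each in the $L_q$-norm sub-Weibull sense by triangle inequality to get $(2B/n+\epsilon_n,\alpha)$, and then invoke \Cref{thm:2.5-sw-mcdiarmid}. The paper's proof is just a terser version of exactly this argument.
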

\begin{proof}[Proof of \Cref{thm:2.5-loocv-concentration-sw}]
    By definition
    $$\nabla_i \hat R_{\cv,n,n} = n^{-1}\left[\ell(X_i',\D_{n,-i})-\ell(X_i,\D_{n,-i})\right]+\frac{1}{n}\sum_{j\neq i}\nabla_i\ell(X_j,\D_{n,-j})\,.$$
    Using the triangle inequality and the sub-Weibull moment bound we have for $q\ge 1$
    $$\|\nabla_i\hat R_{\cv,n,n}\|_q\le (2/n)B q^\alpha+(1-n^{-1})\epsilon_n q^\alpha\le (2B/n+\epsilon_n)q^\alpha\,,$$
    which implies that $\nabla_i \hat R_{\cv,n,n}$ is $(2B/n+\epsilon_n,~\alpha)$-SW.
    Then the claimed result follows from \Cref{thm:2.5-sw-mcdiarmid}.
\end{proof}
\Cref{thm:2.5-loocv-concentration-sw} shows that the LOO-CV risk estimate concentrates around the mean value at essentially the same rate as in \Cref{pro:bousquest_concentration}, with the same dependence, up to constant factors, on $(B,\epsilon_n)$. We will revisit sub-Weibull stability in \Cref{subsec:4.3-hd-clt} when proving high-dimensional Gaussian comparison for cross-validated risks.

Similar to \Cref{thm:bousquet-consistency} and \Cref{pro:bousquest_concentration}, the concentration result under sub-Weibull stability in \Cref{thm:2.5-loocv-concentration-sw} can be extended to bound $|\hat R_{\cv,n,K}-\mu_{n_\tr}|$ in general $K$-fold CV.

\subsection{Bibliographic Notes}
The point estimation target of cross-validation is studied in \cite{bates2021cross}. The random target $\bar R_{\cv,n,K}$ also appeared as the ``random centering'' in central limit theorems of cross-validation \citep{austern2020,bayle2020cross}, and was used as an intermediate inference target for the construction of model confidence sets using cross-validation \citep{lei2020cross,kissel2022high}.

The main consistency and concentration results, \Cref{thm:bousquet-consistency} and \Cref{pro:bousquest_concentration}, are based on \cite{bousquet2002stability}.  Other important contributions to the study of stability and generalization error include \cite{kutin2002almost}, who developed similar generalization error bounds in parallel to \cite{bousquet2002stability}, and \cite{shalev2010learnability,wang2016learning}, who established the connection between stability and generalization error in a broader context of empirical risk minimization.  More recently, \cite{bousquet2020sharper} obtained sharper generalization error bounds under uniform stability.

The stability conditions introduced in \Cref{def:2.2-loo-stab-lq,def:2.2-po-stab-lq,def:2.5-po-sw-stab} are most directly relevant to the theoretical study of cross-validation.  Alternative stability conditions have been considered in the study of other predictive inference tools. For example, \cite{barber2021predictive} and \cite{liang2023algorithmic} used a notion of ``out-of-sample stability'' and a high-probability version of leave-one-out stability to analyze several variants of conformal prediction methods.

More discussion and details about sub-Weibull random variables can be found in \cite{vladimirova2020sub,kuchibhotla2018moving}.  The combination of $L_q$-norm upper bound for martingale increments (\Cref{thm:rio}) and sub-Weibull difference (\Cref{thm:2.5-sw-mcdiarmid}) originated from personal communication with Arun Kumar Kuchibhotla, and a similar form has appeared in Appendix~B of \cite{kuchibhotla2023uniform}. Alternative approaches to proving extensions of McDiarmid's inequality using high-probability bounded differences can be found in \cite{kutin2002extensions,combes2024extension}.

In this article, we consider generic cross-validation methods applied to an arbitrary learning algorithm $\hat f$ under stability conditions.  Parameter estimation and risk consistency results for cross-validation have been established for specific estimators under more specific conditions, such as the lasso \citep{homrighausen2017risk,chetverikov2021cross}, ridge regression \citep{patil2021uniform}, and gradient-descent least squares regression \citep{patil2024failures}.

\newpage


\section{Regression Model Selection Under Squared Loss}\label{sec:3-model-selection}
In this section, we consider another key application of cross-validation: model and tuning parameter selection. The overarching goal of inference in this context is to choose, from a finite collection of candidate models, one that best fits the data.

We adopt the perspective of \emph{model selection for estimation} (see \Cref{subsec:1.2-estimation-selection} and references therein), in which we neither assume that the true model is included in the candidate set nor require the existence of a true model at all. Instead, each candidate model is viewed as a specific procedure for producing a function that predicts future data points, and a good model is one that yields accurate predictions.  In other words, in our context of model selection, each ``model'' simply corresponds to a particular estimator of an underlying parameter of the data-generating distribution, which further leads to a procedure for predicting future, unseen data.

This framework imposes minimal assumptions on both the data-generating process and the model-fitting procedures, capturing the flexibility and broad applicability of cross-validation. All that is required is an i.i.d.\ sample, a loss function, and a collection of fitting procedures. Notably, this setup also encompasses tuning parameter selection, and for simplicity, we will refer to both tasks under the umbrella of ``model selection.''

Using the notation and terminology in \Cref{subsec:prelim}, assume we have a collection of symmetric estimators $\{\hat f^{(r)}:1\le r\le m\}$, where for each $r\in\mathcal M=\{1,...,m\}$, $\hat f^{(r)}:\bigcup_{n=1}^\infty \X^{\otimes n}\mapsto \mathcal F$ maps a dataset $\D_n$ to a parameter $\hat f^{(r)}(\D_n)$. We would like to pick the $r$ that gives the best predictive risk $R[\hat f^{(r)}(\D_n)]$ or $\mu^{(r)}_n=\mathbb E\{R[\hat f^{(r)}(\D_n)]\}$.  

 For each $r\in\mathcal M$, we use the $K$-fold CV risk estimate $\hat R^{(r)}_{\cv,n,K}$ defined in \eqref{eq:2.1-cv-risk-est} with $\ell^{(r)}(x,\D)\coloneqq \ell(x,\hat f^{(r)}(\D))$ to approximate its out-of-sample prediction risk. The $K$-fold CV model selection output is
 $$
 \hat r_{\cv, n,K} =\arg\min_{r\in\mathcal M} \hat R^{(r)}_{\cv,n,K}\,.
 $$
The model selection consistency question we want to address is: When does $\hat r_{\cv,n,K}$ coincide with the ``best candidate model''?

\subsection{Pairwise Selection Consistency}\label{subsec:3.1-yang07}
Intuitively, for the cross-validation model selection procedure described above to successfully identify the ``best model,'' two conditions must be met. First, the best model should exhibit superior predictive accuracy at the population level—a quality gap must exist. Second, the cross-validation risk estimates must concentrate around their corresponding population risks, ensuring that the variability in the estimates does not obscure the true differences between models.

In this section, we follow the setting in \cite{yang2007consistency}, where two further simplifications are made.  The first simplification is to consider only two models: $\mathcal M=\{1,2\}$. This pairwise setting facilitates the articulation of some key intuitions, and the results can be easily extended to any finite candidate set such that $m=|\mathcal M|=O(1)$. The second is a regression setting with the squared loss:
\begin{itemize}
    \item Each sample point consists of a pair $X=(Y,Z)$ such that 
    \begin{equation}\label{eq:3.1-reg-model}
    Y=f^*(Z)+\epsilon\,,\end{equation} 
    where the noise $\epsilon$ has a finite second moment and $\mathbb E(\epsilon|Z)=0$.
    \item Squared loss: $\ell(X,f)=\ell((Y,Z),f)=(Y-f(Z))^2$\,.
\end{itemize}

Under the regression model \eqref{eq:3.1-reg-model} and the squared loss, it is often more convenient to study the \emph{excess risk} of each fitted candidate model. For each $r\in\mathcal M$ define the excess risk
\begin{equation}
    \label{eq:3.1-excess-risk}
    \Psi_{r,n}\coloneqq R[\hat f^{(r)}(\D_n)] - R(f^*)=\mathbb E_{Z_0} \{[\delta_n^{(r)}(Z_0)]^2\}\,.
\end{equation}
where
$$
\delta^{(r)}_n\coloneqq \hat f^{(r)}(\D_n)-f^*
$$
is the estimation error of the regression function.

Without loss of generality, assume that the first model ($r=1$) is better than the other. Intuitively, this requires $\Psi_{2,n}$ to be larger than $\Psi_{1,n}$. In order to give a precise mathematical notion of ``model $1$ is better than model $2$'', we need to take into account the following issues:
\begin{enumerate}
    \item [(i)] The quantities $\Psi_{1,n}$ and $\Psi_{2,n}$ are random, so the comparison between the models must be stated in a stochastic way.
    \item [(ii)] The separation between $\Psi_{2,n}$ and $\Psi_{1,n}$ needs to be large enough to ensure consistency.
\end{enumerate}
With these issues in mind, the following ``risk dominance'' condition seems natural.
\begin{assumption}[Risk Dominance]\label{asm:3.1-stochastic_domination}
The two sequences of random variables $(\Psi_{r,n}:n\ge 1)$ satisfy $\Psi_{2,n}>0$ almost surely and
  \begin{equation}\label{eq:3.1-risk-dom}\frac{\Psi_{1,n}}{(\Psi_{2,n}-\Psi_{1,n})_+}=O_p(1)\,,\end{equation}
  where $(x)_+\coloneqq \max(x,0)$, with the convention $0/0=\infty$.
\end{assumption}
In other words, the risk dominance condition requires that, with high probability, $\Psi_{2,n}$ exceeds $\Psi_{1,n}$ by at least a constant proportion of $\Psi_{1,n}$.  This definition precludes the scenario of $\Psi_{2,n}/\Psi_{1,n}\rightarrow 1$ in probability, where the two risk sequences become asymptotically equivalent.

We can derive some useful consequences from the risk dominance condition.
\begin{proposition}\label{pro:3.1-conseq-risk-dom}
\begin{enumerate}
    \item [(a)] An equivalent statement of the risk dominance condition \eqref{eq:3.1-risk-dom} is that for any $\varepsilon>0$ there exists $c_\varepsilon>0$ such that
  $$\lim_{n\rightarrow\infty}\inf_{t\ge n}\mathbb P[\Psi_{2,t}>(1+c_\varepsilon)\Psi_{1,t}]\ge 1-\varepsilon\,.$$
  \item [(b)] The risk dominance condition \eqref{eq:3.1-risk-dom} implies $\mathbb P(\Psi_{2,n}\le \Psi_{1,n})=o(1)$.
  \item [(c)] Condition \eqref{eq:3.1-risk-dom} also implies that (with the convention $(0/0)\times 0\equiv 0$)
  $$
  \frac{\Psi_{2,n}}{(\Psi_{2,n}-\Psi_{1,n})_+}\mathds{1}(\Psi_{2,n}>\Psi_{1,n})=O_P(1)\,.
  $$
\end{enumerate}
\end{proposition}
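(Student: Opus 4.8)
The plan is to route all three parts through the single $[0,\infty]$-valued random variable
$$T_n \coloneqq \frac{\Psi_{1,n}}{(\Psi_{2,n}-\Psi_{1,n})_+}\,,$$
for which the risk dominance condition \eqref{eq:3.1-risk-dom} is precisely the statement $T_n=O_P(1)$. The only substantive ingredient is an elementary pointwise identity: for every constant $c>0$,
$$\{T_n<1/c\}=\{\Psi_{2,n}>(1+c)\Psi_{1,n}\}\qquad\text{and}\qquad\{T_n=\infty\}=\{\Psi_{2,n}\le\Psi_{1,n}\}\,,$$
which follows by clearing denominators when $\Psi_{1,n}>0$ and by checking the conventions (together with $\Psi_{1,n}\ge 0$ and the standing assumption $\Psi_{2,n}>0$ a.s.) in the remaining cases. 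After that, each part is a short manipulation of the definition of $O_P$.

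For part (a): the identity shows that ``$T_n=O_P(1)$'' is the assertion that for each $\varepsilon>0$ there is $M>0$ with $\mathbb P\bigl(\Psi_{2,n}>(1+M^{-1})\Psi_{1,n}\bigr)\ge 1-\varepsilon$ for all $n$; taking $c_\varepsilon=M^{-1}$ gives the inequality in (a), in fact in the stronger form with the infimum over all $n$. For the converse, the identity turns the inequality in (a) into $\mathbb P(T_n\ge 1/c_\varepsilon)\le\varepsilon$ for all sufficiently large $n$, and one then enlarges the constant to also cover the finitely many remaining indices, recovering $T_n=O_P(1)$.

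For part (b): since $c_\varepsilon>0$ and $\Psi_{1,n}\ge 0$ we have $\{\Psi_{2,n}>(1+c_\varepsilon)\Psi_{1,n}\}\subseteq\{\Psi_{2,n}>\Psi_{1,n}\}$, so part (a) yields $\liminf_n\mathbb P(\Psi_{2,n}>\Psi_{1,n})\ge 1-\varepsilon$ for every $\varepsilon>0$, i.e.\ $\mathbb P(\Psi_{2,n}\le\Psi_{1,n})=o(1)$ (alternatively, $\{\Psi_{2,n}\le\Psi_{1,n}\}=\{T_n=\infty\}\subseteq\{T_n\ge M\}$ for every $M$, so its probability is bounded directly via $T_n=O_P(1)$). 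For part (c): on $\{\Psi_{2,n}>\Psi_{1,n}\}$ one has $(\Psi_{2,n}-\Psi_{1,n})_+=\Psi_{2,n}-\Psi_{1,n}>0$ and hence $\Psi_{2,n}/(\Psi_{2,n}-\Psi_{1,n})_+=1+T_n$, while on the complementary event the stated convention makes the product vanish; thus the quantity in (c) is dominated a.s.\ by $1+T_n$, which is $O_P(1)$ by \eqref{eq:3.1-risk-dom}, and a nonnegative quantity dominated by an $O_P(1)$ term is itself $O_P(1)$.

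I expect the only delicate point to be the reverse implication in part (a): an $O_P(1)$ bound constrains all sample sizes simultaneously, whereas the $\liminf$ formulation only controls the tail, so the contribution of the finitely many initial indices must be dealt with separately (this is also the place where it matters that $T_n$ is a genuine, if extended-real-valued, random variable under the stated conventions). Everything else is routine bookkeeping.
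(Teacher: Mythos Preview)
Your proposal is correct and follows essentially the same route as the paper: the paper also rewrites the tail probability of $T_n$ via the identity $\mathbb P[T_n\ge c]=\mathbb P[\Psi_{2,n}\le(1+c^{-1})\Psi_{1,n}]$ for part~(a), uses $\{\Psi_{2,n}\le\Psi_{1,n}\}=\{T_n=\infty\}$ for part~(b), and decomposes $\Psi_{2,n}/(\Psi_{2,n}-\Psi_{1,n})_+=1+T_n$ on the event $\{\Psi_{2,n}>\Psi_{1,n}\}$ for part~(c). If anything you are slightly more careful than the paper about the reverse direction of~(a) and the edge cases in the conventions.
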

\begin{proof}[Proof of \Cref{pro:3.1-conseq-risk-dom}]
    Part (a) follows from a direct rearrangement of the tail probability of $\frac{\Psi_{1,n}}{(\Psi_{2,n}-\Psi_{1,n})_+}$:
\begin{align*}
    &\mathbb P\left[\frac{\Psi_{1,n}}{(\Psi_{2,n}-\Psi_{1,n})_+}\ge c\right]\\
    =&\mathbb P\left[\frac{(\Psi_{2,n}-\Psi_{1,n})_+}{\Psi_{1,n}}\le c^{-1}\right]\\
    =&\mathbb P\left[\frac{\Psi_{2,n}-\Psi_{1,n}}{\Psi_{1,n}}\le c^{-1}\right]\\
    =&\mathbb P\left[\Psi_{2,n}\le (1+c^{-1})\Psi_{1,n}\right]\,.
\end{align*}
The claim follows by picking $c=c_\varepsilon$ large enough in the above equation so that the probability is no larger than $\varepsilon$ for all sufficiently large $n$. 

Part (b) follows from the fact that $\Psi_{2,n}\le \Psi_{1,n}$ implies $\frac{\Psi_{1,n}}{(\Psi_{2,n}-\Psi_{1,n})_+}=\infty$.

Part (c) follows from the following:
\begin{align*}
   & \frac{\Psi_{2,n}}{(\Psi_{2,n}-\Psi_{1,n})_+}\mathds{1}(\Psi_{2,n}>\Psi_{1,n})\\
   =&\frac{\Psi_{2,n}-\Psi_{1,n}}{(\Psi_{2,n}-\Psi_{1,n})_+}\mathds{1}(\Psi_{2,n}>\Psi_{1,n})+\frac{\Psi_{1,n}}{(\Psi_{2,n}-\Psi_{1,n})_+}\mathds{1}(\Psi_{2,n}>\Psi_{1,n})\\
   =& \mathds{1}(\Psi_{2,n}>\Psi_{1,n})+\frac{\Psi_{1,n}}{(\Psi_{2,n}-\Psi_{1,n})_+}\mathds{1}(\Psi_{2,n}>\Psi_{1,n})\\
   =&O_P(1)\,.\qedhere
\end{align*}
\end{proof}

Now we state the following pairwise comparison consistency result for $K$-fold cross-validation, originally due to \cite{yang2007consistency}. Recall that in $K$-fold cross-validation, the training sample size is $n_\tr=n(1-1/K)$ and the test sample size is $n_\te=n-n_\tr=n/K$.  
\begin{theorem}\label{thm:3.1-yang-consistency}
Assume $\D_n$ consists of i.i.d. data from the regression model \eqref{eq:3.1-reg-model} with $\epsilon$ independent of $Z$, $\mathbb E\epsilon=0$, and ${\var}(\epsilon)=\sigma^2<\infty$.
    For two estimation methods $\hat f^{(r)}$, $r=1,2$, assume $\|\hat f^{(r)}(\D_n)-f^*\|_\infty\le B<\infty$ for $r=1,2$ and all sample sizes $n$. If the excess risks $\Psi_{r,n}$ $(r=1,2)$ satisfy \Cref{asm:3.1-stochastic_domination} and 
   \begin{equation}\label{eq:3.1-psi2-lower}
   n_\te \Psi_{2,n_\tr}\stackrel{P}{\rightarrow}\infty\,,~~n_\tr\rightarrow\infty\,,\end{equation} 
  then, for $K$-fold CV risk estimates with $K=O(1)$, we have $$\lim_{n\rightarrow\infty}\mathbb P(\hat R^{(1)}_{\cv,n,K}<\hat R^{(2)}_{\cv,n,K})=1\,.$$
\end{theorem}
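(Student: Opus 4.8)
The plan is to analyze the scaled difference $W:=n\big(\hat R^{(2)}_{\cv,n,K}-\hat R^{(1)}_{\cv,n,K}\big)$ and show $\mathbb P(W\le 0)\to 0$, which is exactly the assertion. Writing $Y_i=f^*(Z_i)+\epsilon_i$ and $\delta^{(r)}_{n,-k}:=\hat f^{(r)}(\D_{n,-k})-f^*$, each summand in the CV risk equals $\big(\epsilon_i-\delta^{(r)}_{n,-k}(Z_i)\big)^2$, so the $\epsilon_i^2$ terms cancel and the identity $(\epsilon-a)^2-(\epsilon-b)^2=(a^2-b^2)-2\epsilon(a-b)$ gives $W=\sum_{k=1}^K(A_k+B_k)$ with
$$A_k=\sum_{i\in I_{n,k}}\big([\delta^{(2)}_{n,-k}(Z_i)]^2-[\delta^{(1)}_{n,-k}(Z_i)]^2\big)\,,\qquad B_k=-2\sum_{i\in I_{n,k}}\epsilon_i\big(\delta^{(2)}_{n,-k}(Z_i)-\delta^{(1)}_{n,-k}(Z_i)\big)\,.$$
Conditioning on $\mathcal G_k:=\sigma(\D_{n,-k})$, the points $\{(Z_i,\epsilon_i):i\in I_{n,k}\}$ are i.i.d.\ and independent of $\mathcal G_k$ with $\mathbb E[\epsilon_i\mid Z_i]=0$, so $\mathbb E[A_k\mid\mathcal G_k]=n_\te\Delta_k$ and $\mathbb E[B_k\mid\mathcal G_k]=0$, where $\Delta_k:=\Psi^{(k)}_2-\Psi^{(k)}_1$ and $\Psi^{(k)}_r:=\mathbb E_{Z_0}[\delta^{(r)}_{n,-k}(Z_0)]^2$ is the fold-$k$ excess risk (which has the same law as $\Psi_{r,n_\tr}$). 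Since $\|\delta^{(r)}_{n,-k}\|_\infty\le B$ (hence $|[\delta^{(2)}_{n,-k}]^2-[\delta^{(1)}_{n,-k}]^2|\le 2B\,|\delta^{(2)}_{n,-k}-\delta^{(1)}_{n,-k}|$) and $\var(\epsilon)=\sigma^2$, a short computation gives $\var(A_k\mid\mathcal G_k)\le 4B^2 n_\te D_{n,-k}$ and $\var(B_k\mid\mathcal G_k)=4\sigma^2 n_\te D_{n,-k}$ with vanishing conditional covariance, where $D_{n,-k}:=\mathbb E_{Z_0}\big[\delta^{(2)}_{n,-k}(Z_0)-\delta^{(1)}_{n,-k}(Z_0)\big]^2$; so the residual $U_k:=(A_k-n_\te\Delta_k)+B_k$ obeys $\mathbb E[U_k\mid\mathcal G_k]=0$ and $\var(U_k\mid\mathcal G_k)\le 4(B^2+\sigma^2)\,n_\te D_{n,-k}$.

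Next I would invoke the risk-dominance consequences. Fix $\varepsilon>0$, take $c_\varepsilon$ from \Cref{pro:3.1-conseq-risk-dom}(a), and set $E_k:=\{\Psi^{(k)}_2>(1+c_\varepsilon)\Psi^{(k)}_1\}$, $E:=\bigcap_{k=1}^K E_k$. As $\D_{n,-k}$ is a size-$n_\tr$ i.i.d.\ sample and $n_\tr\to\infty$, $\mathbb P(E_k)\ge 1-\varepsilon$ for $n$ large, so $\mathbb P(E^c)\le K\varepsilon$. On $E_k$, elementary triangle-inequality bounds among $\Psi^{(k)}_1,\Psi^{(k)}_2,\Delta_k$ and $D_{n,-k}=\|\delta^{(2)}_{n,-k}-\delta^{(1)}_{n,-k}\|_{L_2(P_Z)}^2$ give $c'\Psi^{(k)}_2\le D_{n,-k}\le C\Delta_k$ for constants $C,c'>0$ depending only on $c_\varepsilon$, and on $E$ every $\Delta_k>0$, so $S:=n_\te\sum_k\Delta_k\ge n_\te\Delta_k$ for each $k$. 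Since $K=O(1)$, on $E$ the event $\{W\le 0\}$ forces some single $U_k\le-S/K\le-\tau n_\te D_{n,-k}$ with $\tau:=1/(KC)$; combining with $\mathbb P(E^c)\le K\varepsilon$,
$$\mathbb P(W\le 0)\le K\varepsilon+\sum_{k=1}^K\mathbb P\big(U_k\le-\tau n_\te D_{n,-k},\,E_k\big)\,.$$

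Finally, for each fixed $k$ I would condition on $\mathcal G_k$ (both $E_k$ and $D_{n,-k}$ are $\mathcal G_k$-measurable) and apply Chebyshev together with the variance bound and $D_{n,-k}\ge c'\Psi^{(k)}_2$ on $E_k$:
$$\mathbb P\big(U_k\le-\tau n_\te D_{n,-k},\,E_k\big)\le\mathbb E\Big[\mathbf{1}_{E_k}\min\Big(1,\tfrac{4(B^2+\sigma^2)}{\tau^2 n_\te D_{n,-k}}\Big)\Big]\le\mathbb E\Big[\min\Big(1,\tfrac{4(B^2+\sigma^2)}{\tau^2 c'\,n_\te\Psi^{(k)}_2}\Big)\Big]\,.$$
Because $n_\te\Psi^{(k)}_2$ has the same distribution as $n_\te\Psi_{2,n_\tr}$, which diverges in probability by \eqref{eq:3.1-psi2-lower}, the integrand on the right is bounded by $1$ and tends to $0$ in probability, so bounded convergence makes each term $o(1)$; summing over the $O(1)$ folds gives $\limsup_n\mathbb P(W\le 0)\le K\varepsilon$, and letting $\varepsilon\downarrow 0$ completes the proof.

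The main obstacle is that the $K$ training sets $\D_{n,-k}$ overlap, so the $U_k$ are not conditionally independent given any single natural $\sigma$-field and one cannot directly bound $\sum_k U_k$ by a single Chebyshev step. The device that sidesteps this is the assumption $K=O(1)$: it lets us replace ``$\sum_k U_k$ very negative'' by ``some $U_k$ very negative'' at only a constant-factor loss, after which each fold is controlled by conditioning on its own $\mathcal G_k$. The remaining care is in the chain of constant-factor comparisons among $\Psi^{(k)}_1,\Psi^{(k)}_2,\Delta_k,D_{n,-k}$ on $E_k$, arranged so that the per-fold signal-to-noise ratio collapses to $\sqrt{n_\te\Psi_{2,n_\tr}}$, precisely the quantity assumed to diverge.
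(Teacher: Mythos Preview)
Your proof is correct and uses essentially the same approach as the paper: decompose the per-fold risk difference into a signal $n_\te\Delta_k$ plus a mean-zero residual, bound the conditional variance by $\lesssim n_\te(\Psi_1^{(k)}+\Psi_2^{(k)})$ via the $\|\delta^{(r)}\|_\infty\le B$ assumption, apply Chebyshev conditionally on the training fold, and reduce to a per-fold statement via $K=O(1)$. The only cosmetic difference is that the paper proves $\mathbb P(\hat R_{\ssp,1}^{(k)}\ge\hat R_{\ssp,2}^{(k)})\to 0$ directly for each fold and then union-bounds, whereas you work with the total $W$ and pigeonhole back to a single fold; both routes use \Cref{pro:3.1-conseq-risk-dom} in the same way to collapse the signal-to-noise ratio to $n_\te\Psi_{2,n_\tr}$.
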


Before proving \Cref{thm:3.1-yang-consistency}, we briefly discuss the additional conditions stated above.  The boundedness condition $\|\hat f^{(r)}(\D_n)-f^*\|_\infty\le B$ is more or less standard, and is needed to establish rates of convergence for empirical approximations of the excess risks using the test sample. It can be relaxed to $\|\hat f^{(r)}(\D_n)-f^*\|_{4,Z}^4\le B^2 \|\hat f^{(r)}(\D_n)-f^*\|_{2,Z}^2$, where $\|f\|_{q,Z}^q=\mathbb E_Z |f(Z)|^q$, which is plausible whenever $\hat f^{(r)}(\D_n)(Z_0)-f^*(Z_0)=O_P(1)$ and is not too heavy-tailed. The second condition $n_{\te}\Psi_{2,n_\tr}\stackrel{P}{\rightarrow}\infty$ is more nontrivial and specifies a requirement on the test and training split ratio determined by the convergence rate of the inferior model. Intuitively, the test sample evaluates the empirical risk with a standard error rate $\sqrt{\Psi_{2,n_\tr}}/\sqrt{n_\te}$. Then we need $\Psi_{2,n_\tr}-\Psi_{1,n_\tr}$, which is asymptotically the same order as $\Psi_{2,n_\tr}$, to dominate this risk evaluation error.  We illustrate the effect of split ratio in a simple example below, which also reflects the tightness of the condition \eqref{eq:3.1-psi2-lower}.  Further extensions, such as other sample splitting schemes and diverging $K$, and the implications of \Cref{thm:3.1-yang-consistency} in traditional parametric and nonparametric regression settings, are discussed in Remarks \ref{rem:3.1-arbitrary-split} and \ref{rem:3.1-diverging-K}.

\begin{example}\label{exa:3.1-simple-example}
    Consider the following comparison of two models: $Y_i\sim N(0,1)$,
    $\hat f^{(1)}\equiv 0$, and $\hat f^{(2)}(\D_n) \equiv n^{-1}\sum_{i=1}^n Y_i$. So both $\hat f^{(1)}$ and $\hat f^{(2)}$ are constant functions, where $\hat f^{(1)}$ uses the constant $0$ and $\hat f^{(2)}$ uses the sample average in $\D_n$. In this case, $f^*=\hat f^{(1)}$.  By construction, $\Psi_{1,n}=0$ for all $n$ and $\Psi_{2,n}=(n^{-1}\sum_{i=1}^n Y_i)^2\asymp 1/n$. Let $\hat R_{\ssp,r}$ ($r=1,2$) be the contribution to the CV risk estimate from a single test sample fold, defined in \eqref{eq:3.1-r_ss} below.  It is straightforward to verify that
    $$n_\te(\hat R_{\ssp,2}-\hat R_{\ssp,1})=-2\bar Y_\te\bar Y_\tr + \bar Y_\tr^2\,,$$
    and $\mathbb P(\hat R_{\ssp,2}-\hat R_{\ssp,1}\le 0)\rightarrow 0$ if and only if \eqref{eq:3.1-psi2-lower} holds.
\end{example}

\begin{proof}[Proof of \Cref{thm:3.1-yang-consistency}]
We drop the dependence on $n$ whenever there is no confusion.
    We focus on such a single sample split with $\D_\tr$ and $\D_\te$ denoting the fitting and validation data, with corresponding index sets $I_\tr$, $I_\te$.   

For $r=1,2$, use simplified notation
$$\hat f_{\tr}^{(r)}=\hat f^{(r)}(\D_\tr)\,,~~~ \delta_r=\hat f_{\tr}^{(r)}-f^*\,.$$
Observe that $\mathbb E_Z \delta_r^2(Z)=\|\delta_r\|_{2,Z}^2=\Psi_{r,n}$.
For $r=1,2$,
consider the sample-split risk estimate $\hat R_{\ssp,r}$ defined as
\begin{equation}\label{eq:3.1-r_ss}\hat R_{\ssp, r}\coloneqq n_\te^{-1}\sum_{i\in I_\te}\ell(X_i,\hat f_{\tr}^{(r)})= n_\te^{-1}\sum_{i\in I_\te} \left[\epsilon_i^2 + 2\epsilon_i \delta_r(Z_i) +\delta_r^2(Z_i)\right]\,.
\end{equation}
Further define,
$$
\xi_i = 2\epsilon_i[\delta_1(Z_i)-\delta_2(Z_i)]+\delta_1^2(Z_i)-\delta_2^2(Z_i)-\Psi_{1,n_\tr} + \Psi_{2,n_\tr}\,.
$$
Our argument is based on the following two facts about $\xi_i$.

Fact (i): Conditioning on $\D_\tr$, $(\xi_i:i\in I_\te)$ are i.i.d. with mean zero.

Fact (ii), $\hat R_{\ssp,1}\ge \hat R_{\ssp,2}$ if and only if
\begin{equation}\label{eq:3.1-violation}
    \sum_{i\in I_\te} \xi_i \ge n_\te (\Psi_{2,n_\tr}-\Psi_{1,n_\tr})\,.
\end{equation}

The plan is to use Tchebyshev's inequality to bound the probability of the event in \eqref{eq:3.1-violation}. We first need to control the conditional variance of $\xi_i$ given $\D_\tr$.  
\begin{align}
    {\rm Var}(\xi_i|\D_\tr) = & {\rm Var}\left\{2\epsilon_i[\delta_1(Z_i)-\delta_2(Z_i)]+\delta_1^2(Z_i)-\delta_2^2(Z_i)\big|\D_\tr\right\}\nonumber\\
    = & 4 {\rm Var}\left\{\epsilon_i[\delta_1(Z_i)-\delta_2(Z_i)]\big|\D_\tr\right\}+{\rm Var}\left\{\delta_1^2(Z_i)-\delta_2^2(Z_i)\big|\D_\tr\right\}\nonumber\\
    \le & (4 \sigma^2+4B^2)\mathbb E\left\{[\delta_2(Z_i)-\delta_1(Z_i)]^2|\D_\tr\right\}\nonumber\\
    \le & 8(\sigma^2+B^2) (\Psi_{2,n_\tr}+\Psi_{1,n_\tr})\,,\label{eq:3.1-cond-var-bound}
\end{align}
where the first inequality uses $\mathbb E(\epsilon_i|Z_i)=0$ and the boundedness of $\delta_1+\delta_2$, and the second inequality follows from Cauchy–Schwartz.

Let $\mathcal E$ be the event that $\Psi_{2,n_\tr}-\Psi_{1,n_\tr}>0$.
Then
\begin{align}
\mathbb P(\hat R_{\ssp,1}\ge\hat R_{\ssp,2})\le & \mathbb P(\hat R_{\ssp,1}>\hat R_{\ssp,2}\cap\mathcal E)+\mathbb P(\mathcal E^c)\nonumber\\
=&\mathbb E\left[\mathbb P\left(\sum_{i\in I_\te} \xi_i \ge n_\te (\Psi_{2,n_\tr}-\Psi_{1,n_\tr})\bigg|\D_\tr\right)\mathds{1}(\E)\right]+\mathbb P(\E^c)\,,\label{eq:3.1-err-prob-upper}
\end{align}
because $\mathcal E$ is $\D_\tr$-measurable.

Using Tchebyshev's inequality conditionally given $\D_\tr$ we have
\begin{align*}
   & \mathbb P\left(\sum_{i\in I_\te} \xi_i \ge n_\te (\Psi_{2,n_\tr}-\Psi_{1,n_\tr})\bigg|\D_\tr\right)\mathds{1}(\E)\le  \frac{n_\te{\var}(\xi_i|\D_\tr)}{n_\te^2(\Psi_{2,n_\tr}-\Psi_{1,n_\tr})^2}\mathds{1}(\E)\\
    \le & \frac{8(\sigma^2+B^2)(\Psi_{1,n_\tr}+\Psi_{2,n_\tr})}{n_\te(\Psi_{2,n_\tr}-\Psi_{1,n_\tr})^2}\mathds{1}(\E)\\
    =& O_P\left(\frac{1}{n_\te\Psi_{2,n_\tr}}\right)\\
    =& o_P(1)\,,
\end{align*}
where the second inequality follows from the conditional variance bound \eqref{eq:3.1-cond-var-bound}, the $O_P(\cdot)$ term follows from part~(c) of \Cref{pro:3.1-conseq-risk-dom}, and the $o_P(\cdot)$ term follows from the theorem assumption \eqref{eq:3.1-psi2-lower}.

As a result,
\begin{equation}
\label{eq:3.1-err-prob-upper-1}\mathbb E\left[\mathbb P(\hat R_{\ssp,1}>\hat R_{\ssp,2}|\D_\tr)\mathds{1}_{\E}\right]=o(1)\,,\end{equation}
where the exchange of expectation and limit is guaranteed by the boundedness of $\mathbb P(\hat R_{\ssp,1}>\hat R_{\ssp,2}|\D_\tr)\mathds{1}_{\E}$.

Finally, part (b) of \Cref{pro:3.1-conseq-risk-dom} guarantees that $\mathbb P(\E^c)=o(1)$. Combining this with \eqref{eq:3.1-err-prob-upper-1} we conclude that the RHS of \eqref{eq:3.1-err-prob-upper} is $o(1)$. 
\end{proof}

\begin{remark}[Other variants of $K$-fold CV]\label{rem:3.1-arbitrary-split}
    \Cref{thm:3.1-yang-consistency} also applies to other CV variants than the standard $K$-fold CV, such as repeated random split, where $I_\tr$ is repeatedly generated by uniformly sampling $n_\tr$ indices from $[n]$ without replacement, and $I_\te=[n]\setminus I_\tr$.  Another example is ``reversed $K$-fold CV'', which uses the same $K$-fold split but $I_\tr=I_{n,k}$ and $I_\te=I_{n,-k}$ for $k\in[K]$.
\end{remark}

\begin{remark}[Large values of $K$]\label{rem:3.1-diverging-K}
    The only reason to require $K=O(1)$ in \Cref{thm:3.1-yang-consistency} is to allow a simple union bound so that the consistency of a single split carries over to the $K$-split case.  If specific rates of convergence are available in conditions~\eqref{eq:3.1-risk-dom} and \eqref{eq:3.1-psi2-lower}, then we can allow $K$ to grow slowly, as permitted by the union bound argument.
\end{remark}

\paragraph{Implications and practical concerns.}
We briefly discuss more concrete versions of the sufficient conditions implied by \Cref{thm:3.1-yang-consistency} in standard parametric and nonparametric regression models.   
\begin{enumerate}
    \item Parametric case. If both models are parametric and correctly specified, then $\Psi_{2,n}\asymp 1/n$ and \Cref{thm:3.1-yang-consistency} requires $n_\te/n_\tr\rightarrow\infty$, which turns out to be tight, as seen in \Cref{exa:3.1-simple-example}. Such a split ratio is not achievable by any $K$-fold split but is achievable by the reversed $K$-fold split with a diverging $K$. 
    \item Nonparametric case. When $\Psi_{2,n}\asymp n^{-\alpha}$ for $\alpha\in[0,1)$, \Cref{thm:3.1-yang-consistency} implies that $n_\te=\omega( n^{\alpha})$ is sufficient for model selection consistency using sample splitting and hence $K$-fold CV with a constant $K$.
\end{enumerate}
In practice, reversed $K$-fold cross-validation (CV) is rarely used or recommended because it severely reduces the training sample size. The principal reason is that such an extreme split ratio is only required to guarantee model selection consistency when squared loss risks differ on the order of $O(n^{-1})$---a regime of limited practical relevance when prediction accuracy, rather than model identification, is the primary objective. CV is more commonly used to tune nonparametric and high-dimensional procedures, for which it is important to keep the training sample size close to the full sample. In the next two subsections, we introduce a variant of CV that achieves strong model selection properties under conventional split ratios. In \Cref{sec:5-applications}, we present methods based on model confidence sets that accommodate weak separation among candidate models. We further discuss the choice of split ratio in \Cref{subsec:7.2-choose-k}.

\subsection{Online Model Selection}\label{subsec:3.2-online-model-selection}
In this subsection, we introduce the online model selection problem, setting the stage for a variant of cross-validation tailored for the online setting.

So far, we have only considered the batch setting, where the fitting algorithm $\hat f$ takes in a static dataset $\D_n$ of an arbitrary sample size $n$ and outputs $\hat f(\D_n)\in\mathcal F$.  The online setting, which becomes increasingly popular in large-scale machine learning, differs from the batch setting with profound algorithmic and computational consequences.
\begin{definition}[Online Estimator]\label{def:3.2-online-alg}
Let $\X$ be the sample space of a single data point, and $\mathcal F$ a parameter space.
An online algorithm $\hat f=(\hat f_n:n\ge 1)$ is an infinite sequence of mappings such that
\begin{enumerate}
    \item [(a)] $\hat f_1:\X \mapsto \mathcal F$\,,
    \item [(b)] For $i\ge 2$, $\hat f_i:\mathcal F\times \X \mapsto \mathcal F$\,,
    \item [(c)] After obtaining $\hat f_i$, the sample points $\{X_j:j\le i\}$ and previous estimates $\{\hat f_j:j<i\}$ are not used again by the algorithm in the future.
\end{enumerate}
\end{definition}
An online estimator $\hat f=(\hat f_i:i\ge 1)$ takes in a (possibly infinite) stream of sample points $(X_i:i\ge 1)$ in $\X$, with an initial estimate $\hat f_0$, and updates the estimate from $\hat f_i$ to $\hat f_{i+1}=\hat f_{i+1}(\hat f_i, X_{i+1})$ after seeing the $(i+1)$th sample point ($i\ge 0$).  Part (c) ensures that the online algorithm satisfies the ``single-pass'' (or constant memory) property, meaning that the sample points and estimates are only processed at the time of sampling and will not be stored.  This does not exclude the popular Polyak averaging estimate, which outputs the final estimate as the average of $(\hat f_i:i\ge 1)$, because we only need to store the current average estimate, which costs a constant amount of memory.

Online algorithms often come with great computational efficiency and play a crucial role in modern machine learning.
Perhaps the most important example of such online estimators is stochastic gradient descent.
\begin{example}[Online Stochastic Gradient Descent]\label{exa:3.2-sgd}
    Suppose the estimation target is $f^*=\arg\min \mathbb E_X\varphi(f,X)$, for an objective function $\varphi(\cdot,\cdot)$. The stochastic gradient descent estimate $\hat f_n$ estimates $f^*$ by minimizing the empirical average objective function $\sum_{i=1}^n \varphi(f,X_i)$ using an online update scheme.
    \begin{enumerate}
        \item [(i)] Initialize $\hat f_0=0$
        \item [(ii)] For each $i\ge 1$, $\hat f_i = \hat f_{i-1}-\alpha_i\dot\varphi(\hat f_{i-1},X_i)$, where $\dot\varphi(\cdot,\cdot)$ is the partial gradient of $\varphi(\cdot,\cdot)$ with respect to the first argument.
    \end{enumerate}
    The sequence $(\alpha_i:i\ge 1)$ specifies the learning rate at each step.
\end{example}

To provide a more concrete example, and better connect to recent literature, we consider the following variant of ``sieve-SGD'' tailored for nonparametric regression under squared loss, where each sample point $X=(Y,Z)$, with $Y\in\mathbb R^1$ and $\mathcal F$ being a class of functions mapping from $\mathcal Z$ to $\mathbb R^1$. Let $(\phi_j:j\ge 1)$ be a set of orthonormal basis functions in $L_{2,Z}$. The sieve-SGD initializes $\hat f_0=0$ and the update from $\hat f_{i-1}$ to $\hat f_{i}$ is:
\begin{equation}\label{eq:3.2-sieve-sgd}
    \hat f_{i} = \hat f_{i-1}+\alpha_{i}\left[Y_i - \hat f_{i-1}(Z_i)\right]\sum_{j=1}^{J_i}j^{-w}\phi_j(Z_i)\phi_j\,.
\end{equation}
Here $\alpha_i$ is the learning rate, $J_i\in \mathbb N$ is the number of basis functions used at sample size $i$, and $w$ is a shrinkage exponent.  It has been shown in \cite{zhang2022sieve} that with suitable choices of $(\alpha_i,J_i)$ and $w$, the sieve-SGD estimate nearly achieves the minimax optimal error rate over Sobolev classes of functions.

When model selection is of interest, a fundamental difference emerges between the batch setting and the online setting.  In the batch setting, the evaluation criterion for model selection is often tied to a particular sample size, which is typically $n$ but could also be $n_\tr$ such as in $K$-fold cross-validation.  The batch cross-validation can approximate the quantities $\mu_{n_\tr}$ and $\bar R_{\cv,n,K}$ because the algorithm can access versions of $\hat f(\D_{n_\tr})$ through $\hat f(\D_{n,-k})$, and repeatedly evaluate their losses at many independent sample points.   This is not possible in the online setting, where, as specified in \Cref{def:3.2-online-alg}, for each sample size $i$, the loss function of the estimate $\hat f_i$ at sample size $i$ can only be evaluated once using $\ell(X_{i+1}, \hat f_i)$. As a result, the model selection criterion in the online setting must have the following two distinct features.
\begin{enumerate}
    \item The model selection criterion in the online setting needs to be asymptotic, as it must describe the behavior of an infinite sequence.
    \item As a consequence, in the online setting, a candidate in the model selection problem must be a sequence itself, which specifies the update rule for each sample size.
\end{enumerate}
Given these thoughts, suppose we have $m$ online algorithms, $(\hat f_i^{(r)}:i\ge 1)$ for $r=1,...,m$, we can define the online model selection target as the $r^*\in[m]$ such that, assuming its existence, 
\begin{equation}\label{eq:3.2-online-criterion}
\max_{r\neq r^*}\limsup_{i\rightarrow\infty}\frac{\mathbb E \left[\Psi_{r^*,i}\right]}{\mathbb E \left[\Psi_{r,i}\right]} < 1\,,
\end{equation}
where $\Psi_{r,i}$ is the excess risk of the fitted model $\hat f_i^{(r)}$, as defined in \eqref{eq:3.1-excess-risk} in the squared loss regression setting.

Such an optimal $r^*$ naturally exists in many nonparametric regression settings. Consider the sieve-SGD example in \eqref{eq:3.2-sieve-sgd}. For simplicity, suppose we only need to choose the number of basis $(J_i:i\ge 1)$. Existing theory \cite{zhang2022sieve} suggests the optimal choice is $J_i=B i^\tau$ for a pair of positive constants $(B,\tau)$ that depends on the function class.  Thus, the candidate set of $m$ online algorithms correspond to $m$ distinct combinations $\{(B^{(r)},\tau^{(r)}):r=1,...,m\}$, where for each $r\in[m]$, the corresponding online algorithm uses the update rule \eqref{eq:3.2-sieve-sgd} with $J_i=B^{(r)}i^{\tau^{(r)}}$.

In this example, an online model selection procedure should point us to a choice of sequence $\{J_i^{(r)}:i\ge 1\}$ that will eventually become the best for each $i$ when the sample size keeps increasing. In contrast, standard batch model selection methods only focus on a single sample size and do not suggest a good choice of model for a different sample size. For example, if batch model selection returns a choice of $J=10$ at 
sample size $100$, it is unclear whether this $J$ comes from a $(B,\tau)$ combination of $(1,~1/2)$ or $(\sqrt{10},~1/4)$, which would lead to different choices of $J$ for other values of sample size.

\subsection{Rolling Validation}\label{subsec:3.3-rv}
In this subsection, we present an online version of the LOOCV model selection method, and establish consistency results under the same linear regression setting with the squared loss as in \Cref{subsec:3.1-yang07}.

Recall that in the regression model, we have $X_i=(Y_i,Z_i)$ such that $Y_i=f^*(Z_i)+\epsilon_i$ with zero-mean noise $\epsilon_i$ independent of $Z_i$.  The loss function is $\ell(x,f) = (y-f(z))^2$.

Suppose we have two sequences of online estimates of $f^*$: $(\hat f_i^{(r)}:i\ge 1)$ for $r\in\{1,2\}$ satisfying \Cref{def:3.2-online-alg}.
  Again, the excess risk $\Psi_{r,n}$, defined in \eqref{eq:3.1-excess-risk}, will play an important role.   
Unlike in the batch setting, where we used the stochastic version of model quality gap (\Cref{asm:3.1-stochastic_domination}), in the online setting we will use a more transparent approach, where the model quality gap is stated in the expected excess risks
\begin{equation}\label{eq:3.3-expected-excess-risk}\psi_{r,n}\coloneqq\mathbb E \Psi_{r,n}\,.\end{equation}
We will assume that the expected excess risks converge at a certain rate, serving the purpose of model quality comparison.
\begin{assumption}[Model quality]\label{asm:3.3-rolling_model_quality} There exist $0\le a_2\le a_1\le 1$ such that
$$\limsup_{n\rightarrow\infty} n^{a_1}\psi_{1,n}<
\liminf_{n\rightarrow\infty} n^{a_2}\psi_{2,n}\le \limsup_{n\rightarrow\infty}n^{a_2}\psi_{2,n}<\infty\,.$$
\end{assumption}
\Cref{asm:3.3-rolling_model_quality} involves a chain of three inequalities. The most crucial one is the first inequality, which implies that the expected excess risk of model $1$ is less than that of model $2$ with a sufficient gap.  When the two models have the same rates of convergence ($a_1=a_2$), the strict inequality ensures that model $1$ is better than model $2$ by at least a constant fraction, similar to \Cref{asm:3.1-stochastic_domination}. A notable difference is that \Cref{asm:3.3-rolling_model_quality} makes assumptions about the expected excess risks, which are deterministic quantities. In contrast, \Cref{asm:3.1-stochastic_domination} is about the risks of fitted models, which are random quantities.  

While it is more convenient to describe the model quality gap in terms of the expected excess risk $\psi_{r,n}$, the randomness of online model fittings needs to be controlled in order to guarantee concentration of $\Psi_{r,n}$ around $\psi_{r,n}$.  This will be achieved through stability conditions in \Cref{asm:3.3-rolling_stability} below.
\begin{assumption}[Estimator stability]\label{asm:3.3-rolling_stability} There exist positive constants $c$, $b_1$, and $b_2$ such that, for $j\le i$, $r=1,2$,
\begin{equation}\label{eq:3.3-wrv-stability}
    \mathbb E\left\{\left[\nabla_j \hat f^{(r)}_i(X_{i+1})\right]^2\bigg|\D_j\right\}\le c i^{-2 b_r}\,,~~\text{almost surely}\,.
\end{equation}
\end{assumption}
Recall that $\nabla_j$ is the perturb-$j$th-entry operator defined in \eqref{eq:2.2-nabla}. If we ignore the conditional expectation, which makes the $L_2$-norm smaller, \eqref{eq:3.3-wrv-stability} roughly requires that the value of $\hat f_i$ at a future sample point change no more than $i^{-b_r}$ when one of the fitting sample points is replaced by an i.i.d.\ copy.  Intuitively, if each fitting sample point contributes equally to the estimate, then the influence of a single sample point on the outcome should be of order $i^{-1}$ and such a stability condition can be plausible if $b_r\le 1$. The almost-sure bound condition can be relaxed to $L_q$ norm bounds with different choices of conjugate indices in H\"{o}lder's inequality, together with stronger moment conditions on the risk function.  We provide examples satisfying \eqref{eq:3.3-wrv-stability} in \Cref{subsec:6.4-stab-online}.

We consider the following online \emph{rolling validation}  risk estimate.
\begin{equation}\label{eq:3.3-rv-stat}
\Xi_{r,n} \coloneqq \sum_{i=1}^n \ell(X_{i+1},\hat f^{(r)}_{i})\,.
\end{equation}
The RV risk estimate has some promising features.
\begin{enumerate}
    \item The RV risk estimate $\Xi_{r,n}$ resembles the leave-one-out cross-validation, as each fitted model $\hat f_i^{(r)}$---fitted using all sample points up to time $i$---is evaluated at one independent sample point $X_{i+1}$.
    \item The RV risk estimate $\Xi_{r,n}$ is fully compatible with the online learning protocol described in \Cref{def:3.2-online-alg}, as one can obtain $\Xi_{r,n+1}$ from $(\Xi_{r,n},X_{n+1},\hat f_n^{(r)})$ without requiring access to any $\Xi_{r,i}$, $X_i$, or $\hat f_i^{(r)}$ for $i<n$.
    \item When $n$ becomes large, $\Xi_{r,n}$ is increasingly representative of the asymptotic behavior of $\Psi_{r,n}$ and $\psi_{r,n}$, thereby serving the purpose of online model selection objective in \eqref{eq:3.2-online-criterion}.
\end{enumerate}
The intuition is that, if the fitting procedure does not too heavily rely on any single sample point, as implied by the stability condition \eqref{eq:3.3-wrv-stability}, then the contribution of each single sample point to $\Psi_{r,i}$ will vanish as $i$ increases.  Thus each single sample point still only contributes a small proportion of the sum $\Xi_{r,n}$, making the sum concentrate around its expected value.  This is formally stated in the following theorem.
\begin{theorem}[Model Selection Consistency of Rolling Validation]\label{thm:3.3-wrv-consistency}
Under the regression model \eqref{eq:3.1-reg-model} and the squared loss function,
assume that the function space is uniformly bounded: $\|f(Z)\|_\infty\le B<\infty$ for all $f\in\mathcal F$.
   If \Cref{asm:3.3-rolling_model_quality} and \Cref{asm:3.3-rolling_stability} hold with \begin{align*}
         b_1> &\frac{1}{2}+a_2-\frac{a_1}{2}\,,  \\
         b_2> & \frac{1}{2}+\frac{a_2}{2} \,,
\end{align*}
then the rolling validation risks $\Xi_{r,n}$ defined in \eqref{eq:3.3-rv-stat} satisfy
$$
\lim_{n\rightarrow \infty}\mathbb P(\Xi_{1,n}<\Xi_{2,n})= 1\,.
$$
\end{theorem}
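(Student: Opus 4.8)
The plan is to reduce the claim, via Chebyshev's inequality, to second‑moment bounds on three martingale remainders. Writing $\delta_i^{(r)}=\hat f_i^{(r)}-f^*$ and expanding the squared loss in \eqref{eq:3.3-rv-stat}, the difference of the two rolling‑validation risks telescopes with the noise‑squared terms $\epsilon_{i+1}^2$ cancelling \emph{exactly}:
\[
\Xi_{1,n}-\Xi_{2,n}=\sum_{i=1}^n W_i,\qquad W_i=-2\epsilon_{i+1}\big[\delta_i^{(1)}-\delta_i^{(2)}\big](Z_{i+1})+\big[(\delta_i^{(1)})^2-(\delta_i^{(2)})^2\big](Z_{i+1}).
\]
This cancellation is essential, since $\sum_i\epsilon_{i+1}^2$ alone fluctuates on the order of $n$. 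Because $\mathbb E[\epsilon_{i+1}\mid Z_{i+1}]=0$, conditioning on $\D_i$ gives $\mathbb E[W_i\mid\D_i]=\Psi_{1,i}-\Psi_{2,i}$, whence
\[
\Xi_{1,n}-\Xi_{2,n}=-\mu_n+M_n^{\mathrm{ev}}+M_n^{\mathrm{est},1}-M_n^{\mathrm{est},2},
\]
with $\mu_n=\sum_{i=1}^n(\psi_{2,i}-\psi_{1,i})$, $M_n^{\mathrm{ev}}=\sum_{i}\big(W_i-\mathbb E[W_i\mid\D_i]\big)$ (``evaluation noise''), and $M_n^{\mathrm{est},r}=\sum_{i}(\Psi_{r,i}-\psi_{r,i})$ (``estimation noise''). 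The first inequality of \Cref{asm:3.3-rolling_model_quality} yields $\psi_{2,i}-\psi_{1,i}\gtrsim i^{-a_2}$ for all large $i$, so $\mu_n\gtrsim\sum_{i\le n}i^{-a_2}\to\infty$. It therefore suffices to show $\mathbb E[(M_n^{\mathrm{ev}})^2]$ and $\mathbb E[(M_n^{\mathrm{est},r})^2]$ ($r=1,2$) are all $o(\mu_n^2)$; a union bound and Chebyshev then give $\mathbb P(\Xi_{1,n}\ge\Xi_{2,n})\to0$.

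The evaluation noise needs no stability. The summands $W_i-\mathbb E[W_i\mid\D_i]$ are a martingale difference sequence for the filtration $(\sigma(\D_{i+1}))_i$, so $\mathbb E[(M_n^{\mathrm{ev}})^2]=\sum_i\mathbb E[(W_i-\mathbb E[W_i\mid\D_i])^2]\le\sum_i\mathbb E[W_i^2]$. Using that $\mathcal F$ is uniformly bounded (so $\hat f_i^{(1)}-\hat f_i^{(2)}$ and $\hat f_i^{(1)}+\hat f_i^{(2)}$ have sup‑norm $\le 2B$), $\mathbb E[\epsilon\mid Z]=0$, $\mathrm{Var}(\epsilon)=\sigma^2$, and $\|\delta_i^{(r)}\|_{2,Z}^2=\Psi_{r,i}$, a routine computation gives $\mathbb E[W_i^2]\lesssim \psi_{1,i}+\psi_{2,i}\lesssim i^{-a_2}$ (the last step by \Cref{asm:3.3-rolling_model_quality} together with $a_1\ge a_2$). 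Hence $\mathbb E[(M_n^{\mathrm{ev}})^2]\lesssim\sum_{i\le n}i^{-a_2}\asymp\mu_n=o(\mu_n^2)$.

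For the estimation noise, apply the martingale decomposition \eqref{eq:2.5-nabla-telescope} to $T_n^{(r)}:=\sum_{i\le n}\Psi_{r,i}$: $M_n^{\mathrm{est},r}=\sum_{k=1}^n\mathbb E[\nabla_kT_n^{(r)}\mid\D_k]$ with $\nabla_kT_n^{(r)}=\sum_{i\ge k}\nabla_k\Psi_{r,i}$ (the terms $i<k$ vanish), again a martingale difference sequence, so $\mathbb E[(M_n^{\mathrm{est},r})^2]=\sum_k\big\|\mathbb E[\nabla_kT_n^{(r)}\mid\D_k]\big\|_2^2\le\sum_k\big(\sum_{i\ge k}\big\|\mathbb E[\nabla_k\Psi_{r,i}\mid\D_k]\big\|_2\big)^2$. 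The crux is the increment bound $\big\|\mathbb E[\nabla_k\Psi_{r,i}\mid\D_k]\big\|_2\lesssim i^{-\gamma_r}$ with $\gamma_r:=b_r+a_r/2$. To prove it, write $\nabla_k\Psi_{r,i}=\langle\nabla_k\hat f_i^{(r)},\,\delta_i^{(r)}+\delta_i^{(r),k}\rangle_{L_{2,Z}}$ (superscript $k$ denoting the perturb-$k$ copy), so Cauchy--Schwarz gives $|\nabla_k\Psi_{r,i}|\le\|\nabla_k\hat f_i^{(r)}\|_{2,Z}\big(\Psi_{r,i}^{1/2}+(\Psi_{r,i}^k)^{1/2}\big)$; take $\mathbb E[\cdot\mid\D_k]$ and apply Cauchy--Schwarz \emph{inside} this conditional expectation, so that \eqref{eq:3.3-wrv-stability} controls $\mathbb E[\|\nabla_k\hat f_i^{(r)}\|_{2,Z}^2\mid\D_k]=\mathbb E[(\nabla_k\hat f_i^{(r)}(X_{i+1}))^2\mid\D_k]\le c\,i^{-2b_r}$ (using that $X_{i+1}$ is a fresh draw); finally take $L_2$-norms over $\D_k$, where the remaining factor averages to $(\mathbb E\Psi_{r,i})^{1/2}+(\mathbb E\Psi_{r,i}^k)^{1/2}=2\psi_{r,i}^{1/2}\lesssim i^{-a_r/2}$ by \Cref{asm:3.3-rolling_model_quality}. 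Summing, $\sum_{i\ge k}i^{-\gamma_r}$ is of order $n^{1-\gamma_r}$, $\log n$, or $k^{1-\gamma_r}$ according as $\gamma_r<1$, $=1$, or $>1$, and in all cases $\mathbb E[(M_n^{\mathrm{est},r})^2]\lesssim n^{3-2\gamma_r}$ up to logarithmic factors (and is $O(1)$ once $\gamma_r>3/2$). Since $\mu_n^2\asymp\big(\sum_{i\le n}i^{-a_2}\big)^2$, the requirement $\mathbb E[(M_n^{\mathrm{est},r})^2]=o(\mu_n^2)$ is precisely $\gamma_r>\tfrac12+a_2$, i.e.\ $b_2>\tfrac12+\tfrac{a_2}{2}$ and $b_1>\tfrac12+a_2-\tfrac{a_1}{2}$ — the theorem's hypotheses (in the boundary case $a_2=1$ these force $\gamma_r>\tfrac32$, so $\mathbb E[(M_n^{\mathrm{est},r})^2]=O(1)=o((\log n)^2)$).

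Combining the three bounds, Chebyshev gives $\mathbb P(\Xi_{1,n}\ge\Xi_{2,n})\le 9\mu_n^{-2}\big(\mathbb E[(M_n^{\mathrm{ev}})^2]+\mathbb E[(M_n^{\mathrm{est},1})^2]+\mathbb E[(M_n^{\mathrm{est},2})^2]\big)\to0$, which is the claim. The main obstacle is the crux bound on $\|\mathbb E[\nabla_k\Psi_{r,i}\mid\D_k]\|_2$: one must marry the stability rate $i^{-b_r}$ with the excess‑risk rate $i^{-a_r/2}$ \emph{without} squaring away the $\psi_{r,i}^{1/2}$ gain, which is what forces the two‑stage conditional Cauchy--Schwarz above — a single global Cauchy--Schwarz would yield only $i^{-\gamma_r/2}$ and miss the stated exponents. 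Everything else (the $\mathbb E[W_i^2]$ estimate and the bookkeeping of the nested sums across the regimes $\gamma_r<1,\,=1,\,>1$ and the boundary $a_2=1$) is routine.
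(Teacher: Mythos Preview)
Your proposal is correct and follows essentially the same argument as the paper. The only organizational differences are cosmetic: the paper splits your $M_n^{\mathrm{ev}}$ into two pieces (the $\epsilon_{i+1}$ cross term and the $\delta^2$-difference term) and treats your $M_n^{\mathrm{est},1}-M_n^{\mathrm{est},2}$ together via $U_i=\delta_{1,i}^2-\delta_{2,i}^2$ evaluated at $Z_{i+1}$, whereas you pre-integrate over $Z_{i+1}$ to work with $\Psi_{r,i}$ directly and handle the two estimators separately; the martingale decomposition of the estimation noise, the crux bound via conditional Cauchy--Schwarz paired with the stability assumption, and the final bookkeeping across the regimes $\gamma_r\lessgtr 1$ are identical.
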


\begin{proof}[Proof of \Cref{thm:3.3-wrv-consistency}]
Denote $\delta_{r,n}=(\hat f^{(r)}_n-f^*)(Z_{n+1})$, $U_n=\delta_{1,n}^2-\delta_{2,n}^2$. We have
\begin{align*}
&\Xi_{1,n}-\Xi_{2,n}\\=&\sum_{i=1}^n\left[ U_i-2\epsilon_{i+1}(\delta_{1,i}-\delta_{2,i})\right]\\
=& \underbrace{-2\sum_{i=1}^n \epsilon_{i+1}(\delta_{1,i}-\delta_{2,i})}_{(I)}+\underbrace{\sum_{i=1}^n\left[ U_i-\mathbb E(U_i|\D_{i})\right]}_{(II)}+\underbrace{\sum_{i=1}^n\left[\mathbb E(U_i|\D_{i}) - \mathbb E U_i\right]}_{(III)}+\sum_{i=1}^n (\psi_{1,i}-\psi_{2,i})\,,
\end{align*}
where, recall the notation $\D_i=(X_j:1\le j\le i)$.
The first three terms are mean zero noises, and the last term is the signal, which will be negative for large values of $n$.  The plan is to derive upper bounds for each of the noise terms so that their sum is unlikely to override the signal.  In this proof, since the claimed result is consistency, we will not keep track of all constant factors. To simplify notation, we will use ``$\lesssim$'' to denote that the inequality holds up to a constant factor.

\textbf{Controlling term (I):} Term (I) has zero mean and martingale increments, and can be bounded similarly as in the proof of \Cref{thm:3.1-yang-consistency}:
\begin{align*}
    \var\left(\sum_{i=1}^n\epsilon_{i+1}(\delta_{1,i}-\delta_{2,i})\right)=&\sigma^2\sum_{i=1}^n\mathbb E (\delta_{1,i}-\delta_{2,i})^2\\
    \le & 2\sigma^2 \sum_{i=1}^n (\psi_{1,i}+\psi_{2,i})\\
     \lesssim & \sum_{i=1}^n i^{-a_1}+i^{-a_2}\\
     \lesssim & n^{1-a_2}\vee\log n\,,
\end{align*}
since $a_2\le a_1$ by \Cref{asm:3.3-rolling_model_quality}.

\textbf{Controlling term (II):} Term (II) also has zero mean and martingale increments.
\begin{align*}
\var\left(\sum_{i=1}^n\left[ U_i-\mathbb E(U_i|\D_{i})\right]\right)=&\sum_{i=1}^n \mathbb E\left[ U_i-\mathbb E(U_i|\D_{i})\right]^2\\
\le & \sum_{i=1}^n \mathbb E U_i^2\le 2B^2\sum_{i=1}^n(\psi_{1,i}+\psi_{2,i})\\
\lesssim & n^{1-a_2}\vee \log n\,.
\end{align*}

\textbf{Controlling term (III):} Term (III) does not have martingale increments.  Each term $\mathbb E(U_i|\D_{i})-\mathbb E U_i$ is a function of $\D_i$, which involves all sample points $(X_j:1\le j\le i)$.
We will use a further martingale decomposition of this term:
\begin{align*}
&  \sum_{i=1}^n  \left[\mathbb E(U_i|\D_i)-\mathbb E U_i\right]\\
=& \sum_{i=1}^n \left[\sum_{j=1}^i \left\{\mathbb E(U_i|\D_j)-\mathbb E(U_i|\D_{j-1})\right\}\right]\\
=&\sum_{j=1}^n \sum_{i=j}^n\left[\mathbb E(U_i|\D_j)-\mathbb E(U_i|\D_{j-1})\right]\\
\coloneqq& \sum_{j=1}^n M_{n,j}
\end{align*}
where $M_{n,j}=\sum_{i=j}^n\left[\mathbb E(U_i|\D_j)-\mathbb E(U_i|\D_{j-1})\right]$, is a martingale increment sequence with respect to the filtration $(\D_j:j\ge 1)$.  Now we control the $L_2$ norm of $M_{n,j}$. Observe that
$$
\mathbb E(U_i|\D_j)-\mathbb E(U_i|\D_{j-1}) =\mathbb E (\nabla_j U_i |\D_{j})\,,
$$
where the $\nabla$ notation is defined in \eqref{eq:2.2-nabla}.

Recall that $\D_n^j=(X_1,...,X_j',...,X_n)$ is the perturb-one dataset obtained by replacing the $j$th entry of $\D_n$ by an i.i.d. copy $X_j'$.  Let $\delta_{r,i}^j$ be the counterpart of $\delta_{r,i}$ obtained as if the input dataset were $\D_n^j$.
By construction, 
$$\nabla_j U_i = (\nabla_j \delta_{1,i})(\delta_{1,i}+\delta_{1,i}^j)-(\nabla_j \delta_{2,i})(\delta_{2,i}+\delta_{2,i}^j).$$
We have
\begin{align*}
    &\left\|\mathbb E(\nabla_j U_i|\D_j)\right\|_2\\
    \le& \left\|\mathbb E\left[(\nabla_j \delta_{1,i})\delta_{1,i}\big|\D_j\right]\right\|_2+ \left\|\mathbb E\left[(\nabla_j \delta_{1,i})\delta_{1,i}^j\Big|\D_j\right]\right\|_2\\
    &\quad+\left\|\mathbb E\left[(\nabla_j \delta_{2,i})\delta_{2,i}\big|\D_j\right]\right\|_2+ \left\|\mathbb E\left[(\nabla_j \delta_{2,i})\delta_{2,i}^j\Big|\D_j\right]\right\|_2\\
    \lesssim & i^{-b_1}\psi_{1,i}^{1/2}+i^{-b_2}\psi_{2,i}^{1/2}
\end{align*}
where the last step follows from conditional H\"{o}lder's inequality and conditional Jensen's inequality.
By \Cref{asm:3.3-rolling_model_quality}, $\psi_{r,i}\lesssim i^{-a_r}$, and hence
\begin{align*}
    \|M_{n,j}\|_2 = & \left\|\sum_{i=j}^n \mathbb E(\nabla_j U_i|\D_{j})\right\|_2\\
    \lesssim &\sum_{i=j}^n i^{-b_1}\psi_{1,i}^{1/2}+i^{-b_2}\psi_{2,i}^{1/2}\\
    \lesssim& \sum_{i=j}^n i^{-b_1-a_1/2} + i^{-b_2-a_2/2}\\
    \lesssim&\sum_{i=j}^n i^{-c}\\
    \lesssim&\left\{\begin{array}{ll}
        j^{1-c}-(n+1)^{1-c} & \text{if }c>1\,, \\
        \log (n+1)-\log j & \text{if } c=1\,,\\
        (n+1)^{1-c}-j^{1-c} & \text{if }c<1\,,
    \end{array}\right.
\end{align*}
where $c=\min(b_1+a_1/2,~b_2+a_2/2)$.

Using the martingale property, Term (III) has variance
\begin{align*}
    \left\|\sum_{i=1}^n  \left[\mathbb E(U_i|\D_i)-\mathbb E U_i\right]\right\|_2^2 =  \left\|\sum_{j=1}^n M_{n,j}\right\|_2^2=\sum_{j=1}^n \|M_{n,j}\|_2^2\,,
\end{align*}
whose upper bound depends on the value of $c$.
\begin{itemize}
    \item When $c>1$,
    \begin{align*}
        \sum_{j=1}^n(j^{1-c}-(n+1)^{1-c})^2
        \lesssim \left\{\begin{array}{ll}
            n^{3-2c} & \text{if } c<3/2 \\
            \log n & \text{if } c\ge 3/2\,.
        \end{array}\right. 
    \end{align*}
    \item When $c=1$, using Stirling's approximation,
    \begin{align*}
        &\sum_{j=1}^n (\log(n+1)-\log j)^2\\
        =&n\log^2(n+1) - 2\log(n+1)\sum_{j=1}^n\log j+\sum_{j=1}^n \log^2j\\
        \le & n \log^2 n - 2\log n(n\log n-n)+n\log^2 n\\
        = & 2n\log n\,.
    \end{align*}
    \item When $c<1$,
    \begin{align*}
        \sum_{j=1}^n((n+1)^{1-c}-j^{1-c})^2\lesssim n^{3-2c}\,.
    \end{align*}
\end{itemize}
So in all cases we have
$$\left\|\sum_{i=1}^n  \left[\mathbb E(U_i|\D_i)-\mathbb E U_i\right]\right\|_2\lesssim n^{3/2-c}\sqrt{\log n}\,.$$

Putting together, we have the noise terms bounded by
\begin{align*}
    (I)+(II)+(III) = O_P(n^{1/2-a_2/2}\vee\sqrt{\log n})+O_P(n^{3/2-c}\sqrt{\log n})\,.
\end{align*}

On the other hand, the signal term is
$$
\sum_{i=1}^n (\psi_{2,i}-\psi_{1,i})\gtrsim n^{1-a_2}\vee\log  n\,.
$$
Thus a sufficient condition for $\mathbb P(\Xi_{1,n}<\Xi_{2,n})\rightarrow 1$ is $c>a_2+1/2$, which is equivalent to
\begin{align*}
    b_1> &\frac{1}{2}+a_2-\frac{a_1}{2}\,,  \\
    b_2> & \frac{1}{2}+\frac{a_2}{2} \,.\qedhere
\end{align*}
\end{proof}


\begin{remark}[Growing number of candidate models]\label{rem:3.3-growing-M}
 If we strengthen the stability condition to $\nabla_j\hat f^{(r)}_i(X_{i+1})$ being $(i^{-b_r},\alpha)$-sub-Weibull, then we can derive exponential probability bounds for correct model selection. This would allow the number of candidate models to diverge sub-exponentially as the sample size grows.  Online learning with a growing number of candidate models is naturally motivated by the need to gradually refine the tuning grid as the training sample size grows, which can be viewed as a sieve method on the regularity parameter space.    However, such stability conditions will be harder to rigorously verify.  In \Cref{sec:6-stability} we provide further discussions and examples of stability bounds for some commonly used estimators such as stochastic gradient descent.
\end{remark}

\subsection{Weighted Rolling Validation}\label{subsec:3.4-wrv}
In the online model selection criterion \eqref{eq:3.2-online-criterion} and the model quality assumption \Cref{asm:3.3-rolling_model_quality}, it is clear that the better model is defined through its superior asymptotic prediction risk. When reflected in the rolling validation risk $\Xi_{r,n}$, the expected model quality difference is 
$$
\mathbb E(\Xi_{1,n}-\Xi_{2,n}) = \sum_{i=1}^n (\psi_{1,i}-\psi_{2,i})\,.
$$
Under \Cref{asm:3.3-rolling_model_quality}, this quantity diverges as $n$ increases, and is driven by the contributions from large values of $i$.  In other words, these assumptions suggest that the model quality is better reflected by the performance of $\hat f_{i}^{(r)}$ at larger values of $i$. This intuition leads to the following weighted version of rolling validation (wRV).
\begin{equation}\label{eq:3.4-wrv}
    \Xi_{r,n,\xi} \coloneqq \sum_{i=1}^n i^\xi \ell(X_{i+1},\hat f_i^{(r)})\,,
\end{equation}
where $\xi\ge 0$ is a positive constant controlling the fraction of weight tilted towards the larger values of $i$.

The original rolling validation risk  $\Xi_{r,n}$ defined in \eqref{eq:3.3-rv-stat} corresponds to the special case of $\Xi_{r,n,0}$, and it is straightforward to check that the wRV risks are also fully compatible with the online learning protocol in \Cref{def:3.2-online-alg}.  More generally, we can write wRV as
$$\Xi_{r,n}=\sum_{i=1}^n w_i \ell(X_{i+1},\hat f_i^{(r)})\,,$$
where $(w_i:i\ge 1)$ is a non-decreasing sequence of positive weights such that
$$\lim_{i\rightarrow \infty}\frac{w_i}{\sum_{j\le i} w_j}=0\,.$$
The specific choice of $w_i=i^\xi$ is quite natural, as a constant fraction of all sample points will have a constant fraction of the maximum weight. For example, at time $i$, a $(1-2^{-1/\xi})$ proportion of sample points will have weights at least half as much as the maximum weight $i^\xi$. These sample points are also the most recent and thus more representative of the asymptotic model quality than the earlier sample points.  In contrast, the exponential weighting $w_i=e^{\xi i}$ will only assign non-trivial weights to a constant number of sample points, leading to an overly large variance in the wRV statistic.

\paragraph{Consistency of wRV.} Interestingly, weighted rolling validation enjoys the same consistency properties, regardless of the value of $\xi$.
\begin{corollary}
    \label{cor:3.4-wrv-consistency}
    Let $\Xi_{r,n,\xi}$ be the wRV statistic defined in \eqref{eq:3.4-wrv}. Then, for any $\xi\ge 0$, under the same conditions as in \Cref{thm:3.3-wrv-consistency}, we have
    $$\lim_{n\rightarrow\infty}\mathbb P(\Xi_{1,n,\xi}<\Xi_{2,n,\xi})=1\,.$$
\end{corollary}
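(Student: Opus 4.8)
The plan is to follow the proof of \Cref{thm:3.3-wrv-consistency} line by line, simply carrying the extra weights $i^\xi$ through every estimate. With $\delta_{r,i}=(\hat f^{(r)}_i-f^*)(Z_{i+1})$ and $U_i=\delta_{1,i}^2-\delta_{2,i}^2$ as there, write
\begin{align*}
\Xi_{1,n,\xi}-\Xi_{2,n,\xi}
=&\underbrace{-2\sum_{i=1}^n i^\xi\epsilon_{i+1}(\delta_{1,i}-\delta_{2,i})}_{(I_\xi)}
+\underbrace{\sum_{i=1}^n i^\xi\big[U_i-\mathbb E(U_i|\D_i)\big]}_{(II_\xi)}\\
&+\underbrace{\sum_{i=1}^n i^\xi\big[\mathbb E(U_i|\D_i)-\mathbb E U_i\big]}_{(III_\xi)}
+\sum_{i=1}^n i^\xi(\psi_{1,i}-\psi_{2,i})\,,
\end{align*}
where the first three terms are mean zero and the last is the (negative) signal. \Cref{asm:3.3-rolling_model_quality} gives $\psi_{2,i}-\psi_{1,i}\gtrsim i^{-a_2}$ for all large $i$, so the signal obeys $\sum_{i=1}^n i^\xi(\psi_{2,i}-\psi_{1,i})\gtrsim n^{1+\xi-a_2}\vee\log n$, and it remains to show each noise term is $o_P$ of this.

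Terms $(I_\xi)$ and $(II_\xi)$ are sums of martingale increments in $i$, exactly as in \Cref{thm:3.3-wrv-consistency}, so $\var(I_\xi)\le\sigma^2\sum_{i=1}^n i^{2\xi}\mathbb E(\delta_{1,i}-\delta_{2,i})^2\lesssim\sum_{i=1}^n i^{2\xi}(\psi_{1,i}+\psi_{2,i})\lesssim n^{1+2\xi-a_2}\vee\log n$, and likewise $\var(II_\xi)\lesssim B^2\sum_{i=1}^n i^{2\xi}(\psi_{1,i}+\psi_{2,i})\lesssim n^{1+2\xi-a_2}\vee\log n$; hence $(I_\xi),(II_\xi)=O_P(n^{1/2+\xi-a_2/2}\vee\sqrt{\log n})$. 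For the non-martingale term $(III_\xi)$ I use the same second martingale decomposition over the perturbation index, $(III_\xi)=\sum_{j=1}^n\tilde M_{n,j}$ with $\tilde M_{n,j}=\sum_{i=j}^n i^\xi\,\mathbb E(\nabla_j U_i|\D_j)$ a martingale increment sequence in $j$. Reusing the bound $\|\mathbb E(\nabla_j U_i|\D_j)\|_2\lesssim i^{-b_1}\psi_{1,i}^{1/2}+i^{-b_2}\psi_{2,i}^{1/2}\lesssim i^{-c_*}$ with $c_*:=\min(b_1+a_1/2,\,b_2+a_2/2)$ established in the proof of \Cref{thm:3.3-wrv-consistency} under \Cref{asm:3.3-rolling_stability}, one obtains $\|\tilde M_{n,j}\|_2\lesssim\sum_{i=j}^n i^{\xi-c_*}$, and then, by the same casework on whether $c_*-\xi$ is $>1$, $=1$, or $<1$, $\|(III_\xi)\|_2^2=\sum_{j=1}^n\|\tilde M_{n,j}\|_2^2\lesssim (n^{3+2\xi-2c_*}\vee 1)\log n$, i.e. $(III_\xi)=O_P\big((n^{3/2+\xi-c_*}\vee 1)\sqrt{\log n}\big)$.

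Collecting the bounds, the total noise is $O_P\big((n^{1/2+\xi-a_2/2}\vee n^{3/2+\xi-c_*}\vee 1)\sqrt{\log n}\big)$ while the signal is $\gtrsim n^{1+\xi-a_2}\vee\log n$. The point is that the weight $i^\xi$ has shifted every relevant exponent in the same direction — by $\xi$ in the signal and in $(III_\xi)$, and by $2\xi$ inside the variances of $(I_\xi),(II_\xi)$ — so that the signal-to-noise comparison is invariant: $1/2+\xi-a_2/2<1+\xi-a_2\iff a_2<1$, and $3/2+\xi-c_*<1+\xi-a_2\iff c_*>1/2+a_2$, the latter being exactly equivalent to the hypotheses $b_1>1/2+a_2-a_1/2$ and $b_2>1/2+a_2/2$ (with the degenerate case $a_2=1$ handled just as in \Cref{thm:3.3-wrv-consistency}, where the logarithmic terms dominate). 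Thus the noise is $o_P(\text{signal})$ for every $\xi\ge 0$, giving $\mathbb P(\Xi_{1,n,\xi}<\Xi_{2,n,\xi})\to 1$. The only step that is not pure bookkeeping is the analysis of $(III_\xi)$; I expect it to be the main obstacle, though it is a verbatim transcription of the corresponding step in the proof of \Cref{thm:3.3-wrv-consistency} with the exponent $-c_*$ replaced by $\xi-c_*$ and is therefore routine.
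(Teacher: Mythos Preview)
Your approach is exactly what the paper does---the paper's entire proof of this corollary is the sentence ``the proof follows exactly the same procedure as that of \Cref{thm:3.3-wrv-consistency}''---and your line-by-line transcription is correct whenever $a_2<1$: the weight $i^\xi$ shifts the signal exponent by $\xi$ and the variance exponents of $(I_\xi),(II_\xi),(III_\xi)$ by $2\xi$, so after taking square roots the $\xi$'s cancel and the signal-to-noise comparison reduces to the unweighted one.

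Your parenthetical treatment of the boundary $a_2=1$ with $\xi>0$, however, does not go through. You say this case is ``handled just as in \Cref{thm:3.3-wrv-consistency}, where the logarithmic terms dominate,'' but that logarithmic saving is specific to $\xi=0$: there the signal $\sum_{i\le n}i^{-1}\asymp\log n$ and the variance of $(I),(II)$ is also $\asymp\log n$, so the signal-to-noise ratio is $\asymp\sqrt{\log n}\to\infty$. Once $\xi>0$, the exponents $\xi-1$ and $2\xi-1$ both exceed $-1$, so $\sum_{i\le n}i^{\xi-1}\asymp n^\xi$ and $\sum_{i\le n}i^{2\xi-1}\asymp n^{2\xi}$ are polynomial rather than logarithmic; the signal is $\asymp n^\xi$ while the standard deviation of $(I_\xi),(II_\xi)$ is also $\asymp n^\xi$, and the Chebyshev argument no longer gives $(I_\xi),(II_\xi)=o_P(\text{signal})$. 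The paper's one-line proof is silent on this edge case as well, so this is a boundary both treatments gloss over rather than a flaw unique to your write-up; for $a_2<1$---the genuinely nonparametric regime that motivates the section---your argument is complete and matches the paper.
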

Intuitively, the weights $i^\xi$ affect the variance and model quality gap in the same manner and will cancel out in the final signal-to-noise ratio calculation.  The proof follows exactly the same procedure as that of \Cref{thm:3.3-wrv-consistency}.

\paragraph{The effect and practical choice of $\xi$.}
Although \Cref{cor:3.4-wrv-consistency} shows that wRV can be consistent for any nonnegative value of $\xi$, it does not reflect the benefit of using a positive value of $\xi$.
In practice, the choice of $\xi$ may still impact the finite sample bias-variance trade-off,  and wRV with a positive $\xi$ generally outperforms the original RV with $\xi=0$.  In general, a small $\xi$ leads to smaller variance but also larger bias (one way to interpret the bias is the 
representativeness of the wRV statistic for the asymptotic model quality).  A theoretically rigorous understanding of the effect of $\xi$ requires a finite sample analysis of the wRV model selection error probability, which is an open problem. Here we provide a heuristic argument to demonstrate the ``quick 
response'' effect of a positive $\xi$ in a noiseless scenario, and give a 
simple rule-of-thumb choice of $\xi$.

Assume that  the expected risks have the form $\psi_{1,i}=A i^{-a}$ and $\psi_{2,i}=B i^{-b}$ for positive constants $(A,a,B,b)$ such that $0\le b <a\le 1$ and $A>B$.   Let $i^*=(A/B)^{1/(a-b)}$. Then $\psi_{1,i}>\psi_{2,i}$ if $i<i^*$ and $\psi_{1,i}<\psi_{2,i}$ if $i\ge i^*$.   We hope to have $\Xi_{1,i}<\Xi_{2,i}$ soon after $i$ exceeds $i^*$.  In the noiseless case, $\Xi_{r,i,\xi}$ reduces to its expected value, which equals (when $r=1$)
$$\sum_{j=1}^i\psi_{1,j} =\sum_{j=1}^i A j^{\xi-a}\approx  \frac{A}{\xi+1-a}i^{\xi+1-a}\,,$$
provided that $\xi-a\neq -1$.
A similar equation can be derived for $r=2$. Thus, approximately the noiseless wRV would be able to correctly favor the better model ($r=1$) when $i$ is large enough such that
$$
i\ge i(\xi)\coloneqq\left(\frac{A}{B}\right)^{\frac{1}{a-b}}\left(\frac{\xi+1-b}{\xi+1-a}\right)^{\frac{1}{a-b}} = i^* \left(1+\frac{a-b}{\xi+1-a}\right)^{\frac{1}{a-b}}\approx i^*\left(1+\frac{1}{\xi+1-a}\right)\,.
$$
The quantity $1/(\xi+1-a)$ in the above equation can be interpreted as the asymptotic \emph{delay ratio} caused by the rolling sum.  It is apparent that although the delay ratio is a monotone decreasing function of $\xi$, the improvement vanishes as $\xi$ increases.  In practice, a rule-of-thumb choice of $\xi$ is
$$\xi_{\rm RoT}\equiv 1\,,$$
which controls the asymptotic delay ratio to be at most $1$, while keeping the variance at a reasonable level.  Empirically, the model selection accuracy of wRV often observes the most significant improvement when $\xi$ increases from $0$ to $1$, and becomes relatively stable for $\xi\in[1,3]$.

\subsection{Bibliographic Notes}\label{subsec:3.5-wrv-increase-M-bibliography}
The pairwise model selection consistency of $K$-fold CV in mean regression under the squared loss is largely based on \cite{yang2007consistency}.  Our \Cref{thm:3.1-yang-consistency} is a simplified version of Theorem 1 in \cite{yang2007consistency}, which covers more general tail conditions.  As shown in \Cref{exa:3.1-simple-example}, the condition $n_{\te}\psi_{2,n_\tr}\rightarrow\infty$ captures the correct sufficient condition in this simple example.  In terms of difference, our result is stated without the ``convergence at exact rate'' condition, which is required in Yang's Theorem 1. This simplification comes from a more streamlined treatment.  Earlier investigations of model selection consistency by cross-validation include \cite{Zhang93,Shao93}.

The online model selection problem is motivated by recent developments in nonparametric stochastic approximation, such as kernel SGD \citep{kivinen2004online,dieuleveut2016nonparametric} and sieve SGD \citep{zhang2022sieve,quan2024optimal}.
The rolling validation and weighted rolling validation results are based on \cite{zhang2023online}, which contains a detailed treatment of the diverging candidate set setting, as well as numerical examples demonstrating the practical advantage of wRV over traditional CV model selection.

This section focuses on mean regression with squared loss.  A natural question is whether similar results can be obtained for more general learning problems and loss functions.  In this direction, \cite{yang2006comparing} developed CV model selection consistency in binary classification that parallels \Cref{subsec:3.1-yang07} and \cite{yang2007consistency}. 
\cite{wang2006estimation} also attempted to obtain a similar extension, requiring an accurate estimate of the conditional probability of $Y$ given $X$ used for a resampling step.  In contrast, the proofs of the consistency results in this section mainly rely on the concentration of loss functions around their mean values.  The general tools developed in \Cref{subsec:bousquet_consistency}, \Cref{subsec:bousquet_concentration}, and \Cref{subsec:concentration_ineq} could be used to provide useful results to establish similar model selection guarantees for general loss functions and online algorithms.

The model selection consistency of cross-validation has also been studied outside of the i.i.d. case.  For example, \cite{racine2000consistent} developed a variant of cross-validation for consistent time series model selection. Some early explorations of methods similar to the weighted rolling validation in the time series forecasting setting include \cite{wei1992predictive,ing2007accumulated}. 
In the network analysis literature, \cite{chen2018network,li2020network,qin2022consistent} used different sample splitting and cross-validation techniques for community-structured network model selection.
\newpage

\section{Central Limit Theorems For Cross-Validation}\label{sec:4-clt}
The focus of this section shifts back to risk estimation, where the target of CV risk estimate $\hat R_{\cv,n,K}$, defined in \eqref{eq:2.1-cv-risk-est}, is either the average risk of the leave-one-fold-out estimates defined as $\bar R_{\cv,n,K}$ in \eqref{eq:2.1-R-bar} or its population version $\mu_{n_\tr}$ defined in \eqref{eq:2.1-mu_n}.  Consistency and exponential concentration of $\hat R_{\cv,n,K}$ around these targets have been established under various stability conditions in \Cref{subsec:bousquet_consistency,subsec:bousquet_concentration,subsec:concentration_ineq}. In this section, we are interested in pushing these results one step further to establish corresponding central limit theorems.  
These results, which are of independent theoretical interest, also pave the way for new methodological developments in a variety of inference problems, including constructing model selection confidence sets. As shown in \Cref{subsec:5.2-cvc-consist}, these can be further developed into improved model selection methods that overcome the theoretical limitation of standard CV methods observed in \Cref{sec:3-model-selection}.

Starting in this section, we will use a new notation. For $i\in[n]$, let $k_i\in[K]$ be the fold index such that $i\in I_{n,k_i}$.  So we can write the cross-validation risk estimate as a sum over $i$:
$\hat R_{\cv,n,K}=n^{-1}\sum_{i=1}^n \ell(X_i,\D_{n,-k_i})$.

\subsection{CLT with Random Centering}\label{subsec:4.1-clt-random}
We first consider the ``random centering'' case, where the quantity $\bar R_{\cv,n,K}$ is treated as the target of $\hat R_{\cv,n,K}$ and hence the centering of the central limit theorem. \Cref{pro:bousquest_concentration} and \Cref{thm:2.5-loocv-concentration-sw} suggest that perturb-one stability of the loss function with rate $o(1/\sqrt{n})$ implies sub-Gaussian type concentration of $\hat R_{\cv,n,K}$ at rate $n^{-1/2}+o(1)$. In this section, we further refine the analysis to show that under essentially the same stability condition the CV risk estimate centers at $\bar R_{\cv,n,K}$ with $\sqrt{n}$-asymptotic normality.

Our starting point is an intuition that if the risk function $\ell(X_0,\D_n)$ is perturb-one stable at a rate that vanishes faster than $1/\sqrt{n}$, then the variance contributed to $\ell(X_0,\D_n)$ by the fitting part $\D_n$ will be upper bounded by $o(1)$ using the Efron--Stein inequality.
\begin{theorem}[Efron--Stein Inequality]\label{thm:4.1-Efron--Stein}
    Let $h$ be a function that maps $\D_n$ to a real number, where $\D_n=(X_i:1\le i\le n)$ consists of independent entries (not necessarily identically distributed or even defined in the same space). Then
    $$\var\left[h(\D_n)\right]\le \frac{1}{2}\sum_{i=1}^n \|\nabla_i h\|_2^2\,.$$
\end{theorem}
The proof of the Efron--Stein inequality is elementary and is based on the telescoping decomposition \eqref{eq:2.5-nabla-telescope}.  A full proof can be found in Chapter 3.1 of \cite{blm}.

As a consequence,  one can expect the summand term $\ell(X_i,\D_{n,-k_i})$ in $\hat R_{\cv,n,K}$ to behave like its de-randomized version
\begin{equation*}
\ell_{n_\tr}(X_i)\coloneqq \mathbb E\left[\ell(X_i,\D_{n,-k_i})|X_i\right]\,.\end{equation*}
The subindex $n_{\tr}$ in the notation $\ell_{n_\tr}(\cdot)$ emphasizes that this function only depends on the training sample size.  More generally, we can define, for any $n\ge 1$,
\begin{equation}\label{eq:4.1-bar-ell}
\ell_n(x) = \mathbb E\left[\ell(X_0,\D_n)|X_0=x\right]\,,~~\text{and}~~
\bar\ell_n(x) = \ell_n(x)-\mu_n\,.
\end{equation}
The following result quantifies the contribution to the total variance of the loss function $\ell(X_i,\D_{n,-k_i})$ from $\D_{n,-k_i}$, i.e., the fitting randomness.
\begin{proposition}
\label{pro:4.1-var-approx}
Assume the fitting procedure is symmetric in the sense of \Cref{asm:symmetric-f} and $\D_n$ consists of i.i.d. sample points. Let $\epsilon_n=\|\nabla_1\ell(X_0,\D_n)\|_2$ \footnote{By symmetry of $\hat f$ and the i.i.d. assumption, the particular choice of $i$ in $\nabla_i \ell(X_0,\D_n)$ does not matter as long as $i\in[n]$}. Then
$$
\var[\ell_n(X_0)]\le \var[\ell(X_0,\D_n)] \le \var[\ell_n(X_0)]+\frac{n}{2}\epsilon_n^2\,.
$$
\end{proposition}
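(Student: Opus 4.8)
The plan is to split $\var[\ell(X_0,\D_n)]$ via the law of total variance, conditioning on the evaluation point $X_0$, and then to bound the leftover conditional-variance term using the Efron--Stein inequality (\Cref{thm:4.1-efron-stein}). Recalling that $\ell_n(x)=\mathbb E[\ell(X_0,\D_n)\mid X_0=x]$, the law of total variance gives
$$
\var[\ell(X_0,\D_n)] \;=\; \var[\ell_n(X_0)] \;+\; \mathbb E\bigl[\var(\ell(X_0,\D_n)\mid X_0)\bigr]\,.
$$
Since the last term is nonnegative, the lower bound $\var[\ell_n(X_0)]\le\var[\ell(X_0,\D_n)]$ is immediate, and it remains only to show $\mathbb E[\var(\ell(X_0,\D_n)\mid X_0)]\le \tfrac n2\epsilon_n^2$.

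For that, I would fix $X_0=x$ and regard $\D_n\mapsto \ell(x,\hat f(\D_n))$ as a function of the $n$ independent coordinates $X_1,\dots,X_n$; note $X_0$ is \emph{not} one of these coordinates. Applying \Cref{thm:4.1-efron-stein} conditionally on $X_0=x$ yields
$$
\var\bigl(\ell(X_0,\D_n)\mid X_0=x\bigr)\;\le\;\frac12\sum_{j=1}^n \mathbb E\bigl[(\nabla_j\ell(x,\D_n))^2\bigm| X_0=x\bigr]\,,
$$
where $\nabla_j$ replaces the $j$-th entry of $\D_n$ by an i.i.d.\ copy as in \eqref{eq:2.2-nabla}. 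Taking the expectation over $X_0$ and interchanging the finite sum with the expectation gives
$$
\mathbb E\bigl[\var(\ell(X_0,\D_n)\mid X_0)\bigr]\;\le\;\frac12\sum_{j=1}^n \bigl\|\nabla_j\ell(X_0,\D_n)\bigr\|_2^2\,.
$$
By the symmetry of $\hat f$ (\Cref{asm:symmetric-f}) together with the i.i.d.\ assumption, each term $\|\nabla_j\ell(X_0,\D_n)\|_2$ equals $\|\nabla_1\ell(X_0,\D_n)\|_2=\epsilon_n$, so the right-hand side is exactly $\tfrac n2\epsilon_n^2$; substituting this into the decomposition above yields the upper bound.

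The argument is short, and the one place to be careful is the bookkeeping of which randomness is being perturbed: in the Efron--Stein step the independent coordinates are $X_1,\dots,X_n$ only, with $X_0$ held fixed, so the differences $\nabla_j$ for $j\in[n]$ are exactly the perturb-one differences that define $\epsilon_n$ (as noted in the footnote to the proposition), and no perturbation of the evaluation point $X_0$ ever enters. Everything else --- the law of total variance, the exchange of a finite sum with an expectation, and the symmetry reduction of the $n$ summands to the common value $\epsilon_n$ --- is routine.
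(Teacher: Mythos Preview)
Your proof is correct and follows essentially the same route as the paper: both use the law of total variance conditioning on $X_0$, then bound the conditional-variance term by applying Efron--Stein (\Cref{thm:4.1-efron-stein}) to the coordinates $X_1,\dots,X_n$ with $X_0$ held fixed, and finally invoke symmetry to collapse the sum to $\tfrac{n}{2}\epsilon_n^2$.
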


\begin{proof}
    [Proof of \Cref{pro:4.1-var-approx}]
    Using the variance decomposition
    \begin{align*}
    &\var[\ell(X_0,\D_n)]=\var\left[\mathbb E(\ell(X_0,\D_n)|X_0)\right]+\mathbb E\left[\var(\ell(X_0,\D_n)|X_0)\right]\\
    =&\var\left[\ell_n(X_0)\right]+\mathbb E\left[\var(\ell(X_0,\D_n)|X_0)\right]\,.
    \end{align*}
    Thus it suffices to show 
    $$\mathbb E\left[\var(\ell(X_0,\D_n)|X_0)\right]\le (n/2)\|\nabla_1\ell(X_0,\D_n)\|_2^2\,.$$
    Using the Efron--Stein inequality on the conditional distribution of $\ell(X_0,\D_n)$ given $X_0$ we have
    \begin{align*}
        \var(\ell(X_0,\D_n)|X_0)\le \frac{1}{2}\sum_{i\in [n]} \mathbb E\left[(\nabla_i\ell(X_0,\D_n))^2|X_0\right]\,.
    \end{align*} 
    The desired result follows by taking expected value on both sides.
\end{proof}

\Cref{pro:4.1-var-approx} suggests that if the loss function is stable enough then the randomness in $\ell(X_0,\D_n)$ comes mostly from $X_0$ and it is reasonable to expect
\begin{equation}\label{eq:4.1-approx-lin}
\hat R_{\cv,n,K}\approx \frac{1}{n}\sum_{i=1}^n \ell_{n_\tr}(X_i)
\end{equation}
which is the average of $n$ i.i.d. random variables.  However, \eqref{eq:4.1-approx-lin} may not be accurate enough in general cases for the purpose of Gaussian approximation. We will need a slight modification on the LHS so that it focuses on the randomness contributed from the loss function evaluation, as detailed in the following theorem.

\begin{theorem}[CV-CLT, Random Centering]\label{thm:4.1-clt-rand}
Let $(X_i:i\ge 0)$ be an i.i.d. sequence of sample points in $\X$ and $\D_n=\{X_i:1\le i\le n\}$.
Let $\hat f:\bigcup_{n=1}^\infty \X^{\otimes n}\mapsto \mathcal F$ be a symmetric estimation procedure, $\ell(\cdot,\cdot)$ a loss function such that
 \begin{equation}\label{eq:4.1-nabla-l2-bound}
 \|\nabla_1 \ell(X_0,\D_n)\|_2=o(\sigma_n/\sqrt{n})\,,
 \end{equation}
 where $\sigma_n^2=\var\left[\ell_n(X_0)\right]\in(0,\infty)$ and
 \begin{equation}
     \label{eq:4.1-lindeberg}
     \lim_{n\rightarrow\infty} \mathbb E\left(\frac{\bar\ell_n(X_0)}{\sigma_n}\right)^2\mathds{1}\left(\frac{|\bar\ell_n(X_0)|}{\sigma_n}\ge \epsilon\sqrt{n}\right)=0
 \end{equation}
 for all $\epsilon>0$ and $\ell_n(X_0)$, $\bar\ell_n(X_0)$ defined in \eqref{eq:4.1-bar-ell}.
Then for any value of $K$ that evenly divides $n$ and is allowed to change with $n$, the $K$-fold cross-validated risk $\hat R_{\cv,n,K}$ satisfies
$$
\frac{\sqrt{n}}{\sigma_{n_\tr}}\left(\hat R_{\cv,n,K}-\bar R_{\cv,n,K}\right)\rightsquigarrow N(0,1)\,,
$$
where the symbol ``$\rightsquigarrow$'' means ``convergence in distribution'', and $\bar R_{\cv,n,K}$ is defined in \eqref{eq:2.1-R-bar}.
\end{theorem}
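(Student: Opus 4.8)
The plan is to reduce $\hat R_{\cv,n,K}$, up to an error negligible on the $\sigma_{n_\tr}/\sqrt n$ scale, to an average of $n$ i.i.d.\ summands plus the random centering $\bar R_{\cv,n,K}$, and then invoke the Lindeberg--Feller CLT. Using $\ell(X_i,\D_{n,-k})=\ell_{n_\tr}(X_i)+[\ell(X_i,\D_{n,-k})-\ell_{n_\tr}(X_i)]$ together with the identity $\bar R_{\cv,n,K}-\mu_{n_\tr}=\frac1n\sum_{k}\sum_{i\in I_{n,k}}(R(\D_{n,-k})-\mu_{n_\tr})$, I would decompose
\[
\hat R_{\cv,n,K}-\bar R_{\cv,n,K}=\underbrace{\frac1n\sum_{i=1}^n\bar\ell_{n_\tr}(X_i)}_{T_1}+\underbrace{\frac1n\sum_{k=1}^K\sum_{i\in I_{n,k}}g_{k,i}}_{\Delta},\qquad g_{k,i}:=\bigl[\ell(X_i,\D_{n,-k})-\ell_{n_\tr}(X_i)\bigr]-\bigl[R(\D_{n,-k})-\mu_{n_\tr}\bigr];
\]
the ``slight modification of the left-hand side'' alluded to before the theorem is exactly $\hat R_{\cv,n,K}-\Delta=T_1+\bar R_{\cv,n,K}$, which retains only the evaluation-point fluctuation $\ell_{n_\tr}(X_i)-\mu_{n_\tr}$ of each summand. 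It then suffices to show (i) $\sqrt n\,T_1/\sigma_{n_\tr}\rightsquigarrow N(0,1)$ and (ii) $\var(\Delta)=o(\sigma_{n_\tr}^2/n)$; Slutsky's theorem finishes. Throughout, the hypotheses \eqref{eq:4.1-nabla-l2-bound} and \eqref{eq:4.1-lindeberg} are read along the generic sample size, hence apply at $n_\tr=n(1-1/K)\to\infty$; write $\epsilon_m:=\|\nabla_1\ell(X_0,\D_m)\|_2$, so $\epsilon_{n_\tr}=o(\sigma_{n_\tr}/\sqrt{n_\tr})$, and note $n\asymp n_\tr$.

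Part (i) is immediate: the $\bar\ell_{n_\tr}(X_i)$, $i=1,\dots,n$, are i.i.d.\ with mean zero and variance $\sigma_{n_\tr}^2$, and the Lindeberg condition for this i.i.d.\ triangular array is precisely \eqref{eq:4.1-lindeberg} at sample size $n_\tr$ (replacing the threshold $\epsilon\sqrt n$ by $\epsilon\sqrt{n_\tr}$ is harmless since $n\asymp n_\tr$). Part (ii) is the substance, and it rests on two degeneracy properties of $g_{k,i}$, valid because $\hat f$ is symmetric and the data are i.i.d.: for every fixed value of the remaining sample points $\mathbb E_{X_i}[g_{k,i}]=0$ (as $\mathbb E_{X_i}[\ell(X_i,\D_{n,-k})\mid\D_{n,-k}]=R(\D_{n,-k})$), and also $\mathbb E[g_{k,i}\mid\D_{n,-k}]=0$. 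I would expand $\var(\Delta)=n^{-2}\sum_{(k,i),(k',i')}\mathbb E[g_{k,i}g_{k',i'}]$ and classify pairs. For the $n$ diagonal pairs, $\|g_{k,i}\|_2^2\le 2\|\ell(X_i,\D_{n,-k})-\ell_{n_\tr}(X_i)\|_2^2+2\var(R(\D_{n,-k}))$, and both pieces are at most $n_\tr\epsilon_{n_\tr}^2$ --- the first by \Cref{pro:4.1-var-approx} applied conditionally on $X_i$, the second by the Efron--Stein inequality (\Cref{thm:4.1-efron-stein}) together with Jensen's inequality $\|\nabla_m R(\D_{n,-k})\|_2\le\epsilon_{n_\tr}$ --- so the diagonal contributes $O(n_\tr\epsilon_{n_\tr}^2/n)=O(\epsilon_{n_\tr}^2)$. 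For pairs with $k=k'$, $i\ne i'$ (same fold), conditioning on $\D_{n,-k}$ makes $g_{k,i}$ and $g_{k,i'}$ independent functions of $X_i$ and $X_{i'}$, each with conditional mean zero, so $\mathbb E[g_{k,i}g_{k,i'}]=0$.

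The genuinely delicate case, and the expected main obstacle, is $k\ne k'$ (distinct folds, so $i\ne i'$): now $X_i$ is a training point of $g_{k',i'}$ and $X_{i'}$ is a training point of $g_{k,i}$, so the two factors are truly dependent. Here I would apply the free-nabla trick (an instance of \Cref{lem:4.4-free-nabla}): peel the index $i$ off $g_{k',i'}$, writing $g_{k',i'}=g_{k',i'}^{\,i}+\nabla_i g_{k',i'}$ where $g_{k',i'}^{\,i}$ replaces $X_i$ by an independent copy; since $g_{k',i'}^{\,i}$ no longer involves $X_i$ while $g_{k,i}$ has mean zero given everything but $X_i$, the term $\mathbb E[g_{k,i}\,g_{k',i'}^{\,i}]$ vanishes, and symmetrically peeling $i'$ off $g_{k,i}$ yields $\mathbb E[g_{k,i}g_{k',i'}]=\mathbb E[(\nabla_{i'}g_{k,i})(\nabla_i g_{k',i'})]$. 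Since the $\ell_{n_\tr}$ and $\mu_{n_\tr}$ pieces of $g$ are untouched by these single-coordinate perturbations, perturb-one stability gives $\|\nabla_{i'}g_{k,i}\|_2\le 2\epsilon_{n_\tr}$ and $\|\nabla_i g_{k',i'}\|_2\le 2\epsilon_{n_\tr}$, whence $|\mathbb E[g_{k,i}g_{k',i'}]|\le 4\epsilon_{n_\tr}^2$ by Cauchy--Schwarz. Summing over the fewer than $n^2$ cross-fold pairs and dividing by $n^2$ then contributes $O(\epsilon_{n_\tr}^2)$, so $\var(\Delta)=O(\epsilon_{n_\tr}^2)=o(\sigma_{n_\tr}^2/n_\tr)=o(\sigma_{n_\tr}^2/n)$, giving (ii). The reason this step is the crux: the naive bound $|\mathbb E[g_{k,i}g_{k',i'}]|\le\|g_{k,i}\|_2\|\nabla_i g_{k',i'}\|_2$ is of order $\sqrt{n_\tr}\,\epsilon_{n_\tr}^2$ --- a factor $\sqrt{n_\tr}$ too large to survive summation over $n^2$ pairs --- and it is the double centering of $g_{k,i}$ that licenses inserting the second ``free'' $\nabla$ needed to recover the correct $O(\epsilon_{n_\tr}^2)$.
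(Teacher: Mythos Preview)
Your proposal is correct and follows essentially the same route as the paper: your $g_{k,i}$ is exactly the paper's $\xi_i-\bar\xi_i$, and your three-case treatment of the cross terms (Efron--Stein for the diagonal, conditional independence within a fold, double application of the free-nabla lemma across folds) matches the paper's proof step by step. The only cosmetic differences are that the paper bounds $\|\nabla_{i'}(\xi_i-\bar\xi_i)\|_2\le\|\nabla_{i'}\ell(X_i,\D_{n,-k_i})\|_2$ via \Cref{lem:4.4-nabla-xi} (giving $\epsilon_{n_\tr}$ rather than your $2\epsilon_{n_\tr}$) and handles the diagonal term directly via conditional Efron--Stein without your intermediate triangle-inequality split, neither of which affects the argument.
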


\begin{proof}[Proof of \Cref{thm:4.1-clt-rand}]
The proof proceeds by rigorously establishing a modified version of the ``approximate linearity'' of $\hat R_{\cv,n,K}$ in \eqref{eq:4.1-approx-lin}.

Let $\xi_i = \ell(X_i,\D_{n,-k_i}) - R(\D_{n,-k_i})$ and $\bar \xi_i =\mathbb E\left[\xi_i|X_i\right]=\bar\ell_{n_\tr}(X_i)$.

The rest of the proof consists of two main steps.  In the first step we show that
 \begin{equation}\label{eq:4.1-approx-lin-long}
     \frac{\sqrt{n}}{\sigma_{n_\tr}}\left(\hat R_{\cv,n,K}-\bar R_{\cv,n,K}\right)-\frac{1}{\sqrt{n}\sigma_{n_\tr}}\sum_{i=1}^n\bar\xi_i=o_P(1)\,.
 \end{equation}
In fact, by construction, the LHS of \Cref{eq:4.1-approx-lin-long} multiplied by $\sqrt{n}\sigma_{n_\tr}$ equals 
\begin{align*}
n\left(\hat R_{\cv,n,K}-\bar R_{\cv,n,K}\right)-\sum_{i=1}^n\bar\xi_i=\sum_{i=1}^n (\xi_i-\bar\xi_i)\,,
\end{align*}
whose squared $L_2$ norm is
\begin{align}\label{eq:4.1-squared-norm-rand-center-residual}
   \sum_{i=1}^n\mathbb E(\xi_i-\bar\xi_i)^2 + \sum_{1\le i\neq j\le n} \mathbb E(\xi_i-\bar\xi_i)(\xi_j-\bar \xi_j)\,. 
\end{align}
For the squared term $(\xi_i-\bar\xi_i)^2$ we have, using the same argument as in \Cref{pro:4.1-var-approx},
\begin{align*}
    &\mathbb E(\xi_i-\bar\xi_i)^2 
    = \mathbb E\left[\var(\xi_i|X_i)\right]\\
    \le & \frac{1}{2}\mathbb E\left[\sum_{j\notin I_{k_i}}\mathbb E((\nabla_j \xi_i)^2|X_i)\right]\\
    = & \frac{1}{2}\sum_{j\notin I_{k_i}}\|\nabla_j \xi_i\|_2^2\\
    \le & \frac{1}{2}\sum_{j\notin I_{k_i}}\|\nabla_j \ell(X_i,\D_{n,-k_i})\|_2^2=o(\sigma_{n_\tr}^2)\,,
\end{align*}
where the second inequality follows from the fact
$$\|\nabla_j\xi_i\|_2\le \|\nabla_j\ell(X_i,\D_{n,-k_i})\|_2$$ as implied by  \Cref{lem:4.4-nabla-xi} in \Cref{subsec:4.4-aux}.  As a result, the total contribution from the squared terms in \eqref{eq:4.1-squared-norm-rand-center-residual} is $o(n\sigma_{n_\tr}^2)$.

For the cross term $(\xi_i-\bar\xi_i)(\xi_j-\bar\xi_j)$, if $k_i=k_j$, then by conditioning on $\D_{n,-k_i}$ and using conditional independence of $X_i$ and $X_j$ given $\D_{n,-k_i}$ we have
$\mathbb E(\xi_i-\bar\xi_i)(\xi_j-\bar\xi_j)=0$. Now we only need to control the cross-term contributed by $(i,j)$ pairs such that $j\notin I_{k_i}$.  The main trick here is the ``free nabla'' lemma (\Cref{lem:4.4-free-nabla} in \Cref{subsec:4.4-aux}).  By applying \Cref{lem:4.4-free-nabla} twice we have, for $j\notin I_{k_i}$
$$
\mathbb E(\xi_i-\bar\xi_i)(\xi_j-\bar \xi_j) = \mathbb E [\nabla_j(\xi_i-\bar\xi_i)][\nabla_i(\xi_j-\bar\xi_j)]
$$
whose absolute value is $o(\sigma_{n_\tr}^2/n_\tr)$. Thus the total contribution from the cross-terms in \eqref{eq:4.1-squared-norm-rand-center-residual} is  $o(n\sigma_{n_\tr}^2)$.  This establishes \eqref{eq:4.1-approx-lin-long}.

It remains to show that 
$$\frac{1}{\sqrt{n}\sigma_{n_\tr}}\sum_{i=1}^n\bar\xi_i\rightsquigarrow N(0,1)\,,$$
which, under \eqref{eq:4.1-lindeberg}, follows from the standard Lindeberg central limit theorem for triangular arrays \citep[Theorem 3.4.5]{durrett2019probability}.
\end{proof}

\begin{remark}\label{rem:4.1-wasserstein}
The proof of \Cref{thm:4.1-clt-rand} implies that 
    $$
    d_{W,1}\left(\frac{\sqrt{n}}{\sigma_{n_\tr}}\left(\hat R_{\cv,n,K}-\bar R_{\cv,n,K}\right),~N(0,1)\right)\rightarrow 0\,,
    $$
    where $d_{W,1}$ is the Wasserstein-1 distance.
Moreover, if we assume a specific rate of convergence in \eqref{eq:4.1-nabla-l2-bound}, as well as a stronger integrability condition than \eqref{eq:4.1-lindeberg}, such as uniform boundedness of $\|[\ell_n(X_0)-\mu_n]/\sigma_n\|_3$, then we can establish the Berry–Esseen-type finite sample error bounds of normal approximation in the Wasserstein-1 and Kolmogorov–Smirnov distances.
\end{remark}

\begin{remark}\label{rem:4.1-var-lower-bound}
In most applications, it is reasonable to assume that there exists a pair of positive constants $(\underline{c},\bar c)$ such that
\begin{equation}\label{eq:4.1-var-lower}
0< \underline{c}^2\le \var[\ell_n(X_0)]\le \|\ell(X_0,\D_n)\|_2^2\le \bar c^2<\infty
\end{equation}
for all $n$.  For example, in regression problems, the lower bound corresponds to the aleatoric uncertainty that comes from the observation noise, and cannot be reduced by the prediction function. The upper bound usually corresponds to the parameter space and the total variability of the random variables.
\end{remark}

As another consequence of \Cref{pro:4.1-var-approx}, one can expect to obtain a consistent estimate of $\sigma_{n_\tr}$ using the sample variance of $\{\ell(X_i,\D_{n,-k_i}):i\in[n]\}$ as the variance is mainly contributed by the evaluation point $X_i$.
\begin{theorem}[Variance Estimation]\label{thm:4.1-var-est}
    Let 
    $$\hat\sigma_{n_\tr}^2=\frac{1}{n}\sum_{i=1}^n\left(\ell(X_i,\D_{n,-k_i})-\hat R_{\cv,n,K}\right)^2.$$
In addition to the same condition as in \Cref{thm:4.1-clt-rand}, assume that
 $[\bar\ell_n(X_0)/\sigma_n]^2$ is uniformly integrable.
Then 
    $$
    \frac{\hat\sigma_{n_\tr}^2}{\sigma_{n_\tr}^2}\stackrel{P}{\rightarrow} 1\,.
    $$
\end{theorem}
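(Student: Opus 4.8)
The plan is to recognize $\hat\sigma_{n_\tr}^2$, after dividing by $\sigma_{n_\tr}^2$, as the sample variance of a triangular array whose leading part is i.i.d.\ and whose remainder is negligible. Write $L_i=\ell(X_i,\D_{n,-k_i})$ and $\ell_{n_\tr}(X_i)=\mathbb E[L_i\mid X_i]$, and decompose $L_i=\ell_{n_\tr}(X_i)+D_i$ with $D_i:=L_i-\ell_{n_\tr}(X_i)$. Normalizing, set $T_i=(L_i-\mu_{n_\tr})/\sigma_{n_\tr}=W_i+R_i$ where $W_i=(\ell_{n_\tr}(X_i)-\mu_{n_\tr})/\sigma_{n_\tr}=\bar\ell_{n_\tr}(X_i)/\sigma_{n_\tr}$ and $R_i=D_i/\sigma_{n_\tr}$. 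Since the sample variance is translation-invariant, $\hat\sigma_{n_\tr}^2/\sigma_{n_\tr}^2=\frac1n\sum_{i=1}^n(T_i-\bar T)^2=\frac1n\sum_i T_i^2-\bar T^2$ with $\bar T=\frac1n\sum_i T_i$. The $W_i$ are i.i.d.\ with $\mathbb E W_i=0$ and $\mathbb E W_i^2=1$, and the added hypothesis that $[\bar\ell_n(X_0)/\sigma_n]^2$ is uniformly integrable says precisely that the family $\{W_1^2\}$ (indexed by $n$ through $n_\tr$) is uniformly integrable.

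First I would control the remainder. By symmetry of $\hat f$ and the i.i.d.\ assumption, $(\D_{n,-k_i},X_i)$ has the same joint law as $(\D_{n_\tr},X_0)$, so $\mathbb E D_i^2=\mathbb E[\var(L_i\mid X_i)]$, and applying the Efron-Stein inequality conditionally on $X_i$ exactly as in the proof of \Cref{pro:4.1-var-approx} yields $\mathbb E D_i^2\le\frac{n_\tr}{2}\|\nabla_1\ell(X_0,\D_{n_\tr})\|_2^2$. By the stability hypothesis \eqref{eq:4.1-nabla-l2-bound} this is $o(\sigma_{n_\tr}^2)$, hence $\mathbb E R_i^2=o(1)$ uniformly in $i$. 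Consequently $\mathbb E[\frac1n\sum_i R_i^2]=o(1)$, so $\frac1n\sum_i R_i^2\stackrel{P}{\rightarrow}0$ by Markov's inequality, and by Cauchy--Schwarz $|\frac1n\sum_i R_i|\le(\frac1n\sum_i R_i^2)^{1/2}\stackrel{P}{\rightarrow}0$.

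The one step requiring care is $\frac1n\sum_i W_i^2\stackrel{P}{\rightarrow}1$: this is a weak law of large numbers for the row-wise i.i.d.\ array $\{W_i\}_{i\le n}$, and since the variance of $W_i^2$ need not be bounded along $n$, Chebyshev cannot be applied directly --- uniform integrability is exactly what is needed. I would truncate: choose $c_n\to\infty$ with $c_n=o(\sqrt n)$ and write $\frac1n\sum_i W_i^2=\frac1n\sum_i(W_i^2\wedge c_n)+\frac1n\sum_i(W_i^2-c_n)_+$. Both $1-\mathbb E[W_1^2\wedge c_n]$ and $\mathbb E[(W_1^2-c_n)_+]$ are bounded by $\rho(c_n):=\sup_n\mathbb E[W_1^2\mathds{1}(W_1^2>c_n)]$, which tends to $0$ by uniform integrability, and $\var(W_1^2\wedge c_n)\le c_n=o(n)$; hence the truncated average has mean $\to1$ and variance $\to0$, while the tail average has vanishing mean. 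Adding the two parts gives $\frac1n\sum_i W_i^2\stackrel{P}{\rightarrow}1$.

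Finally I would assemble the pieces: $\frac1n\sum_i T_i^2=\frac1n\sum_i W_i^2+\frac2n\sum_i W_iR_i+\frac1n\sum_i R_i^2$, where the cross term is $O_P(1)\cdot o_P(1)=o_P(1)$ by Cauchy--Schwarz and the last term is $o_P(1)$, so $\frac1n\sum_i T_i^2\stackrel{P}{\rightarrow}1$; and $\bar T=\frac1n\sum_i W_i+\frac1n\sum_i R_i$, where $\frac1n\sum_i W_i$ has variance $1/n\to0$ and the second summand is $o_P(1)$, so $\bar T\stackrel{P}{\rightarrow}0$. Therefore $\hat\sigma_{n_\tr}^2/\sigma_{n_\tr}^2=\frac1n\sum_i T_i^2-\bar T^2\stackrel{P}{\rightarrow}1$. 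The main obstacle, as noted, is the triangular-array weak law under uniform integrability; everything else is bookkeeping using results already established in \Cref{subsec:4.1-clt-random}.
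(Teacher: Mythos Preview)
Your proposal is correct and follows essentially the same strategy as the paper: split each loss into the i.i.d.\ projection $\ell_{n_\tr}(X_i)$ and a remainder $D_i$, bound $\|D_i\|_2=o(\sigma_{n_\tr})$ via Efron--Stein, invoke uniform integrability to get the triangular-array weak law for the main part, and control cross terms by Cauchy--Schwarz. Your use of translation invariance (recognizing $\hat R_{\cv,n,K}=\bar L$) is a minor simplification over the paper, which instead bounds $\|\hat R_{\cv,n,K}-\mu_{n_\tr}\|_2$ separately; and you spell out the truncation argument for the WLLN step that the paper leaves implicit.
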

The additional uniform integrability assumption ensures that the sample variance of $\{\bar\ell_{n_\tr}(X_i):1\le i\le n\}$ is a consistent estimate of its population version. It can be satisfied under moment conditions, such as uniformly bounded $\|\bar\ell_n(X_0)/\sigma_n\|_{2+\delta}$ for any $\delta>0$.
\begin{proof}
    [Proof of \Cref{thm:4.1-var-est}]
    
    Define 
    $$\tilde\sigma_{n_\tr}^2\coloneqq \frac{1}{n}\sum_{i=1}^n [\bar\ell_{n_\tr}(X_i)]^2\,,$$
    which, under uniform integrability of $[\bar\ell_n(X_0)/\sigma_n]^2$, satisfies
    $$
    \frac{\tilde\sigma_{n_\tr}^2}{\sigma_{n_\tr}^2}\stackrel{P}{\rightarrow} 1\,.
    $$

    It suffices to show that $\hat\sigma_{n_\tr}^2-\tilde\sigma_{n_\tr}^2=o_P(\sigma_{n_\tr}^2)$.
    By construction, we have
    \begin{align*}
       | \hat \sigma_{n_\tr}^2-\tilde\sigma_{n_\tr}^2| = & \left|\frac{1}{n}\sum_{i=1}^n \left[\ell(X_i,\D_{n,-k_i}) - \hat R_{\cv,n,K}-\ell_{n_\tr}(X_i)+\mu_{n_\tr}\right]\right.\\
       &~~\left.\times \left[\ell(X_i,\D_{n,-k_i}) - \hat R_{\cv,n,K}-\ell_{n_\tr}(X_i)+\mu_{n_\tr}+2(\ell_{n_\tr}(X_i)-\mu_{n_\tr})\right]\right|\,.
    \end{align*}
    Using Jensen's inequality and the same argument as bounding $\|\xi_i-\bar\xi_i\|_2^2$ in the proof of \Cref{thm:4.1-clt-rand} we obtain
    $$ \|\ell(X_i,\D_{n,-k_i})-\ell_{n_\tr}(X_i)\|_2= o(\sigma_{n_\tr})\,.
    $$
Following the proof of \Cref{thm:2.5-loocv-concentration-sw}, we can obtain, under the stability condition \eqref{eq:4.1-nabla-l2-bound},
$$\|\nabla_i \hat R_{\cv,n,K}\|_2=o(\sigma_{n_\tr}/\sqrt{n_\tr})$$
which, by Efron--Stein, further implies that
$$\|\hat R_{\cv,n,K}-\mu_{n_\tr}\|_2=o(\sigma_{n_\tr})\,.$$

Then Cauchy–Schwartz and the triangle inequality imply that
$$
\|\tilde\sigma_{n_\tr}^2-\hat\sigma_{n_\tr}^2\|_1=o(\sigma_{n_\tr}^2)\,,
$$
which concludes the proof.
\end{proof}

\subsection{CLT with Deterministic Centering}\label{subsec:4.2-cv-clt-det}
In this subsection, we develop a central limit theorem for the CV risk estimate with the deterministic centering: $\mu_{n_\tr}=\mathbb E\hat R_{\cv,n,K}$.

The random centering CLT proved in the previous subsection provides a head start. A key intermediate step in that proof is, under the loss stability condition \eqref{eq:4.1-nabla-l2-bound} and recalling that $\bar\ell_n(x)=\ell_n(x)-\mu_n$,
$$
\hat R_{\cv,n,K} - \bar R_{\cv,n,K} = \frac{1}{n}\sum_{i=1}^n\bar\ell_{n_\tr}(X_i) +o_P(1/\sqrt{n})\,,
$$
which can be rewritten equivalently as
\begin{equation}
\label{eq:4.2-asymp-lin-with-G}
\hat R_{\cv,n,K} - \mu_{n_\tr} = \frac{1}{n}\sum_{i=1}^n\bar\ell_{n_\tr}(X_i)+\frac{1}{n} G+o_P(1/\sqrt{n})\,,
\end{equation}
where
\begin{equation}\label{eq:4.2-def-G}
G = \sum_{i=1}^n \left[R(\D_{n,-k_i})-\mu_{n_\tr}\right] = \sum_{k=1}^K n_\te \left[R(\D_{n,-k})-\mu_{n_\tr}\right]
\end{equation}
accounts for the difference between the random centering $\bar R_{\cv,n,K}$ and the deterministic centering $\mu_{n_\tr}$.

The additional challenge in order to establish a deterministic centering CLT for $\hat R_{\cv,n,K}$ is to characterize the randomness in $G$, a mean zero random variable that involves each sample point in a symmetric fashion.
Unlike the loss function, where each sample point $X_i$ is evaluated exactly once in $\ell(X_i,\D_{n,-k_i})$, the randomness in $G$ features the sum of the risks of parameters fitted from different leave-one-fold-out subsamples, and each $X_i$ contributes, in a symmetric fashion as other fitting sample points, to all but one such risks.  Thus we will proceed by considering the projection of $G$ onto each $X_i$, which is involved in the parameter fitting part in $R(\D_{n,-k_j})$ for all $j\notin I_{k_i}$. Define
\begin{equation}\label{eq:4.2-g}
g_n(x) \coloneqq n\mathbb E\left[R(\D_n)|X_1=x\right]\,,~~~~\text{and}~~~~ \bar g_n(x)=g_n(x)-n\mu_n\,.
\end{equation}
The quantity $g_n(X_i)$ is the counterpart of $\ell_n(X_i)$ when considering the contribution of $X_i$ to the \emph{fitting variability}.

By construction, it is straightforward to check that 
$$\bar g_{n_\tr}(X_i)=\mathbb E(G|X_i)\,.$$
It is reasonable to expect the linear decomposition
\begin{equation}\label{eq:4.2-approx-lin-of-G}
G\approx\sum_{i=1}^n \bar g_{n_\tr}(X_i)\,.
\end{equation}
If we can establish \eqref{eq:4.2-approx-lin-of-G} with enough level of accuracy, then one can establish a CLT with deterministic centering from \eqref{eq:4.2-asymp-lin-with-G} using standard i.i.d. sums.

To further explore the plausibility of \eqref{eq:4.2-approx-lin-of-G}, we consider the following martingale decomposition of $G$:
\begin{equation}\label{eq:4.2-first-mg-decomp}
G = \sum_{i=1}^n \mathbb E(G|\D_i) - \mathbb E(G|\D_{i-1}) = \sum_{i=1}^n \mathbb E(\nabla_i G|\D_i)
\end{equation}
with the convention $\D_0=\emptyset$.

The martingale increment $\mathbb E(\nabla_i G|\D_i)$ represents the contribution to $G$ from the $i$th sample point.  We shall also see that, at least intuitively, when the loss function is stable, the choice of ordering of $i$ in \eqref{eq:4.2-first-mg-decomp} does not matter.  Now it seems natural to expect this increment to be close to $\bar g_{n_\tr}(X_i)$. Indeed we have, using another round of martingale decomposition with respect to the $\sigma$-fields generated by $(\D_j,X_i)$,
\begin{align}
    \mathbb E(\nabla_i G|\D_i) = & \mathbb E(\nabla_i G|X_i)+\sum_{j=1}^{i-1}\left[\mathbb E(\nabla_i G|\D_j,X_i)-\mathbb E(\nabla_i G|\D_{j-1},X_i)\right]\nonumber\\
    = & \bar g_{n_\tr}(X_i)+\sum_{j=1}^{i-1}\mathbb E(\nabla_j\nabla_i G|\D_j,X_i)\nonumber\\
    \coloneqq & \bar g_{n_\tr}(X_i)+\Lambda_i\,. \label{eq:4.2-second-mg-decomp}
\end{align}

Now we arrive at the following approximate linear form for $G$:
\begin{equation}\label{eq:4.2-approx-G-with-res}
    G = \sum_{i=1}^n \bar g_{n_\tr}(X_i)+\sum_{i=1}^n \Lambda_i\,.
\end{equation}
It remains to control the residual term $\sum_{i=1}^n \Lambda_i$.  Two observations are useful.
\begin{enumerate}
    \item The sequence $(\Lambda_i:1\le i\le n)$ is a martingale increment sequence with respect to the filtration corresponding to $(\D_i:1\le i\le n)$.
    \item For each $i$, we have
    \begin{equation}
        \label{eq:4.2-Lambda_i-bound}
        \|\Lambda_i\|_2^2\le (i-1)n^2 \max_{1\le j< l\le n_\tr} \|\nabla_j\nabla_l R(\D_{n_\tr})\|_2^2\,.
    \end{equation}
\end{enumerate}
To prove \eqref{eq:4.2-Lambda_i-bound}, using the martingale decomposition of $\Lambda_i$, we have
\begin{align*}
    \|\Lambda_i\|_2^2  = & \sum_{j=1}^{i-1}\|\mathbb E(\nabla_j\nabla_i G|\D_j,X_i)\|_2^2\\
    \le & \sum_{j=1}^{i-1} \|\nabla_j\nabla_i G\|_2^2\,.
\end{align*}
But $\nabla_j\nabla_i G = \sum_{k=1}^K n_\te \nabla_j\nabla_i R(\D_{n,-k})$ and observe that $\nabla_j\nabla_i R(\D_{n,-k})\neq 0$ only if $i,j\notin I_{n,k}$.  Thus $\|\nabla_j\nabla_i G\|_2\le n\max_{1\le j<l\le n_\tr}\|\nabla_j\nabla_l R(\D_{n_\tr})\|_2$.

The above derivation essentially proves the following central limit theorem for CV risk estimates with deterministic centering.
\begin{theorem}[CV-CLT, Deterministic Centering]
    \label{thm:4.2-cv-clt-det}
    Let $(X_i:0\le i<\infty)$ be a sequence of i.i.d. sample points in $\X$, $\hat f:\bigcup_{n\ge 1}\X^{\otimes n}\mapsto \mathcal F$ a symmetric estimation procedure, and $\ell:\X\times \mathcal F\mapsto \mathbb R$ a loss function. Let $K$ be an integer that evenly divides $n$ and is allowed to change with $n$.  Suppose 
    $$\tau_{n}^2\coloneqq \var(\ell_n(X_0)+g_{n}(X_0))>0$$
    for all $n$
    and the following stability condition holds
    \begin{enumerate}
    \item For any $1\le i\le n$, $\|\nabla_i \ell(X_0,\D_n)\|_2=o(\tau_n/\sqrt{n})$;
    \item For any $1\le j < i \le n$, $\|\nabla_j\nabla_i R(\D_n)\|_2=o(\tau_n n^{-3/2})$.
    \item For any $\epsilon>0$, \begin{equation}
        \label{eq:4.2-lindeberg-det}
        \lim_{n\rightarrow \infty}\mathbb E\left(\frac{\bar\ell_n(X_0)+\bar g_n(X_0)}{\tau_n}\right)^2\mathds{1}\left(\left|\frac{\bar\ell_n(X_0)+\bar g_n(X_0)}{\tau_n}\right|\ge \epsilon\sqrt{n}\right)=0\,.
   \end{equation}
    \end{enumerate}
    Then $$
    \frac{\sqrt{n}}{\tau_{n_\tr}}\left(\hat R_{\cv,n,K}-\mu_{n_\tr}\right)\rightsquigarrow N(0,1)\,.
    $$
\end{theorem}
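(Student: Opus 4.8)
The derivation preceding the statement already assembles most of the ingredients, so the plan is to organize them into three stages and then invoke a standard triangular-array central limit theorem.

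\emph{Stage 1: reduction to an i.i.d.\ sum plus $G$.} First I would rerun the proof of \Cref{thm:4.1-clt-rand}, but with $\sigma_{n_\tr}$ replaced throughout by $\tau_{n_\tr}$ and using the first-order stability Condition~1. The argument is structurally identical: the squared-term bound $\mathbb E(\xi_i-\bar\xi_i)^2\le \tfrac12\sum_{j\notin I_{k_i}}\|\nabla_j\ell(X_i,\D_{n,-k_i})\|_2^2=o(\tau_{n_\tr}^2)$ follows from the Efron--Stein inequality (\Cref{thm:4.1-efron-stein}) together with the bound $\|\nabla_j\xi_i\|_2\le\|\nabla_j\ell(X_i,\D_{n,-k_i})\|_2$; cross-terms with $k_i=k_j$ vanish by conditional independence; and for $j\notin I_{k_i}$ the free-nabla lemma (\Cref{lem:4.4-free-nabla}), applied twice, gives $|\mathbb E(\xi_i-\bar\xi_i)(\xi_j-\bar\xi_j)|=o(\tau_{n_\tr}^2/n_\tr)$. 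Summing the $O(n)$ squared terms and $O(n^2)$ cross terms and using $n_\tr\asymp n$ yields $\|\sum_{i=1}^n(\xi_i-\bar\xi_i)\|_2^2=o(n\tau_{n_\tr}^2)$, hence $\hat R_{\cv,n,K}-\bar R_{\cv,n,K}=\tfrac1n\sum_{i=1}^n\bar\ell_{n_\tr}(X_i)+o_P(\tau_{n_\tr}/\sqrt n)$. Combined with the identity $\bar R_{\cv,n,K}-\mu_{n_\tr}=\tfrac1n G$, this is exactly \eqref{eq:4.2-asymp-lin-with-G} with $G$ as in \eqref{eq:4.2-def-G}.

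\emph{Stage 2: linearizing $G$.} Here I would carry out the double martingale decomposition already displayed: write $G=\sum_i\mathbb E(\nabla_i G\mid\D_i)$, then split each increment over the filtration generated by $(\D_j,X_i)$ to reach \eqref{eq:4.2-approx-G-with-res}, i.e.\ $G=\sum_{i=1}^n\bar g_{n_\tr}(X_i)+\sum_{i=1}^n\Lambda_i$ with $\Lambda_i=\sum_{j=1}^{i-1}\mathbb E(\nabla_j\nabla_i G\mid\D_j,X_i)$. Two points need verifying: that $\mathbb E(\nabla_i G\mid X_i)=\bar g_{n_\tr}(X_i)$, which is immediate from the definition of $g_n$ in \eqref{eq:4.2-g} and the fact that $X_i$ is a fitting point in exactly $n_\tr$ of the $K$ leave-one-fold-out risks; and that $(\Lambda_i)$ is a martingale-difference sequence for $(\D_i)$, which is built into the telescoping. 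The residual is then controlled via \eqref{eq:4.2-Lambda_i-bound}: using the martingale structure of $\Lambda_i$ and then of $\sum_i\Lambda_i$,
\[
\Big\|\sum_{i=1}^n\Lambda_i\Big\|_2^2=\sum_{i=1}^n\|\Lambda_i\|_2^2\le \sum_{i=1}^n(i-1)\,n^2\max_{1\le j<l\le n_\tr}\|\nabla_j\nabla_l R(\D_{n_\tr})\|_2^2\lesssim n^4\max_{j<l}\|\nabla_j\nabla_l R(\D_{n_\tr})\|_2^2,
\]
which by Condition~2 is $o\!\big(n^4\tau_{n_\tr}^2 n_\tr^{-3}\big)=o(n\tau_{n_\tr}^2)$ since $n_\tr\asymp n$. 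Hence $\tfrac1n G=\tfrac1n\sum_{i=1}^n\bar g_{n_\tr}(X_i)+o_P(\tau_{n_\tr}/\sqrt n)$.

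\emph{Stage 3: CLT for the i.i.d.\ sum.} Combining Stages 1 and 2,
\[
\frac{\sqrt n}{\tau_{n_\tr}}\big(\hat R_{\cv,n,K}-\mu_{n_\tr}\big)=\frac{1}{\sqrt n\,\tau_{n_\tr}}\sum_{i=1}^n\big[\bar\ell_{n_\tr}(X_i)+\bar g_{n_\tr}(X_i)\big]+o_P(1).
\]
The leading term is a normalized sum of i.i.d.\ mean-zero summands with per-$n$ variance exactly $\var(\ell_{n_\tr}(X_0)+g_{n_\tr}(X_0))/\tau_{n_\tr}^2=1$, so under the Lindeberg Condition~3 the Lindeberg--Feller CLT for triangular arrays gives convergence to $N(0,1)$, and Slutsky's lemma finishes the proof. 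The main obstacle is Stage~2: producing the correct second-order bound on $\Lambda_i$ and confirming that a genuine martingale-difference structure survives the double decomposition, so that variances add and the crude $\sum_{i}(i-1)\asymp n^2/2$ counting already suffices; this is precisely where the $o(\tau_n n^{-3/2})$ second-order stability Condition~2 is used, and it is the step with no analogue in the random-centering argument.
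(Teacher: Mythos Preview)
Your proposal is correct and follows essentially the same three-step approach as the paper: use Condition~1 and the proof of \Cref{thm:4.1-clt-rand} to establish \eqref{eq:4.2-asymp-lin-with-G}, use Condition~2 together with \eqref{eq:4.2-Lambda_i-bound} and the martingale structure to show $n^{-1/2}\sum_i\Lambda_i=o_P(\tau_{n_\tr})$, then apply the Lindeberg CLT. The paper's proof is extremely terse because the derivation preceding the theorem already carries out most of the work; your version simply spells out the details.
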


\begin{proof}
    [Proof of \Cref{thm:4.2-cv-clt-det}]
 According to the proof of \Cref{thm:4.1-clt-rand},  the first stability condition establishes \eqref{eq:4.2-asymp-lin-with-G}. Now the second stability condition ensures that $n^{-1/2}\sum_i \Lambda_i=o_P(\tau_{n_\tr})$ according to \eqref{eq:4.2-Lambda_i-bound}.  The result then follows from the standard Lindeberg CLT for the triangular array $\{\bar\ell_{n_\tr}(X_i)+\bar g_{n_\tr}(X_i):1\le i\le n\}_{n=1}^\infty$.
\end{proof}
The asymptotic variance $\tau_{n_\tr}^2$ is the total variance of the sum of two parts: $\ell_{n_\tr}(X_i)$ and $g_{n_\tr}(X_i)$, which corresponds to the evaluation and estimation randomness contributed by the sample point $X_i$, respectively. In contrast, the random centering CLT \Cref{thm:4.1-clt-rand} only captures the evaluation part involving $\ell_{n_\tr}(X_i)$, leaving the randomness of estimation to the random centering $\bar R_{\cv,n,K}$.  While it is relatively straightforward to assume the boundedness of $\var(\ell_{n_\tr}(X_1))$, the behavior of $\var(\ell_{n_\tr}(X_1)+g_{n_\tr}(X_1))$ is harder to track in general.  Intuitively, $\ell_{n_\tr}(X_1)$ and $g_{n_\tr}(X_1)$ should have a weak positive correlation, because if $X_1$ is a ``good sample point'' then it should reduce the loss function as well as the risk of the estimate obtained using it as one of the training sample points.  This makes a positive constant lower bound of $\var(\ell_{n_\tr}(X_1)+g_{n_\tr}(X_1))$ more or less reasonable.  In order to assume an upper bound on $\var(g_{n_\tr}(X_1))$, it helps to consider the variability of $n_\tr\nabla_1 R(\D_{n_\tr})$, because there are $n_\tr$ terms in $G$ that involves $X_1$.  An upper bound on $\var[g_{n_\tr}(X_1)]$ can be obtained through the first-order risk stability $\nabla_1 R(\D_{n_\tr})$:
\begin{align}
    \label{eq:4.2-var-G-risk-stab}
    \var(g_{n_\tr}(X_1))= &\var\left[n_\tr \mathbb E(R(\D_{n_\tr})|X_1)\right]\nonumber\\
     = &\frac{1}{2}\var\left[n_\tr \nabla_1 \mathbb E(R(\D_{n_\tr})|X_1)\right]\nonumber\\
     \le &\frac{1}{2} n_\tr^2\|\nabla_1 R(\D_{n_\tr})\|_2^2\,.
\end{align}
In order for the upper bound in \eqref{eq:4.2-var-G-risk-stab} to be $O(1)$, it would require a risk stability rate of $O_P(1/n_\tr)$, which seems substantially more stringent than the corresponding rate assumed for the loss stability in \eqref{eq:4.1-nabla-l2-bound} as well as Part 1 of the stability assumption in \Cref{thm:4.2-cv-clt-det}.   We argue that in many machine learning contexts, the risk can have much better stability than the loss function itself.

As a first example, consider $X_i\stackrel{i.i.d.}{\sim} N(0,1)$ and $\ell(x,\D_n)=(x-\hat\mu_n)^2$ where $\hat\mu_n$ is the empirical mean of $\D_n$.  It is straightforward to check that $\|\nabla_1\ell(X_0,\D_n)\|_2\asymp n^{-1}$ but $\|\nabla_1 R(\D_n)\|_2\asymp n^{-3/2}$.

More generally, suppose that $\mathcal F$ is a Hilbert space, and we have the first-order approximation
$$
\nabla_i R(\D_n) \approx \left\langle\left(\frac{d R}{d f}\bigg|_{f=\hat f(\D_n)}\right)\,,~ \left(\nabla_i \hat f\right)\right\rangle\,.
$$
Usually $\nabla_i\hat f$ corresponds to the stability of the loss function $\ell(X_0;\hat f(\D_n))$ if $\ell(X_0,f)$ is smooth in $f$, which could be as small as $O(n^{-1})$.  For risk minimization estimators, it is typically the case that $\frac{d R(f)}{d f}\big|_{f=\hat f(\D_n)}\approx 0$.  Thus for such approximate risk minimization estimators it is reasonable to expect $\nabla_i R(\D_n)$ to be much smaller than $\nabla_i\ell(X_0,\D_n)$, making the requirement of $|\nabla_i R(\D_n)|\ll n^{-1}$ plausible.   This intuition also helps explain the condition $\|\nabla_j\nabla_i R(\D_n)\|_2=o(n^{-3/2})$ required in \Cref{thm:4.2-cv-clt-det}.  We will resume this discussion after presenting the variance estimation result below.  Further discussion and examples of stable algorithms are presented in \Cref{sec:6-stability}, especially in \Cref{subsec:6.3-stab-diff} for the risk stability.

\paragraph{Variance Estimation with Deterministic Centering}
It is unclear how to construct a consistent estimate of the asymptotic variance of $\hat R_{\cv,n,K}$ without auxiliary sample points that are not in $\D_n$.  Now suppose we do have access to some auxiliary sample points, independently drawn from the same distribution.  The question we seek to answer here is how many such auxiliary sample points would be sufficient to guarantee a consistent estimate of the asymptotic variance of the CV risk estimate with deterministic centering. 

If we have $\omega(n)$ auxiliary sample points, then we can generate $\omega(1)$ i.i.d. copies of $\hat R_{\cv,n,K}$ and then obtain a consistent variance estimate from these i.i.d. copies.  Now we show that one can also obtain a consistent variance estimate using only $\omega(1)$ auxiliary sample points.

We begin with the proof of \Cref{thm:4.2-cv-clt-det}, which is based on the following  more refined version of \eqref{eq:4.2-asymp-lin-with-G} and \eqref{eq:4.2-approx-lin-of-G} with an explicit expression of the residual,
$$
\hat R_{\cv,n,K} - \mu_{n_\tr} = \frac{1}{n}\sum_{i=1}^n\left[\bar\ell_{n_\tr}(X_i)+\bar g_{n_\tr}(X_i)\right]+ \frac{1}{n}\sum_{i=1}^n(\xi_i-\bar\xi_i)+\frac{1}{n}\sum_{i=1}^n \Lambda_i\,,
$$
where $\xi_i=\ell(X_i,\D_{n,-k_i})-R(\D_{n,-k_i})$, $\bar\xi_i=\mathbb E(\xi_i|X_i)=\ell_{n_\tr}(X_i)-\mu_{n_\tr}$, and $\Lambda_i=\mathbb E(G|\D_i)-\mathbb E(G|\D_{i-1})$.

To study the variance of $\hat R_{\cv,n,K}$, it is natural to consider $\nabla_i \hat R_{\cv,n,K}$.
In particular we will consider $i=n$, and, assuming that $k_n=K$,
\begin{equation}\label{eq:4.2-nabla_n-R}
    n\nabla_n\hat R_{\cv,n,K} =  \nabla_n (\ell_{n_\tr}(X_n)+g_{n_\tr}(X_n))+ \sum_{i\notin I_K}\nabla_n(\xi_i-\bar\xi_i) + \nabla_n(\xi_n-\bar\xi_n) + \nabla_n \Lambda_n\,.
\end{equation}
The convenience of choosing $i=n$ in $\nabla_i$ is reflected in the last term of the equation above, where all $(\Lambda_i:i<n)$ disappear under the $\nabla_n$ operator. It is important to note that \eqref{eq:4.2-nabla_n-R} also holds for arbitrary index $i\in[n]$, with the last term becoming $\nabla_i \Lambda_{i,n}$, where $\Lambda_{i,n}$ is the counterpart of $\Lambda_n$ when one swaps index $i$ and index $n$ in the martingale decompositions in \eqref{eq:4.2-first-mg-decomp} and \eqref{eq:4.2-second-mg-decomp}.

The first term in the RHS of \eqref{eq:4.2-nabla_n-R} gives us the desired i.i.d. random variable with the desired asymptotic variance. Thus, if we can show that the other terms in the RHS of \eqref{eq:4.2-nabla_n-R} are all sufficiently small with vanishing $L_2$ norms, then one can use the empirical variance of $n\nabla_i \hat R_{\cv,n,K}$ with $\omega(1)$ many distinct $i$'s, which would only require $\omega(1)$ auxiliary sample points.

Among the residual terms in the RHS of \eqref{eq:4.2-nabla_n-R}, the most nontrivial one to control is the second term
$\nabla_n\sum_{i\notin I_K}(\xi_i-\bar\xi_i)$.  In the proof of \Cref{thm:4.1-clt-rand} we have shown 
$$\left\|\sum_{i\notin I_K}(\xi_i-\bar\xi_i)\right\|_2=o(\sqrt{n}\sigma_{n_\tr})\,,$$
thus we only need the $\nabla_n$ operator to cancel out the additional $\sqrt{n}$ factor in this upper bound. While this is intuitively plausible, we will need some additional assumptions for a rigorous result.  We consider the following.
\begin{assumption}
    \label{asm:4.2-second-order-stab}
    \begin{enumerate}
        \item [(a)]  $\|\nabla_1\ell(X_0,\D_n)\|_2=o((n K)^{-1/2}\tau_n)$.
        \item [(b)]  The loss function satisfies $\|\nabla_1\nabla_2\ell(X_0,\D_n)\|_2=o(n^{-1}\tau_n)$.
    \end{enumerate}
\end{assumption}
The two parts of \Cref{asm:4.2-second-order-stab} represent different possible strengthening of the stability condition \eqref{eq:4.1-nabla-l2-bound}.  Part (a) is easy to hold when the number of folds $K$ is a constant or grows with $n$ at a slow rate.  When $K$ is a constant, Part (a) of \Cref{asm:4.2-second-order-stab} is equivalent to the original stability condition \eqref{eq:4.1-nabla-l2-bound}.  On the other hand, part (b) of \Cref{asm:4.2-second-order-stab} requires a ``second-order stability''.  Intuitively, if applying the perturb-one difference operator $\nabla_i$ reduces the magnitude of $\ell(X_0,\D_n)$ by a factor of $o(1/\sqrt{n})$, then applying two such operators will reduce the magnitude by a factor of $o(1/n)$.  The advantage of Part (b) is that it can potentially cover the case of large values of $K$, such as $K=n$.  But such an advantage comes at the cost of having to verify the second-order stability condition.  We will discuss examples of second-order stability in \Cref{subsec:6.2-stab-sgd-2}.

Formally, assume we have auxiliary sample points $X_1',...,X_{n_\aux}'$ independent of $\D_n$ with sample size $n_\aux=\omega(1)$.  Consider the following pseudo-estimate of the asymptotic variance of the CV risk with deterministic centering.
\begin{equation}\label{eq:4.2-var-est-determ}
    \hat \tau_{n_\tr}^2 = \frac{1}{2n_\aux}\sum_{i=1}^{n_\aux} (n\nabla_i \hat R_{\cv,n,K})^2\,.
\end{equation}

\begin{theorem}
    \label{thm:4.2-var-est-determ}
    Under the assumptions of \Cref{thm:4.2-cv-clt-det}, assuming in addition that either part (a) or part (b) of \Cref{asm:4.2-second-order-stab} holds, if the auxiliary sample size $n_\aux$ satisfies
    \begin{equation}\label{eq:4.2-aux-sample-size}
    \frac{\|\bar\ell_{n_\tr}(X_1)+\bar g_{n_\tr}(X_1)\|_4^4}{n_\aux \tau_{n_\tr}^4}\rightarrow 0\,,
    \end{equation} we have
    $$
    \frac{\hat\tau_{n_\tr}^2}{\tau_{n_\tr}^2}\stackrel{P}{\rightarrow} 1\,.
    $$
\end{theorem}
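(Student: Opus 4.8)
The plan is to start from the refined perturb-one decomposition \eqref{eq:4.2-nabla_n-R}, read now for a general index $i\le n_\aux$ with the $i$-th auxiliary point $X_i'$ playing the role of the i.i.d.\ copy, so that
\[
n\nabla_i\hat R_{\cv,n,K} = W_i + r_i,\qquad W_i \coloneqq \bigl(\bar\ell_{n_\tr}(X_i)+\bar g_{n_\tr}(X_i)\bigr)-\bigl(\bar\ell_{n_\tr}(X_i')+\bar g_{n_\tr}(X_i')\bigr),
\]
with remainder $r_i = \nabla_i(\xi_i-\bar\xi_i)+\nabla_i\Lambda_{i,n}+\nabla_i\sum_{j\notin I_{k_i}}(\xi_j-\bar\xi_j)$. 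The key structural point is that $X_1,\dots,X_{n_\aux}$ are distinct points of $\D_n$ and $X_1',\dots,X_{n_\aux}'$ are the independent auxiliary points, so $(W_i)_{i\le n_\aux}$ is an i.i.d.\ sequence, each $W_i$ being the difference of two independent copies of $Z\coloneqq \bar\ell_{n_\tr}(X_0)+\bar g_{n_\tr}(X_0)$. Hence $\mathbb E W_i^2 = 2\tau_{n_\tr}^2$ and $\mathbb E W_i^4\lesssim \|Z\|_4^4$, and a Chebyshev bound applied to $\frac{1}{2n_\aux}\sum_i W_i^2$—whose variance is $\lesssim \|Z\|_4^4/n_\aux = \|\bar\ell_{n_\tr}(X_1)+\bar g_{n_\tr}(X_1)\|_4^4/n_\aux$—together with hypothesis \eqref{eq:4.2-aux-sample-size} yields $\frac{1}{2n_\aux\tau_{n_\tr}^2}\sum_{i=1}^{n_\aux}W_i^2\stackrel{P}{\rightarrow}1$.

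It then remains to prove $\frac{1}{n_\aux}\sum_{i=1}^{n_\aux}r_i^2 = o_P(\tau_{n_\tr}^2)$. Because symmetry of $\hat f$ makes every $L_2$ bound below independent of $i$, it suffices to show $\|r_i\|_2 = o(\tau_{n_\tr})$ for one $i$ and conclude by Markov's inequality. The term $\nabla_i(\xi_i-\bar\xi_i)$ collapses (since $X_i\notin\D_{n,-k_i}$) to $\bigl[\ell(X_i,\D_{n,-k_i})-\ell_{n_\tr}(X_i)\bigr]-\bigl[\ell(X_i',\D_{n,-k_i})-\ell_{n_\tr}(X_i')\bigr]$, which by \Cref{pro:4.1-var-approx} and the first-order loss stability in \Cref{thm:4.2-cv-clt-det} has $L_2$ norm $o(\tau_{n_\tr})$; the term $\nabla_i\Lambda_{i,n}$ is $o(\tau_{n_\tr})$ directly from the bound \eqref{eq:4.2-Lambda_i-bound}, the second-order risk stability of \Cref{thm:4.2-cv-clt-det}, and $n_\tr\asymp n$.

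The remaining piece $\nabla_i\sum_{j\notin I_{k_i}}(\xi_j-\bar\xi_j)=\sum_{j\notin I_{k_i}}\nabla_i\xi_j$ (using $\nabla_i\bar\xi_j=0$ for $j\ne i$) is the crux, and this is where \Cref{asm:4.2-second-order-stab} enters. Expanding $\bigl\|\sum_j\nabla_i\xi_j\bigr\|_2^2$ into diagonal and off-diagonal terms: the $n_\tr$ diagonal terms satisfy $\|\nabla_i\xi_j\|_2\le 2\|\nabla_1\ell(X_0,\D_{n_\tr})\|_2$, so their sum is $o(\tau_{n_\tr}^2)$ under the first-order stability of \Cref{thm:4.2-cv-clt-det}; and for $j,j'$ in the \emph{same} fold, $\mathbb E[(\nabla_i\xi_j)(\nabla_i\xi_{j'})]=0$ exactly, because conditionally on the shared fitting data and $X_i'$ both factors have conditional mean zero and depend on the conditionally independent evaluation points $X_j,X_{j'}$. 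Only the distinct-fold pairs survive, and these I control via the two branches of \Cref{asm:4.2-second-order-stab}: under part (a), grouping $\sum_{j\notin I_{k_i}}\nabla_i\xi_j=\sum_{k\ne k_i}\nabla_i\!\bigl(\sum_{j\in I_k}\xi_j\bigr)$ and using the triangle inequality over the $K-1$ folds—each per-fold block having $L_2$ norm $o(\tau_{n_\tr}/K)$ by the within-fold orthogonality above and the $(nK)^{-1/2}$ rate in part (a)—gives total $o(\tau_{n_\tr})$ for every $K$; under part (b), keeping the full expansion and applying the free-nabla lemma \Cref{lem:4.4-free-nabla} twice (exploiting that $\nabla_i\xi_j$ is centered in coordinate $j$) rewrites each distinct-fold cross term as the expectation of a product of two second-order differences of $\xi$, each of $L_2$ norm $o(\tau_{n_\tr}/n_\tr)$ by the second-order loss stability, so the sum over the $O(n^2)$ pairs is $o(\tau_{n_\tr}^2)$, uniformly in $K$ and hence covering $K=n$. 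I expect this term to be the main obstacle: the crude fold-by-fold triangle bound degrades like $\sqrt K$, so for large $K$ one genuinely needs the second-order cancellation, while for bounded or slowly growing $K$ part (a) already suffices.

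Finally, expanding $(n\nabla_i\hat R_{\cv,n,K})^2 = W_i^2 + 2W_ir_i + r_i^2$ and averaging over $i\le n_\aux$, the first average tends to $\tau_{n_\tr}^2$ in probability, the third is $o_P(\tau_{n_\tr}^2)$ by the previous paragraph, and the cross term is $o_P(\tau_{n_\tr}^2)$ by Cauchy--Schwarz; dividing through by $\tau_{n_\tr}^2$ gives $\hat\tau_{n_\tr}^2/\tau_{n_\tr}^2\stackrel{P}{\rightarrow}1$.
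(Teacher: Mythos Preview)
Your proposal is correct and follows essentially the same approach as the paper: decompose $n\nabla_i\hat R_{\cv,n,K}=W_i+\Delta_i$ via \eqref{eq:4.2-nabla_n-R}, control the i.i.d.\ main term $\tfrac{1}{2n_\aux}\sum W_i^2$ by Chebyshev and hypothesis \eqref{eq:4.2-aux-sample-size}, bound $\|\Delta_i\|_2=o(\tau_{n_\tr})$, and handle the cross term by Cauchy--Schwarz. The paper packages your remainder bounds as Parts~1--3 of \Cref{lem:4.4-useful-fact-loss-stab}, but the arguments coincide: your within-fold orthogonality plus triangle-over-folds for branch~(a) is exactly Part~3 at $q=2$, and your double application of the free-nabla lemma for branch~(b) is exactly the proof of Part~2.
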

If $\|\bar\ell_{n_\tr}(X_1)+\bar g_{n_\tr}(X_1)\|_4 =  O(\tau_{n_\tr})$, then \Cref{thm:4.2-var-est-determ} implies consistent estimation with $n_\aux=\omega(1)$.   In practice, we can hold out $n_\aux=\sqrt{n}$ for variance estimation and conduct cross-validation on the remaining sample points.

\begin{proof}
    [Proof of \Cref{thm:4.2-var-est-determ}]
Using \eqref{eq:4.2-nabla_n-R}, we have
$$n\nabla_i \hat R_{\cv,n,K}=\nabla_i[\ell_{n_\tr}(X_i)+g_{n_\tr}(X_i)]+\Delta_i$$
where $\Delta_i$ equals the sum of the last three terms in the RHS of \eqref{eq:4.2-nabla_n-R}\footnote{Although \eqref{eq:4.2-nabla_n-R} is derived for $i=n$, the validity of this decomposition for all $i\in[n]$  is due to the symmetry, as discussed in the text after \eqref{eq:4.2-nabla_n-R}.}.
Let $W_i=\nabla_i[\ell_{n_\tr}(X_i)+g_{n_\tr}(X_i)]$. Then
$$\frac{\hat\tau_{n_\tr}^2}{\tau_{n_\tr}^2}-1=\frac{\frac{1}{n_\aux}\sum_{i=1}^{n_\aux} (W_i^2/2-\tau_{n_\tr}^2)}{\tau_{n_\tr}^2}+\frac{1}{n_\aux}\sum_{i=1}^{n_\aux}\frac{W_i \Delta_i}{\tau_{n_\tr}^2}+\frac{1}{n_\aux}\sum_{i=1}^{n_\aux}\frac{\Delta_i^2}{2\tau_{n_\tr}^2}\,.$$
The second and third terms on the RHS of the above equation has $L_1$ norm bounded by $o(1)$ because $\|W_i/\tau_{n_\tr}\|_2 = \sqrt{2}$ and $\|\Delta_i/\tau_{n_\tr}\|_2=o(1)$ uniformly over $i$ according to \Cref{lem:4.4-useful-fact-loss-stab}.  Thus it suffices to show the first term is $o_P(1)$. In fact,
the first term has mean $0$ and variance
\begin{align*}
\frac{\var(W_1^2)}{4n_\aux\tau_{n_\tr}^4}\lesssim & \frac{\|\bar\ell_{n_\tr}(X_1)+\bar g_{n_\tr}(X_1)\|_4^4}{n_\aux\tau_{n_\tr}^4}=o(1)\,.\qedhere
\end{align*}
\end{proof}

\paragraph{Revisiting the Risk Stability Conditions}
The variance estimation argument above also provides further insights to the asymptotic variance $\tau_{n_\tr}^2$ and the risk stability condition in \Cref{thm:4.2-cv-clt-det}.  To begin with, we repeat the derivation of \eqref{eq:4.2-first-mg-decomp}, \eqref{eq:4.2-second-mg-decomp}, \eqref{eq:4.2-approx-G-with-res}, \eqref{eq:4.2-Lambda_i-bound}, and \eqref{eq:4.2-nabla_n-R} for the risk $R(\D_n)$. In particular, \eqref{eq:4.2-approx-G-with-res} becomes
\begin{equation*}
    R(\D_{n_\tr})-\mu_{n_\tr} = \sum_{i=1}^n\left[ \mathbb E(R(\D_{n_\tr})|X_i)-\mu\right]+\sum_{i=1}^n \tilde \Lambda_i
\end{equation*}
where $\tilde\Lambda_i$ is the counterpart of $\Lambda_i$ in \eqref{eq:4.2-approx-G-with-res}.
Also \eqref{eq:4.2-nabla_n-R} becomes (suppose we put $i=1$ in the final position in the first martingale decomposition \eqref{eq:4.2-first-mg-decomp})
\begin{equation}\label{eq:4.2-nabla_1-R=g+lambda}
    \nabla_1 R(\D_{n_\tr}) = \nabla_1 \mathbb E(R(\D_{n_\tr})|X_1)+\nabla_1\tilde\Lambda_1\,,
\end{equation}
where $\|\nabla_1\tilde\Lambda_1\|_2\le 2\sqrt{n_\tr}\|\nabla_1\nabla_2 R(\D_{n_\tr})\|_2$ by a similar argument used in \eqref{eq:4.2-Lambda_i-bound}.

Therefore, we have the following result on the relationship between the variances of $\nabla_1 R(\D_{n_\tr})$ and $g_{n_\tr}(X_1)$, which provides the reverse direction of \eqref{eq:4.2-var-G-risk-stab}.
\begin{lemma}
    \label{lem:4.2-nabla_1-R-var}
    If $\var(R(\D_n))>0$ and \begin{equation}
        \label{eq:4.2-2nabla-1nabla}
        \sqrt{n}\|\nabla_1\nabla_2 R(\D_n)\|_2 = o(\|\nabla_1 R(\D_n)\|_2)\,,
    \end{equation}
    then 
    $$
    \frac{\var(g_n(X_1))}{n^2\var(\nabla_1 R(\D_n) )}\rightarrow \frac{1}{2}\,.
    $$
\end{lemma}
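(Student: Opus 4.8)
The plan is to reduce the claimed limit to a comparison of two $L_2$-norms of perturb-one differences, and then to control the gap between those norms using the second-order risk stability hypothesis \eqref{eq:4.2-2nabla-1nabla}.

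First I would rewrite both variances in the statement. Setting $\phi(x)=\mathbb E[R(\D_n)\mid X_1=x]$, so that $g_n(x)=n\phi(x)$, the random variable $\nabla_1\mathbb E[R(\D_n)\mid X_1]$ is simply $\phi(X_1)-\phi(X_1')$, a difference of two i.i.d.\ copies; hence it has mean zero and $\big\|\nabla_1\mathbb E[R(\D_n)\mid X_1]\big\|_2^2=2\var(\phi(X_1))$, giving
\[
\var(g_n(X_1))=\frac{n^2}{2}\,\big\|\nabla_1\mathbb E[R(\D_n)\mid X_1]\big\|_2^2\,.
\]
On the other side, $\nabla_1 R(\D_n)=R(\D_n)-R(\D_n^1)$ has mean zero by exchangeability of the sample points, so $\var(\nabla_1 R(\D_n))=\|\nabla_1 R(\D_n)\|_2^2$; and this is strictly positive for every $n$ because the martingale decomposition together with symmetry of $\hat f$ and the i.i.d.\ assumption give $\var(R(\D_n))=\sum_{i=1}^n\|\mathbb E(\nabla_i R(\D_n)\mid\D_i)\|_2^2\le n\|\nabla_1 R(\D_n)\|_2^2$. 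Consequently the target quantity equals
\[
\frac{\var(g_n(X_1))}{n^2\var(\nabla_1 R(\D_n))}=\frac12\cdot\frac{\big\|\nabla_1\mathbb E[R(\D_n)\mid X_1]\big\|_2^2}{\|\nabla_1 R(\D_n)\|_2^2}\,,
\]
so it remains to show this ratio of squared norms tends to $1$.

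Next I would invoke the decomposition \eqref{eq:4.2-nabla_1-R=g+lambda}, namely $\nabla_1 R(\D_n)=\nabla_1\mathbb E[R(\D_n)\mid X_1]+\nabla_1\tilde\Lambda_1$, together with the residual bound $\|\nabla_1\tilde\Lambda_1\|_2\le 2\sqrt n\,\|\nabla_1\nabla_2 R(\D_n)\|_2$ already recorded just after that display (it comes from the martingale-in-$\D_j$ structure of $\tilde\Lambda_1$ and the observation that $\nabla_j\nabla_1 R(\D_n)$ is the only surviving contribution, exactly as in \eqref{eq:4.2-Lambda_i-bound}). The hypothesis \eqref{eq:4.2-2nabla-1nabla} makes the right-hand side $o(\|\nabla_1 R(\D_n)\|_2)$, so $\|\nabla_1\tilde\Lambda_1\|_2=o(\|\nabla_1 R(\D_n)\|_2)$, and the triangle inequality in both directions yields
\[
\big|\,\|\nabla_1\mathbb E[R(\D_n)\mid X_1]\|_2-\|\nabla_1 R(\D_n)\|_2\,\big|\le\|\nabla_1\tilde\Lambda_1\|_2=o\big(\|\nabla_1 R(\D_n)\|_2\big)\,,
\]
which forces $\|\nabla_1\mathbb E[R(\D_n)\mid X_1]\|_2/\|\nabla_1 R(\D_n)\|_2\to1$; squaring and substituting into the identity from the first step completes the argument.

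I do not anticipate a genuine obstacle here: the only substantive input—the residual estimate on $\|\nabla_1\tilde\Lambda_1\|_2$—was established in the discussion preceding the lemma, and the remainder is the i.i.d.-copy variance identity plus a two-sided triangle inequality. The one point that needs to be stated with care is the strict positivity of $\|\nabla_1 R(\D_n)\|_2$ for each $n$, which is what makes the $o(\cdot)$ comparisons and the division legitimate; this is precisely why I would record the martingale/symmetry bound $\var(R(\D_n))\le n\|\nabla_1 R(\D_n)\|_2^2$ explicitly and invoke the assumption $\var(R(\D_n))>0$.
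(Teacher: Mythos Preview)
Your proposal is correct and follows essentially the same route as the paper: both proofs hinge on the decomposition \eqref{eq:4.2-nabla_1-R=g+lambda} together with the residual bound $\|\nabla_1\tilde\Lambda_1\|_2\le 2\sqrt{n}\,\|\nabla_1\nabla_2 R(\D_n)\|_2$, and then compare $\|\nabla_1\mathbb E[R(\D_n)\mid X_1]\|_2$ with $\|\nabla_1 R(\D_n)\|_2$. The only cosmetic difference is that the paper cites \eqref{eq:4.2-var-G-risk-stab} (i.e., Jensen) separately for the upper bound and uses the decomposition for the lower bound, whereas you obtain both directions at once via the two-sided triangle inequality; your explicit justification of $\|\nabla_1 R(\D_n)\|_2>0$ from $\var(R(\D_n))>0$ is a nice addition that the paper leaves implicit.
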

\begin{proof}
    [Proof of \Cref{lem:4.2-nabla_1-R-var}]
    Equation \eqref{eq:4.2-var-G-risk-stab} shows that the ratio is upper bounded by $1/2$.  On the other hand, recall that $g_n(X_1)=n \mathbb E[R(\D_n)|X_1]$ and \eqref{eq:4.2-nabla_1-R=g+lambda} implies that
    $\|\nabla_1 R(\D_n)\|_2\le \sqrt{2}n^{-1}\|\bar g_n(X_1)\|_2+o(\|\nabla_1 R(\D_n)\|_2)$, which provides the other direction.
\end{proof}

Putting things together we can have the following more refined view of \Cref{thm:4.2-cv-clt-det} as well as its relationship with \Cref{thm:4.1-clt-rand}.
\begin{proposition}
    [CV-CLT, Consolidated Version]\label{pro:4.2-cv-clt-consolidate}
Assuming an i.i.d. sequence of $X_i:i\ge 0$ and a symmetric loss function $\ell(\cdot,\cdot)$.
\begin{enumerate}
    \item If $n\|\nabla_1 R(\D_{n})\|_2=o(\sigma_{n})$, then
    $$\frac{\tau_{n_\tr}^2}{\sigma_{n_\tr}^2}\stackrel{P}{\rightarrow}1$$
    and \eqref{eq:4.1-nabla-l2-bound}, \eqref{eq:4.1-lindeberg} imply that
    $$
   \frac{\sqrt{n}}{\sigma_{n_\tr}}(\hat R_{\cv,n,K}-\mu_{n_\tr})\rightsquigarrow N(0,1)\,. 
    $$
    \item If $n\|\nabla_1 R(\D_n)\|_2=\Omega(\sigma_n)$ and $\tau_n=\Omega(\sigma_n)$, then
    $$
   \frac{\sqrt{n}}{\tau_{n_\tr}}(\hat R_{\cv,n,K}-\mu_{n_\tr})\rightsquigarrow N(0,1)\,.
    $$
    provided that $\sqrt{n}\|\nabla_i\ell(X_0,\D_n)\|_2=o(\tau_n)$ and that \eqref{eq:4.2-2nabla-1nabla}, \eqref{eq:4.2-lindeberg-det} hold.
\end{enumerate}
\end{proposition}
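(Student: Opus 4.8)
The plan is to deduce each case from one of the two central limit theorems already in hand: Part~1 from the random-centering theorem \Cref{thm:4.1-clt-rand}, and Part~2 from the deterministic-centering theorem \Cref{thm:4.2-cv-clt-det}. In both cases the work is to check that the hypotheses available in that regime are strong enough to feed the relevant theorem, using the risk-stability bound \eqref{eq:4.2-var-G-risk-stab} and its converse \Cref{lem:4.2-nabla_1-R-var} to translate between the first-order risk stability $\|\nabla_1 R(\D_n)\|_2$ and the variances $\sigma_n^2=\var(\ell_n(X_0))$ and $\tau_n^2=\var(\ell_n(X_0)+g_n(X_0))$.

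For Part~1, I would first establish $\tau_{n_\tr}^2/\sigma_{n_\tr}^2\to1$ by expanding $\tau_n^2=\sigma_n^2+2\,\mathrm{Cov}(\ell_n(X_0),g_n(X_0))+\var(g_n(X_0))$: the bound \eqref{eq:4.2-var-G-risk-stab} gives $\var(g_n(X_0))\le\tfrac12 n^2\|\nabla_1 R(\D_n)\|_2^2=o(\sigma_n^2)$ under the hypothesis $n\|\nabla_1 R(\D_n)\|_2=o(\sigma_n)$, and Cauchy--Schwarz bounds the cross term by $\sigma_n\sqrt{\var(g_n(X_0))}=o(\sigma_n^2)$, so $\tau_n^2=(1+o(1))\sigma_n^2$; evaluating at the sample size $n_\tr\to\infty$ gives the claim. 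For the limit law, \Cref{thm:4.1-clt-rand} applies verbatim under \eqref{eq:4.1-nabla-l2-bound} and \eqref{eq:4.1-lindeberg}, yielding $\tfrac{\sqrt n}{\sigma_{n_\tr}}(\hat R_{\cv,n,K}-\bar R_{\cv,n,K})\rightsquigarrow N(0,1)$, and it then suffices to show $\tfrac{\sqrt n}{\sigma_{n_\tr}}(\bar R_{\cv,n,K}-\mu_{n_\tr})=o_P(1)$ and apply Slutsky. Since $\bar R_{\cv,n,K}-\mu_{n_\tr}=G/n$ with $G$ defined in \eqref{eq:4.2-def-G} and $\mathbb E G=0$, I would apply the Efron--Stein inequality \Cref{thm:4.1-efron-stein} to $G$: because $\nabla_j G=\sum_{k\ne k_j}n_\te\,\nabla_j R(\D_{n,-k})$ and each summand has $L_2$ norm $\|\nabla_1 R(\D_{n_\tr})\|_2$ by the symmetry of $\hat f$, the triangle inequality gives $\|\nabla_j G\|_2\le n_\tr\|\nabla_1 R(\D_{n_\tr})\|_2$, hence $\var(G)\le\tfrac n2\,n_\tr^2\|\nabla_1 R(\D_{n_\tr})\|_2^2=o(n\sigma_{n_\tr}^2)$; Chebyshev finishes. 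Note that this uses only first-order risk stability---no second-order condition is assumed in Part~1, so \Cref{thm:4.2-cv-clt-det} cannot be invoked directly.

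For Part~2 the goal is to verify the three hypotheses of \Cref{thm:4.2-cv-clt-det}. Its loss-stability hypothesis $\|\nabla_i\ell(X_0,\D_n)\|_2=o(\tau_n/\sqrt n)$ is exactly the assumed $\sqrt n\|\nabla_i\ell(X_0,\D_n)\|_2=o(\tau_n)$, and its Lindeberg hypothesis is the assumed \eqref{eq:4.2-lindeberg-det}; the substance is its second-order risk-stability hypothesis $\|\nabla_j\nabla_i R(\D_n)\|_2=o(\tau_n n^{-3/2})$. To obtain it I would first bound $\|\nabla_1 R(\D_n)\|_2$ from above: the $L_2$ triangle inequality gives $\|\bar g_n(X_0)\|_2\le\|\bar\ell_n(X_0)+\bar g_n(X_0)\|_2+\|\bar\ell_n(X_0)\|_2=\tau_n+\sigma_n\lesssim\tau_n$, using $\tau_n=\Omega(\sigma_n)$, so $\var(g_n(X_0))\lesssim\tau_n^2$. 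Since $n\|\nabla_1 R(\D_n)\|_2=\Omega(\sigma_n)$ rules out $\var(R(\D_n))=0$ (and when $\var(R(\D_n))=0$ the hypothesis holds trivially), \Cref{lem:4.2-nabla_1-R-var} applies---its hypothesis \eqref{eq:4.2-2nabla-1nabla} is assumed---and gives $n^2\|\nabla_1 R(\D_n)\|_2^2\sim2\var(g_n(X_0))\lesssim\tau_n^2$, i.e.\ $\|\nabla_1 R(\D_n)\|_2\lesssim\tau_n/n$. Substituting this into \eqref{eq:4.2-2nabla-1nabla} yields $\|\nabla_1\nabla_2 R(\D_n)\|_2=o(n^{-1/2}\|\nabla_1 R(\D_n)\|_2)=o(\tau_n n^{-3/2})$, as required; applying these facts at sample size $n_\tr\to\infty$ and invoking \Cref{thm:4.2-cv-clt-det} gives the stated convergence.

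The main obstacle is the second-order risk-stability hypothesis of \Cref{thm:4.2-cv-clt-det} in Part~2: a priori nothing in the stated hypotheses upper-bounds $\|\nabla_1 R(\D_n)\|_2$, and producing the bound $\|\nabla_1 R(\D_n)\|_2\lesssim\tau_n/n$ requires the \emph{converse} of the Efron--Stein estimate \eqref{eq:4.2-var-G-risk-stab}---namely \Cref{lem:4.2-nabla_1-R-var}---together with the lower bound $\tau_n=\Omega(\sigma_n)$ to tie $\var(g_n)$ back to $\tau_n^2$. The parallel delicate point in Part~1 is that, with no second-order control available, the centering discrepancy $G$ must be handled directly from the first-order rate $\|\nabla_1 R(\D_{n_\tr})\|_2$ via Efron--Stein, rather than through the $\Lambda_i$ expansion used in the proof of \Cref{thm:4.2-cv-clt-det}.
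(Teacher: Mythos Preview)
Your proposal is correct and follows essentially the same approach as the paper: Part~1 via \Cref{thm:4.1-clt-rand} plus an Efron--Stein bound on the centering gap, and Part~2 by verifying the hypotheses of \Cref{thm:4.2-cv-clt-det} using \Cref{lem:4.2-nabla_1-R-var}. The only cosmetic differences are that the paper applies Efron--Stein to each $R(\D_{n,-k})$ and then triangle-bounds $\bar R_{\cv,n,K}-\mu_{n_\tr}$ (rather than applying Efron--Stein to $G$ directly as you do), and in Part~2 it records the two-sided relation $\tau_n\asymp n\|\nabla_1 R(\D_n)\|_2$ rather than just the upper bound you derive---but your one-sided bound is all that is needed.
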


\begin{proof}
    [Proof of \Cref{pro:4.2-cv-clt-consolidate}]
    For the first part, it suffices to show that $\|\bar R_{\cv,n,K}-\mu_{n_\tr}\|_2=o(\sigma_{n_\tr}/\sqrt{n})$, which is in turn implied by
    $\|R(\D_{n_\tr})-\mu_{n_\tr}\|_2=o(\sigma_{n_\tr}/\sqrt{n})$ using symmetry among the folds and triangle inequality.  The condition $n\|\nabla_1 R(\D_{n})\|_2=o(\sigma_{n})$, combined with Efron--Stein, guarantees $\|R(\D_{n_\tr})-\mu_{n_\tr}\|_2=o(\sigma_{n_\tr}/\sqrt{n})$.

    For the second part, it suffices to show that the conditions in \Cref{thm:4.2-cv-clt-det} are satisfied.  In particular, we only need to verify the second-order risk stability in \Cref{thm:4.2-cv-clt-det}. First, \Cref{lem:4.2-nabla_1-R-var} ensures that $\|g_{n_\tr}(X_0)\|_2\asymp n_\tr\|\nabla_1 R(\D_{n_\tr})\|_2$. The additional assumptions $n\|\nabla_1 R(\D_n)\|_2=\Omega(\sigma_n)$ and $\tau_n=\Omega(\sigma_n)$ ensure that $\tau_{n_\tr}\asymp n_\tr\|\nabla_1 R(\D_{n_\tr})\|_2$. Thus the second-order risk stability condition (part 2) of \Cref{thm:4.2-cv-clt-det} is satisfied because by \eqref{eq:4.2-2nabla-1nabla} we have $\|\nabla_1\nabla_2 R(\D_n)\|_2\le n^{-1/2}\|\nabla_1 R(\D_n)\|_2=o(n^{-3/2}\tau_n)$.
\end{proof}

\Cref{pro:4.2-cv-clt-consolidate} provides a more transparent view of the role played by the risk stability quantities $\|\nabla_1 R(\D_n)\|_2$ and $\|\nabla_1\nabla_2 R(\D_n)\|_2$ in the CLT with deterministic centering.  In the first part of \Cref{pro:4.2-cv-clt-consolidate}, the condition $n\|\nabla_1 R(\D_n)\|_2=o(\sigma_n)$ indicates that the fitting randomness in $nR(\D_n)$ contributed from a single sample point $X_1$ has much smaller variability than the evaluation $\ell_n(X_1)$.  In this case the second-order risk stability $\|\nabla_1\nabla_2 R(\D_n)\|_2$ is irrelevant and the deterministic centering essentially behaves like the random centering.

In the second part of \Cref{pro:4.2-cv-clt-consolidate}, the condition $n\|\nabla_1 R(\D_n)\|_2=\Omega(\sigma_n)$ means that the variability in the fitting is at least as large as that in the evaluation.  The condition $\tau_n=\Omega(\sigma_n)$ ensures that these two sources of variability do not cancel each other when we add up $\ell_n(X_1)$ and $g_n(X_1)$. As mentioned in the discussion after \Cref{thm:4.2-cv-clt-det}, it is often plausible to assume a nonnegative correlation between $\ell_n(X_1)$ and $g_n(X_1)$, and hence no cancellation of variance.  In this case, the key condition on the second-order risk stability is that $\|\nabla_1\nabla_2 R(\D_n)\|_2$ is much smaller than $n^{-1/2}\|\nabla_1 R(D_n)\|_2$.  So this is analogous to the original loss stability condition \eqref{eq:4.1-nabla-l2-bound} in the sense that we obtain an $o(n^{-1/2})$ factor from applying one more $\nabla$-operator.  Moreover, if $n\|\nabla_1 R(\D_n)\|_2\gg \sigma_n$, the requirement of $\sqrt{n}\|\nabla_1 \ell(X_0,\D_n)\|_2=o(\tau_n)$ is also weaker than \eqref{eq:4.1-nabla-l2-bound} because the fitting variability dominates and we can have a higher tolerance of the instability.

\subsection{High-Dimensional Gaussian Comparison}\label{subsec:4.3-hd-clt}
The central limit theorems developed in \Cref{subsec:4.1-clt-random} and \Cref{subsec:4.2-cv-clt-det} are concerned with a single estimator $\hat f$.  In the context of model selection and model comparison, it is necessary to study the simultaneous fluctuation of the CV risk estimates of multiple estimators.  While the extension of the central limit theorem to multivariate cases with a finite and fixed number of estimators is straightforward, in this subsection we consider the situation with a potentially large number of estimators.

To begin with, we first introduce a high-dimensional Gaussian comparison result for sums of independent random vectors, initiated by \cite{chernozhukov2013gaussian}.
A typical form of high-dimensional Gaussian comparison is to compare the distribution of the maximum of a random vector to that of a Gaussian vector with matching mean and variance.
For a vector $z=(z_r:1\le r\le m)\in\mathbb R^m$, let $\max z=\max_{r\in [m]}z_r$ be the value of the maximum entry of $z$.
The following result is a simplified version of Theorem 2.5 of \cite{chernozhukov2022improved} adapted for our setting.
\begin{theorem}[Gaussian Approximation, Simplified i.i.d. Case]\label{thm:4.3-cckk}
    Let $(\mathbf Z_i:i\in[n])$ be i.i.d. centered random vectors in $\mathbb R^m$ such that each coordinate of $\mathbf Z_1$ has unit variance.
    Let $\mathbf W_n=n^{-1/2}\sum_{i=1}^n \mathbf Z_i$ and $\mathbf{Y}_n\sim N(0,\mathbb E (\mathbf Z_1 \mathbf Z_1^T))$.

Let $\|\max|\mathbf Z_1|\|_4= B_n$ and assume
$\max_{1\le r\le m}\|Z_{1,r}\|_4\le \kappa$ for a positive constant $\kappa$ not depending on $(n,m)$, then for any $t\in\mathbb R$
\begin{equation}\label{eq:4.3-hd-gaussian}
\left|\mathbb P(\max \mathbf W_n \le t)-\mathbb P(\max \mathbf Y_n\le t)\right|\le C_{\kappa}\frac{B_n(\log m)^{5/4}}{n^{1/4}}\,,
\end{equation}
for a constant $C_\kappa$ depending on $\kappa$ only.
\end{theorem}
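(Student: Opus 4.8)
The statement is a specialization of the abstract high-dimensional central limit theorem of \cite{chernozhukov2013gaussian,chernozhukov2022improved} to i.i.d.\ summands with unit coordinate variances, so the plan is to derive it by reduction rather than by redoing the smoothing and Lindeberg machinery from scratch. Concretely, I would invoke Theorem 2.5 of \cite{chernozhukov2022improved} applied to the i.i.d.\ sequence $\mathbf{Z}_1,\ldots,\mathbf{Z}_n$ with fourth moments. That general statement controls $\sup_{t}\left|\mathbb{P}(\max\mathbf{W}_n\le t)-\mathbb{P}(\max\mathbf{Y}_n\le t)\right|$ in terms of (i) a coordinatewise moment parameter such as $\max_{r}\|Z_{1,r}\|_3\vee\max_r\|Z_{1,r}\|_4$ together with the minimal coordinate variance, and (ii) an envelope quantity for $\max_r|Z_{1,r}|$ measured in $L_4$. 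Condition (i) holds with a constant depending only on $\kappa$: $\max_r\|Z_{1,r}\|_3\le\max_r\|Z_{1,r}\|_4\le\kappa$ by Lyapunov, and each coordinate variance equals $1$ by hypothesis, so the normalization factors in the cited bound are $O(1)$ in $\kappa$. Condition (ii) is exactly $\|\max_r|Z_{1,r}|\|_4=B_n$. Substituting these into the general error bound and collecting the powers of $\log m$ and $n$ that the improved argument produces yields the claimed estimate $C_\kappa B_n(\log m)^{5/4}n^{-1/4}$, uniformly in $t$.

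For completeness I would also recall the self-contained route that underlies the cited theorem, in case one prefers to keep the exposition closed. First replace $\max\mathbf{W}_n$ by the log-sum-exp surrogate $F_\beta(z)=\beta^{-1}\log\sum_{r=1}^m e^{\beta z_r}$, which obeys $\max z\le F_\beta(z)\le\max z+\beta^{-1}\log m$, and smooth the indicator $\mathds{1}(\cdot\le t)$ by a thrice-differentiable function at scale $\eta$; this costs $O(\beta^{-1}\log m+\eta)$ plus a Gaussian anti-concentration term for $\max\mathbf{Y}_n$, handled by Nazarov's inequality. Then run the Lindeberg replacement, exchanging the $\mathbf{Z}_i$ for matching Gaussian vectors one at a time; the per-step error is bounded by the second- and third-order derivatives of the composed smooth functional, which for log-sum-exp are controlled by $\beta$ and $\beta^2$ respectively, multiplied by moments of $\max_r|Z_{1,r}|$. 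Splitting those moments via a truncation at a level proportional to $B_n$, using the $L_4$ envelope to handle the tail, and finally optimizing $\beta$ and $\eta$ against the smoothing errors gives the $n^{-1/4}$ rate with the stated logarithmic exponent.

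The genuine technical work in the self-contained route is the bookkeeping in the third-derivative (Lindeberg) term: one has to relate $\mathbb{E}[\max_r|Z_{1,r}|^3]$ to $B_n$ and the coordinate moments through the truncation argument in a way that keeps the exponent of $\log m$ at $5/4$ and not larger, and to optimize the two smoothing parameters jointly against the Nazarov anti-concentration bound. Since this optimization is precisely the content of \cite{chernozhukov2022improved} and does not interact with the cross-validation structure of this article, the plan is to cite that work and merely verify the two displayed moment conditions above rather than reproduce the argument.
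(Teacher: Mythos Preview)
Your proposal is correct and matches the paper's treatment exactly: the paper does not prove this theorem but simply states it as ``a simplified version of Theorem 2.5 of \cite{chernozhukov2022improved} adapted for our setting,'' and your plan to invoke that result after verifying the coordinatewise fourth-moment and $L_4$-envelope conditions is precisely the intended reduction. Your additional sketch of the smoothing/Lindeberg route is more than the paper provides, but is consistent with the cited source.
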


Gaussian comparison results like this provide probability-theoretic foundations for high-dimensional inference tasks, such as construction of simultaneous confidence intervals for high dimensional mean vectors.

When studying the joint randomness of the CV risk estimates of $m$ estimators, it is more convenient to use vector notation. Let $\boldsymbol{\ell}$ be the vector version of $(\ell^{(r)}:1\le r\le m)$. Define $\boldsymbol{\ell}_n$, $\bar{\boldsymbol{\ell}}_n$, $\hat{\mathbf R}_{\cv,n,K}$, and $\bar{\mathbf R}_{\cv,n,K}$ accordingly.
Let  $S_n$ be the $m\times m$ diagonal matrix with the $r$th diagonal entry being $\sigma_n^{(r)}\coloneqq \sqrt{\var(\ell_n^{(r)}(X_0))}$, and $\Gamma_n$ the correlation matrix of $\boldsymbol{\ell}_n(X_0)$:
\begin{equation}
    \label{eq:4.3-cov-mat-rand}
    \Gamma_n(r,s) = 
        {\rm Corr}\left[\ell_n^{(r)}(X_0),~\ell_n^{(s)}(X_0)\right]\,.
\end{equation}
Our goal is to characterize the simultaneous fluctuation of $\hat{\mathbf R}_{\cv,n,K}$ as an $m$-dimensional random vector.
\Cref{thm:4.1-clt-rand} implies that the $m$-dimensional vector $\sqrt{n}S_{n_\tr}^{-1}(\hat{\mathbf R}_{\cv,n,K}-\bar{\mathbf R}_{\cv,n,K})$
is entry-wise asymptotically standard normal, provided that, among other regularity conditions, the loss stability condition \eqref{eq:4.1-nabla-l2-bound} holds for each estimator $\hat f^{(r)}$.  Therefore, it is natural to consider the corresponding multivariate Gaussian vector $\mathbf Y_n\sim N(0,\Gamma_n)$ as the target in the high-dimensional Gaussian comparison. 

More specifically:
Does \eqref{eq:4.3-hd-gaussian} still hold when $\mathbf W_n$ is replaced by 
$\sqrt{n}S_n^{-1}(\hat{\mathbf R}_{\cv,n,K}-\bar{\mathbf R}_{\cv,n,K})$ and $\mathbf Y_n\sim N(0,\Gamma_n)$?

The following result provides a positive answer to this question under a strengthened version of the stability condition \eqref{eq:4.1-nabla-l2-bound}.
\begin{theorem}
    \label{thm:4.3-hd-cv-clt-rand}
    Let $(X_i:i\ge 0)$ be a sequence of i.i.d. sample points, and $(\hat f^{(r)}:r\in[m])$ a collection of symmetric estimators. Assume
    \begin{enumerate}
        \item there exists a positive constant $\alpha$ and a sequence $\epsilon_n$ such that, for all $r\in[m]$ and all $n$,
    \begin{equation}
        \label{eq:4.3-nabla_ell_sw}
        \frac{\sqrt{nK}}{\sigma_n^{(r)}}\nabla_1 \ell^{(r)}(X_0,\D_n) \text{ is } (\epsilon_n,\alpha)\text{-SW}\,,
    \end{equation}
 \item and there exists a finite positive constant $\kappa$ such that for all $r$ and $n$
    \begin{equation}
        \label{eq:4.3-4th-moment}
       \frac{\bar\ell_n^{(r)}(X_0)}{\sigma_n^{(r)}} ~~\text{is }(\kappa,\alpha)\text{-SW}\,.
    \end{equation}
    \end{enumerate}
If the number of models $m=m_n$ satisfies
\begin{equation}
    \label{eq:4.3-m-bound}
    \frac{(\log m)^{4\alpha+5}}{n} = o(1)\,,~~\text{and}~~(\log m)^{\alpha+3/2}\epsilon_{n_\tr} = o(1),
\end{equation}
    then we have, for $\mathbf Y_n\sim N(0,\Gamma_{n_\tr})$ with $\Gamma_{n_\tr}$ defined in \eqref{eq:4.3-cov-mat-rand},
    \begin{equation}
        \label{eq:4.3-hd-cv-gaussian-comp}
       \sup_{t\in \mathbb R}\left|\mathbb P\left(\max \sqrt{n} S_{n_\tr}^{-1}(\hat{\mathbf R}_{\cv,n,K}-\bar{\mathbf R}_{\cv,n,K})\le t\right)-\mathbb P\left(\max \mathbf Y_{n_\tr}\le t\right)\right|=o(1)\,.
    \end{equation}
\end{theorem}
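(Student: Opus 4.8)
The plan is to deduce \eqref{eq:4.3-hd-cv-gaussian-comp} from the high-dimensional Gaussian approximation of \cite{chernozhukov2022improved} (\Cref{thm:4.3-cckk}), applied to an idealized i.i.d.\ sum, with the gap between the CV risk vector and that sum controlled in sup-norm by the sub-Weibull machinery of \Cref{subsec:concentration_ineq}, and the final passage from a sup-norm bound to a Kolmogorov bound on the maxima handled by Gaussian anti-concentration. Throughout, let $\mathbf Z_i$ denote the $m$-vector with $r$th coordinate $\bar\ell^{(r)}_{n_\tr}(X_i)/\sigma^{(r)}_{n_\tr}$; the $\mathbf Z_i$ are i.i.d., centered, have unit-variance coordinates, and have covariance matrix $\Gamma_{n_\tr}$ (by \eqref{eq:4.3-cov-mat-rand}). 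Write $\mathbf W_n=n^{-1/2}\sum_{i=1}^n\mathbf Z_i$ and $\mathbf Y_n\sim N(0,\Gamma_{n_\tr})$, and observe that the $r$th coordinate of $\sqrt n\,S_{n_\tr}^{-1}(\hat{\mathbf R}_{\cv,n,K}-\bar{\mathbf R}_{\cv,n,K})-\mathbf W_n$ is exactly the residual $\Delta^{(r)}=(n^{1/2}\sigma^{(r)}_{n_\tr})^{-1}\sum_{i=1}^n(\xi^{(r)}_i-\bar\xi^{(r)}_i)$ of the ``approximate linearity'' decomposition in the proof of \Cref{thm:4.1-clt-rand} (see \eqref{eq:4.1-approx-lin-long}), with $\xi^{(r)}_i=\ell^{(r)}(X_i,\D_{n,-k_i})-R^{(r)}(\D_{n,-k_i})$ and $\bar\xi^{(r)}_i=\bar\ell^{(r)}_{n_\tr}(X_i)$.

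\textbf{Step 1 (simultaneous linearization).} I would upgrade the $L_2$ bound $\|\Delta^{(r)}\|_2=o(1)$ established inside the proof of \Cref{thm:4.1-clt-rand} to the sup-norm statement $\max_{r\in[m]}|\Delta^{(r)}|=o_P((\log m)^{-1/2})$. Fixing $r$, set $h^{(r)}(\D_n)=\sum_i(\xi^{(r)}_i-\bar\xi^{(r)}_i)$, a mean-zero function of $\D_n$. Under \eqref{eq:4.3-nabla_ell_sw}, \Cref{pro:2.5-sw-equiv} gives $\|\nabla_l\ell^{(r)}(X_0,\D_{n_\tr})\|_q\lesssim \sigma^{(r)}_{n_\tr}\epsilon_{n_\tr}(n_\tr K)^{-1/2}q^\alpha$ for all $q\ge1$, and (by conditional Jensen) the same bound for $\|\nabla_l R^{(r)}(\D_{n_\tr})\|_q$. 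Re-running the Efron--Stein and ``free nabla'' estimates of \Cref{thm:4.1-clt-rand} in $L_q$ rather than $L_2$, and applying \Cref{thm:2.5-sw-mcdiarmid} conditionally on $X_i$ to $\ell^{(r)}(X_i,\D_{n,-k_i})-\ell^{(r)}_{n_\tr}(X_i)$ (a telescoping over the $n_\tr$ fitting points), shows each increment $\nabla_j h^{(r)}(\D_n)$ is $(c\,\sigma^{(r)}_{n_\tr}\epsilon_{n_\tr},\alpha+\tfrac12)$-sub-Weibull with $c$ independent of $j,r,n$. Feeding these $n$ increments into \Cref{thm:2.5-sw-mcdiarmid} once more gives $h^{(r)}(\D_n)$ is $(c'\sqrt n\,\sigma^{(r)}_{n_\tr}\epsilon_{n_\tr},\alpha+1)$-SW, hence $\Delta^{(r)}$ is $(c'\epsilon_{n_\tr},\alpha+1)$-SW; the maximal inequality \Cref{lem:2.5-sw-maximal} then yields $\max_r|\Delta^{(r)}|=O_P\bigl((\log m)^{\alpha+1}\epsilon_{n_\tr}\bigr)$, which is $o_P((\log m)^{-1/2})$ by the second half of \eqref{eq:4.3-m-bound}.

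\textbf{Step 2 (Gaussian approximation and assembly).} Apply \Cref{thm:4.3-cckk} to $\mathbf W_n$: by \eqref{eq:4.3-4th-moment} and \Cref{pro:2.5-sw-equiv}, $\max_r\|Z_{1,r}\|_4\le\kappa'$ with $\kappa'=\kappa'(\kappa,\alpha)$, and by \Cref{lem:2.5-sw-maximal} the quantity $B_{n_\tr}:=\|\max_r|Z_{1,r}|\|_4\lesssim_{\kappa,\alpha}(\log m)^\alpha$. Since the $\mathbf Z_i$ have covariance $\Gamma_{n_\tr}$, \Cref{thm:4.3-cckk} gives $\sup_t\bigl|\mathbb P(\max\mathbf W_n\le t)-\mathbb P(\max\mathbf Y_n\le t)\bigr|\lesssim B_{n_\tr}(\log m)^{5/4}n^{-1/4}\lesssim(\log m)^{\alpha+5/4}n^{-1/4}$, which is $o(1)$ because the first half of \eqref{eq:4.3-m-bound} is equivalent to $(\log m)^{\alpha+5/4}n^{-1/4}\to0$. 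To conclude: by Step 1, on an event of probability $1-o(1)$ we have $\|\sqrt n\,S_{n_\tr}^{-1}(\hat{\mathbf R}_{\cv,n,K}-\bar{\mathbf R}_{\cv,n,K})-\mathbf W_n\|_\infty\le\eta_n$ with $\eta_n\sqrt{\log m}\to0$, so the event $\{\max\sqrt n\,S_{n_\tr}^{-1}(\hat{\mathbf R}_{\cv,n,K}-\bar{\mathbf R}_{\cv,n,K})\le t\}$ is sandwiched, up to an $o(1)$-probability event, between $\{\max\mathbf W_n\le t-\eta_n\}$ and $\{\max\mathbf W_n\le t+\eta_n\}$. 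Replacing $\max\mathbf W_n$ by $\max\mathbf Y_n$ via the displayed CCK bound and then absorbing the $\pm\eta_n$ shift with the Gaussian anti-concentration inequality $\sup_t\mathbb P(|\max\mathbf Y_n-t|\le\eta_n)\lesssim\eta_n\sqrt{\log m}=o(1)$ (valid because $\mathbf Y_n$ has unit-variance coordinates), both sides of the sandwich collapse to $\mathbb P(\max\mathbf Y_n\le t)+o(1)$, uniformly in $t$, which is \eqref{eq:4.3-hd-cv-gaussian-comp}.

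\textbf{Main obstacle.} The substantive work is Step 1: promoting the $L_2$-level linearization of \Cref{thm:4.1-clt-rand} to a sup-norm bound that pays only a polylogarithmic price in $m$. This forces one to carry the Efron--Stein and free-nabla estimates through every $L_q$-norm at once and to track the inflation of the sub-Weibull exponent --- the inner (conditional) telescoping over the $n_\tr$ fitting points turns $\alpha$ into $\alpha+\tfrac12$, and the outer martingale decomposition of \Cref{thm:2.5-sw-mcdiarmid} adds another $\tfrac12$, so the final exponent is exactly $\alpha+1$; this is what makes the two conditions in \eqref{eq:4.3-m-bound} the right ones, the first calibrated for the CCK step and the second for the anti-concentration step after the $(\log m)^{\alpha+1}$ cost of the maximal inequality. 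One must also verify that the scale of $\nabla_j h^{(r)}$ stays proportional to $\epsilon_{n_\tr}$ and does not acquire spurious factors of $n$, which is precisely where the $(n_\tr K)^{-1/2}$ normalization built into \eqref{eq:4.3-nabla_ell_sw} enters. Everything else reduces to routine substitution of results already in the excerpt.
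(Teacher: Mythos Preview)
Your overall architecture---linearize to the i.i.d.\ sum $\mathbf W_n$, apply \Cref{thm:4.3-cckk}, then absorb the residual via Gaussian anti-concentration---is exactly the paper's proof, and your Step~2 matches it essentially line for line, including the use of \Cref{lem:2.5-sw-maximal} to get $B_n\lesssim(\log m)^\alpha$ and the choice of $\delta_n$ so that $\delta_n\sqrt{\log m}\to 0$ while $(\log m)^{\alpha+1}\epsilon_{n_\tr}=o(\delta_n)$. The paper, however, obtains the sub-Weibull bound on $\Delta^{(r)}$ more directly via \Cref{lem:4.4-rand-center-residual-bound} (which is Part~4 of \Cref{lem:4.4-useful-fact-loss-stab}): within each fold the $(\xi_i-\bar\xi_i)_{i\in I_k}$ are conditionally i.i.d.\ with mean zero given $\D_{n,-k}$, so Rio's inequality applies fold-by-fold and a triangle inequality combines the $K$ folds, yielding $\|\Delta^{(r)}\|_q\lesssim (q-1)\epsilon_{n_\tr}$ directly, with no detour through $\nabla_j h^{(r)}$.

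One correction to your Step~1: the free-nabla trick (\Cref{lem:4.4-free-nabla}) is an identity about \emph{expectations of products} and does not transfer to $L_q$ norms for $q>2$; the paper says this explicitly in the paragraph following the theorem statement, and it is precisely why the extra $\sqrt K$ factor is baked into the stability hypothesis \eqref{eq:4.3-nabla_ell_sw}. Your claimed scale $(c\sigma^{(r)}_{n_\tr}\epsilon_{n_\tr},\alpha+\tfrac12)$ for $\nabla_j h^{(r)}$ is still correct, but the piece $\sum_{i:k_i\neq k_j}\nabla_j\xi_i$ must be handled by Part~3 of \Cref{lem:4.4-useful-fact-loss-stab} (Rio within each calibration fold, then triangle across folds), not by an $L_q$ analogue of free nabla. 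With that substitution your route and the paper's are equivalent.
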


In \Cref{thm:4.3-hd-cv-clt-rand}, we allow the number of models $m=m_n$ to vary with $n$.  Equations \eqref{eq:4.3-nabla_ell_sw} and \eqref{eq:4.3-4th-moment} are counterparts of \eqref{eq:4.1-nabla-l2-bound} and \eqref{eq:4.1-lindeberg} respectively.  The sub-Weibull tail enables a simultaneous upper bound on many residual terms.  The factor of $\sqrt{K}$ on the LHS of \eqref{eq:4.3-nabla_ell_sw} is necessary here because the ``free nabla trick'' used in the proof of \Cref{thm:4.1-clt-rand} no longer works for general $L_q$ norms and we can only use Rio's inequality within each test sample fold and resort to triangle inequality to combine the folds.  
Condition \eqref{eq:4.3-nabla_ell_sw} is less stringent for smaller values of $K$, and becomes comparable to \eqref{eq:4.1-nabla-l2-bound} if $K=O(1)$.
In practice, $K$-fold CV with a constant $K$, such as $K=5$ and $K=10$, is perhaps the most common version.

\begin{remark}
    \label{rem:4.3-sign-flip}
    \Cref{thm:4.3-hd-cv-clt-rand} is stated for the maximum entry of the marginally standardized error vector, which corresponds to one-sided simultaneous confidence bound.  The same result can be easily extended to a two-sided version in that \eqref{eq:4.3-hd-cv-gaussian-comp} holds true, under the same conditions, for the corresponding vectors of absolute values $|\hat{\mathbf R}_{\cv,n,K}-\bar{\mathbf R}_{\cv,n,K}|$ and $|\mathbf Y_n|$. This can be achieved by considering $m$ more loss functions $\ell^{(r+m)}=-\ell^{(r)}$ for $r\in[m]$ and applying \Cref{thm:4.3-hd-cv-clt-rand} to these $2m$ loss functions.
\end{remark}

\begin{proof}
    [Proof of \Cref{thm:4.3-hd-cv-clt-rand}]
    
The proof is based on a more refined version of the asymptotic linear representation \eqref{eq:4.1-approx-lin-long} using the sub-Weibull stability condition.
Let 
$$\boldsymbol{\zeta}_n=\sqrt{n}S_{n_\tr}^{-1}(\hat{\mathbf R}_{\cv,n,K}-\bar{\mathbf R}_{\cv,n,K}) - \frac{1}{\sqrt{n}}S_{n_\tr}^{-1}\sum_{i=1}^n \bar{\boldsymbol{\ell}}_{n_\tr}(X_i)\,.$$
By \Cref{lem:4.4-rand-center-residual-bound}, each $\zeta_{n,r}$, the $r$th coordinate of $\boldsymbol{\zeta}_n$, is $(2\epsilon_{n_\tr},\alpha+1)$-SW.

Consider i.i.d. random vectors $\mathbf Z_i = S_{n_\tr}^{-1}\bar{\boldsymbol{\ell}}_{n_\tr}(X_i)$. Condition \eqref{eq:4.3-4th-moment}
and \Cref{lem:2.5-sw-maximal} imply that $\max |\mathbf Z_1|$ is $(c_\alpha (\log m)^\alpha\kappa,\alpha)$-SW and hence
$\|\max |\mathbf Z_1|\|_4 \le c_\alpha'(\log m)^\alpha\kappa$ for another constant $c_\alpha'$.  On the other hand, by \eqref{eq:4.3-4th-moment} $\|\bar\ell_{n_\tr}^{(r)}/\sigma_{n_\tr}^{(r)}\|_4\le 4^\alpha\kappa$ for each $r\in[m]$.
Now let $\mathbf W_{n_\tr}=\frac{1}{\sqrt{n}}S_{n_\tr}^{-1}\sum_{i=1}^n\bar{\boldsymbol{\ell}}_{n_\tr}(X_i)$. \Cref{thm:4.3-cckk} implies that, for any $t\in\mathbb R$
\begin{equation}
    \label{eq:4.3-cckk-bar-ell}
    |\mathbb P(\max \mathbf W_{n_\tr}\le t)-\mathbb P(\max\mathbf Y_{n_\tr}\le t)|\le C_{\kappa,\alpha}\frac{(\log m)^{\alpha+5/4}}{n^{1/4}}=o(1)\,,
\end{equation}
where the last $o(1)$ statement comes from the first part of \eqref{eq:4.3-m-bound}.

Under the second part of \eqref{eq:4.3-m-bound}, one can find a $\delta_n$ such that $\delta_n\sqrt{\log m}=o(1)$ and $\epsilon_{n_\tr}(\log m)^{\alpha+1}=o(\delta_n)$ (for example, one possibility is  $\delta_n=\epsilon_{n_\tr}^{1/2}(\log m)^{\alpha/2}$).

Then
\begin{align*}
    &\mathbb P\left[\max(\mathbf W_{n_\tr}+\boldsymbol{\zeta}_n)\le t\right]\\
    \le & \mathbb P\left[\max\mathbf W_{n_\tr}\le t+\delta_n\right]+\mathbb P(\max|\boldsymbol{\zeta}_n|>\delta_n)\\
    \le & \mathbb P(\max \mathbf Y_{n_\tr}\le t+\delta_n)+o(1)+o(1)\\
    \le & \mathbb P(\max \mathbf Y_{n_\tr}\le t)+C\delta_n\sqrt{\log m}+o(1)\\
    = &\mathbb P(\max \mathbf Y_{n_\tr}\le t)+o(1)\,,
\end{align*}
where the first $o(1)$ in the second inequality comes from \eqref{eq:4.3-cckk-bar-ell}, the second $o(1)$ in the same line comes from the fact that $\max|\boldsymbol{\zeta}_n|$ is $(c_{\alpha}\epsilon_{n_\tr}(\log m)^{\alpha+1},\alpha+1)$-SW as guaranteed by \Cref{lem:2.5-sw-maximal} and the choice of $\delta_n$, and the last inequality comes from a Gaussian anti-concentration result \citep[Lemma J.3]{chernozhukov2022improved}.
Similarly we can obtain the other direction and conclude the proof.
\end{proof}

\paragraph{High-dimensional Gaussian comparison with deterministic centering}
It is possible to extend \Cref{thm:4.3-hd-cv-clt-rand} to deterministic centering.  
While it is desirable to have a result analogous to \Cref{pro:4.2-cv-clt-consolidate}, the dichotomy of whether the validation uncertainty dominates the fitting uncertainty needs to be checked for each coordinate.
Define $\boldsymbol{\mu}_n$, $\boldsymbol{g}_n(X_0)$, and $\bar{\boldsymbol{g}}_n(X_0)$ as the vector versions of $\mu_n$, $g_n(X_0)$, and $\bar g_n(X_0)$, respectively. 
For simplicity, we consider the following two scenarios. 

In the first scenario, where $\nabla_1 R^{(r)}(\D_n)$ is $(n^{-1}\sigma_n^{(r)}\epsilon_n,\alpha)$-SW for all $r\in[m]$ with $\epsilon_n$ vanishing fast enough in the sense of \eqref{eq:4.3-m-bound}, then \Cref{thm:4.3-hd-cv-clt-rand} would hold for the deterministic centering without much modification.  The following theorem is a deterministic centering counterpart of Part 1 of \Cref{pro:4.2-cv-clt-consolidate}.
\begin{theorem}\label{thm:4.3-hd-cv-clt-det-1}
    Assume the same conditions in \Cref{thm:4.3-hd-cv-clt-rand}. If $\nabla_1 R^{(r)}(\D_n)$ is $(n^{-1}\sigma_n^{(r)}\tilde\epsilon_n,\tilde\alpha)$-SW for all $r\in[m]$ with $\tilde\epsilon_n(\log m)^{\tilde\alpha+1}=o(1)$, then
        $$\sup_{t\in\mathbb R}\left|\mathbb P\left(\max \sqrt{n}S_{n_\tr}^{-1}(\hat{\mathbf R}_{\cv,n,K}-\boldsymbol{\mu}_{n_\tr})\le t\right) - \mathbb P(\max\mathbf Y_{n_\tr}\le t) \right|=o(1)\,.$$
\end{theorem}

\begin{proof}[Proof of \Cref{thm:4.3-hd-cv-clt-det-1}]
For the first part, let $\tilde\zeta_n = \sqrt{n}\max|S_{n_\tr}^{-1}(\bar{\mathbf R}_{\cv,n,K}-\boldsymbol{\mu}_{n_\tr})|$.   The sub-Weibull condition assumed on $\nabla_1 R^{(r)}(\D_n)$ implies that, by \Cref{thm:2.5-sw-mcdiarmid} and triangle inequality,  each coordinate of $S_{n_\tr}^{-1}(\bar R_{\cv,n,K}-\boldsymbol{\mu}_{n_\tr})$ is $(c_K\tilde\epsilon_{n_\tr},\tilde\alpha+1/2)$-SW, with $c_K=\sqrt{K/(K-1)}\in(1,1.5)$.
Thus \Cref{lem:2.5-sw-maximal} implies that $\tilde\zeta_n$ is $(c_Kc_{\tilde\alpha}(\log m)^{\tilde\alpha+1/2}\tilde\epsilon_{n_\tr},\tilde\alpha+1/2)$-SW, for some constant $c_{\tilde\alpha}$ depending only on $\tilde\alpha$.  

The condition $\tilde\epsilon_n(\log m)^{\tilde\alpha+1}=o(1)$ ensures existence of a sequence $\tilde\delta_n$ such that $\tilde\delta_n=o(1/\sqrt{\log m})$ and 
$(\log m)^{\tilde\alpha+1/2}\epsilon_{n_\tr}=o(\tilde\delta_n)$. Therefore,
\begin{align*}
&\mathbb P\left(\sqrt{n}\max(\hat{\mathbf R}_{\cv,n,K}-\boldsymbol{\mu}_{n_\tr})\le t\right)\\
\le & \mathbb P\left(\sqrt{n}\max(\hat{\mathbf R}_{\cv,n,K}-\bar{\mathbf R}_{\cv,n,K})\le t+\tilde\delta_n\right)+\mathbb P(\tilde\zeta_n>\tilde\delta_n)\\
\le & \mathbb P(\max \mathbf Y_{n_\tr}\le t+\tilde\delta_n)+o(1)\\
\le & \mathbb P(\max \mathbf Y_{n_\tr}\le t)+o(1)\,,
\end{align*}
where the first inequality follows from union bound, the second inequality from \Cref{thm:4.3-hd-cv-clt-rand} and the sub-Weibull tail of $\tilde\zeta_n$, the third from Gaussian anti-concentration (Lemma J.3 of \cite{chernozhukov2022improved} since $\tilde\delta_n=o(1/\sqrt{\log m})$.
The other direction can be established similarly.
\end{proof}

In the second scenario, when $\nabla_1 R^{(r)}$ is large as in the second case in \Cref{pro:4.2-cv-clt-consolidate} for some values of $r$, the deterministic centering would lead to a different asymptotic correlation matrix, whose $(r,s)$-entry equals ${\rm Corr}(\bar\ell_{n_\tr}^{(r)}(X_0)+\bar g_{n_\tr}^{(r)}(X_0),~\bar\ell_{n_\tr}^{(s)}(X_0)+\bar g_{n_\tr}^{(s)}(X_0))$. The following theorem is a deterministic centering counterpart of \Cref{thm:4.2-cv-clt-det}.
\begin{theorem}
    \label{thm:4.3-hd-cv-clt-det-2}
     Let $(\tau_n^{(r)})^2=\var(\ell_n^{(r)}(X_0)+g_n^{(r)}(X_0))>0$.
        If, for all $r$ and $n$, $\nabla_1\ell(X_0,\D_n)$ is $(n^{-1/2}\epsilon_n\tau_n^{(r)},\alpha)$-SW,
        $\nabla_1\nabla_2 R(\D_n)$ is $(n^{-3/2}\epsilon_n\tau_n^{(r)},\alpha)$-SW, and $[\bar\ell_n^{(r)}(X_0)+\bar g_n^{(r)}(X_0)]/\tau_n^{(r)}$ is $(\kappa,\alpha)$-SW with constants $(\kappa$, $\alpha)$, and $\epsilon_n$ such that \eqref{eq:4.3-m-bound} holds, then 
$$\sup_{t\in\mathbb R}\left|\mathbb P\left(\max \sqrt{n}T_{n_\tr}^{-1}(\hat{\mathbf R}_{\cv,n,K}-\boldsymbol{\mu}_{n_\tr})\le t\right) - \mathbb P(\max\mathbf Y_{n_\tr}\le t) \right|=o(1)\,,$$
where $T_n$ is an $m\times m$ diagonal matrix with the $r$th diagonal entry being $\tau_n^{(r)}$, and $\mathbf Y_n$ a centered Gaussian vector whose covariance matrix has $(r,s)$-entry ${\rm Corr}(\bar\ell_{n_\tr}^{(r)}(X_0)+\bar g_{n_\tr}^{(r)}(X_0),~\bar\ell_{n_\tr}^{(s)}(X_0)+\bar g_{n_\tr}^{(s)}(X_0))$.
\end{theorem}
\begin{proof}
    [Proof of \Cref{thm:4.3-hd-cv-clt-det-2}]
    Using the vector versions of \eqref{eq:4.2-asymp-lin-with-G}, \eqref{eq:4.2-def-G}, and \eqref{eq:4.2-approx-G-with-res}, we have
\begin{align*}
   & \sqrt{n}T_{n_\tr}^{-1}(\hat{\mathbf R}_{\cv,n,K}-\boldsymbol{\mu}_{n_\tr})\\
   =&\frac{1}{\sqrt{n}}T_{n_\tr}^{-1}\sum_{i=1}^n \left[\bar{\boldsymbol{\ell}}_{n_\tr}(X_i)+\bar{\boldsymbol{g}}_{n_\tr}(X_i)\right] +\boldsymbol{\tilde\zeta}_n+
   \frac{1}{\sqrt{n}}T_{n_\tr}^{-1}\sum_{i=1}^n \boldsymbol{\Lambda}_i\,,
\end{align*}
where $\tilde{\boldsymbol{\zeta}}_n$ is the analogous version of $\boldsymbol{\zeta}_n$ in the proof of \Cref{thm:4.3-hd-cv-clt-rand} with $S_n$ replaced by $T_n$.

The first-order stability condition implies that $\tilde{\boldsymbol{\zeta}}_n$ is component-wise $(c_\alpha \epsilon_n,\alpha+1/2)$-SW, and the martingale decomposition of $\boldsymbol{\Lambda}_i$ implies that $\frac{1}{\sqrt{n}}T_{n_\tr}^{-1}\sum_{i=1}^n \boldsymbol{\Lambda}_i$ is component-wise $(c_\alpha\epsilon_n,\alpha+1)$-SW.  Thus
$\boldsymbol{\tilde\zeta}_n+
   \frac{1}{\sqrt{n}}T_{n_\tr}^{-1}\sum_{i=1}^n \boldsymbol{\Lambda}_i$ is component-wise $(c_\alpha\epsilon_n,\alpha+1)$-SW. The rest of the proof follows a similar argument as in \Cref{thm:4.3-hd-cv-clt-det-1}.
\end{proof}

A fully consolidated version of deterministic centering Gaussian comparison would involve studying the scenario where the risk stability condition $\nabla_1 R^{(r)}(\D_n)\ll n^{-1}$ for some but not all coordinates $r\in[m]$, which is an open problem. Practically, the random centering Gaussian comparison is easier to use with easier-to-verify assumptions. In some applications, random centering Gaussian comparison provides sufficient theoretical support for non-trivial inference problems. We provide a couple of such examples in \Cref{sec:5-applications}.

\paragraph{Variance and Correlation Estimation} In order to use the high-dimensional Gaussian comparison \Cref{thm:4.3-hd-cv-clt-rand} to construct confidence sets for $\bar{\mathbf R}_{\cv,n,K}$, one needs to estimate not only the marginal variances $(\sigma_{n_\tr}^{(r)})^2$ for $r\in[m]$, but also the pairwise correlation $\Gamma_{n_\tr}(r,s)$ in \eqref{eq:4.3-cov-mat-rand}. The same argument in the proof of \Cref{thm:4.1-var-est} can provide entry-wise $L_2$-consistency of the sample variance and correlation estimates. However, such entry-wise consistency is not sufficient for high-dimensional inference, because when $m=m_n$ increases with $n$, the Gaussian comparison result is not a central limit theorem as there is no fixed limit of convergence in distribution.  To make valid inference with the estimated variances and correlations, we need a more refined error analysis of the estimated variance and covariance, which will be detailed in \Cref{subsec:5.1-model-conf-set} below.

\paragraph{Rates of Convergence} In this article we do not keep track of the rates of convergence in CLT or Gaussian comparison as we focus on asymptotically valid inference which only requires vanishing Kolmogorov-Smirnov distance between the scaled and centered CV risk estimate and the corresponding Gaussian distribution.  As mentioned in \Cref{rem:4.1-wasserstein}, such rates of convergence are available by adapting the proofs, assuming a specific rate of convergence on the loss stability.

\subsection{Auxiliary Lemmas}\label{subsec:4.4-aux}
In this subsection we collect the auxiliary lemmas used in earlier subsections.

The following lemma explains the ``free nabla'' trick.
\begin{lemma}
    [Free Nabla Lemma]\label{lem:4.4-free-nabla}
    Let $\mathbf D=(X_1,...,X_n,X_1',...,X_n',W)$ be independent random objects such that for each $i\in[n]$, $X_i$ and $X_i'$ have the same distribution. 
    Let $h$ and $g$ be two functions that act on subsets of $\mathbf D$ such that for some $i\in[n]$
    \begin{enumerate}
        \item $g(\mathbf D)$, $h(\mathbf D)$, and $g(\mathbf D)h(\mathbf D)$ all have finite $L_1$ norms,
        \item $\mathbb E_{X_i} g =0$, and
        \item $h$ does not involve $X_i'$.
    \end{enumerate}
    Then
    $$\mathbb E (g h)=\mathbb E(g\nabla_i h)\,.$$
\end{lemma}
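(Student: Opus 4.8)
The plan is to reduce the identity to a single ``take out what is known'' step. First I would write $\nabla_i h = h(\mathbf D) - h(\mathbf D^i)$, where $\mathbf D^i$ is $\mathbf D$ with its $i$th coordinate $X_i$ overwritten by $X_i'$, so that
\[
\mathbb E(g\,\nabla_i h) = \mathbb E(gh) - \mathbb E\big(g\,h(\mathbf D^i)\big);
\]
it then suffices to show the second term vanishes. (This splitting is legitimate since $gh\in L_1$ by hypothesis and $g\,h(\mathbf D^i)\in L_1$ as well: $h(\mathbf D^i)$ has the same law as $h(\mathbf D)$ because $X_i$ and $X_i'$ are i.i.d., and in the settings where the lemma is used $g$ and $h$ are square-integrable, giving $\|g\,h(\mathbf D^i)\|_1\le\|g\|_2\|h\|_2<\infty$ by Cauchy--Schwarz; more generally it is covered by the integrability hypothesis read with conjugate exponents.)

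To handle $\mathbb E\big(g\,h(\mathbf D^i)\big)$, I would condition on the $\sigma$-field $\mathcal G$ generated by every component of $\mathbf D$ other than $X_i$, i.e.\ by $(X_j:j\in[n],\,j\neq i)$, $(X_j':j\in[n])$, and $W$. The key observation --- and the only place hypothesis~3 enters --- is that $h(\mathbf D^i)$ is $\mathcal G$-measurable: forming $\mathbf D^i$ only overwrites the $i$th slot (with $X_i'$) and leaves all other slots of $\mathbf D$ untouched, and since $h$ never reads the $X_i'$-slot, $h(\mathbf D^i)$ is a measurable function of $X_i'$, $(X_j)_{j\neq i}$, $(X_j')_{j\neq i}$ and $W$, all of which are $\mathcal G$-measurable. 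Since $X_i$ is independent of $\mathcal G$, conditioning on $\mathcal G$ is exactly ``integrating out $X_i$ with everything else fixed,'' so $\mathbb E(g\mid\mathcal G)=\mathbb E_{X_i}g=0$ almost surely by hypothesis~2. The tower property (valid since $g\,h(\mathbf D^i)\in L_1$ and $h(\mathbf D^i)\,\mathbb E(g\mid\mathcal G)=0\in L_1$) then gives
\[
\mathbb E\big(g\,h(\mathbf D^i)\big)=\mathbb E\Big[h(\mathbf D^i)\,\mathbb E\big(g\mid\mathcal G\big)\Big]=0,
\]
and hence $\mathbb E(g\,\nabla_i h)=\mathbb E(gh)$.

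There is no substantive obstacle; the argument is a short measure-theoretic computation, and the only care needed is bookkeeping --- correctly identifying the conditioning $\sigma$-field $\mathcal G$, checking that $h(\mathbf D^i)$ lands in it (hypothesis~3 is precisely what guarantees that applying $\nabla_i$ leaves $h$ a function of coordinates other than $X_i$), and tracking the integrability used both to split the expectation and to apply the tower property. In the intended applications --- in particular the two invocations inside the proof of \Cref{thm:4.1-clt-rand}, with $g=\xi_i-\bar\xi_i$, $h=\xi_j-\bar\xi_j$ and then $g=\nabla_i(\xi_j-\bar\xi_j)$, $h=\xi_i-\bar\xi_i$ --- all functions are square-integrable so integrability is automatic, and verifying the conditional-mean-zero hypothesis $\mathbb E_{X_i}g=0$ reduces to the elementary fact that the conditional expectation of the loss at a fresh evaluation point equals the risk.
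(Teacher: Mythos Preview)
Your proof is correct and follows exactly the paper's approach: both arguments reduce to showing $\mathbb E\big(g\,h(\mathbf D^i)\big)=0$ by observing that $h(\mathbf D^i)$ is measurable with respect to everything except $X_i$ (this is precisely where hypothesis~3 is used), then pulling it out of the inner $X_i$-expectation to exploit $\mathbb E_{X_i}g=0$. Your write-up is somewhat more explicit about the conditioning $\sigma$-field and the integrability needed to split the expectation and apply the tower property, but the substance is identical.
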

The operator $\nabla_i$ in this lemma is defined in the usual sense since $h$ does not involve $X_i'$. In particular, $\nabla_i h$ is the  difference between the original $h$ and the one obtained
by replacing $X_i$ by $X_i'$ in the calculation of $h$.
\begin{proof}
    [Proof of \Cref{lem:4.4-free-nabla}]
    Let $h^i$ be the version of $h$ obtained by swapping $X_i$ and $X_i'$ in $\mathbf D$. It suffices to show that $\mathbb E (g h^i) = 0$.
    In fact, by construction $h^i$ does not involve $X_i$ since $h$ does not involve $X_i'$.  Thus
    \begin{align*}\mathbb E g h^i=&\mathbb E\left[(\mathbb E_{X_i} g h^i) \right] =\mathbb E\left[(\mathbb E_{X_i} g) h^i\right]=0\,.\qedhere\end{align*}
\end{proof}

The following lemma provides a convenient bound on $\nabla_i\xi_j$ for $i\notin I_{k_j}$.
\begin{lemma}\label{lem:4.4-nabla-xi}
    For $i\neq j\in [n]$ we have
    $\|\nabla_i\xi_j\|_2\le \|\nabla_i\ell(X_j,\D_{n,-k_j})\|_2$.
\end{lemma}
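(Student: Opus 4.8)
The plan is to recognize $\nabla_i\xi_j$ as the residual of an $L_2$-projection and then invoke the contraction property of conditional expectation. Recall that $\xi_j=\ell(X_j,\D_{n,-k_j})-R(\D_{n,-k_j})$ with $R(\D)=\mathbb E_X\ell(X,\D)$, and that the fold $I_{k_j}$ containing the index $j$ is disjoint from the index set underlying $\D_{n,-k_j}$. First I would dispense with the case in which $i$ belongs to the same fold as $j$: then $X_i$ appears nowhere in $\xi_j$, so $\nabla_i\xi_j=0$ and likewise $\nabla_i\ell(X_j,\D_{n,-k_j})=0$, and the inequality holds with both sides zero. Hence it remains to handle $i\notin I_{k_j}$, i.e.\ $X_i\in\D_{n,-k_j}$.

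For that case, I would let $\mathcal G$ be the $\sigma$-field generated by all the sample points other than $X_j$ --- in particular including both $X_i$ and its i.i.d.\ copy $X_i'$, together with any auxiliary randomness of $\hat f$. Because $X_j$ is independent of $\mathcal G$ and has the same law as the fresh evaluation point in the definition of $R$, for any $\mathcal G$-measurable dataset $\D$ not involving $X_j$ one has $\mathbb E[\ell(X_j,\D)\mid\mathcal G]=R(\D)$. Applying this both to $\D_{n,-k_j}$ and to the perturb-$i$ dataset $\D_{n,-k_j}^i$ (both $\mathcal G$-measurable, both free of $X_j$) and subtracting, linearity of conditional expectation yields
\[
\nabla_i R(\D_{n,-k_j})=\mathbb E\!\left[\nabla_i\ell(X_j,\D_{n,-k_j})\,\middle|\,\mathcal G\right].
\]
Writing $Z\coloneqq\nabla_i\ell(X_j,\D_{n,-k_j})$, this says exactly $\nabla_i\xi_j=Z-\mathbb E[Z\mid\mathcal G]$, the residual of the $L_2$-projection of $Z$ onto the $\mathcal G$-measurable functions.

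The conclusion would then be immediate: $Z-\mathbb E[Z\mid\mathcal G]$ is $L_2$-orthogonal to the $\mathcal G$-measurable term $\mathbb E[Z\mid\mathcal G]$, so $\|Z\|_2^2=\|\nabla_i\xi_j\|_2^2+\|\mathbb E[Z\mid\mathcal G]\|_2^2\ge\|\nabla_i\xi_j\|_2^2$, which is the desired bound $\|\nabla_i\xi_j\|_2\le\|\nabla_i\ell(X_j,\D_{n,-k_j})\|_2$. The only step requiring any real care is the middle paragraph, namely choosing $\mathcal G$ so that $X_j$ is independent of it while both $\D_{n,-k_j}$ and $\D_{n,-k_j}^i$ stay $\mathcal G$-measurable; once the conditioning is set up correctly, everything else is the standard ``conditional expectation contracts $L_2$'' fact, so I do not expect a genuine obstacle here.
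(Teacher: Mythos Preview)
Your proof is correct and is essentially the same as the paper's: both rest on the identity $\nabla_i R(\D_{n,-k_j})=\mathbb E\bigl[\nabla_i\ell(X_j,\D_{n,-k_j})\mid\mathcal G\bigr]$ (the paper phrases this as ``first taking expectation over $X_j$''), and the resulting Pythagorean decomposition $\|\nabla_i\xi_j\|_2^2=\|\nabla_i\ell(X_j,\D_{n,-k_j})\|_2^2-\|\nabla_i R(\D_{n,-k_j})\|_2^2$. You frame it as an $L_2$-projection; the paper expands the square directly and simplifies the cross term, but the computations are identical.
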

\begin{proof}
    [Proof of \Cref{lem:4.4-nabla-xi}]
    Let $\D_{n,-k_j}^i$ be the dataset obtained by replacing $X_i$ by $X_i'$ in $\D_{n,-k_j}$ while keeping everything else the same.
    Then
    \begin{align*}
        \|\nabla_i\xi_j\|_2^2=& \|\nabla_i \ell(X_j,\D_{n,-k_j})\|_2^2-2\mathbb E \nabla_i\ell(X_j,\D_{n,-k_j})\nabla_i R(\D_{n,-k_j})+\|\nabla_i R(\D_{n,-k_j})\|_2^2\\
        =&\|\nabla_i \ell(X_j,\D_{n,-k_j})\|_2^2-\|\nabla_i R(\D_{n,-k_j})\|_2^2\,,
    \end{align*}
    where the last step follows from
    \begin{align*}
        \mathbb E \nabla_i\ell(X_j,\D_{n,-k_j})\nabla_i R(\D_{n,-k_j}) = \|\nabla_i R(\D_{n,-k_j})\|_2^2
    \end{align*}
    by first taking expectation over $X_j$.
\end{proof}

The following lemma is a special case of Rio's inequality \Cref{thm:rio} and generalizes \Cref{thm:2.5-sw-mcdiarmid}.
\begin{lemma}[Rio's Inequality For Conditionally Independent Sequences]\label{lem:4.4-rio_cond_i.i.d.}
    Let $Z_1,...,Z_n$ be random variables that are conditionally independent given a $\sigma$-field $\mathcal C$ such that $\mathbb E(Z_i|\mathcal C)=0$ for all $i$. Then for any $q\ge 2$,
    $$
    \left\|\sum_{i=1}^n Z_i\right\|_q^2\le (q-1)\sum_{i=1}^n \|Z_i\|_q^2\,.
    $$
    Moreover, if $Z_i$ is $(\kappa_i,\alpha_i)$-SW ($i=1,...,n$) then
    $\sum_{i=1}^n Z_i$ is $(\kappa,\alpha)$-SW with
    $\kappa=2^\alpha\sqrt{\sum_{i=1}^n\kappa_i^2}$ and $\alpha=\max_{i=1}^n\alpha_i+1/2$.
\end{lemma}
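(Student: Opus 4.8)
The plan is to obtain the $L_q$ inequality by applying Rio's inequality (\Cref{thm:rio}) \emph{conditionally} on the $\sigma$-field $\mathcal C$ and then stripping off the conditioning via Minkowski's inequality in $L_{q/2}$; the sub-Weibull conclusion then drops out by substituting the sub-Weibull $L_q$-moment bounds, exactly as in the proof of \Cref{thm:2.5-sw-mcdiarmid}. Throughout one may assume $\|Z_i\|_q<\infty$ for each $i$ (otherwise the right-hand side is infinite and there is nothing to prove), which makes $\mathbb E[|S_n|^q\mid\mathcal C]$ finite almost surely and legitimizes the steps below.

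For the moment bound I would set $S_0=0$, $S_k=\sum_{j=1}^k Z_j$, and first check that conditioning on $\mathcal C$ puts us in the setting of \Cref{thm:rio}: conditional independence of the $Z_i$ given $\mathcal C$ together with $\mathbb E(Z_k\mid\mathcal C)=0$ gives $\mathbb E(Z_k\mid S_{k-1},\mathcal C)=\mathbb E(Z_k\mid\mathcal C)=0$ almost surely, so, under the regular conditional law given $\mathcal C$, the partial sums $(S_k)$ form a martingale with increments $Z_k$. Applying \Cref{thm:rio} conditionally then yields the almost-sure bound $\left(\mathbb E[|S_n|^q\mid\mathcal C]\right)^{2/q}\le (q-1)\sum_{i=1}^n\left(\mathbb E[|Z_i|^q\mid\mathcal C]\right)^{2/q}$. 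Writing $V=\left(\mathbb E[|S_n|^q\mid\mathcal C]\right)^{1/q}$ and $N_i=\left(\mathbb E[|Z_i|^q\mid\mathcal C]\right)^{1/q}$, which are $\mathcal C$-measurable with $\|V\|_q=\|S_n\|_q$ and $\|N_i\|_q=\|Z_i\|_q$ by the tower property, this reads $V^2\le (q-1)\sum_i N_i^2$ pointwise. Taking $L_{q/2}$-norms on both sides (legitimate since $q/2\ge 1$), using $\|V^2\|_{q/2}=\|V\|_q^2$, $\|N_i^2\|_{q/2}=\|N_i\|_q^2$, and Minkowski's inequality in $L_{q/2}$, we land on $\|S_n\|_q^2\le (q-1)\sum_i\|Z_i\|_q^2$, which is the asserted inequality.

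The one place where care is needed — the main obstacle, such as it is — is precisely this passage from the conditional to the unconditional bound: one cannot simply take expectations in the conditional inequality, because $x\mapsto x^{2/q}$ is concave for $q\ge 2$ and Jensen's inequality then points the wrong way; routing the de-conditioning through the $L_{q/2}$-norm (where Minkowski is available because $q/2\ge 1$) is what rescues the argument. The remaining sub-Weibull part is bookkeeping: given $\|Z_i\|_q\le\kappa_i q^{\alpha_i}$ for all $q\ge 1$ (\Cref{pro:2.5-sw-equiv}) and $\bar\alpha=\max_i\alpha_i$, the just-proved inequality gives $\|S_n\|_q\le (q-1)^{1/2}\left(\sum_i\kappa_i^2 q^{2\alpha_i}\right)^{1/2}\le q^{\bar\alpha+1/2}\left(\sum_i\kappa_i^2\right)^{1/2}$ for $q\ge 2$, while for $1\le q\le 2$ monotonicity of $L_q$-norms gives $\|S_n\|_q\le\|S_n\|_2\le 2^{\bar\alpha+1/2}\left(\sum_i\kappa_i^2\right)^{1/2}$; since $2^{\bar\alpha+1/2}\ge 1$, both regimes are absorbed into $\|S_n\|_q\le 2^{\bar\alpha+1/2}\left(\sum_i\kappa_i^2\right)^{1/2}q^{\bar\alpha+1/2}$ for all $q\ge 1$, which by \Cref{pro:2.5-sw-equiv} is precisely the statement that $\sum_i Z_i$ is $(\kappa,\alpha)$-SW with $\alpha=\bar\alpha+1/2$ and $\kappa=2^{\alpha}\sqrt{\sum_i\kappa_i^2}$, matching \Cref{thm:2.5-sw-mcdiarmid}.
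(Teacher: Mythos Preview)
Your proposal is correct, but it takes a longer road than the paper. The paper simply notes that the $(Z_k)$ already form an \emph{unconditional} martingale difference sequence: conditional independence plus $\mathbb E(Z_k\mid\mathcal C)=0$ give $\mathbb E(Z_k\mid Z_1,\dots,Z_{k-1},\mathcal C)=0$, and by the tower property $\mathbb E(Z_k\mid Z_1,\dots,Z_{k-1})=0$, hence $\mathbb E(Z_k\mid S_{k-1})=0$. One can therefore apply Rio's inequality (\Cref{thm:rio}) directly to the unconditional $L_q$-norms, with no conditioning and no de-conditioning step; this is literally the same computation as in the proof of \Cref{thm:2.5-sw-mcdiarmid}, which is all the paper says. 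Your route---apply Rio under the regular conditional law given $\mathcal C$, then strip the conditioning via Minkowski in $L_{q/2}$---is valid and the Minkowski-in-$L_{q/2}$ maneuver is a nice device, but here it is unnecessary extra work since the martingale hypothesis of \Cref{thm:rio} is already available unconditionally. The sub-Weibull bookkeeping you give for the second part is identical to the paper's.
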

\begin{proof}
   [Proof of \Cref{lem:4.4-rio_cond_i.i.d.}]
The proof is identical to that of \Cref{thm:2.5-sw-mcdiarmid}. 
\end{proof}

\Cref{lem:4.4-rio_cond_i.i.d.} can be used to prove various important consequences of the loss stability, collected in the following lemma.
\begin{lemma}[Useful Consequences of Loss Stability]\label{lem:4.4-useful-fact-loss-stab}
Assume an i.i.d. data sequence and a symmetric estimator. Let $\xi_i=\ell(X_i,\D_{n,-k_i})-R(\D_{n,-k_i})$ and $\bar\xi_i=\mathbb E(\xi_i|X_i)$.  The following holds for any fixed $q\ge 2$ and each $i\in[n]$,
\begin{enumerate}
\item
$\|\ell(X_0,\D_n)-\ell_n(X_0)\|_q\le \sqrt{q-1}\sqrt{n}\|\nabla_1\ell(X_0,\D_n)\|_q$ and as a result,
$$
\|R(\D_n)-\mu_n\|_q\le \sqrt{q-1}\sqrt{n}\|\nabla_1\ell(X_0,\D_n)\|_q
$$
$$
\|\xi_i-\bar\xi_i\|_q\le 2\sqrt{q-1}\sqrt{n_\tr}\|\nabla_1\ell(X_0,\D_{n_\tr})\|_q
$$
    \item
    $$
   \left\|\nabla_i\sum_{j\notin I_{k_i}}(\xi_j-\bar\xi_j)\right\|_2^2
   \le n_\tr\|\nabla_1\ell(X_0,\D_{n_\tr})\|_2^2+n_{\tr}^2\|\nabla_1\nabla_2\ell(X_0,\D_{n_\tr})\|_2^2\,.
    $$
    \item
    $$
    \left\|\nabla_i\sum_{j\notin I_{k_i}}(\xi_j-\bar\xi_j)\right\|_q\le
    2\sqrt{q-1}\sqrt{K}\sqrt{n}\|\nabla_1\ell(X_0,\D_{n_\tr})\|_q\,.
    $$
    \item 
    $$
    \left\|\sum_{i=1}^n(\xi_i-\bar\xi_i)\right\|_q \le 2(q-1)\sqrt{K}n\|\nabla_1\ell(X_0,\D_{n_\tr})\|_q\,.
    $$
\end{enumerate}
\end{lemma}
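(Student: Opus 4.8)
All four bounds rest on the same two tools already in hand: the telescoping martingale identity \eqref{eq:2.5-nabla-telescope} combined with Rio's inequality (\Cref{thm:rio}, and its conditionally-i.i.d.\ version \Cref{lem:4.4-rio_cond_i.i.d.}), and the free nabla lemma \Cref{lem:4.4-free-nabla}. Throughout I use symmetry of $\hat f$ and the i.i.d.\ structure to replace any fitting subsample of size $m$ by a canonical $\D_m$ and any disjoint evaluation point by $X_0$; I also use repeatedly that $\mathbb E_{X_i}\xi_i=0$ (since $X_i$ is independent of $\D_{n,-k_i}$) and that $\bar\xi_i=\ell_{n_\tr}(X_i)-\mu_{n_\tr}$ is a function of $X_i$ alone, so $\nabla_j\bar\xi_i=0$ for $j\neq i$. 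For \textbf{Part 1}, condition on $X_0$ and telescope over the entries of $\D_n$: with $M_i=\mathbb E[\ell(X_0,\D_n)\mid X_0,\D_i]-\mathbb E[\ell(X_0,\D_n)\mid X_0,\D_{i-1}]=\mathbb E[\nabla_i\ell(X_0,\D_n)\mid X_0,\D_i]$, the sequence $(M_i)$ is a martingale increment sequence with $\sum_{i=1}^n M_i=\ell(X_0,\D_n)-\ell_n(X_0)$, so Rio's inequality gives $\|\ell(X_0,\D_n)-\ell_n(X_0)\|_q^2\le(q-1)\sum_i\|M_i\|_q^2$, while conditional Jensen plus symmetry give $\|M_i\|_q\le\|\nabla_i\ell(X_0,\D_n)\|_q=\|\nabla_1\ell(X_0,\D_n)\|_q$; this proves the first display. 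Since $R(\D_n)-\mu_n=\mathbb E[\ell(X_0,\D_n)-\ell_n(X_0)\mid\D_n]$, a further Jensen step transfers the bound to $R(\D_n)-\mu_n$; and since $\xi_i-\bar\xi_i=[\ell(X_i,\D_{n,-k_i})-\ell_{n_\tr}(X_i)]-[R(\D_{n,-k_i})-\mu_{n_\tr}]$, applying the two bounds just obtained at sample size $n_\tr$ yields the factor $2$.

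For \textbf{Part 2}, since $\nabla_i\bar\xi_j=0$ for $j\notin I_{k_i}$ we have $\nabla_i\sum_{j\notin I_{k_i}}(\xi_j-\bar\xi_j)=\sum_{j\notin I_{k_i}}\nabla_i\xi_j$, so I expand the square. The $n_\tr$ diagonal terms are handled by \Cref{lem:4.4-nabla-xi} and symmetry: $\|\nabla_i\xi_j\|_2\le\|\nabla_i\ell(X_j,\D_{n,-k_j})\|_2=\|\nabla_1\ell(X_0,\D_{n_\tr})\|_2$, which gives the first term on the right-hand side. For an off-diagonal pair $j\neq j'$, both outside $I_{k_i}$, I split cases: if $k_j=k_{j'}$ then $\nabla_i\xi_{j'}$ does not involve $X_j$, so taking $\mathbb E_{X_j}$ first and using $\mathbb E_{X_j}\nabla_i\xi_j=0$ kills the cross term; if $k_j\neq k_{j'}$ then $X_i,X_{j'}$ both lie in the fitting set of $\xi_j$ and $X_i,X_j$ both lie in that of $\xi_{j'}$, and applying \Cref{lem:4.4-free-nabla} once to ``free'' $X_j$ and once to ``free'' $X_{j'}$ gives $\mathbb E[\nabla_i\xi_j\,\nabla_i\xi_{j'}]=\mathbb E[\nabla_{j'}\nabla_i\xi_j\cdot\nabla_j\nabla_i\xi_{j'}]$, whence Cauchy--Schwarz together with the second-order analogue of \Cref{lem:4.4-nabla-xi} ($\|\nabla_a\nabla_b\xi_j\|_2\le\|\nabla_a\nabla_b\ell(X_j,\D_{n,-k_j})\|_2$, proven exactly as that lemma by taking the expectation over the evaluation point first) bounds the term by $\|\nabla_1\nabla_2\ell(X_0,\D_{n_\tr})\|_2^2$; summing over the at most $n_\tr^2$ such pairs gives the second term. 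I expect this off-diagonal estimate to be the main obstacle: one must separately dispatch the same-fold cross terms and then, for different-fold pairs, invoke the free nabla trick in exactly the right order so that one extra $\nabla$-operator lands on \emph{each} factor, turning a product of first-order differences into a product of second-order differences — the bookkeeping of which point occupies which fitting-versus-evaluation slot (hence which $\mathbb E_{X_j}$ or $\mathbb E_{X_{j'}}$ annihilates which factor) is where care is needed.

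\textbf{Parts 3 and 4} both follow the pattern ``Rio inside each fold, triangle inequality across folds.'' For Part 4, conditional on $\D_{n,-k}$ the variables $\{\xi_i-\bar\xi_i:i\in I_k\}$ are i.i.d.\ with conditional mean zero, so \Cref{lem:4.4-rio_cond_i.i.d.} and Part 1 give $\|\sum_{i\in I_k}(\xi_i-\bar\xi_i)\|_q\le 2(q-1)\sqrt{n_\te n_\tr}\,\|\nabla_1\ell(X_0,\D_{n_\tr})\|_q$; summing over the $K$ folds and using $K\sqrt{n_\te n_\tr}=\sqrt{K}\sqrt{n}\sqrt{n_\tr}\le\sqrt{K}\,n$ finishes it. For Part 3, fix $k\neq k_i$; conditional on $\sigma(\D_{n,-k},X_i')$ the variables $\{\nabla_i\xi_j:j\in I_k\}$ are i.i.d.\ with conditional mean zero (each equals $\nabla_i\ell(X_j,\D_{n,-k})$ minus its mean over $X_j$), so \Cref{lem:4.4-rio_cond_i.i.d.} together with $\|\nabla_i\xi_j\|_q\le 2\|\nabla_i\ell(X_j,\D_{n,-k})\|_q$ gives $\|\sum_{j\in I_k}\nabla_i\xi_j\|_q\le 2\sqrt{q-1}\sqrt{n_\te}\,\|\nabla_1\ell(X_0,\D_{n_\tr})\|_q$; summing over the $K-1$ folds $k\neq k_i$ and using $(K-1)\sqrt{n_\te}\le\sqrt{K}\sqrt{n}$ completes the proof. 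Everything outside the Part 2 off-diagonal estimate is a routine application of Rio's inequality and the triangle inequality.
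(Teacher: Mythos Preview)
Your proposal is correct and follows essentially the same route as the paper: Part~1 via the martingale telescope and Rio, Part~2 via squaring out, \Cref{lem:4.4-nabla-xi} on the diagonal, and the free-nabla trick on the cross terms, and Parts~3--4 via ``Rio within a fold, triangle across folds.'' The only cosmetic difference is that in Part~2 you explicitly separate same-fold and different-fold cross terms, whereas the paper applies the free-nabla step uniformly to all off-diagonal pairs (the same-fold ones then vanish because $\nabla_{j'}\nabla_i\xi_j=0$ when $j'\in I_{k_j}$); your second-order analogue of \Cref{lem:4.4-nabla-xi} is exactly what the paper uses implicitly to pass from $\|\nabla_{j'}\nabla_i\xi_j\|_2$ to $\|\nabla_1\nabla_2\ell(X_0,\D_{n_\tr})\|_2$.
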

\begin{proof}
    [Proof of \Cref{lem:4.4-useful-fact-loss-stab}]
For Part 1, the first inequality follows from the conditional version Rio's inequality as applied to the martingale increment sequence 
$$\mathbb E[\ell(X_0,\D_n)|\D_i,X_0]-\mathbb E[\ell(X_0,\D_n)|\D_{i-1},X_0]=\mathbb E[\nabla_i \ell(X_0,\D_n)|\D_i,X_0]\,,~~1\le i\le n\,,$$ as in the proof of \Cref{thm:2.5-sw-mcdiarmid}. The second inequality follows from the first one and Jensen's inequality. The third one follows from triangle inequality.

In the proofs of Parts 2 and 3, without loss of generality we assume $i=1$ and $k_1=1$.

For Part 2, observe that $\nabla_1(\xi_i-\bar\xi_i)=\nabla_1\xi_i$ for $i\notin I_1$. So we have
\begin{align*}
    &\left\|\sum_{i\notin I_1}\nabla_1\xi_i\right\|_2^2
    =\sum_{i\notin I_1} \|\nabla_1\xi_i\|_2^2 + \sum_{i\neq j\notin I_1}\mathbb E(\nabla_1 \xi_i)(\nabla_1\xi_j)\\
    =&\sum_{i\notin I_1} \|\nabla_1\xi_i\|_2^2 + \sum_{i\neq j\notin I_1}\mathbb E(\nabla_j\nabla_1 \xi_i)(\nabla_i\nabla_1\xi_j)\\
    \le & n_\tr \|\nabla_1\ell(X_0,\D_{n_\tr})\|_2^2 + n_\tr^2 \|\nabla_1\nabla_2\ell(X_0,\D_{n_\tr})\|_2^2\,,
 \end{align*}
 where the second line follows from an application \Cref{lem:4.4-free-nabla}.

For Part 3, consider $k\neq 1$ and observe that $(\nabla_1\xi_i:i\in I_{n,k})$ is a martingale increment sequence with respect to the collection of nested $\sigma$-fields: $\{\D_{n,-k}\cup\{X_1'\}\cup \{X_j:j\in I_{n,k},~j\le i\}:i \in I_{n,k}\}$. Therefore,
 \begin{align*}
 \left\|\nabla_1\sum_{i\in I_{n,k}}\xi_i\right\|_q^2 \le (q-1) \sum_{i\in I_{n,k}} \|\nabla_1 \xi_i\|_q^2 \le  4(q-1) n_\te \|\nabla_1 \ell(X_0,\D_{n_\tr})\|_q^2\,,
 \end{align*}
 where the first inequality follows from \Cref{lem:4.4-rio_cond_i.i.d.} because $(\nabla_1\xi_i:i\in I_{n,k})$ are conditionally i.i.d. given $(\D_{n,-k},X_1')$ and have conditional expectation $0$. 
 Then the desired result follows from the triangle inequality by summing over $k\in\{2,...,K\}$.

For Part 4, fix a $k\in[K]$,
then using \Cref{lem:4.4-rio_cond_i.i.d.} and Part (1) we obtain
$$
\left\|\sum_{i\in I_{n,k}}(\xi_i-\bar\xi_i)\right\|_q\le 2(q-1)\sqrt{n_\te}\sqrt{n_\tr}\|\nabla_1\ell(X_0,\D_{n_\tr})\|_q\,,
$$
Thus the claimed result follows from triangle inequality.
\end{proof}

The following lemma provides $\ell_q$ norm bound for the random centering residuals.
\begin{lemma}\label{lem:4.4-rand-center-residual-bound}
Let $(X_i:i\ge 0)$ be an i.i.d. sequence, $\hat f$ a symmetric estimator, and $\ell(\cdot,\cdot)$ a loss function. For $q\in[2,\infty]$ let $\kappa_n=\|\nabla_1\ell(X_0,\D_n)\|_q$. Then
$$
\left\|\frac{\sqrt{n}}{\sigma_{n_\tr}}(\hat R_{\cv,n,K}-\bar R_{\cv,n,K}) - \frac{1}{\sqrt{n}\sigma_{n_\tr}}\sum_{i=1}^n \bar\ell_{n_\tr}(X_i)\right\|_q\le 2(q-1)\frac{(nK)^{1/2}\kappa_{n_\tr}}{\sigma_{n_\tr}}\,.
$$
Moreover, if $\nabla_1\ell(X_0,\D_n)$ is $(\kappa_n,\alpha)$-SW, then
$$\frac{\sqrt{n}}{\sigma_{n_\tr}}(\hat R_{\cv,n,K}-\bar R_{\cv,n,K}) - \frac{1}{\sqrt{n}\sigma_{n_\tr}}\sum_{i=1}^n \bar\ell_{n_\tr}(X_i)$$
is $(2\sqrt{nK}\kappa_{n_\tr}/\sigma_{n_\tr},~\alpha+1)$-SW.
\end{lemma}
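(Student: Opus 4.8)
The plan is to reduce the statement to the algebraic identity already exploited in the proof of \Cref{thm:4.1-clt-rand} and then to invoke Part 4 of \Cref{lem:4.4-useful-fact-loss-stab}. First I would identify the residual random variable explicitly. Set $\xi_i = \ell(X_i,\D_{n,-k_i}) - R(\D_{n,-k_i})$ and $\bar\xi_i = \mathbb E(\xi_i\mid X_i) = \ell_{n_\tr}(X_i) - \mu_{n_\tr} = \bar\ell_{n_\tr}(X_i)$. By definition $n\hat R_{\cv,n,K} = \sum_{i=1}^n \ell(X_i,\D_{n,-k_i})$, and re-indexing the fold-wise sum in \eqref{eq:2.1-R-bar} gives $n\bar R_{\cv,n,K} = \sum_{k=1}^K n_\te R(\D_{n,-k}) = \sum_{i=1}^n R(\D_{n,-k_i})$, since for each $k$ there are exactly $n_\te$ indices $i$ with $k_i=k$. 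Subtracting, $n\bigl(\hat R_{\cv,n,K}-\bar R_{\cv,n,K}\bigr) - \sum_{i=1}^n \bar\ell_{n_\tr}(X_i) = \sum_{i=1}^n(\xi_i-\bar\xi_i)$, so the random variable appearing in the lemma is exactly $\frac{1}{\sqrt{n}\,\sigma_{n_\tr}}\sum_{i=1}^n(\xi_i-\bar\xi_i)$.

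For the $L_q$ bound, for fixed $q\ge 2$ Part 4 of \Cref{lem:4.4-useful-fact-loss-stab} applied at training sample size $n_\tr$ gives $\bigl\|\sum_{i=1}^n(\xi_i-\bar\xi_i)\bigr\|_q \le 2(q-1)\sqrt{K}\,n\,\|\nabla_1\ell(X_0,\D_{n_\tr})\|_q = 2(q-1)\sqrt{K}\,n\,\kappa_{n_\tr}$; dividing by $\sqrt{n}\,\sigma_{n_\tr}$ yields the claimed bound $2(q-1)(nK)^{1/2}\kappa_{n_\tr}/\sigma_{n_\tr}$. For the sub-Weibull conclusion, if $\nabla_1\ell(X_0,\D_n)$ is $(\kappa_n,\alpha)$-SW then by \Cref{pro:2.5-sw-equiv} (used, per the convention there, up to constants) $\|\nabla_1\ell(X_0,\D_{n_\tr})\|_q\le \kappa_{n_\tr}q^\alpha$ for all $q\ge 1$. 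Plugging this into the previous display, for $q\ge 2$ the residual has $L_q$-norm at most $2(q-1)q^\alpha\sqrt{nK}\,\kappa_{n_\tr}/\sigma_{n_\tr}\le 2q^{\alpha+1}\sqrt{nK}\,\kappa_{n_\tr}/\sigma_{n_\tr}$ (using $q-1\le q$), while for $1\le q\le 2$ one uses $\|\cdot\|_q\le\|\cdot\|_2$ and absorbs the resulting constant into the scale parameter. Hence the $L_q$-characterization \eqref{eq:2.5-sw-def-lq} holds with exponent $\alpha+1$ and scale $2\sqrt{nK}\,\kappa_{n_\tr}/\sigma_{n_\tr}$, i.e., the quantity is $(2\sqrt{nK}\,\kappa_{n_\tr}/\sigma_{n_\tr},\alpha+1)$-SW.

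There is no real obstacle here: the content of the lemma is essentially a restatement of Part 4 of \Cref{lem:4.4-useful-fact-loss-stab} after the rescaling. The only places requiring a little care are the bookkeeping in the first step—matching the fold-indexed sum defining $\bar R_{\cv,n,K}$ with the index-indexed sum $\sum_i R(\D_{n,-k_i})$—and the small-$q$ range in passing from the $L_q$ bound to the sub-Weibull scale parameter, both of which are routine.
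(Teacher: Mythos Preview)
Your proposal is correct and matches the paper's own proof essentially line for line: the paper also writes the residual as $\frac{1}{\sqrt{n}\sigma_{n_\tr}}\sum_{i=1}^n(\xi_i-\bar\xi_i)$ via the same identity and then simply says ``the result follows from Part 4 of \Cref{lem:4.4-useful-fact-loss-stab}.'' Your additional detail on deriving the sub-Weibull exponent $\alpha+1$ from the $L_q$ bound is exactly what the paper leaves implicit, and your handling of the small-$q$ range and constants is consistent with the paper's stated convention of not distinguishing scale parameters that differ by constant factors.
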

\begin{proof}
    [Proof of \Cref{lem:4.4-rand-center-residual-bound}]
    Use notation $\xi_i = \ell(X_i,\D_{n,-k_i}) - R(\D_{n,-k_i})$ and $\bar \xi_i =\mathbb E\left[\xi_i|X_i\right]=\bar\ell_{n_\tr}(X_i)$.

    Then
    \begin{align*}
\frac{\sqrt{n}}{\sigma_{n_\tr}}\left(\hat R_{\cv,n,K}-\bar R_{\cv,n,K}\right)-\frac{1}{\sqrt{n}\sigma_{n_\tr}}\sum_{i=1}^n\bar\ell_{n_\tr}(X_i)=\frac{1}{\sqrt{n}\sigma_{n_\tr}}\sum_{i=1}^n (\xi_i-\bar\xi_i)\,,
\end{align*}
and the result follows from Part 4 of \Cref{lem:4.4-useful-fact-loss-stab}.
\end{proof}

\subsection{Bibliographic Notes}\label{subsec:4.5-bib}
The rigorous normal approximation of cross-validated statistics has been conjectured and regarded as an open problem in \cite{yang2006comparing}.
The univariate and low-dimensional central limit theorems for cross-validated risk estimates are first established in \cite{bayle2020cross} and \cite{austern2020}. The random centering CLT based on the first-order loss stability \eqref{eq:4.1-nabla-l2-bound} presented in \Cref{thm:4.1-clt-rand} largely follows the argument in \cite{bayle2020cross}.  The same random centering CLT is also proved in \cite{austern2020}, which relied on an additional second-order loss stability as in Part (b) of \Cref{asm:4.2-second-order-stab}. This second-order stability condition turned out to be redundant for the random centering CLT.    While \cite{bayle2020cross} does not offer a deterministic centering CLT, \cite{austern2020} provides one.  Our deterministic centering CLT (\Cref{thm:4.2-cv-clt-det}) requires fewer assumptions and offers additional insights through the asymptotic linear representation of the $G$ term as well as the consolidated version \Cref{pro:4.2-cv-clt-consolidate}.  In particular, unlike \cite{austern2020}, our deterministic centering CLT does not require the second-order loss stability or a specific bound on the first-order risk stability.  Our consolidated version \Cref{pro:4.2-cv-clt-consolidate} also makes it clear that, when the risk variability is non-negligible, it is the relative size of the second-order risk stability to the first-order risk stability that matters.   In addition to simplified assumptions, our proof of the deterministic centering CLT is also substantially simpler than that of \cite{austern2020}.  Our proof clearly reflects the role played by stability, which is solely in the asymptotic linearity of the CV risk estimate. The deterministic asymptotic variance estimate presented in \eqref{eq:4.2-var-est-determ} is the same as the one proposed in \cite{austern2020}.

A deterministic-centering CLT for \(K\)-fold cross-validation is established by \cite{li2023asymptotics} for regression under high-level assumptions, including (i) the third derivative of the loss with respect to the regression function’s value is identically zero, and (ii) the estimated regression function admits an asymptotic linear representation. These assumptions are highly nontrivial: The third-derivative condition forces \(\ell((z_0,y_0),\hat f)\) to depend on \(\hat f\) quadratically in \(\hat f(z_0)\), and the asymptotic linear representation is an abstract property of the loss/estimator pair which requires verification.
The developments in \Cref{subsec:4.2-cv-clt-det} show that the stability conditions in \Cref{thm:4.2-cv-clt-det,pro:4.2-cv-clt-consolidate} are sufficient to guarantee such asymptotic linear representations.

The high-dimensional Gaussian comparison for independent random vectors has been studied as early as in \cite{bentkus2003dependence}, with a major breakthrough in \cite{chernozhukov2013gaussian} that allows the dimensionality to be $\exp(n^a)$ for small values of $a$.  Many further developments have been made since \cite{chernozhukov2013gaussian}, with the most recent ones being \cite{chernozhukov2022improved} and \cite{chernozhukov2023nearly}. Readers are referred to those papers for a more comprehensive literature review on this topic.  In our case we used results from \cite{chernozhukov2022improved} as it does not require the asymptotic covariance matrix $\Gamma_n$ to be non-degenerate.
It is possible to use more refined techniques, such as those in \cite{chernozhukov2023nearly}, to obtain approximation error bounds with better dependence on $n$.

The high-dimensional Gaussian approximation for cross-validated risks presented in \Cref{subsec:4.3-hd-clt} is developed in \cite{kissel2022high}, which is motivated from the model selection literature \citep{lei2020cross}.  The proofs presented in this article are much simpler and more streamlined than those in \cite{kissel2022high}, and feature a similar reduction in required assumptions as in the univariate CLT.  Our Gaussian comparison result for cross-validation can also be viewed as an instance of high-dimensional Gaussian comparison for dependent data.  Existing results in the literature include vector-valued $U$-statistics \citep{chen2018gaussian} and vector-valued stochastic processes with mixing properties \citep{chang2021central,kurisu2021gaussian}.  These results are based on a short-range dependence structure, where the dependence between two random vectors decays quickly as the indices move apart from each other.  However, such a structure does not hold for cross-validation, where the loss functions evaluated at any two sample points have a small but nonzero correlation due to the involvement of sample points in both evaluation and estimation.
\newpage

\section{Some Applications}\label{sec:5-applications}
In this section, we demonstrate several interesting applications of the general tools developed in the previous sections.
In \Cref{subsec:5.1-model-conf-set}, we use the Gaussian comparison results in \Cref{sec:4-clt} to construct confidence sets for the risks and then convert them into model confidence sets.  In \Cref{subsec:5.2-cvc-consist}, these model confidence sets are used to obtain consistent model selection in classical settings where the standard CV method is known to fail.
In \Cref{subsec:5.3-argmin}, we use the central limit theorems developed in \Cref{subsec:4.1-clt-random} to develop a new method for high-dimensional global mean inference, which further leads to a method for constructing confidence sets for the argmin index of a vector from noisy observations (\Cref{subsec:5.4-argmin-conf}).
Finally, in \Cref{subsec:5.5-cross-conf}, we combine the techniques and tools in \Cref{sec:bousquet,sec:4-clt} to prove asymptotic validity for the cross-conformal prediction method.

\subsection{Model Selection Confidence Sets}\label{subsec:5.1-model-conf-set}
Consider the model selection setting in \Cref{subsec:3.1-yang07}, where we have shown that if the estimators are well-separated in the sense of \Cref{asm:3.1-stochastic_domination}, then cross-validation can consistently select the better model when the sample size increases to infinity.  However, in many applications such as tuning parameter selection, the stochastic dominance condition becomes less plausible, and, in order to take into account the uncertainty in the CV risk estimates, it becomes more reasonable to aim at finding all ``nearly optimal'' models.  The tools developed in \Cref{subsec:4.3-hd-clt} are suitable for this purpose.

\paragraph{From risk confidence set to model confidence set}
Suppose we run $K$-fold CV for each of the $m$ candidate estimators. 
Our goal is to learn
\begin{equation}\label{eq:5.1-r^*-rand-center}
r^*=\arg\min_{r\in[m]} \bar R_{\cv,n,K}^{(r)}\,,
\end{equation}
and in the case that there exist multiple good models with close to optimal values, find a confidence set $\hat\Theta\subseteq[m]$ such that
\begin{equation}\label{eq:5.1-conf-set-def-rand}
\mathbb P(r^*\in \hat\Theta)\ge 1-\beta
\end{equation}
for some pre-specified Type I error level $\beta$.

It may not seem entirely natural to choose $r^*$ as the target parameter of interest, as it is the average risk of $K$ fitted parameters obtained from $K$ leave-one-fold-out estimates. This choice is aligned with \Cref{thm:4.3-hd-cv-clt-rand}, which provides simultaneous uncertainty quantification of $\hat{\mathbf R}_{\cv,n,K}-\bar{\mathbf R}_{\cv,n,K}$.  Moreover, \Cref{thm:4.3-hd-cv-clt-det-1} suggests that if the risks are stable enough, then such a random centering result is equivalent to the deterministic centering with sample size $n_\tr$.

Let $\beta\in(0,1)$ be a nominal Type I error level and $\Sigma$ a positive semi-definite matrix. Define 
\begin{equation}\label{eq:5.1-gaussian-quantile-abs}
t_{\beta,\Sigma} \coloneqq \text{upper }\beta\text{-quantile of } \max|N(0,\Sigma)|\,,\end{equation}
where $|\cdot|$ means entry-wise absolute value.
Then according to \Cref{thm:4.3-hd-cv-clt-rand}, under the conditions therein, the following is an asymptotic $(1-\beta)$ simultaneous confidence set\footnote{The  terminology ``confidence set'' is a bit different from its standard use, as the target $\bar{\mathbf R}_{\cv,n,K}$ itself is random. However, the usual frequentist interpretation of confidence sets are still valid for the probabilistic statements about $\bar{\mathbf R}_{\cv,n,K}$. Moreover, as discussed in the previous paragraph, the random centering will be asymptotically equivalent to the deterministic centering at $\boldsymbol{\mu}_{n_\tr}$ if the risks are stable enough in the sense of Part 1 of \Cref{pro:4.2-cv-clt-consolidate}.} of $\hat{\mathbf R}_{\cv,n,K}$:
\begin{equation}
    \label{eq:5.1-simu-cs-ideal}
    \prod_{r\in[m]} \left[\hat R^{(r)}_{\cv,n,K}-\frac{\sigma_{n_\tr}^{(r)}}{\sqrt{n}}t_{\beta,\Gamma_{n_\tr}}\,,~\hat R^{(r)}_{\cv,n,K}+\frac{\sigma_{n_\tr}^{(r)}}{\sqrt{n}}t_{\beta,\Gamma_{n_\tr}}\right]\,,
\end{equation}
where $\Gamma_{n_\tr}$ is the correlation matrix defined in \Cref{eq:4.3-cov-mat-rand}.

However, the confidence set in \Cref{eq:5.1-simu-cs-ideal} is not practical as it involves unknown quantities: $(\sigma_{n_\tr}^{(r)},\Gamma_{n_\tr},t_{\beta,\Gamma_{n_{\tr}}})$.

We can estimate $\sigma_{n_\tr}$ for each $r$ as in \Cref{thm:4.1-var-est}. Similarly, we can estimate $\Gamma_{n_\tr}(r,s)$ by
\begin{equation}\label{eq:5.1-gamma-hat}
\hat\Gamma_{n_\tr}(r,s)=\frac{1}{n \hat\sigma_{n_\tr}^{(r)}\hat\sigma_{n_\tr}^{(s)}}\sum_{i=1}^n[\ell^{(r)}(X_i,\D_{n,-k_i})-\hat R_{\cv,n,K}^{(r)}][\ell^{(s)}(X_i,\D_{n,-k_i})-\hat R_{\cv,n,K}^{(s)}]\,.
\end{equation}

After obtaining $\hat\Gamma_{n_\tr}$, we can numerically approximate $t_{\beta,\hat\Gamma_{n_\tr}}$ using Monte Carlo methods.  The quality of approximation depends on the Monte Carlo sample size and can be made arbitrarily small. In the following analysis we will ignore this approximation error and simply work under the assumption that $t_{\beta,\hat\Gamma_{n_\tr}}$ is available.  However, our analysis does explicitly account for the estimation error in $\hat \Gamma_{n_\tr}$ and the difference between $t_{\beta,\hat\Gamma_{n_\tr}}$ and $t_{\beta,\Gamma_{n_\tr}}$.

\begin{theorem}
    \label{thm:5.1-simul-conf-set-naive}
    If \Cref{eq:4.3-nabla_ell_sw} and \Cref{eq:4.3-4th-moment} hold and
    \begin{equation}\label{eq:5.1-corr-est-condition}(\epsilon_{n_\tr}+n^{-1/2})(\log m)^{3\alpha+4}=o(1)\end{equation} then for any $\beta\in(0,1)$
    \begin{equation*}
        \mathbb P\left[\left|\hat R_{\cv,n,K}^{(r)}-\bar R_{\cv,n,K}^{(r)}\right|\le \frac{\hat\sigma_{n_\tr}^{(r)}}{\sqrt{n}}t_{\beta,\hat\Gamma_{n_\tr}}\,,~~\forall~r\in[m]\right] = 1-\beta+o(1)\,.
    \end{equation*}
\end{theorem}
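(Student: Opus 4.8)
The plan is to derive the feasible (data-driven) statement from the oracle Gaussian comparison of \Cref{thm:4.3-hd-cv-clt-rand}, controlling the plug-in errors in the estimated marginal standard deviations $\hat\sigma_{n_\tr}^{(r)}$ and the estimated correlation matrix $\hat\Gamma_{n_\tr}$, and then absorbing the residual perturbations with Gaussian anti-concentration. First I would note that \eqref{eq:5.1-corr-est-condition} is strictly stronger than the hypothesis \eqref{eq:4.3-m-bound} of \Cref{thm:4.3-hd-cv-clt-rand}: it forces both $n^{-1/2}(\log m)^{3\alpha+4}=o(1)$ and $\epsilon_{n_\tr}(\log m)^{3\alpha+4}=o(1)$, which dominate $(\log m)^{4\alpha+5}/n=o(1)$ and $(\log m)^{\alpha+3/2}\epsilon_{n_\tr}=o(1)$ respectively (for $\alpha\ge 0$). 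Applying the two-sided form of \Cref{thm:4.3-hd-cv-clt-rand} furnished by \Cref{rem:4.3-sign-flip} (append the loss functions $-\ell^{(r)}$), we obtain
$$\sup_{t\in\mathbb R}\left|\mathbb P\!\left(\max_{r\in[m]}\frac{\sqrt n}{\sigma_{n_\tr}^{(r)}}\left|\hat R_{\cv,n,K}^{(r)}-\bar R_{\cv,n,K}^{(r)}\right|\le t\right)-\mathbb P\!\left(\max|\mathbf Y_n|\le t\right)\right|=o(1)\,,$$
with $\mathbf Y_n\sim N(0,\Gamma_{n_\tr})$; since $\Gamma_{n_\tr}$ has unit diagonal, $t_{\beta,\Gamma_{n_\tr}}=O(\sqrt{\log m})$.

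The core of the argument is a uniform estimation-error bound: I would show that on an event of probability $1-o(1)$,
$$\eta_n:=\max_{r\in[m]}\left|\frac{\hat\sigma_{n_\tr}^{(r)}}{\sigma_{n_\tr}^{(r)}}-1\right|=o\!\left((\log m)^{-1/2}\right)\,,\qquad \Delta_n:=\max_{r,s\in[m]}\left|\hat\Gamma_{n_\tr}(r,s)-\Gamma_{n_\tr}(r,s)\right|=o\!\left((\log m)^{-2}\right)\,.$$
The errors split into two contributions (the relevant decomposition is that used in the proof of \Cref{thm:4.1-var-est}). The first is the discrepancy between $\ell^{(r)}(X_i,\D_{n,-k_i})$ and its de-randomized version $\ell_{n_\tr}^{(r)}(X_i)$, together with the gap between the centerings $\hat R_{\cv,n,K}^{(r)}$ and $\mu_{n_\tr}^{(r)}$; Parts 1 and 4 of \Cref{lem:4.4-useful-fact-loss-stab}, combined with the sub-Weibull loss stability \eqref{eq:4.3-nabla_ell_sw} and \Cref{pro:2.5-sw-equiv}, bound these by sub-Weibull variables of scale $O(\sigma_{n_\tr}^{(r)}\epsilon_{n_\tr})$. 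The second is the sampling fluctuation of the empirical first and second moments of the i.i.d. standardized vectors $\bar{\boldsymbol{\ell}}_{n_\tr}(X_i)$ around their population values; \eqref{eq:4.3-4th-moment}, \Cref{pro:2.5-sum-prod-sw} (to handle the products $\bar\ell^{(r)}_{n_\tr}\bar\ell^{(s)}_{n_\tr}$) and the maximal inequality \Cref{lem:2.5-sw-maximal} applied over the $O(m^2)$ index pairs give a contribution of order $O_P\big((\log m)^{c\alpha}\sqrt{(\log m)/n}\big)$ for an absolute constant $c$. Adding the two pieces and dividing by the (positive, unit-order) true variances, the entrywise correlation error is $O_P\big((\epsilon_{n_\tr}+n^{-1/2})(\log m)^{c'\alpha+c''}\big)$, and \eqref{eq:5.1-corr-est-condition} is precisely calibrated so that this is $o((\log m)^{-2})$; the same computation, with fewer logarithmic factors, controls $\eta_n$.

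Finally I would patch the pieces together. Writing $\hat T_r=\frac{\sqrt n}{\sigma_{n_\tr}^{(r)}}\big(\hat R_{\cv,n,K}^{(r)}-\bar R_{\cv,n,K}^{(r)}\big)$, the feasible event equals $\big\{\max_{r}(\sigma_{n_\tr}^{(r)}/\hat\sigma_{n_\tr}^{(r)})\,|\hat T_r|\le t_{\beta,\hat\Gamma_{n_\tr}}\big\}$. On the good event of the previous paragraph, $(\sigma_{n_\tr}^{(r)}/\hat\sigma_{n_\tr}^{(r)})|\hat T_r|=(1+O(\eta_n))|\hat T_r|$, while the Gaussian comparison lemma of \cite{chernozhukov2022improved} (the same tool already underlying \Cref{thm:4.3-hd-cv-clt-rand}) turns $\Delta_n=o((\log m)^{-2})$ into $t_{\beta+\rho_n,\Gamma_{n_\tr}}\le t_{\beta,\hat\Gamma_{n_\tr}}\le t_{\beta-\rho_n,\Gamma_{n_\tr}}$ for some deterministic $\rho_n=o(1)$. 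Hence on the good event the feasible event is sandwiched between events $\big\{\max_r|\hat T_r|\le (1+o(\eta_n\sqrt{\log m}))\,t_{\beta\mp\rho_n,\Gamma_{n_\tr}}\big\}$; combining $\eta_n\sqrt{\log m}=o(1)$, $t_{\beta,\Gamma_{n_\tr}}=O(\sqrt{\log m})$, the oracle comparison displayed above, and the Gaussian anti-concentration bound \citep[Lemma J.3]{chernozhukov2022improved} — which controls $\mathbb P(|\max|\mathbf Y_n|-s|\le\delta)$ by $O(\delta\sqrt{\log m})$ — shows that both sides of the sandwich have probability $1-\beta+o(1)$, which proves the claim.

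The step I expect to be the main obstacle is the uniform estimation-error bound of the second paragraph: propagating the sub-Weibull exponents correctly through the products $\bar\ell^{(r)}_{n_\tr}\bar\ell^{(s)}_{n_\tr}$, the $m^2$-fold maximal inequality, the normalization by $\hat\sigma_{n_\tr}^{(r)}$, and the stability-induced bias, so that the final logarithmic power stays within the budget $(\log m)^{3\alpha+4}$ allowed by \eqref{eq:5.1-corr-est-condition}. Once those rates are in hand, the anti-concentration and sandwiching arguments are routine.
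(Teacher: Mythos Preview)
Your proposal is correct and follows essentially the same route as the paper: invoke the two-sided version of \Cref{thm:4.3-hd-cv-clt-rand} via \Cref{rem:4.3-sign-flip}, establish uniform-in-$(r,s)$ accuracy of $\hat\sigma_{n_\tr}^{(r)}$ and $\hat\Gamma_{n_\tr}$ by decomposing into the stability-induced bias (\Cref{lem:4.4-useful-fact-loss-stab}) and the i.i.d. sampling fluctuation of the standardized products (\Cref{pro:2.5-sum-prod-sw}, \Cref{thm:2.5-sw-mcdiarmid}, \Cref{lem:2.5-sw-maximal}), then sandwich using the Gaussian-to-Gaussian comparison and anti-concentration tools from \cite{chernozhukov2022improved}. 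The paper packages the estimation-error step as a separate lemma (\Cref{lem:5.1-corr-est}) giving the explicit rate $O_P\big((\epsilon_{n_\tr}+n^{-1/2})(\log m)^{2\alpha+1}\big)$ for both $\eta_n$ and $\Delta_n$, and carries out the sandwich through the auxiliary variables $W_0=\|\sqrt n\,\hat S^{-1}(\hat{\mathbf R}-\bar{\mathbf R})\|_\infty$, $W_1=\|\sqrt n\,S^{-1}(\hat{\mathbf R}-\bar{\mathbf R})\|_\infty$, $W_2=\|N(0,\Gamma_{n_\tr})\|_\infty$ with a single slack sequence $\delta_n$ absorbing both $|W_0-W_1|$ and $\Delta_n$; your separated treatment of $\eta_n$ and $\Delta_n$ is equivalent. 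One small sharpening: your stated requirement $\eta_n=o((\log m)^{-1/2})$ is a bit loose---after multiplying by $t_{\beta,\Gamma_{n_\tr}}=O(\sqrt{\log m})$ and then applying anti-concentration (another $\sqrt{\log m}$ factor), you actually need $\eta_n\log m=o(1)$, but this is comfortably covered by the rate above under \eqref{eq:5.1-corr-est-condition}.
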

Recall that \eqref{eq:4.3-nabla_ell_sw} and \eqref{eq:4.3-4th-moment} are the stability and tail conditions of the loss $\ell^{(r)}(X_0,\D_n)$, respectively. These conditions are stated uniformly over $r\in[m]$.
Intuitively, a $K$-fold version of \Cref{thm:2.5-loocv-concentration-sw} suggests that each coordinate of $\hat{\mathbf R}_{\cv,n,K}$ concentrates around its expected value at scale $O(n^{-1/2}+\epsilon_{n_\tr})$. The additional $(\log m)^{3\alpha+4}$ factor required in \eqref{eq:5.1-corr-est-condition} is used to ensure (i) the concentration of $\hat{\mathbf R}_{\cv,n,K}$ holds for all entries at the same time, and (ii) the correlation matrix estimation is uniformly accurate over all entries.
\begin{proof}
    [Proof of \Cref{thm:5.1-simul-conf-set-naive}]
    We use simplified notation by dropping the subindices whenever there is no confusion: $\hat{\mathbf R} = \hat{\mathbf R}_{\cv,n,K}$,   $\bar{\mathbf R}=\bar{\mathbf R}_{\cv,n,K}$, $\hat\Gamma=\hat\Gamma_{n_\tr}$, $\Gamma=\Gamma_{n_\tr}$.
   Let $\hat S$ be the $m\times m$ diagonal matrix whose $r$th diagonal entry is $\hat\sigma_{n_\tr}^{(r)}$, and $S$ its population version whose corresponding entry is $\sigma_{n_\tr}^{(r)}$.  

Define   \begin{align*}
      W_0 = & \left\|\sqrt{n}\hat S^{-1}(\hat{\mathbf R}-\bar{\mathbf R})\right\|_\infty\,,\\
      W_1 = & \left\|\sqrt{n} S^{-1}(\hat{\mathbf R}-\bar{\mathbf R})\right\|_\infty\,,     \\
      W_2= & \|N(0,\Gamma)\|_\infty\,.  \end{align*}

We first control $|W_0-W_1|$, observe that
\begin{align*}
    |W_0-W_1|\le \|\sqrt{n}S^{-1}(\hat{\mathbf R}-\bar{\mathbf R})\|_\infty \cdot \max_{r\in[m]}\left|\frac{\sigma_{n_\tr}^{(r)}}{\hat\sigma_{n_\tr}^{(r)}}-1\right|\,.
\end{align*}
By \Cref{lem:4.4-rand-center-residual-bound} and \Cref{pro:2.5-sum-prod-sw}, each entry of
$\sqrt{n}S^{-1}(\hat{\mathbf R}-\bar{\mathbf R})$ is $(c_{\alpha,\kappa},\alpha+1)$-SW and hence, by \Cref{lem:2.5-sw-maximal}, the maximum entry-wise absolute value of $\sqrt{n}S^{-1}(\hat{\mathbf R}-\bar{\mathbf R})$ is $O_P((\log m)^{\alpha+1})$. Combine this with Part 1 of \Cref{lem:5.1-corr-est}  we have 
$$
|W_0-W_1|=O_P\left((\epsilon_{n_\tr}+n^{-1/2})(\log m)^{3\alpha+2}\right)\,.
$$

Next we relate $t_{\beta,\hat\Gamma}$ to $t_{\beta,\Gamma}$. Under the assumption that $(\epsilon_{n_\tr}+n^{-1/2})(\log m)^{3\alpha+4}=o(1)$, there exists a sequence $\delta_n$ such that $\delta_n=\omega\left((\epsilon_{n_\tr}+n^{-1/2})(\log m)^{3\alpha+2}\right)$ and $\delta_n=o((\log m)^{-2})$.
Define event $\mathcal E = \{\sup_{r,s}|\hat\Gamma(r,s)-\Gamma(r,s)|\le \delta_n\}$.
Part 2 of \Cref{lem:5.1-corr-est} implies that $\mathbb P(\mathcal E^c)=o(1)$.
By the Gaussian-to-Gaussian comparison result (Proposition 2.1 of \cite{chernozhukov2022improved}) we have, on $\mathcal E$,
$$
\mathbb P(W_2\le t_{\beta,\hat\Gamma})\le 1-\beta+ c \delta_n^{1/2}\log m \coloneqq 1-\tilde\beta\,,
$$
where $c$ is a universal constant, and  hence
$$
t_{\beta,\hat\Gamma}\le t_{\tilde\beta,\Gamma}\,.
$$
Finally we arrive at the following inequalities
\begin{align*}
    \mathbb P(W_0\le t_{\beta,\hat\Gamma})\le & \mathbb P(W_1\le t_{\beta,\hat\Gamma}+\delta_n)+\mathbb P(|W_0-W_1|>\delta_n)\\
    \le &\mathbb P(W_1\le t_{\tilde\beta,\Gamma}+\delta_n)+\mathbb P(\mathcal E^c)+o(1)\\
    \le &\mathbb P(W_2\le t_{\tilde\beta,\Gamma}+\delta_n)+o(1)\\
    \le & 1-\tilde\beta+O(\delta_n\sqrt{\log m})+o(1)\\
    = & 1-\beta+o(1)\,,
\end{align*}
where the first inequality follows from union bound, the second from another union bound over $(\mathcal E,~\mathcal E^c)$ and the fact that $t_{\beta,\hat\Gamma}\le t_{\tilde\beta,\Gamma}$ on $\mathcal E$, the third from \Cref{thm:4.3-hd-cv-clt-rand}, the fourth  from the Gaussian anti-concentration (Lemma J.3 of \cite{chernozhukov2022improved}),  and the last line follows from the choice of $\delta_n$.  A similar argument provides the other direction and concludes the proof.
\end{proof}

\Cref{thm:5.1-simul-conf-set-naive} leads to a confidence set $\hat\Theta$ satisfying \eqref{eq:5.1-conf-set-def-rand}:
\begin{equation}\label{eq:5.1-cvc-naive}
\hat\Theta_0=\left\{r\in[m]:~\hat R_{\cv,n,K}^{(r)}-\frac{\hat\sigma_{n_\tr}^{(r)}}{\sqrt{n}}t_{\beta,\hat\Gamma_{n_\tr}}\le \min_{s\in[m]}\hat R_{\cv,n,K}^{(s)}+\frac{\hat\sigma_{n_\tr}^{(s)}}{\sqrt{n}}t_{\beta,\hat\Gamma_{n_\tr}}\right\}\,.
\end{equation}
In other words, this confidence set includes all models whose confidence lower bound is not dominated by all the confidence upper bounds, because otherwise we are $1-\beta$ confident that there exists an $s$ such that $\bar R_{\cv,n,K}^{(s)}<\bar R_{\cv,n,K}^{(r)}$, as witnessed by the $s$ that violates the inequality in \eqref{eq:5.1-cvc-naive}. 
\begin{proposition}
    \label{pro:5.1-coverage-cvc-naive}
    Under the same conditions in \Cref{thm:5.1-simul-conf-set-naive}, we have
    $$\mathbb P\left(r^*\in\hat\Theta_0\right)\ge 1-\beta+o(1)\,,$$
    for all $r^*\in \arg\min_r \bar R_{\cv,n,K}^{(r)}$.
\end{proposition}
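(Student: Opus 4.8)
The plan is to reduce the coverage statement for $\hat\Theta_0$ directly to the simultaneous confidence bound of \Cref{thm:5.1-simul-conf-set-naive}. Fix any $r^*\in\arg\min_{r\in[m]}\bar R_{\cv,n,K}^{(r)}$ and let $\mathcal A$ denote the event
$$
\mathcal A = \left\{\left|\bar R_{\cv,n,K}^{(r)}-\hat R_{\cv,n,K}^{(r)}\right|\le \frac{\hat\sigma_{n_\tr}^{(r)}}{\sqrt{n}}\,t_{\beta,\hat\Gamma_{n_\tr}}\quad\text{for all }r\in[m]\right\}\,.
$$
By \Cref{thm:5.1-simul-conf-set-naive}, $\mathbb P(\mathcal A)=1-\beta+o(1)$, so it suffices to show the inclusion $\mathcal A\subseteq\{r^*\in\hat\Theta_0\}$.

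On $\mathcal A$, two one-sided consequences will be used. First, applied with $r=r^*$, the lower bound gives $\hat R_{\cv,n,K}^{(r^*)}-\hat\sigma_{n_\tr}^{(r^*)}t_{\beta,\hat\Gamma_{n_\tr}}/\sqrt{n}\le \bar R_{\cv,n,K}^{(r^*)}$. Second, applied with an arbitrary $s\in[m]$, the upper bound gives $\bar R_{\cv,n,K}^{(s)}\le \hat R_{\cv,n,K}^{(s)}+\hat\sigma_{n_\tr}^{(s)}t_{\beta,\hat\Gamma_{n_\tr}}/\sqrt{n}$. Since $r^*$ minimizes $\bar R_{\cv,n,K}^{(\cdot)}$, we have $\bar R_{\cv,n,K}^{(r^*)}\le \bar R_{\cv,n,K}^{(s)}$ for every $s$, and chaining these three inequalities yields, for every $s\in[m]$,
$$
\hat R_{\cv,n,K}^{(r^*)}-\frac{\hat\sigma_{n_\tr}^{(r^*)}}{\sqrt{n}}t_{\beta,\hat\Gamma_{n_\tr}}
\;\le\; \bar R_{\cv,n,K}^{(r^*)}\;\le\;\bar R_{\cv,n,K}^{(s)}\;\le\;\hat R_{\cv,n,K}^{(s)}+\frac{\hat\sigma_{n_\tr}^{(s)}}{\sqrt{n}}t_{\beta,\hat\Gamma_{n_\tr}}\,.
$$
Taking the minimum over $s\in[m]$ on the right-hand side produces exactly the defining inequality of membership in $\hat\Theta_0$ from \eqref{eq:5.1-cvc-naive}, so $r^*\in\hat\Theta_0$ on $\mathcal A$. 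Hence $\mathbb P(r^*\in\hat\Theta_0)\ge\mathbb P(\mathcal A)=1-\beta+o(1)$, which is the claim.

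There is essentially no genuine obstacle here; the argument is a deterministic set inclusion combined with the already-established simultaneous coverage. The only points warranting a sentence of care are that the inclusion holds simultaneously for every element of $\arg\min_r\bar R_{\cv,n,K}^{(r)}$ (the argument above is uniform in the choice of $r^*$, since it only uses that $\bar R_{\cv,n,K}^{(r^*)}$ attains the minimum), and that the $o(1)$ term is inherited verbatim from \Cref{thm:5.1-simul-conf-set-naive} with no additional error accumulation.
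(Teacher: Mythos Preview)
Your proof is correct and is exactly the argument the paper has in mind. The paper does not write out a formal proof of this proposition; it simply records the intuition in the sentence preceding \eqref{eq:5.1-cvc-naive} (that a model is excluded only when its confidence lower bound exceeds some other model's confidence upper bound, which on the simultaneous-coverage event would certify $\bar R_{\cv,n,K}^{(s)}<\bar R_{\cv,n,K}^{(r)}$), and your deterministic inclusion $\mathcal A\subseteq\{r^*\in\hat\Theta_0\}$ followed by \Cref{thm:5.1-simul-conf-set-naive} makes this precise.
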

\begin{remark}
    \label{rem:5.1-non-uniqueness}
    The model confidence set given in \eqref{eq:5.1-cvc-naive} and its theoretical guarantee, \Cref{pro:5.1-coverage-cvc-naive}, do not require the minimizer of $\bar {\mathbf R}_{\cv,n,K}$ to be unique.  The interpretation of the coverage guarantee in \Cref{pro:5.1-coverage-cvc-naive} is that it holds marginally for each individual $r^*$ that minimizes $\bar R_{\cv,n,K}^{(r)}$.
\end{remark}

\paragraph{Model Confidence Set Based on Differences}
The model confidence set $\hat\Theta_0$ given in \eqref{eq:5.1-cvc-naive} is often too conservative as it compares the least optimistic prediction of $\bar R_{\cv,n,K}^{(s)}$ with the most optimistic prediction of $\bar R_{\cv,n,K}^{(r)}$. Here the term ``most optimistic'' corresponds to the lower bound of the simultaneous confidence set and vice versa.  In practice, the correlation between the loss functions calculated from different estimators tends to be positive, as they are targeting the same task of estimating the parameter. Therefore, it is highly unlikely that a large positive fluctuation of the CV risk estimate in one estimator is associated with a large negative fluctuation in another. 

To take into account the likely positive correlation between the cross-validated risks, one approach is to consider the difference of CV risks and loss functions. 
For a given $r$, we want to test whether $r\in\arg\min_s \bar R_{\cv,n,K}^{(s)}$, which is equivalent to test $$\max_{s\neq r} \bar R^{(r)}_{\cv,n,K}-\bar R^{(s)}_{\cv,n,K}>0\,.$$ To this end, consider the difference loss function
\begin{equation}
    \label{eq:5.1-diff-loss}
    \ell^{(r,s)}(X_i,\D_{n,-k_i})\coloneqq \ell^{(r)}(X_i,\D_{n,-k_i})-\ell^{(s)}(X_i,\D_{n,-k_i})\,.
\end{equation}
The difference based model confidence is obtained by constructing a confidence lower bound for $$\max_{s\neq r} \bar R_{\cv,n,K}^{(r)}-\bar R_{\cv,n,K}^{(s)}$$ by applying \Cref{thm:4.3-hd-cv-clt-rand} to the $(m-1)$-dimensional vectors $(\ell^{(r,s)}(X_i,\D_{n,-k_i}):s\neq r)$.
We include $r$ in the model confidence set $\hat\Theta$ if and only if this confidence lower bound is non-positive.

Similarly define $\ell_n^{(r,s)}(X_0)=\mathbb E[\ell^{(r,s)}(X_0,\D_n)|X_0]$, $(\sigma_n^{(r,s)})^2=\var(\ell_n^{(r,s)}(X_0))$, and $\Gamma_n^{(r)}$ the $(m-1)\times (m-1)$ correlation matrix whose $(s,s')$-entry is ${\rm Corr}(\ell_n^{(r,s)}(X_0),\ell_n^{(r,s')}(X_0))$.

Let $\hat\sigma_{n_\tr}^{(r,s)}$ be the empirical standard deviation estimated from $\{\ell^{(r,s)}(X_i,\D_{n,-k_i}):1\le i\le n\}$, and $\hat\Gamma_{n_\tr}^{(r)}$ the corresponding $(m-1)\times (m-1)$ empirical correlation matrix.

Since we only need a confidence lower bound for $\max_{s\neq r} \bar R_{\cv,n,K}^{(r)}-\bar R_{\cv,n,K}^{(s)}$, we only need the quantile of the maximum of a one-sided Gaussian vector.  For a positive semidefinite matrix $\Gamma$, define
\begin{equation}
    \label{eq:5.1-gauss-quantile-one-side}
    u_{\beta,\Gamma} = \text{upper }\beta\text{-quantile of } \max N(0,\Gamma)\,.
\end{equation}
Then we have the following result.

\begin{proposition}
    \label{pro:5.1-cvc-diff}
    For $r\neq s\in[m]$, if \eqref{eq:4.3-nabla_ell_sw} and \eqref{eq:4.3-4th-moment} hold with $\ell^{(r)}(\cdot,\cdot)$ and $\sigma_n^{(r)}$ replaced by
    $\ell^{(r,s)}(\cdot,\cdot)$ and $\sigma_n^{(r,s)}$, respectively, and \eqref{eq:5.1-corr-est-condition} holds, then, for each $r$
    $$
    \max_{s\neq r}\hat R_{\cv,n,K}^{(r)}-\hat R_{\cv,n,K}^{(s)}-\frac{\hat\sigma_{n_\tr}^{(r,s)}}{\sqrt{n}}u_{\beta,\hat\Gamma_{n_\tr}^{(r)}}
    $$
    is an asymptotic $1-\beta$ confidence lower bound for $\max_{s\neq r}\bar R_{\cv,n,K}^{(r)}-\bar R_{\cv,n,K}^{(s)}$.

    Moreover, the confidence set 
    \begin{equation}
        \label{eq:5.1-cvc}
        \hat\Theta=\left\{r\in[m]:~\max_{s\neq r}\hat R_{\cv,n,K}^{(r)}-\hat R_{\cv,n,K}^{(s)}-\frac{\hat\sigma_{n_\tr}^{(r,s)}}{\sqrt{n}}u_{\beta,\hat\Gamma_{n_\tr}^{(r)}}\le 0\right\}
    \end{equation}
    satisfies $\mathbb P(r^*\in\hat\Theta)\ge 1-\beta+o(1)$ for all $r^*\in\arg\min_s \bar{R}_{\cv,n,K}^{(s)}$.
\end{proposition}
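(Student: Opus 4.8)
The plan is to transcribe the argument behind \Cref{thm:5.1-simul-conf-set-naive}, applied coordinatewise to the difference losses and with the one-sided maximum replacing the two-sided one. Fix $r\in[m]$ and write $\hat R^{(r,s)}_{\cv}\coloneqq\hat R^{(r)}_{\cv,n,K}-\hat R^{(s)}_{\cv,n,K}$, $\bar R^{(r,s)}_{\cv}\coloneqq\bar R^{(r)}_{\cv,n,K}-\bar R^{(s)}_{\cv,n,K}$, and $\theta_r\coloneqq\max_{s\neq r}\bar R^{(r,s)}_{\cv}$. The key observation is that each $\ell^{(r,s)}=\ell^{(r)}-\ell^{(s)}$ is again a (signed) loss function in the sense of the paper, that $\hat R^{(r,s)}_{\cv}$ is exactly its $K$-fold CV risk estimate, and that by assumption the family $\{\ell^{(r,s)}:s\neq r\}$ satisfies the sub-Weibull stability and tail conditions \eqref{eq:4.3-nabla_ell_sw}--\eqref{eq:4.3-4th-moment} with $\sigma_n^{(r,s)}>0$ in the role of $\sigma_n^{(r)}$. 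Since \Cref{thm:4.3-hd-cv-clt-rand} is already stated for the one-sided maximum entry, and \eqref{eq:5.1-corr-est-condition} implies the dimension budget \eqref{eq:4.3-m-bound} for $m-1$ models, applying it to the $(m-1)$-dimensional vector $(\ell^{(r,s)}(X_i,\D_{n,-k_i}):s\neq r)$ yields
\begin{equation*}
\sup_{t\in\mathbb R}\left|\mathbb P\left(\max_{s\neq r}\frac{\sqrt n}{\sigma^{(r,s)}_{n_\tr}}\bigl(\hat R^{(r,s)}_{\cv}-\bar R^{(r,s)}_{\cv}\bigr)\le t\right)-\mathbb P\bigl(\max N(0,\Gamma^{(r)}_{n_\tr})\le t\bigr)\right|=o(1)\,,
\end{equation*}
where $\Gamma^{(r)}_{n_\tr}$ is the (possibly singular) correlation matrix of $(\ell^{(r,s)}_{n_\tr}(X_0):s\neq r)$; singularity is harmless since \Cref{thm:4.3-cckk} and the anti-concentration bounds used below do not require $\Gamma^{(r)}_{n_\tr}$ to be invertible.

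Next I would replace the population scalings $\sigma^{(r,s)}_{n_\tr}$ and $\Gamma^{(r)}_{n_\tr}$ by their empirical versions, reusing the estimation-error bookkeeping in the proof of \Cref{thm:5.1-simul-conf-set-naive}. Concretely: by \Cref{lem:4.4-rand-center-residual-bound} and \Cref{pro:2.5-sum-prod-sw} each coordinate of $\sqrt n\,(\hat R^{(r,s)}_{\cv}-\bar R^{(r,s)}_{\cv})/\sigma^{(r,s)}_{n_\tr}$ is $(c_{\alpha,\kappa},\alpha+1)$-SW, so \Cref{lem:2.5-sw-maximal} makes the maximum over $s\neq r$ of order $O_P((\log m)^{\alpha+1})$; combining this with the variance and correlation estimation error bounds of \Cref{lem:5.1-corr-est} (applied to the difference losses) shows that switching $\sigma^{(r,s)}_{n_\tr}\to\hat\sigma^{(r,s)}_{n_\tr}$ perturbs the maximum by $O_P\bigl((\epsilon_{n_\tr}+n^{-1/2})(\log m)^{3\alpha+2}\bigr)$, and that the event $\mathcal E\coloneqq\{\sup_{s,s'}|\hat\Gamma^{(r)}_{n_\tr}(s,s')-\Gamma^{(r)}_{n_\tr}(s,s')|\le\delta_n\}$ has probability $1-o(1)$ for a $\delta_n$ obeying $\delta_n=\omega((\epsilon_{n_\tr}+n^{-1/2})(\log m)^{3\alpha+2})$ and $\delta_n=o((\log m)^{-2})$, such a $\delta_n$ existing precisely because of \eqref{eq:5.1-corr-est-condition}. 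On $\mathcal E$, the Gaussian-to-Gaussian comparison (Proposition 2.1 of \cite{chernozhukov2022improved}) replaces $u_{\beta,\hat\Gamma^{(r)}_{n_\tr}}$ by $u_{\tilde\beta,\Gamma^{(r)}_{n_\tr}}$ with $\tilde\beta=\beta-c\,\delta_n^{1/2}\log m$, and Gaussian anti-concentration (Lemma J.3 of \cite{chernozhukov2022improved}) absorbs the $\delta_n$-shifts; chaining these estimates through a couple of union bounds exactly as in \Cref{thm:5.1-simul-conf-set-naive} gives
\begin{equation*}
\mathbb P\left(\max_{s\neq r}\frac{\sqrt n}{\hat\sigma^{(r,s)}_{n_\tr}}\bigl(\hat R^{(r,s)}_{\cv}-\bar R^{(r,s)}_{\cv}\bigr)\le u_{\beta,\hat\Gamma^{(r)}_{n_\tr}}\right)=1-\beta+o(1)\,.
\end{equation*}

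On the event inside that last probability we have, simultaneously for all $s\neq r$, $\hat R^{(r,s)}_{\cv}-\frac{\hat\sigma^{(r,s)}_{n_\tr}}{\sqrt n}u_{\beta,\hat\Gamma^{(r)}_{n_\tr}}\le\bar R^{(r,s)}_{\cv}$; taking the maximum over $s\neq r$ shows $L_r\coloneqq\max_{s\neq r}\bigl(\hat R^{(r,s)}_{\cv}-\frac{\hat\sigma^{(r,s)}_{n_\tr}}{\sqrt n}u_{\beta,\hat\Gamma^{(r)}_{n_\tr}}\bigr)\le\theta_r$, so $\mathbb P(L_r\le\theta_r)\ge 1-\beta+o(1)$, which is the claimed asymptotic $1-\beta$ confidence lower bound for $\theta_r=\max_{s\neq r}\bar R^{(r,s)}_{\cv}$. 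For the coverage of $\hat\Theta$, note that if $r^*\in\arg\min_s\bar R^{(s)}_{\cv,n,K}$ then $\theta_{r^*}=\max_{s\neq r^*}(\bar R^{(r^*)}_{\cv,n,K}-\bar R^{(s)}_{\cv,n,K})\le 0$, so on $\{L_{r^*}\le\theta_{r^*}\}$ we get $L_{r^*}\le 0$, i.e. $r^*\in\hat\Theta$; hence $\mathbb P(r^*\in\hat\Theta)\ge\mathbb P(L_{r^*}\le\theta_{r^*})\ge 1-\beta+o(1)$. The bulk of the work, and the only genuinely delicate point, is the middle step --- propagating the variance and correlation estimation errors through the one-sided Gaussian quantile $u_{\beta,\hat\Gamma^{(r)}_{n_\tr}}$ while $m=m_n$ grows --- which is exactly why \Cref{lem:5.1-corr-est}, the sub-Weibull maximal inequality, and the strengthened budget \eqref{eq:5.1-corr-est-condition} are needed; everything else is a direct transcription of the proof of \Cref{thm:5.1-simul-conf-set-naive}, with the $(m-1)$-dimensional difference-loss vector and the one-sided $\max N(0,\Gamma^{(r)}_{n_\tr})$ in place of the $m$-dimensional loss vector and the two-sided $\max|N(0,\Gamma_{n_\tr})|$.
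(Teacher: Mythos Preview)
Your proposal is correct and takes essentially the same approach as the paper: the paper simply states that the proof is almost identical to that of \Cref{thm:5.1-simul-conf-set-naive} and omits it, and your write-up carries out exactly this transcription---applying the one-sided Gaussian comparison of \Cref{thm:4.3-hd-cv-clt-rand} to the $(m-1)$-dimensional difference-loss vector, then propagating the variance/correlation estimation errors via \Cref{lem:5.1-corr-est}, the Gaussian-to-Gaussian comparison, and anti-concentration just as in \Cref{thm:5.1-simul-conf-set-naive}. Your explicit derivation of the coverage of $\hat\Theta$ from the confidence lower bound is also the intended final step.
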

The proof of \Cref{pro:5.1-cvc-diff} is almost identical to that of \Cref{thm:5.1-simul-conf-set-naive} and is omitted.



\begin{remark}\label{rem:5.1-difference_loss_standardize}
As we mentioned in the motivation of the difference based CV model confidence set, the loss functions $\ell^{(r)}(\cdot,\cdot)$ and $\ell^{(s)}(\cdot,\cdot)$ are often positively correlated. If the estimators $\hat f^{(r)}$ and $\hat f^{(s)}$ are very similar, such as the same algorithm with two slightly different tuning parameter values, then the correlation can even be close to $1$. Such a high correlation may lead to a very small standard deviation  $\sigma_n^{(r,s)}$ of the difference loss function, and hence pose additional challenge for the stability condition \eqref{eq:4.3-nabla_ell_sw} to hold.  Nevertheless, \eqref{eq:4.3-nabla_ell_sw} is still conceptually similar to the original version in that it requires an $o(1/\sqrt{n})$ factor after taking a $\nabla_i$ operator in the parameter estimation part. We will provide an example of the difference loss stability in \Cref{subsec:6.3-stab-diff}.
\end{remark}

\begin{lemma}
    [Correlation Estimation Accuracy]\label{lem:5.1-corr-est}
    Under the setting as in \Cref{thm:4.3-hd-cv-clt-rand}, assuming \eqref{eq:4.3-nabla_ell_sw} and \eqref{eq:4.3-4th-moment} such that \eqref{eq:5.1-corr-est-condition} holds.  Then 
    \begin{enumerate}
        \item $
    \max_{r}\left|\frac{\hat\sigma_{n_\tr}^{(r)}}{\sigma_{n_\tr}^{(r)}}-1\right|=O_P\left((\epsilon_{n_\tr}+n^{-1/2})(\log m)^{2\alpha+1}\right)\,.
    $
 \item  
    $
\max_{r,s}\left|\hat\Gamma_{n_\tr}(r,s) - \Gamma_{n_\tr}(r,s)\right|=O_P\left((\epsilon_{n_\tr}+n^{-1/2})(\log m)^{2\alpha+1}\right)\,.$
    \end{enumerate}
\end{lemma}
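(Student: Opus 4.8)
The plan is to adapt the two-stage device from the proof of \Cref{thm:4.1-var-est}, carried out uniformly over all $m$ models and $m^2$ pairs, replacing the scalar $L_2$ estimates there by sub-Weibull arithmetic (\Cref{pro:2.5-sw-equiv,pro:2.5-sum-prod-sw}) and by the maximal inequality \Cref{lem:2.5-sw-maximal}. Throughout I would write $a_i^{(r)}=\ell^{(r)}(X_i,\D_{n,-k_i})-\hat R_{\cv,n,K}^{(r)}$ and $b_i^{(r)}=\bar\ell_{n_\tr}^{(r)}(X_i)=\ell_{n_\tr}^{(r)}(X_i)-\mu_{n_\tr}^{(r)}$, with standardized versions $\tilde a_i^{(r)}=a_i^{(r)}/\sigma_{n_\tr}^{(r)}$ and $\tilde b_i^{(r)}=b_i^{(r)}/\sigma_{n_\tr}^{(r)}$, so that $(\hat\sigma_{n_\tr}^{(r)})^2/(\sigma_{n_\tr}^{(r)})^2=n^{-1}\sum_i(\tilde a_i^{(r)})^2$, $\hat\Gamma_{n_\tr}(r,s)=n^{-1}\sum_i\tilde a_i^{(r)}\tilde a_i^{(s)}\big/[(\hat\sigma_{n_\tr}^{(r)}/\sigma_{n_\tr}^{(r)})(\hat\sigma_{n_\tr}^{(s)}/\sigma_{n_\tr}^{(s)})]$, and $\mathbb E(\tilde b_1^{(r)})^2=1$, $\mathbb E\,\tilde b_1^{(r)}\tilde b_1^{(s)}=\Gamma_{n_\tr}(r,s)$.

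The first step is the \emph{oracle} bound. For fixed $r$ (resp.\ $(r,s)$) the variables $(\tilde b_i^{(r)})^2-1$ and $\tilde b_i^{(r)}\tilde b_i^{(s)}-\Gamma_{n_\tr}(r,s)$ are i.i.d.\ mean-zero, and by \eqref{eq:4.3-4th-moment} together with Parts 1, 2, 4 of \Cref{pro:2.5-sum-prod-sw} they are $(c_\alpha\kappa^2,2\alpha)$-SW; applying \Cref{lem:4.4-rio_cond_i.i.d.} with a trivial conditioning field, the empirical averages $n^{-1}\sum_i[(\tilde b_i^{(r)})^2-1]$ and $n^{-1}\sum_i[\tilde b_i^{(r)}\tilde b_i^{(s)}-\Gamma_{n_\tr}(r,s)]$ are $(c_\alpha'\kappa^2 n^{-1/2},2\alpha+\tfrac12)$-SW, so by \Cref{lem:2.5-sw-maximal} over $m$, resp.\ $m^2$, indices (using $\log m^2\asymp\log m$) their maxima are $O_P((\log m)^{2\alpha+1/2}n^{-1/2})$; in particular $\max_r n^{-1}\sum_i(\tilde b_i^{(r)})^2=1+o_P(1)$.

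The second step controls the \emph{replacement error} $\tilde a_i^{(r)}-\tilde b_i^{(r)}=\tilde c_i^{(r)}-(\hat R_{\cv,n,K}^{(r)}-\mu_{n_\tr}^{(r)})/\sigma_{n_\tr}^{(r)}$, where $\tilde c_i^{(r)}=(\ell^{(r)}(X_i,\D_{n,-k_i})-\ell_{n_\tr}^{(r)}(X_i))/\sigma_{n_\tr}^{(r)}=(\xi_i^{(r)}-\bar\xi_i^{(r)}+R^{(r)}(\D_{n,-k_i})-\mu_{n_\tr}^{(r)})/\sigma_{n_\tr}^{(r)}$. From \eqref{eq:4.3-nabla_ell_sw} one reads off $\|\nabla_1\ell^{(r)}(X_0,\D_{n_\tr})\|_q\le\sigma_{n_\tr}^{(r)}\epsilon_{n_\tr}(n_\tr K)^{-1/2}q^\alpha$, so Part 1 of \Cref{lem:4.4-useful-fact-loss-stab} makes $\tilde c_i^{(r)}$ uniformly $(c\epsilon_{n_\tr},\alpha+\tfrac12)$-SW, while the ``moreover'' part of \Cref{lem:4.4-rand-center-residual-bound} (together with $\|\bar R_{\cv,n,K}^{(r)}-\mu_{n_\tr}^{(r)}\|_q\le\|R^{(r)}(\D_{n_\tr})-\mu_{n_\tr}^{(r)}\|_q$ and $n/n_\tr\le 2$) makes $(\hat R_{\cv,n,K}^{(r)}-\mu_{n_\tr}^{(r)})/\sigma_{n_\tr}^{(r)}$ uniformly $(c(\epsilon_{n_\tr}+n^{-1/2}),\alpha+1)$-SW. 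Hence, bounding $n^{-1}\sum_i(\tilde c_i^{(r)})^2$ in $L_q$ by the triangle inequality across $i$ and invoking \Cref{lem:2.5-sw-maximal}, $\max_r n^{-1}\sum_i(\tilde c_i^{(r)})^2=O_P((\log m)^{2\alpha+1}\epsilon_{n_\tr}^2)$ and $\max_r\big((\hat R_{\cv,n,K}^{(r)}-\mu_{n_\tr}^{(r)})/\sigma_{n_\tr}^{(r)}\big)^2=O_P((\log m)^{2\alpha+2}(\epsilon_{n_\tr}+n^{-1/2})^2)$, so $\max_r n^{-1}\sum_i(\tilde a_i^{(r)}-\tilde b_i^{(r)})^2=O_P((\log m)^{2\alpha+2}(\epsilon_{n_\tr}+n^{-1/2})^2)$.

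The third step assembles everything. Expanding $(\hat\sigma_{n_\tr}^{(r)})^2/(\sigma_{n_\tr}^{(r)})^2-n^{-1}\sum_i(\tilde b_i^{(r)})^2=n^{-1}\sum_i(\tilde a_i^{(r)}-\tilde b_i^{(r)})^2+2n^{-1}\sum_i(\tilde a_i^{(r)}-\tilde b_i^{(r)})\tilde b_i^{(r)}$ and bounding the cross term by Cauchy--Schwarz over $i$ using the bounds of Steps 1--2, one obtains $\max_r|(\hat\sigma_{n_\tr}^{(r)})^2/(\sigma_{n_\tr}^{(r)})^2-1|\le O_P((\log m)^{2\alpha+1/2}n^{-1/2})+O_P((\log m)^{\alpha+1}(\epsilon_{n_\tr}+n^{-1/2}))+O_P((\log m)^{2\alpha+2}(\epsilon_{n_\tr}+n^{-1/2})^2)$; since \eqref{eq:5.1-corr-est-condition} forces $(\epsilon_{n_\tr}+n^{-1/2})(\log m)^{3\alpha+4}=o(1)$, the last (quadratic) term is $o((\log m)^{2\alpha+1}(\epsilon_{n_\tr}+n^{-1/2}))$, and the others are $O((\log m)^{2\alpha+1}(\epsilon_{n_\tr}+n^{-1/2}))$, giving Part 1 after a square root (using $|\hat\sigma/\sigma-1|\le|(\hat\sigma/\sigma)^2-1|$). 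Part 2 follows by the analogous expansion $n^{-1}\sum_i\tilde a_i^{(r)}\tilde a_i^{(s)}-n^{-1}\sum_i\tilde b_i^{(r)}\tilde b_i^{(s)}=n^{-1}\sum_i(\tilde a_i^{(r)}-\tilde b_i^{(r)})\tilde a_i^{(s)}+n^{-1}\sum_i\tilde b_i^{(r)}(\tilde a_i^{(s)}-\tilde b_i^{(s)})$, Cauchy--Schwarz over $i$ using $\max_r n^{-1}\sum_i(\tilde a_i^{(r)})^2=O_P(1)$ (from Part 1) and $\max_r n^{-1}\sum_i(\tilde b_i^{(r)})^2=O_P(1)$ (from Step 1), and then dividing by $(\hat\sigma_{n_\tr}^{(r)}/\sigma_{n_\tr}^{(r)})(\hat\sigma_{n_\tr}^{(s)}/\sigma_{n_\tr}^{(s)})=1+O_P((\log m)^{2\alpha+1}(\epsilon_{n_\tr}+n^{-1/2}))$ while using $|\Gamma_{n_\tr}(r,s)|\le 1$. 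The main obstacle is the careful bookkeeping of the sub-Weibull scale/exponent pairs through products, centerings, and two nested maximal inequalities, together with checking that the exponent $3\alpha+4$ in \eqref{eq:5.1-corr-est-condition} is exactly what renders every quadratic remainder negligible against the target rate; the one estimate that is not mechanical is the \emph{uniform}-in-$r$ smallness of $n^{-1}\sum_i(\tilde a_i^{(r)}-\tilde b_i^{(r)})^2$ — a union bound over Markov's inequality would cost a factor $m$, so one must instead route the sub-Weibull tail of each $(\tilde c_i^{(r)})^2$ through \Cref{lem:2.5-sw-maximal}.
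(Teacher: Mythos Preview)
Your proposal is correct and follows essentially the same two-stage scheme as the paper (oracle term via sub-Weibull Rio/McDiarmid, replacement term via Part~1 of \Cref{lem:4.4-useful-fact-loss-stab}, then combine), with one tactical difference worth noting. The paper works entirely at the level of \emph{individual product terms}: it shows directly, via the product rule in \Cref{pro:2.5-sum-prod-sw}, that each summand $\bigl[(\ell_{i,r}-\hat R_r)(\ell_{i,s}-\hat R_s)-\bar\ell_{i,r}\bar\ell_{i,s}\bigr]/\sqrt{\rho_{rr}\rho_{ss}}$ is $(c\,\epsilon_{n_\tr}(\kappa+\epsilon_{n_\tr}),\,2\alpha+1)$-SW, so the average over $i$ inherits this by the triangle inequality; combining with the oracle piece gives that $(\hat\rho_{rs}-\rho_{rs})/\sqrt{\rho_{rr}\rho_{ss}}$ is $(c(\epsilon_{n_\tr}+n^{-1/2}),\,2\alpha+1)$-SW for every $(r,s)$, and then a \emph{single} application of \Cref{lem:2.5-sw-maximal} over the $m^2$ pairs yields the stated rate. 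Your route instead controls $\max_r n^{-1}\sum_i(\tilde a_i^{(r)}-\tilde b_i^{(r)})^2$ first and then Cauchy--Schwarz the cross term, which costs you separate maximal inequalities on several pieces and a small amount of extra exponent bookkeeping (your cross-term exponent $(\log m)^{\alpha+1}$ is actually sharper than needed, while the quadratic remainder picks up $(\log m)^{2\alpha+2}$ and must be absorbed via \eqref{eq:5.1-corr-est-condition}). Both are valid; the paper's version avoids the Cauchy--Schwarz detour and the intermediate squared-sum bounds, at the price of having to check the sub-Weibull exponent of a product of two $(\cdot,\alpha+\tfrac12)$-SW factors.
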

\begin{proof}
    [Proof of \Cref{lem:5.1-corr-est}]
    For any $r\in[m]$ and $i\in[n]$, we use simplified notation
$\ell_{i,r}=\ell^{(r)}(X_i,\D_{n,-k_i})$, $\bar\ell_{i,r}=\bar\ell_{n_\tr}^{(r)}(X_i)$, $\hat R_{\cv,n,K}^{(r)}=\hat R_r$.
For $r,s\in[m]$, define
$\tilde \rho_{rs} = \frac{1}{n}\sum_{i=1}^n \bar\ell_{i,r}\bar\ell_{i,s}$
and $\rho_{rs} = \mathbb E \tilde \rho_{rs} = {\rm Cov}(\bar\ell_{1,r},\bar\ell_{1,s})$.  By definition, $\rho_{rr}=(\sigma_{n_\tr}^{(r)})^2$.

Let $\hat\rho_{rs}=\frac{1}{n}\sum_{i=1}^n(\ell_{i,r}-\hat R_r)(\ell_{i,s}-\hat R_s)$ then
$$
\hat\Gamma(r,s) = \frac{\hat\rho_{rs}}{\sqrt{\hat\rho_{rr}\hat\rho_{ss}}}
$$

Condition \eqref{eq:4.3-4th-moment} and \Cref{pro:2.5-sum-prod-sw} imply that $\bar\ell_{i,r}\bar\ell_{i,s}$ is $(c_\alpha \kappa^2 \sqrt{\rho_{rr}\rho_{ss}},2\alpha)$-SW.
Then \Cref{thm:2.5-sw-mcdiarmid} implies that 
$\frac{\tilde\rho_{rs}-\rho_{rs}}{\sqrt{\rho_{rr}\rho_{ss}}}$ is $(c_{\alpha,\kappa} n^{-1/2},2\alpha+1/2)$-SW for a constant $c_{\alpha,\kappa}$ depending only on $(\alpha,\kappa)$.

    Next we control $\tilde\rho_{rs}-\hat\rho_{rs}$ where $\hat\rho_{rs}=n^{-1}\sum_{i=1}^n(\ell_{i,r}-\hat R_r)(\ell_{i,s}-\hat R_s)$ is the empirical estimate of $\rho_{rs}$.  Using a decomposition
\begin{align*}
&(\ell_{i,r}-\hat R_r)(\ell_{i,s}-\hat R_s)-\bar\ell_{i,r}\bar\ell_{i,s}\\
=&(\ell_{i,r}-\hat R_r-\bar\ell_{i,r})(\ell_{i,s}-\hat R_s)+\bar\ell_{i,r}(\ell_{i,s}-\hat R_s-\bar\ell_{i,s})\,,
\end{align*}
by Part 1 of \Cref{lem:4.4-useful-fact-loss-stab} we have, for each $r$,
 $\ell_{i,r}-\hat R_r-\bar\ell_{i,r}$ is $(2\sqrt{\rho_{rr}}\epsilon_{n_\tr},\alpha+1/2)$-SW.
As a result, by \Cref{pro:2.5-sum-prod-sw},
$\ell_{i,r}-\hat R_r$ is $((\kappa+2\epsilon_{n_\tr})\sqrt{\rho_{rr}},\alpha+1/2)$-SW, and
therefore by \Cref{pro:2.5-sum-prod-sw},
$$
\frac{\tilde\rho_{rs}-\hat\rho_{rs}}{\sqrt{\rho_{rr}\rho_{ss}}}~~
\text{is}~~(c_\alpha \epsilon_{n_\tr}(\kappa+\epsilon_{n_\tr}),~2\alpha+1)\text{-SW}\,.
$$
Thus for $n$ large enough, we have
$$
\frac{\hat\rho_{rs}-\rho_{rs}}{\sqrt{\rho_{rr}\rho_{ss}}}~~\text{is}~~(c_{\alpha,\kappa}(\epsilon_{n_\tr}+n^{-1/2}),2\alpha+1)\text{-SW}\,.
$$

By \Cref{lem:2.5-sw-maximal} we have
\begin{equation}\label{eq:4.4-cov-est-err-sw}
\max_{r,s}\frac{|\hat\rho_{rs}-\rho_{rs}|}{\sqrt{\rho_{rr}\rho_{ss}}}~~\text{is}~~(c_{\alpha,\kappa}(\epsilon_{n_\tr}+n^{-1/2})(\log m)^{2\alpha+1},~2\alpha+1)\text{-SW}\,.
\end{equation}
The first part of the claimed result follows from the fact that $|\sqrt{x}-1|\le |x-1|$ for all $x\ge 0$.

Now we prove the second part. Let $\delta_n= (\epsilon_{n_\tr}+n^{-1/2})(\log m)^{2\alpha+1}=o(1)$,
\begin{align*}
     &|\hat\Gamma_{n}(r,s)-\Gamma_{n}(r,s)|=\left|\frac{\hat\rho_{rs}}{\sqrt{\hat\rho_{rr}\hat\rho_{ss}}}-\frac{\rho_{rs}}{\sqrt{\rho_{rr}\rho_{ss}}}\right|\\
    \le & \frac{|\hat\rho_{rs}-\rho_{rs}|}{\sqrt{\rho_{rr}\rho_{ss}}} + \frac{|\hat\rho_{rs}|}{\sqrt{\rho_{rr}\rho_{ss}}}\left(\sqrt{\frac{\rho_{rr}\rho_{ss}}{\hat\rho_{rr}\hat\rho_{ss}}}-1\right)\\
    \le & O_P(\delta_n) + \left(1+\frac{|\hat\rho_{rs}-\rho_{rs}|}{\sqrt{\rho_{rr}\rho_{ss}}}\right)\left(\sqrt{\frac{\rho_{rr}\rho_{ss}}{\hat\rho_{rr}\hat\rho_{ss}}}-1\right)\\
     = & O_P(\delta_n)\,.\qedhere
\end{align*}
\end{proof}
\subsection{Consistent Subset Selection Using CV Confidence Sets}\label{subsec:5.2-cvc-consist}
What can we do with a model confidence set as constructed in either \eqref{eq:5.1-cvc-naive} or \eqref{eq:5.1-cvc}?  In general, model confidence sets can be used in the following three ways.
\begin{enumerate}
    \item \emph{Model elimination}.  The coverage guarantee of model confidence sets ensures that we can only keep those in the confidence set for future exploration.
    \item \emph{Stopping rule in sequential model building}.  For example, in order to determine the number of steps in forward stepwise regression, in each step one can construct a confidence set by comparing the current model and all models with one more variable, then if the current model is included in the confidence set, it means going one more step will not significantly increase the performance and the procedure should stop.
    \item \emph{Improved single model selection}.  In many scenarios, it is straightforward to pick a single element from the model confidence set.  For example, one can pick a model having a particular desired feature suggested by subject knowledge.  In the common situation of nested models, it is often beneficial to select the most parsimonious model in the confidence set.  The motivation is a simple reflection of Occam's razor: With similar performance, the simpler one wins.
\end{enumerate}

In this section, we will focus on a concrete example related to the third item in the list above: improved single model selection.  What we will see is that, under fairly standard conditions in a classical linear subset selection problem, the most parsimonious model in the CV model confidence set can yield consistent subset selection while the ordinary CV fails.  

To proceed, we consider a fixed-dimensional subset selection problem, where the data $\D_n$ consists of $n$ i.i.d. copies of $X=(Y,Z)$ with
$$Y=Z^T f + \zeta\,,$$
where $Z\in\mathbb R^p$ is the covariate vector, $f\in\mathbb R^p$ the regression coefficient, and $\zeta$ a mean-$0$ noise, independent of $Z$.
Let $\mathcal J=\{0,1\}^p$ be the collection of all possible subsets of $[p]$. 
The goal is to recover $J^*$, the support of $f$.  It is well known, as illustrated in \Cref{exa:3.1-simple-example}, that $K$-fold cross-validation that chooses $$\hat J_{\cv}\coloneqq\arg\min \hat R_{\cv,n,K}^{(J)}$$ is generally inconsistent, regardless of the value of $K$.  Instead, consistency requires reversed $K$-fold with a diverging $K$.  The following result shows that, under certain regularity conditions, the most parsimonious element in the CV model confidence set \eqref{eq:5.1-cvc-naive} is consistent for conventional $K$-fold CV.  In particular, let
$$
\hat J_{\rm cvc} = \arg\min\{|J|:J\in\hat\Theta_0\}\,.
$$
Here the subindex ``cvc'' refers to ``\textbf{C}ross-\textbf{V}alidation with \textbf{C}onfidence'' \citep{lei2020cross}.  By construction, it is straightforward to check that $\hat J_{\cv}$ is always contained in $\hat\Theta_0$ (and $\hat\Theta$) as long as $t_{\beta,\hat\Gamma_{n_\tr}}\ge 0$.  Therefore, $\hat J_{\rm cvc}$ is guaranteed to be at least as parsimonious as $\hat J_{\cv}$, and can be viewed as a correction of $\hat J_{\cv}$, which is known to be prone to overfitting in practice. In this regard, $\hat J_{\rm cvc}$ is a principled approach to reflect a common intuition about CV: The ordinary CV tends to select a slightly over-fitting model, so the best model is usually achieved by one that is slightly more regularized than the ordinary CV choice.

\begin{proposition}
\label{pro:5.2-cvc-consist}
Assume the linear regression model with a fixed distribution of $(Z,\zeta)$ that does not change with $n$, such that $\Sigma_Z=\mathbb E (Z Z^T)$ is full-rank, $\sigma_\zeta^2=\mathbb E \zeta^2>0$, and all entries of $Z$ and $\zeta$ are $(1,\alpha)$-SW for an $\alpha>0$. Let $\ell(x,\hat f)=(y-z^T\hat f)^2$ be the squared loss.  If, for each $J$, $\hat f^{(J)}(\D_n)$ is supported only on $J$ and satisfies the stability condition
\begin{equation}
    \label{eq:5.2-reg-stab}
    \sqrt{n}\|\nabla_1 \hat f^{(J)}(\D_n)\|\text{ is }(\epsilon_n,\alpha)\text{-SW}
\end{equation} for a sequence $\epsilon_n=o(1)$;
and for each $J\supseteq J^*={\rm supp}(f)$
\begin{equation}\label{eq:5.2-root-n-consist}
\sqrt{n}\|\hat f^{(J)}(\D_n)-f\|\text{ is }(1,\alpha)\text{-SW}\,.
\end{equation}
If $J^*=\arg\min_J \mu_n^{(J)}$ for all $n$ large enough, and $\beta=\beta_n$ satisfies $\beta_n=o(1)$ and $\beta_n>1/n$ then
$$
\mathbb P(\hat J_{\rm cvc}=J^*)\rightarrow 1\,.
$$
\end{proposition}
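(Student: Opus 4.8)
The plan is to split $\{\hat J_{\rm cvc}\neq J^*\}$ into two failure modes — $J^*$ falls out of the confidence set $\hat\Theta_0$, or some wrong model $J$ with $|J|\le|J^*|$ sneaks in — and to show each has vanishing probability. Since $p$ is fixed, $m=2^p=O(1)$, so $\log m=O(1)$, every $(\log m)^{\cdots}$ factor in \Cref{subsec:4.3-hd-clt,subsec:5.1-model-conf-set} is harmless, and \eqref{eq:5.1-corr-est-condition} reduces to $\epsilon_{n_\tr}+n^{-1/2}=o(1)$. The preliminary step is a routine translation of the estimator-level hypotheses into the loss-level hypotheses of \Cref{thm:4.3-hd-cv-clt-rand}: from $\nabla_1\ell^{(J)}(X_0,\D_n)=-(Z_0^T\nabla_1\hat f^{(J)}(\D_n))\big((Y_0-Z_0^T\hat f^{(J)}(\D_n))+(Y_0-Z_0^T\hat f^{(J)}(\D_n^1))\big)$, the sub-Weibull product/sum calculus (\Cref{pro:2.5-sum-prod-sw,pro:2.5-sw-equiv,lem:2.5-sw-maximal}), the $(1,\alpha)$-sub-Weibull moments of $(Z,\zeta)$, and \eqref{eq:5.2-reg-stab}--\eqref{eq:5.2-root-n-consist}, one obtains \eqref{eq:4.3-nabla_ell_sw} with stability scale $\asymp\epsilon_{n_\tr}$ and \eqref{eq:4.3-4th-moment}, once one records that $\sigma_n^{(J)}$ is bounded above (sub-Weibull boundedness of the squared loss, which in turn follows from \eqref{eq:4.3-nabla_ell_sw} and Part~1 of \Cref{lem:4.4-useful-fact-loss-stab}) and bounded below by a positive constant (the aleatoric noise contribution, cf.\ \Cref{rem:4.1-var-lower-bound}). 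Consequently \Cref{thm:5.1-simul-conf-set-naive}, \Cref{pro:5.1-coverage-cvc-naive}, \Cref{thm:4.1-var-est}, \Cref{lem:5.1-corr-est} all apply, giving in particular $\hat\sigma_{n_\tr}^{(J)}/\sigma_{n_\tr}^{(J)}\stackrel{P}{\rightarrow}1$ for each $J$, and \Cref{thm:2.5-loocv-concentration-sw} (with its $K$-fold extension) gives $\hat R^{(J)}_{\cv,n,K}-\mu^{(J)}_{n_\tr}=o_P(1)$ for every $J$.

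Two quantitative facts are then needed. (i) For $J\supseteq J^*$, the identity $\nabla_1 R^{(J)}(\D_n)=(\nabla_1\hat f^{(J)}(\D_n))^T\Sigma_Z\big(\hat f^{(J)}(\D_n)+\hat f^{(J)}(\D_n^1)-2f\big)$ together with \eqref{eq:5.2-reg-stab}, \eqref{eq:5.2-root-n-consist} gives $\|\nabla_1 R^{(J)}(\D_n)\|_2\lesssim\epsilon_n/n$; \Cref{thm:4.1-efron-stein} then yields $\|\bar R^{(J)}_{\cv,n,K}-\mu^{(J)}_{n_\tr}\|_2=o(n^{-1/2})$, and combined with \Cref{lem:4.4-rand-center-residual-bound} this upgrades the asymptotic-linear representation to $\hat R^{(J)}_{\cv,n,K}-\mu^{(J)}_{n_\tr}=n^{-1}\sum_{i=1}^n\bar\ell^{(J)}_{n_\tr}(X_i)+o_P(n^{-1/2})$, so that for $J,J'\supseteq J^*$, $\hat R^{(J)}_{\cv,n,K}-\hat R^{(J')}_{\cv,n,K}=(\mu^{(J)}_{n_\tr}-\mu^{(J')}_{n_\tr})+O_P(n^{-1/2})$. (ii) For $J\not\supseteq J^*$ there is a fixed gap $\mu^{(J)}_{n_\tr}-\mu^{(J^*)}_{n_\tr}\ge c_J>0$ for all large $n$: indeed $R(\hat f^{(J)}(\D_n))=\sigma_\zeta^2+\|\hat f^{(J)}(\D_n)-f\|_{\Sigma_Z}^2\ge\sigma_\zeta^2+\min_{{\rm supp}(g)\subseteq J}\|g-f\|_{\Sigma_Z}^2$ with the last minimum strictly positive ($\Sigma_Z$ full rank, ${\rm supp}(f)\not\subseteq J$), whereas $\mu^{(J^*)}_{n_\tr}-\sigma_\zeta^2=\mathbb E\|\hat f^{(J^*)}(\D_{n_\tr})-f\|_{\Sigma_Z}^2=O(1/n_\tr)$ by \eqref{eq:5.2-root-n-consist}. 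Finally, $\beta_n=o(1)$ and $\beta_n>1/n$ force $t_{\beta_n,\hat\Gamma_{n_\tr}}\to\infty$ (anti-concentration of $|N(0,1)|$) while $t_{\beta_n,\hat\Gamma_{n_\tr}}\le\sqrt{2\log(2m/\beta_n)}=O(\sqrt{\log n})$ (union bound over the $N(0,1)$ marginals), and $\hat\sigma^{(J)}_{n_\tr}$ is bounded away from $0$ and $\infty$ with probability $\to1$.

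Now the two failure modes. For $\{J^*\notin\hat\Theta_0\}$: by \eqref{eq:5.1-cvc-naive}, $J^*\in\hat\Theta_0$ is equivalent to $\hat R^{(J^*)}_{\cv,n,K}-\hat R^{(s)}_{\cv,n,K}\le n^{-1/2}(\hat\sigma^{(J^*)}_{n_\tr}+\hat\sigma^{(s)}_{n_\tr})t_{\beta_n,\hat\Gamma_{n_\tr}}$ for all $s\in[m]$, and it suffices to verify this for each of the finitely many $s$ with probability $\to1$. If $s\not\supseteq J^*$, fact (ii) and $\hat R^{(\cdot)}_{\cv,n,K}-\mu^{(\cdot)}_{n_\tr}=o_P(1)$ make the left side $\le-c_s/2+o_P(1)<0\le$ right side. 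If $s\supseteq J^*$, fact (i) makes the left side $=(\mu^{(J^*)}_{n_\tr}-\mu^{(s)}_{n_\tr})+O_P(n^{-1/2})=O_P(n^{-1/2})$, while the right side divided by $n^{-1/2}$ tends to $\infty$ in probability since $t_{\beta_n,\hat\Gamma_{n_\tr}}\to\infty$ and the $\hat\sigma$'s are bounded below; a union bound over $s$ gives $\mathbb P(J^*\notin\hat\Theta_0)\to0$. For the other mode, fix $J$ with ${\rm supp}(f)\not\subseteq J$ and $|J|\le|J^*|$ (every $J\neq J^*$ with $|J|\le|J^*|$ is of this form): if $J\in\hat\Theta_0$, taking $s=J^*$ in \eqref{eq:5.1-cvc-naive} forces $\hat R^{(J)}_{\cv,n,K}-\hat R^{(J^*)}_{\cv,n,K}\le n^{-1/2}(\hat\sigma^{(J)}_{n_\tr}+\hat\sigma^{(J^*)}_{n_\tr})t_{\beta_n,\hat\Gamma_{n_\tr}}=O_P(\sqrt{\log n}/\sqrt n)=o_P(1)$ (here $\beta_n>1/n$ is used), yet fact (ii) and the concentration bound give $\hat R^{(J)}_{\cv,n,K}-\hat R^{(J^*)}_{\cv,n,K}\ge c_J+o_P(1)\ge c_J/2$ with probability $\to1$, a contradiction. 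Summing over the finitely many such $J$ and combining with the previous bound yields $\mathbb P(\hat J_{\rm cvc}\neq J^*)\to0$.

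The delicate point — and where the argument genuinely departs from the ordinary CV analysis — is the case $s\supseteq J^*$ of the coverage step: one must know that $\hat R^{(J^*)}_{\cv,n,K}-\hat R^{(s)}_{\cv,n,K}$ is truly $O_P(n^{-1/2})$. Note that the overfitting gap $\mu^{(s)}_{n_\tr}-\mu^{(J^*)}_{n_\tr}$ and the random fluctuation of $\bar R^{(\cdot)}_{\cv,n,K}$ are both of the tiny order $\asymp 1/n$, so $J^*$ need not minimize $\bar R^{(\cdot)}_{\cv,n,K}$ and \Cref{pro:5.1-coverage-cvc-naive} cannot be invoked for $J^*$ directly; one instead needs the refined asymptotic-linear expansion with $o(n^{-1/2})$ remainder, which rests on the sharp risk-stability bound $\|\nabla_1 R^{(J)}(\D_n)\|_2\lesssim\epsilon_n/n$ of fact (i), together with the fact that the confidence half-width $n^{-1/2}\hat\sigma^{(\cdot)}_{n_\tr}t_{\beta_n,\hat\Gamma_{n_\tr}}$ strictly dominates it because $\beta_n\to0$. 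This is exactly the quantitative mechanism by which ``CV with confidence'' corrects the slight overfitting of the plain CV selector, and the two-sided constraint $\beta_n=o(1)$, $\beta_n>1/n$ reflects the two competing needs (half-width large enough to contain $J^*$, small enough to exclude the underfitting models).
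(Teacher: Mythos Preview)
Your proof is correct and follows the same two-part skeleton as the paper: partition $\mathcal J$ into $\mathcal J_0=\{J:J^*\nsubseteq J\}$ and $\mathcal J_1=\{J:J^*\subseteq J\}$, use the constant risk gap plus $t_{\beta_n,\hat\Gamma_{n_\tr}}=O(\sqrt{\log n})$ to exclude $\mathcal J_0$ from $\hat\Theta_0$, and use the sharp risk stability $\|\nabla_1 R^{(J)}(\D_n)\|_2\lesssim \epsilon_n/n$ for $J\in\mathcal J_1$ to guarantee $J^*\in\hat\Theta_0$.

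The one genuine difference is in the coverage step for $J^*$ versus the overfitting competitors $s\in\mathcal J_1$. The paper invokes the deterministic-centering Gaussian comparison (\Cref{rem:4.3-hd-clt-det} combined with \Cref{thm:5.1-simul-conf-set-naive}) to obtain the simultaneous bound $|\hat R^{(J)}_{\cv,n,K}-\mu^{(J)}_{n_\tr}|\le \hat\sigma^{(J)}_{n_\tr}t_{\beta_n,\hat\Gamma_{n_\tr,1}}/\sqrt{n}$ for all $J\in\mathcal J_1$ with probability $\ge 1-\beta_n+o(1)$; combined with the hypothesis $J^*=\arg\min_J\mu^{(J)}_n$ this immediately yields $J^*\in\hat\Theta_0$. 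You instead bypass the Gaussian comparison altogether: you pass from random to deterministic centering via Efron--Stein (your fact~(i)), obtain $\hat R^{(J^*)}_{\cv,n,K}-\hat R^{(s)}_{\cv,n,K}=O_P(n^{-1/2})$ for each $s\in\mathcal J_1$, and beat this with the confidence half-width by observing that $t_{\beta_n,\hat\Gamma_{n_\tr}}\to\infty$ while $\hat\sigma^{(\cdot)}_{n_\tr}$ stays bounded below. Your route is more elementary (it does not need the high-dimensional CLT machinery at all, only the asymptotic-linear expansion), at the cost of a slightly less sharp intermediate statement --- you get $\mathbb P(J^*\in\hat\Theta_0)\to 1$ rather than $\ge 1-\beta_n+o(1)$ --- which is immaterial here since $\beta_n=o(1)$. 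Your closing paragraph correctly identifies the crux and explains why \Cref{pro:5.1-coverage-cvc-naive} cannot be invoked directly.
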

Here, the inner $\|\cdot\|$ denotes the Euclidean norm of a $p$-dimensional vector.

\begin{remark}
    \label{rem:5.2-cvc-consist}
    \Cref{pro:5.2-cvc-consist} is stated with a set of pretty strong but reasonable conditions to deliver the desired model selection consistency result. These conditions can be verified in the setting of \Cref{exa:3.1-simple-example} to show that the most parsimonious element in the confidence set can lead to consistent (or bounded error probability if $\beta$ is a fixed constant) model selection with a standard $K$-fold cross-validation scheme.  Nearly all the conditions can be relaxed to some extent with lengthier bookkeeping and/or more refined arguments.  We list some possible relaxations below.
    \begin{enumerate}
        \item The dimensionality $p$, the smallest non-zero entry of $f$, and the eigenvalues of $\Sigma_Z$, can be allowed to change with $n$ at slow enough rates.  These may require faster rates of the stability term.  In particular, let $\lambda_{\min}$ and $f_{\min}$ be the smallest eigenvalue of $\Sigma_Z$ and the smallest non-zero entry of $f$, respectively. Instead of \eqref{eq:5.2-root-n-consist}, now assume that $\sqrt{n}\|\hat f^{(J)}-f\|$ is $(\sqrt{p},\alpha)$-SW for all $J\supseteq J^*$.
        We will need $(n^{-1/2}+\epsilon_n)p^{\alpha+1/2}+p/n=o(\lambda_{\rm min} f_{\rm min}^2)$ to ensure \eqref{eq:5.2-exclude}, and 
        $p^{1/2}\epsilon_n=o(1)$ to establish \eqref{eq:5.2-Rcv-gap-good}.
        \item The stability condition can be relaxed to $L_2$-norm instead of sub-Weibull, with the addition of the uniform integrability as in \Cref{thm:4.1-var-est} (which implies the Lindeberg condition \eqref{eq:4.1-lindeberg}).
        \item The requirement of $J^*=\arg\min\mu_n^{(J)}$ can be relaxed to $$\mu_{n}^{(J^*)}\le \min_J \mu_n^{(J)}+O((n\log m)^{-1/2})$$ using the Gaussian anti-concentration inequality.
    \end{enumerate}
    Moreover, it is also possible to obtain a similar consistency of $\hat J_{\rm cvc}$ for the difference-based CV model selection confidence set $\hat\Theta$ in \eqref{eq:5.1-cvc}.  The conditions and arguments would be similar to those of \Cref{pro:5.2-cvc-consist}, with everything stated for the difference of the estimated parameters.
\end{remark}

Now we examine the practical plausibility of the two conditions \eqref{eq:5.2-reg-stab} and \eqref{eq:5.2-root-n-consist}.  First, \eqref{eq:5.2-root-n-consist} corresponds to the standard root-$n$-consistency for correctly specified linear regression models.  The maximum likelihood estimator and least squares estimator often satisfy this in the fixed-$p$ setting.  To examine the stability condition \eqref{eq:5.2-reg-stab}, let $\hat f$ be an estimate of $f$ obtained from $\D_n$ using the least squares method. For notational convenience we will consider $\nabla_n$ instead of $\nabla_1$. Let $\hat\Sigma_{n}$ be the sample covariance matrix from $\D_{n}$, and $\hat\Gamma_{n}=n^{-1}\sum_{i=1}^n Z_i Y_i$.
Then we have, by the Sherman-Morrison formula,
\begin{align*}
\hat f = & \hat\Sigma_n^{-1}\hat\Gamma_n = \left(\frac{n-1}{n}\hat\Sigma_{n-1}+\frac{1}{n}Z_n Z_n^T\right)^{-1}\left(\frac{n-1}{n}\hat\Gamma_{n-1}+\frac{1}{n}Z_n Y_n\right)\\
=& \left(\frac{n}{n-1}\hat\Sigma_{n-1}^{-1}-\frac{\frac{n}{(n-1)^2}\hat\Sigma_{n-1}^{-1}Z_n Z_n^T\hat\Sigma_{n-1}^{-1}}{1+\frac{n-1}{n^2}Z_n^T\hat\Sigma_{n-1}^{-1}Z_n}\right)\left(\frac{n-1}{n}\hat\Gamma_{n-1}+\frac{1}{n}Z_n Y_n\right)\,,
\end{align*}
where only the second terms in each factor involve $(Z_n,Y_n)$, both of which are of order $O_P(1/n)$. Thus $\nabla_n \hat f = O_P(n^{-1})$, making \eqref{eq:5.2-reg-stab} plausible for any $\epsilon_n\gg n^{-1/2}$.
\begin{proof}
    [Proof of \Cref{pro:5.2-cvc-consist}]
    Let $\mathcal J_0=\{J\subseteq[p]:J^*\nsubseteq J\}$ and $\mathcal J_1=\mathcal J\setminus \mathcal J_0$.

   For an arbitrary $J\in\mathcal J$, using $\hat f$ and $\hat f'$ to denote $\hat f^{(J)}(\D_n)$ and $\hat f^{(J)}(\D_n^1)$, respectively, we have
    $$\ell(X_0,\hat f)=\zeta_0^2-2\zeta_0 Z_0^T \hat f+(\hat f-f)^T Z_0Z_0^T(\hat f-f)\,,$$
    $$
    R(\hat f) = \sigma_{\zeta}^2 +(\hat f-f)^T \Sigma_Z(\hat f-f)\,,
    $$
    and
    \begin{align*}
        \nabla_1 \ell^{(J)}(X_0,\D_n)= -2\zeta_0 Z_0^T(\nabla_1\hat f)+(\nabla_1 \hat f)^T Z_0Z_0^T(\hat f-f)+(\hat f'-f)^TZ_0Z_0^T(\nabla_1\hat f)\,,
    \end{align*}
    which is $(cn^{-1/2}\epsilon_n,\alpha')$-SW for some universal constant $c$ and $\alpha'$ depending only on $\alpha$.

    Therefore, the $K$-fold version of \Cref{thm:2.5-loocv-concentration-sw} implies that $\hat R_{\cv,n,K}^{(J)}=\mu_{n_\tr}^{(J)}+O_P(n^{-1/2}+\epsilon_n)$ for each $J$.
    For $J\in \mathcal J_0$, the corresponding $\hat f^{(J)}$ satisfies
$\mu_{n_\tr}^{(J)}\ge \var(\zeta_0)+ C$ for a positive constant $C$ depending on the minimum eigenvalue of $\Sigma_Z$ and the minimum nonzero entry of $f$, while $\mu_{n_\tr}^{(J^*)}\le \var(\zeta_0)+ O(1/n_\tr)$.
As a result, with probability tending to $1$ we have
\begin{equation}\label{eq:5.2-Rcv-gap-bad}
\hat R_{\cv,n,K}^{(J)}-\frac{\hat\sigma_{n_\tr}^{(J)}}{\sqrt{n}}t_{\beta_n,\hat\Gamma_{n_\tr}}>\hat R_{\cv,n,K}^{(J^*)}+\frac{\hat\sigma_{n_\tr}^{(J^*)}}{\sqrt{n}}t_{\beta_n,\hat\Gamma_{n_\tr}}\,,
\end{equation}
    because the fluctuation $\hat\sigma_{n_\tr}^{(J)} t_{\beta_n,\hat\Gamma_{n_\tr}}/\sqrt{n}$ in the confidence set is of order at most $\log(p/\beta_n)/\sqrt{n}$, which is not enough to override the gap between $\mu_{n_\tr}^{(J)}$ and $\mu_{n_\tr}^{(J^*)}$. As a result we have
    \begin{equation}\label{eq:5.2-exclude}
\mathbb P(\hat\Theta_0\cap \mathcal J_0\neq \emptyset)=o(1)\,.\end{equation}

Now for $J\in \mathcal J_1$, we have, letting $\hat f^1=\hat f(\D_n^1)$,
$$
\nabla_1 R(\hat f)=(\nabla_1 \hat f)^T\Sigma_Z (\hat f-f)+(\hat f^1-f)^T\Sigma_Z(\nabla_1\hat f)
$$
which is $(cn^{-1}\epsilon_n,\alpha')$-SW.  Thus using the deterministic version of \Cref{thm:5.1-simul-conf-set-naive} and \Cref{thm:4.3-hd-cv-clt-det-1},
we have
\begin{equation}\label{eq:5.2-Rcv-gap-good}
\mathbb P\left[\left|\hat R^{(J)}_{\cv,n,K}-\mu^{(J)}_{n_\tr}\right|\le \frac{\hat\sigma^{(J)}_{n_\tr}}{\sqrt{n}}t_{\beta_n,\hat\Gamma_{n_\tr,1}}~~\forall~J\in\mathcal J_1\right]\ge 1-\beta_n+o(1)\,,
\end{equation}
where $\hat\Gamma_{n_\tr,1}$ is the submatrix of $\hat\Gamma_{n_\tr}$ restricted to $\mathcal J_1$.

Because $t_{\beta_n,\hat\Gamma_{n_\tr}}\ge t_{\beta_n,\hat\Gamma_{n_\tr,1}}$, \eqref{eq:5.2-Rcv-gap-good} also holds for $t_{\beta_n,\hat\Gamma_{n_\tr}}$. Now combine this with  \eqref{eq:5.2-exclude} we conclude that $\mathbb P(\hat J_{\rm cvc}=J^*)=1-o(1)$.
\end{proof}

\subsection{Testing Many Means}\label{subsec:5.3-argmin}
In this subsection, we demonstrate an application of the tools developed in \Cref{sec:4-clt} in a prototypical inference problem at the intersection of high-dimensional inference, multiple comparison, and selective inference.

Let $(X_i:1\le i\le n)$ be an i.i.d. sample in $\mathbb R^m$ with $\mathbb E X_1=\vartheta=(\vartheta_1,...,\vartheta_m)^T$.
We are interested in testing 
\begin{equation}
    \label{eq:5.3-many-means}
    H_0: ~\max\vartheta\le 0\,,~~~\text{vs}~~~ H_a:~\max \vartheta>0\,.
\end{equation}

This problem has many connections to other well-studied problems. Here we briefly mention two examples. A more comprehensive literature review is given in \Cref{subsec:5.6-bib}.  One example is marginal regression, where we have a response variable and an $m$-dimensional covariate vector.  Let each $X_{i,r}$ be the product of the response and the (centered) $r$th covariate in the $i$th sample point; then the problem \eqref{eq:5.3-many-means} corresponds to testing whether any covariate has positive correlation with the response, and it can be extended to testing any non-zero correlation. Such a test is useful, for example, in deciding the stopping rule in forward stepwise regression. Another example is evaluation of many fitted prediction models.  Assume there are $m$ regression or classification probability functions, fitted from external training data.  We want to compare their performance on a target data distribution from which we have $n$ i.i.d. samples points.  Then each $X_{i,r}$ corresponds to the realized loss function of the $r$th prediction function on the $i$th sample point. The testing problem \eqref{eq:5.3-many-means} corresponds to testing whether any of these $m$ prediction models are better than a benchmark, assuming, after re-centering, that the benchmark has $0$ risk.

Our proposed solution to the testing problem \eqref{eq:5.3-many-means} is based on constructing a confidence lower bound for the parameter $\theta=\max\vartheta$ and shares some similarity with the problem of \emph{selective inference}. To begin with, observe that \eqref{eq:5.3-many-means} is equivalent to
$$
H_0:~\vartheta_{s^*}\le 0\,~~~\text{vs}~~ H_a:~\vartheta_{s^*}>0\,,
$$
where
$$s^*=\arg\max_{s\in[m]}\vartheta_s$$
or an arbitrary element in $\arg\max_{s\in[m]}\vartheta_s$ if the maximum is achieved at multiple entries.

The optimal index $s^*$ is usually unknown and can be regarded as an auxiliary parameter (or nuisance parameter), the knowledge of which would make the inference problem much easier.  If $s^*$ were known, then one can apply any one-sample location test such as Student's $t$-test or its robust versions to solve \eqref{eq:5.3-many-means}.  When $s^*$ is unknown, the ordinary one-sample location test applied to $\hat s$, defined as the argmax of the sample mean $\hat\vartheta$, will generally lead to biased inference due to the ``double-dipping'' or ``selection bias'' issue, meaning that the same data is used to select the index $\hat s$ and to compute the test statistic. Thus any valid inference must take into account the randomness in $\hat s$. Existing approaches include the following.
\begin{itemize}
    \item \emph{Selective Inference.}  This framework aims at making the $p$-values conditionally valid on the event $\hat s^*=s$ for each $s\in[m]$. These methods typically require non-trivial assumptions on the data distribution and the power can be less than ideal due to the conditioning. 
    \item \emph{Simultaneous Inference.}  This framework relies on controlling the simultaneous fluctuation of the sample mean vector $\hat\vartheta$, such as in the high-dimensional Gaussian comparison result in a similar fashion as in \Cref{pro:5.1-cvc-diff}.  This approach does not require conditioning but the worst-case bound could be loose if there are many ``obviously bad'' coordinates.
\end{itemize}

Our approach aims at striking a balance between these two: We want to focus on the few entries that showed enough promise to be legitimate candidates for $s^*$, which allows us to avoid having the typical $\sqrt{\log m}$ additional width of the confidence bound as in the simultaneous inference. On the other hand, we also want to avoid conditioning on selection.  The key idea is that conditioning is unnecessary if the selection is stable and
    the test statistic is cross-validated.
In fact, we have more to offer: If the selection is stable and the test statistic is cross-validated, a confidence lower bound on $\max\vartheta$ can be constructed from an asymptotically normal test statistic with a small conservative bias.

\paragraph{Cross-validated max-entry evaluation}
The first idea for fixing the double-dipping issue with data-driven max-entry selection is to split the sample used for selecting $\hat s$ and to evaluate $\vartheta_{\hat s}$. Consider a leave-one-out CV scheme:
\begin{equation}\label{eq:5.3-T-split}
    \hat T_n=\frac{1}{n}\sum_{i=1}^n X_{i,\hat s^{(-i)}}
\end{equation}
where
$$\hat s^{(-i)}=\arg\max_{1\le s\le m}\hat\vartheta_s^{(-i)}\,,~~\text{and}~~\hat\vartheta^{(-i)}=\frac{1}{n-1}\sum_{j\neq i}X_j\,.$$

Unfortunately this $\hat T_n$ is not generally asymptotically normal.  To see why, observe that for each $i$, the quantity $X_{i,\hat s^{(-i)}}$ is an instance of 
$$\ell(X_i,\D_{n,-i})$$
with $$\ell(x,\D_n)=x_{\hat s(\D_n)}$$
and $$\hat s(\D_n)=\arg\max_{s\in[m]}\sum_{i\in[n]} X_{i,s}\,.$$
The stability condition \eqref{eq:4.1-nabla-l2-bound} does not hold for this $\ell(x,\D)$.
In particular, consider the case of $m=2$, $\vartheta_1=\vartheta_2=0$, $X_i\sim N(0, I_2)$. The contribution of sample point $j$ in the comparison between $\hat\vartheta_1^{(-i)}$ and $\hat\vartheta_2^{(-i)}$ is $N(0,2/n)$, which has a probability $\asymp 1/\sqrt{n}$ to flip the sign of $\hat\vartheta_1^{(-i)}-\hat\vartheta_2^{(-i)}$ if $X_j$ is replaced by an i.i.d. copy $X_j'$. When this happens, the resulting change in $X_{i,\hat s^{(-i)}}$ is $N(0,2)$.  Therefore $\nabla_j X_{i,\hat s^{(-i)}}$ is $O_P(n^{-1/2})$. But \eqref{eq:4.1-nabla-l2-bound} requires $\|\nabla_j X_{i,\hat s^{(-i)}}\|_2=o(n^{-1/2})$.  Even such a narrow miss of $o(\cdot)$ to $O(\cdot)$ would lead to a failure of the CLT, as demonstrated in the simulation in \Cref{fig:5.3-hist}, where the histogram of $\hat T_n$ in \eqref{eq:5.3-T-split} is plotted in green, which  clearly deviates from the standard normal density curve (black curve).  This also demonstrates the sharpness of the stability condition \eqref{eq:4.1-nabla-l2-bound} in \Cref{thm:4.1-clt-rand} --- even relaxing $o_P(n^{-1/2})$ to $O_P(n^{-1/2})$ may lead to failure of the CLT!

\begin{figure}
    \centering
    \includegraphics[width=0.7\linewidth]{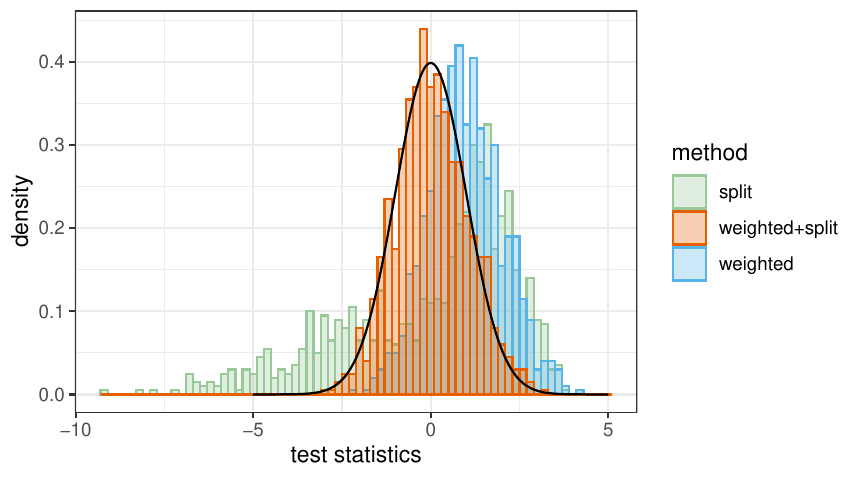}
    \caption{Histograms of standardized test statistics.  Black solid line is the standard normal density curve. Light green: LOOCV in \eqref{eq:5.3-T-split}; Red: LOOCV+stable weights in \eqref{eq:5.3-T-split-exp}; Blue: stable weights only.}
    \label{fig:5.3-hist}
\end{figure}

\paragraph{Stabilizing the maximum value}
The main reason for the instability of $X_{i,\hat s^{(-i)}}$ is that $\hat s^{(-i)}$ may change completely when a single data point is perturbed.
Such an instability of discrete argmax can be treated using techniques developed in differential privacy \citep{dwork2014algorithmic}.  In general, differential privacy requires a statistic to be ``insensitive'' to perturbations at any individual sample point, which is closely related to the first-order stability condition.  Inspired by the ``exponential mechanism'' \citep{mcsherry2007mechanism}, we consider the following cross-validated softmax statistic:
\begin{equation}
    \label{eq:5.3-T-split-exp}
    T_n=\frac{1}{n}\sum_{i=1}^n Q_i\coloneqq \frac{1}{n}\sum_{i=1}^n \sum_{s\in[m]} \hat w_s^{(-i)} X_{i,s}\,,
\end{equation}
with weights
$$
\hat w_s^{(-i)} = \frac{\exp(\lambda \hat\vartheta_s^{(-i)})}{\sum_{t\in[m]}\exp(\lambda \hat\vartheta_t^{(-i)})}\,.
$$
The exponential mechanism stabilizes the maximum value by ``softening'' the indicator of the maximum entry to a weight vector, $\hat w^{(-i)}$, with larger entries receiving more weight.  By construction, the weights are smooth functions of the sample means, and hence enjoy a similar level of stability as the sample means. 
The quantity $Q_i=\sum_{s\in[m]}\hat w_s^{(-i)}X_{i,s}$ can be interpreted as the conditional expectation of $X_{i,\tilde s^{(-i)}}$  given $(X_i,\hat w^{(-i)})$, where  $\tilde s^{(-i)}$ is sampled according to the weight $\hat w^{(-i)}$, which is a smoothed version of $\hat s^{(-i)}$.

The coefficient $\lambda$ is a tuning parameter in the exponential mechanism. A larger $\lambda$ puts more weight on large entries, which has less bias but is less stable. In the extreme case $\lambda=\infty$, $Q_i=X_{i,\hat s^{(-i)}}$.  In the other extreme case $\lambda=0$, $Q_i=m^{-1}\sum_{s\in[m]}X_{i,s}$.   

\begin{theorem}
    \label{thm:5.3-argmin-clt}
    Let $\sigma_n^2={\rm Var}[\mathbb E(Q_i|X_i)]$.
    If $\sqrt{n}\|\nabla_j Q_i\|_2 =o(\sigma_n)$ for $j\neq i$ and \eqref{eq:4.1-lindeberg} holds for $\bar\ell_n(X_i)=\mathbb E(Q_i|X_i)-\mathbb E Q_i$, then
    $$
    \frac{1}{\sqrt{n}\sigma_n}\sum_{i=1}^n(Q_i-R_i)\rightsquigarrow N(0,1)\,,
    $$
    where
    $$R_i=\mathbb E(Q_i|\D_{-i})=\sum_{s\in[m]}\hat w_s^{(-i)}\vartheta_s\,.$$
\end{theorem}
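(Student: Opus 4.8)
The statement is a direct specialization of the random-centering CLT, \Cref{thm:4.1-clt-rand}, to leave-one-out cross-validation ($K=n$), so the plan is to exhibit the dictionary between the two and then check that the hypotheses match. First I would recast $T_n$ as a cross-validated risk estimate. Define the symmetric loss $\ell(x,\D_n)=\sum_{s\in[m]}\hat w_s(\D_n)\,x_s$, where $\hat w_s(\D_n)=\exp(\lambda\bar X_s)/\sum_{t\in[m]}\exp(\lambda\bar X_t)$ and $\bar X=n^{-1}\sum_{i}X_i$ is the sample mean of $\D_n$; the map $\hat f:\D_n\mapsto\hat w(\D_n)$ is permutation invariant, so this fits the setup of \Cref{subsec:prelim}. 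Since $\hat\vartheta^{(-i)}$ is exactly the sample mean of $\D_{n,-i}$, we get $\ell(X_i,\D_{n,-i})=Q_i$, the associated risk is $R(\D_n)=\mathbb E_{X_0}\ell(X_0,\D_n)=\sum_s\hat w_s(\D_n)\vartheta_s$ so that $R(\D_{n,-i})=R_i$, and therefore $\hat R_{\cv,n,n}=n^{-1}\sum_i Q_i=T_n$, $\bar R_{\cv,n,n}=n^{-1}\sum_i R_i$, and $\hat R_{\cv,n,n}-\bar R_{\cv,n,n}=n^{-1}\sum_i(Q_i-R_i)$ is precisely the object in the conclusion.

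Next I would line up the variance and integrability quantities. Because $X_i$ is independent of $\D_{n,-i}$ and $\hat w^{(-i)}$ is $\D_{n,-i}$-measurable, $\mathbb E(Q_i\mid X_i)=\sum_s X_{i,s}\,\mathbb E[\hat w^{(-i)}_s]=\ell_{n-1}(X_i)$ in the notation of \eqref{eq:4.1-bar-ell}; hence the theorem's $\sigma_n^2=\var[\mathbb E(Q_i\mid X_i)]$ coincides with $\sigma_{n_\tr}^2=\var[\ell_{n_\tr}(X_0)]$ at training size $n_\tr=n-1$, and the hypothesis that \eqref{eq:4.1-lindeberg} holds for $\bar\ell_n(X_i)=\mathbb E(Q_i\mid X_i)-\mathbb E Q_i$ is exactly the Lindeberg condition of \Cref{thm:4.1-clt-rand} at that training size. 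For the stability condition, note that for $j\ne i$, $\nabla_j Q_i=\nabla_j\ell(X_i,\D_{n,-i})$, and by symmetry of $\hat f$ together with the i.i.d. assumption the $L_2$ norm of this perturb-one difference depends neither on which training index is perturbed nor on the identity of the evaluation point (cf. the footnote to \Cref{pro:4.1-var-approx}); thus $\sqrt n\,\|\nabla_j Q_i\|_2=o(\sigma_n)$ is equivalent to $\|\nabla_1\ell(X_0,\D_{n-1})\|_2=o(\sigma_{n-1}/\sqrt{n-1})$, which is \eqref{eq:4.1-nabla-l2-bound} at sample size $n-1$. Finiteness $\sigma_n^2<\infty$ is automatic because $\hat w^{(-i)}\in\mathbb S_{m-1}$, so $|\mathbb E(Q_i\mid X_i)|\le\max_{s}|X_{i,s}|$, which is square integrable when the coordinates of $X_1$ are; positivity $\sigma_n>0$ is the standing assumption under which the normalized statement is meaningful.

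With the dictionary established, \Cref{thm:4.1-clt-rand} with $K=n$ gives $\frac{\sqrt n}{\sigma_{n-1}}(\hat R_{\cv,n,n}-\bar R_{\cv,n,n})\rightsquigarrow N(0,1)$, which is exactly $\frac{1}{\sqrt n\,\sigma_n}\sum_{i=1}^n(Q_i-R_i)\rightsquigarrow N(0,1)$. The only point requiring care --- and the closest thing to an ``obstacle,'' though it is really bookkeeping --- is justifying the two identifications used above: that $\mathbb E(Q_i\mid X_i)$ equals the de-randomized loss $\ell_{n-1}(X_i)$ (which uses the independence of the evaluation point $X_i$ from $\D_{n,-i}$ and the fact that the softmax weight is a symmetric function of $\D_{n,-i}$), and that the single-perturbation stability assumed in the statement is the same as the perturb-one stability \eqref{eq:4.1-nabla-l2-bound} (which uses symmetry of the weights in the data and the i.i.d. structure). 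Neither requires a new estimate. If one preferred a self-contained argument, one could instead rerun the two-step proof of \Cref{thm:4.1-clt-rand} verbatim here --- an Efron--Stein bound on $\|Q_i-\mathbb E(Q_i\mid X_i)\|_2$, a free-nabla bound (\Cref{lem:4.4-free-nabla}) on the cross terms $\mathbb E(Q_i-\mathbb E(Q_i\mid X_i))(Q_j-\mathbb E(Q_j\mid X_j))$, and a Lindeberg CLT for $n^{-1}\sum_i[\mathbb E(Q_i\mid X_i)-\mathbb E Q_i]$ --- but invoking the theorem is the economical route.
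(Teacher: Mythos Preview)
Your proposal is correct and is essentially the paper's own argument: the paper states immediately after the theorem that ``\Cref{thm:5.3-argmin-clt} is a direct consequence of \Cref{thm:4.1-clt-rand},'' and you have simply spelled out the dictionary (loss $\ell(x,\D)=\sum_s\hat w_s(\D)x_s$, $Q_i=\ell(X_i,\D_{n,-i})$, $R_i=R(\D_{n,-i})$, $\sigma_n=\sigma_{n_\tr}$ with $n_\tr=n-1$) that makes this invocation precise. Your bookkeeping on the $n$ versus $n-1$ indexing and the match-up of the stability and Lindeberg hypotheses is accurate and slightly more careful than the paper's one-line remark.
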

\Cref{thm:5.3-argmin-clt} is a direct consequence of \Cref{thm:4.1-clt-rand}.
The choice of notation $R_i$ has a direct correspondence with $R(\D_{n,-i})$ in the context of \Cref{thm:4.1-clt-rand}.   

The conditions in \Cref{thm:5.3-argmin-clt}, although directly corresponding to those in \Cref{thm:4.1-clt-rand}, are still somewhat abstract in the context of the many means testing problem. In particular, \Cref{thm:5.3-argmin-clt} does not specify which $\lambda$ would lead to the stability condition $\|\nabla_jQ_i\|_2=o(\sigma_n/\sqrt{n})$.  We provide a set of more transparent sufficient conditions in the proposition below.
\begin{proposition}
    \label{pro:5.3-suff-cond-argmin}
    Let $\Sigma=\mathbb E(X_1-\vartheta)(X_1-\vartheta)^T$ be the covariance matrix.
    Assume
    \begin{itemize}
        \item $|X_{i,s}|\le B$ almost surely for some finite constant $B$.
        \item There exist a pair of positive constants $c_1,c_2$ such that $w^T\Sigma w\in[c_1,c_2]$ for all $w\in \mathbb S_{m-1}$, where $\mathbb S_{m-1}$ is the $(m-1)$-dimensional simplex: $\{w\in \mathbb R^m:w\ge 0,~\|w\|_1=1\}$. 
    \end{itemize}
     Then $\sigma_n\asymp 1$ and $\sqrt{n}\|\nabla_j Q_i\|_2=o(\sigma_n)$ whenever $\lambda=\lambda_n=o(\sqrt{n})$.
\end{proposition}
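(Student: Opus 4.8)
The plan is to establish the two assertions separately; both follow from elementary facts—the Lipschitz continuity of the softmax map and the independence of the weight vector $\hat w^{(-i)}$ from the evaluation point $X_i$.

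\emph{Step 1: the variance $\sigma_n^2$ is bounded above and below.} The key observation is that $\hat w^{(-i)}$ is a deterministic function of $\D_{n,-i}=\{X_j:j\ne i\}$ and hence independent of $X_i$. Consequently
$$\mathbb E(Q_i\mid X_i)=\sum_{s\in[m]}\mathbb E\big[\hat w_s^{(-i)}\big]\,X_{i,s}=\bar w_n^{\,T}X_i,\qquad \bar w_n:=\mathbb E\,\hat w^{(-i)}\in\mathbb S_{m-1},$$
so that $\sigma_n^2=\var\big(\bar w_n^{\,T}X_i\big)=\bar w_n^{\,T}\Sigma\,\bar w_n$. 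Since $\bar w_n$ lies in the simplex, the assumed two-sided bound $w^T\Sigma w\in[c_1,c_2]$ over $\mathbb S_{m-1}$ gives $c_1\le\sigma_n^2\le c_2$, i.e. $\sigma_n\asymp 1$.

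\emph{Step 2: Lipschitz bound for softmax, and the stability chain.} Writing $p(v)_s=e^{\lambda v_s}/\sum_t e^{\lambda v_t}$ for $v\in\mathbb R^m$, a direct Jacobian computation gives $(Dp(v)u)_s=\lambda\,p(v)_s\big(u_s-\langle p(v),u\rangle\big)$ for any direction $u$, whence $\|Dp(v)u\|_1\le 2\lambda\|u\|_\infty$ uniformly in $v$; integrating along the segment between two inputs yields $\|p(v)-p(v')\|_1\le 2\lambda\|v-v'\|_\infty$. Now fix $j\ne i$: the operator $\nabla_j$ leaves $X_i$ untouched and changes $\hat\vartheta^{(-i)}=\frac1{n-1}\sum_{k\ne i}X_k$ only through its $j$th summand, so $\|\nabla_j\hat\vartheta^{(-i)}\|_\infty\le 2B/(n-1)$ almost surely. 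Combining the Lipschitz bound with $|X_{i,s}|\le B$,
$$|\nabla_j Q_i|=\Big|\sum_{s\in[m]}\big(\nabla_j\hat w_s^{(-i)}\big)X_{i,s}\Big|\le B\,\|\nabla_j\hat w^{(-i)}\|_1\le B\cdot 2\lambda\cdot\frac{2B}{n-1}=\frac{4B^2\lambda}{n-1}$$
almost surely. Hence $\sqrt n\,\|\nabla_j Q_i\|_2\le 4B^2\lambda\sqrt n/(n-1)=O(\lambda/\sqrt n)$, which is $o(1)=o(\sigma_n)$ as soon as $\lambda=\lambda_n=o(\sqrt n)$ and $\sigma_n\asymp 1$ from Step 1. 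This is exactly the claimed conclusion.

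I do not anticipate a genuine obstacle here; the only step requiring care is the operator-norm estimate for the softmax Jacobian and the verification that the bound $\|Dp(v)u\|_1\le 2\lambda\|u\|_\infty$ holds uniformly in $v$, so that the mean-value integration along the segment is legitimate, together with bookkeeping the fact that $j\ne i$ keeps the evaluation point $X_i$—and thus the bound $|X_{i,s}|\le B$ applied to it—unaffected by the perturbation.
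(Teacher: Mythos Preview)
Your proof is correct and follows the same overall strategy as the paper: Step~1 is identical (compute $\mathbb E(Q_i\mid X_i)=\bar w_n^T X_i$ and invoke the simplex bound on $w^T\Sigma w$), and Step~2 reduces, as in the paper, to showing that the softmax weights move by $O(\lambda/n)$ under a single-sample perturbation of the mean.

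The only difference is in how that softmax perturbation is controlled. The paper works multiplicatively, bounding the ratio $w_s'/w_s\in[e^{-4\lambda B/n_\tr},\,e^{4\lambda B/n_\tr}]$ directly from the exponential form and then using $|\nabla_j Q_i|\le B\max_s|w_s'/w_s-1|$. You instead use the $\ell_1$-Lipschitz property of the softmax via its Jacobian, obtaining $\|\nabla_j\hat w^{(-i)}\|_1\le 2\lambda\|\nabla_j\hat\vartheta^{(-i)}\|_\infty$ and then $|\nabla_j Q_i|\le B\|\nabla_j\hat w^{(-i)}\|_1$. Both routes give the same almost-sure bound $|\nabla_j Q_i|\lesssim B^2\lambda/n$; your Jacobian argument is slightly cleaner and more portable, while the paper's ratio argument is more self-contained and avoids any appeal to differentiability along a segment.
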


\begin{remark}
    \label{rem:5.3-varying-m}
    In \Cref{thm:5.3-argmin-clt} and \Cref{pro:5.3-suff-cond-argmin}, we allow for the triangular-array setting where $m$ may change with $n$.
    Moreover, all the theoretical results and methods developed below can be extended to $K$-fold CV for any value of $K$.  It would be possible to relax the $L_\infty$-norm bound on $X_{i,s}$ to sub-Weibull type bound using more refined arguments.
\end{remark}

The choice of $\lambda_n=o(\sqrt{n})$ is used to cover the worst-case scenario. In most realistic situations, such as when there are a few coordinates whose $\vartheta_s$ values are substantially larger than the rest, then the stability condition can be satisfied by larger values of $\lambda_n$. We discuss data-driven choices of $\lambda$ below after proving \Cref{pro:5.3-suff-cond-argmin} and describe the test procedure.

\begin{proof}
    [Proof of \Cref{pro:5.3-suff-cond-argmin}]
We first show that $\sigma_n=\Omega(1)$.
In fact $\sigma_n^2=\var[\mathbb E(Q_i|X_i)]=\bar w^T\Sigma\bar w$, where
$\bar w=\mathbb E \hat w^{(-1)}$. By assumption $\sigma_n^2\in[c_1,c_2]$.
The boundedness of $X_{i,s}$ further implies the condition \eqref{eq:4.1-lindeberg} is satisfied for $\ell(X_i,\D_{n,-k_i})=Q_i$.

It suffices to show that $\|\nabla_j Q_i\|_2=O(\lambda_n/n)$.
To simplify notation, we write $\hat w_s^{(-i)}$ as $w_s$, and $\hat w_s^{(-i,j)}$ as $w_s'$, which is the version of $\hat w_s^{(-i)}$ obtained from replacing $X_j$ by $X_j'$. We also write $\hat\vartheta^{(-i)}$ as $\hat\vartheta$, and $\hat\vartheta'$ the corresponding perturb-one version. By construction
$$
|\nabla_j Q_i| = \left|\sum_{s\in[m]}(w_s'-w_s)X_{i,s}\right|\le B \max_s \left|\frac{w_s'}{w_s}-1\right|\,.
$$
Recall that $n_\tr=n-1$ since we are considering LOOCV. By construction we have for each $s\in[m]$
\begin{align*}
\frac{w_s'}{w_s}=&\frac{\exp(\lambda_n\hat\vartheta_s')}{\sum_{t\in[m]}\exp(\lambda_n \hat\vartheta_t')}\cdot\frac{\sum_{t\in[m]}\exp(\lambda_n \hat\vartheta_t)}{\exp(\lambda_n\hat\vartheta_s')}\\
=&\exp(\lambda_n n_\tr^{-1}(X_{j,s}'-X_{j,s}))\frac{\sum_{t\in[m]}\exp(\lambda_n\hat\vartheta_t')\exp(\lambda_n n_\tr^{-1}(X_{j,t}-X_{j,t}'))}{\sum_{t\in[m]}\exp(\lambda_n\hat\vartheta_t')}\\
\le & \exp(4\lambda_n n_\tr^{-1}B)\,.
\end{align*}
Similarly we can show $\frac{w_s'}{w_s}\ge \exp(-4\lambda_n n_\tr^{-1}B)$, and the claimed result follows.
\end{proof}

\begin{remark}
    \label{rem:5.3-necessity-of-both}
    One may wonder whether the exponential weighting trick alone can fix the failure of CLT in the statistic $\hat T_n$ in \eqref{eq:5.3-T-split} without cross-validation. That is, consider $$\tilde T_n=\frac{1}{n}\sum_{i=1}^n\tilde Q_i$$
    with $\tilde Q_i=\sum_{s\in[m]}\tilde w_s X_{i,s}$
    and $\tilde w_s\propto \exp(\lambda \hat\vartheta_s)$.  The empirical distribution of $\tilde T_n$ is plotted as the blue histogram in \Cref{fig:5.3-hist}, showing a clear shift from the desired standard normal curve.  This demonstrates that both cross-validation and exponential weighting are necessary to achieve asymptotic normality for the (randomly centered) statistic $T_n$ in \eqref{eq:5.3-T-split-exp}.
\end{remark}

\paragraph{Inference}
As seen in the proof of \Cref{pro:5.3-suff-cond-argmin}, we have $\sigma_{n}^2\asymp 1$, and $(Q_i-R_i)\in[-2B, 2B]$. Thus the conditions required in \Cref{thm:4.1-var-est} are also satisfied, which leads to the following.
\begin{corollary}\label{cor:5.3-var_est_argmin}
    Let $\hat\sigma_n$ be the empirical variance of $(Q_i:i\in [n])$. Under the conditions in \Cref{pro:5.3-suff-cond-argmin}, $\hat\sigma_n/\sigma_n\rightarrow 1$ in probability, where $\sigma_n^2=\var[\mathbb E(Q_i|X_i)]$.

Moreover, for $\beta\in(0,1)$, let $u_\beta$ be the $(1-\beta)$-quantile of the standard normal, then the statistic $T_n$ defined in \eqref{eq:5.3-T-split-exp} satisfies
    \begin{equation}\label{eq:5.3-max-val-conf-lower-bound}
    \mathbb P\left[\max\vartheta \ge T_n-\frac{\hat\sigma_n}{\sqrt{n}}u_\beta\right]\ge 1-\beta+o(1)\,.
    \end{equation}
\end{corollary}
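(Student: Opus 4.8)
The plan is to assemble the corollary from three ingredients already in hand: the central limit theorem \Cref{thm:5.3-argmin-clt}, the variance-estimation theorem \Cref{thm:4.1-var-est}, and the elementary fact that the random centering appearing in the CLT lies below $\max\vartheta$. For the variance consistency, I would first record that the hypotheses of \Cref{pro:5.3-suff-cond-argmin} give $\sigma_n\asymp 1$, $|Q_i-R_i|\le 2B$, and $\sqrt n\,\|\nabla_j Q_i\|_2=o(\sigma_n)$ for $j\neq i$; the last statement is precisely the loss-stability condition \eqref{eq:4.1-nabla-l2-bound} for the (LOOCV) loss $\ell(X_i,\D_{n,-i})=Q_i$, and the boundedness of $Q_i-R_i$ together with $\sigma_n\asymp 1$ makes $[\bar\ell_n(X_0)/\sigma_n]^2$ bounded, hence uniformly integrable. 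Therefore \Cref{thm:4.1-var-est} applies directly and yields $\hat\sigma_n^2/\sigma_n^2\stackrel{P}{\rightarrow}1$, equivalently $\hat\sigma_n/\sigma_n\stackrel{P}{\rightarrow}1$, which is the first assertion.

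For the coverage bound, I would write $\bar R_n\coloneqq \frac1n\sum_{i=1}^n R_i=\frac1n\sum_{i=1}^n\sum_{s\in[m]}\hat w_s^{(-i)}\vartheta_s$, so that \Cref{thm:5.3-argmin-clt} reads $\frac{\sqrt n}{\sigma_n}(T_n-\bar R_n)\rightsquigarrow N(0,1)$. The only new observation is that, since each $\hat w^{(-i)}$ lies in the simplex $\mathbb S_{m-1}$, we have $\sum_{s}\hat w_s^{(-i)}\vartheta_s\le\max\vartheta$ almost surely, hence $\bar R_n\le\max\vartheta$ almost surely. This gives the inclusion of events
\[
\bigl\{\bar R_n\ge T_n-\tfrac{\hat\sigma_n}{\sqrt n}u_\beta\bigr\}\ \subseteq\ \bigl\{\max\vartheta\ge T_n-\tfrac{\hat\sigma_n}{\sqrt n}u_\beta\bigr\}\,,
\]
reducing the claim to showing the probability of the left-hand event tends to $1-\beta$. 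Rewriting that probability as $\mathbb P\bigl(\frac{\sqrt n}{\sigma_n}(T_n-\bar R_n)\le \tfrac{\hat\sigma_n}{\sigma_n}u_\beta\bigr)$ and combining $\tfrac{\hat\sigma_n}{\sigma_n}u_\beta\stackrel{P}{\rightarrow}u_\beta$ with the CLT via Slutsky's theorem, it converges to $\mathbb P(N(0,1)\le u_\beta)=1-\beta$, because $u_\beta$ is the $(1-\beta)$-quantile of the standard normal and the limiting law is continuous at the relevant point. Hence $\mathbb P(\max\vartheta\ge T_n-\tfrac{\hat\sigma_n}{\sqrt n}u_\beta)\ge 1-\beta+o(1)$, which is \eqref{eq:5.3-max-val-conf-lower-bound}.

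I do not expect a genuine obstacle here, since the substantive work is already done in \Cref{thm:5.3-argmin-clt}, \Cref{thm:4.1-var-est}, and \Cref{pro:5.3-suff-cond-argmin}; the proof is essentially a bookkeeping assembly plus the one-line bound $\bar R_n\le\max\vartheta$, which is exactly what makes the interval conservative rather than anti-conservative and quantifies the (nonnegative) downward bias $\max\vartheta-\bar R_n=\frac1n\sum_i\bigl(\max\vartheta-\sum_s\hat w_s^{(-i)}\vartheta_s\bigr)$. The only place demanding mild care is the Slutsky step: one must invoke $\sigma_n\asymp 1$ from \Cref{pro:5.3-suff-cond-argmin} to be sure that replacing $\sigma_n$ by $\hat\sigma_n$ perturbs the standardized statistic by only $o_P(1)$, and one should note that $\bar R_n$ is matched exactly to the CLT centering so that no additional bias term survives the $\sqrt n/\sigma_n$ scaling.
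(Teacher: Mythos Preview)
Your proposal is correct and follows essentially the same approach as the paper: variance consistency via \Cref{thm:4.1-var-est} (using boundedness and $\sigma_n\asymp 1$ for uniform integrability), the key inequality $R_i\le\max\vartheta$ to pass from the random centering to $\max\vartheta$, and the CLT from \Cref{thm:5.3-argmin-clt} combined with Slutsky to handle $\hat\sigma_n$. The only cosmetic difference is that the paper absorbs $\hat\sigma_n$ directly into the denominator of the standardized sum rather than writing the threshold as $(\hat\sigma_n/\sigma_n)u_\beta$, but the logic is identical.
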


\begin{proof}[Proof of \Cref{cor:5.3-var_est_argmin}]
The consistency of variance estimation follows directly from \Cref{thm:4.1-var-est}.  

Now we prove the second claim.
    \begin{align*}
       & \mathbb P\left[\max\vartheta \ge T_n-\frac{\hat\sigma_n}{\sqrt{n}}u_\beta\right]\\
  =&  \mathbb P\left[\frac{1}{n}\sum_{i=1}^n Q_i \le \max\vartheta+\frac{\hat\sigma_n}{\sqrt{n}}u_\beta\right]  \\
  =&  \mathbb P\left[\frac{1}{n}\sum_{i=1}^n (Q_i-R_i) \le \max\vartheta -\frac{1}{n}\sum_{i=1}^n R_i +\frac{\hat\sigma_n}{\sqrt{n}}u_\beta\right] \\
  \ge &  \mathbb P\left[\frac{1}{n}\sum_{i=1}^n (Q_i-R_i) \le \frac{\hat\sigma_n}{\sqrt{n}}u_\beta\right] \\
  =&  \mathbb P\left[\frac{1}{\sqrt{n}\hat\sigma_n}\sum_{i=1}^n (Q_i-R_i) \le u_\beta\right]\\
  = & 1-\beta+o(1)\,,
    \end{align*}
    where the inequality follows from the fact that $R_i\le \max\vartheta$, and the last equation follows from \Cref{thm:5.3-argmin-clt}.
\end{proof}

Therefore, the following rejection rule is asymptotically valid for the testing problem \eqref{eq:5.3-many-means} at nominal level $\beta$,
\begin{equation}\label{eq:5.3-rej-rule}
\text{reject } H_0 \text{ if } T_n\ge \frac{\hat\sigma_n}{\sqrt{n}}u_\beta\,,
\end{equation}
provided that $\lambda_n=o(\sqrt{n})$.

\paragraph{Bias and power analysis}
    In the confidence lower bound in \Cref{cor:5.3-var_est_argmin}, the term due to the variance, which is $\hat\sigma_n u_\beta/\sqrt{n}$, does not involve the dimensionality $m$. This is in sharp contrast with the simultaneous inference approach, which, in the case of independent coordinates, would typically involve a $\sqrt{\log m}$ factor associated with the variance term.  Such a reduction in variance is gained at a price of a bias term
    $\max\vartheta - n^{-1}\sum_{i\in[n]}R_i$.  Empirically the cross-validated and exponentially weighted confidence lower bound in \eqref{eq:5.3-max-val-conf-lower-bound} with an adaptively chosen $\lambda$ (see below) usually achieves a favorable bias-variance trade-off when compared to other methods. Here we provide a simple upper bound of this bias under a boundedness condition.
\begin{proposition}\label{pro:5.3-bias}
    If $|X_{i,s}|\le B<\infty$ almost surely for all $i,s$, and $\lambda_n=o(\sqrt{n})$. Then there exists a constant $C$ such that
    $$
    \max\vartheta-n^{-1}\sum_{i=1}^n R_i\le O_P\left(\frac{\log(m+n)}{\lambda_n}\right)\,.
    $$
\end{proposition}    
    \begin{proof}
        [Proof of \Cref{pro:5.3-bias}]
        Without loss of generality, assume $\vartheta_1=\max\vartheta$.
        Let $\mathcal T=\{s\in[m]: \vartheta_s\le \vartheta_1-c\log(m+n)/\lambda_n\}$, for a constant $c$ to be specified later.  Define $\mathcal E$ to be the event $|\hat\vartheta_s^{-i}-\vartheta_s|\le c'\sqrt{\log(m+n)/n}$ for all $i,s$, with a constant $c'$ large enough such that $\mathbb P(\mathcal E^c)\le (m+n)^{-1}$. Then on $\mathcal E$, we have
        $\hat w_s^{(-i)}\le (m+n)^{-2}$ for all $i\in [n]$ and $s\in\mathcal T$ if we pick $C$ large enough in the definition of $\mathcal T$. Therefore, on event $\mathcal E$, we have
        \begin{align*}
            &\max\vartheta-n^{-1}\sum_{i=1}^n R_i\\
            =& \sum_{s\in\mathcal T}n^{-1}\sum_{i=1}^n \hat w_s^{(-i)}(\vartheta_1-\vartheta_s)+\sum_{s\in\mathcal T^c}n^{-1}\sum_{i=1}^n \hat w_s^{(-i)}(\vartheta_1-\vartheta_s)\\
            \le & 2B |\mathcal T|(m+n)^{-2}+C\frac{\log(m+n)}{\lambda_n}\\
            =& O\left(\frac{\log(m+n)}{\lambda_n}\right)\,. \qedhere
        \end{align*}
    \end{proof}
    \Cref{pro:5.3-bias} implies that the test statistic $T_n$ is asymptotically powerful against alternatives with signal size $\omega(\log(m+n)/\lambda_n)$.
    See \Cref{subsec:5.6-bib} for further references on power analysis and numerical results.

\paragraph{Choosing the tuning parameter}
As mentioned in the discussion after \Cref{rem:5.3-varying-m}, 
the requirement of $\lambda_n=o(\sqrt{n})$ in \Cref{pro:5.3-suff-cond-argmin} is for the worst-case scenario.
For an example of a non-worst-case scenario, suppose the largest entry $\vartheta_{s^*}\gg \vartheta_s$ for $s\neq s^*$, the selection of $\arg\max\hat\vartheta_s$ would be stable even without the exponential mechanism.  
In general, the optimal choice of $\lambda_n$ should be as large as allowed by the stability condition, and is often a complex function of the mean vector $\vartheta$ and covariance matrix $\Sigma$.  

Suppose we are given a finite collection $\mathcal L$ of candidate values of $\lambda$, a plausible strategy is to select the largest one in this set such that the stability condition holds approximately.

By the insight in \Cref{pro:4.1-var-approx},  the stability condition we want to check for each $\lambda\in\mathcal L$ is the following:
\begin{equation}
    \label{eq:5.3-approx-stab-cond}
    n\|\nabla_j (Q_i-R_i)\|_2^2\ll \sigma_{n_\tr}^2
\end{equation}
for $j\neq i$, where $\sigma_{n_\tr}^2$ is defined as in \Cref{thm:5.3-argmin-clt}.

The quantity $\sigma_{n_\tr}^2$ can be estimated using the sample variance of $(Q_i:1\le i\le n)$ as in \Cref{thm:4.1-var-est}.  Here we only need to estimate $\|\nabla_j(Q_i-R_i)\|_2^2$.

We use a leave-two-out bootstrap estimate.  For a distinct triplet $(i,j,l)\subset[n]$, 
Let $$\hat\delta_{i,j,l}=\sum_s(\hat w_s^{(-i,-j)}-\hat w_s^{(-i,-l)})(X_{i,s}-\hat\vartheta_s)$$
which mimics $\nabla_j(Q_i-R_i)$.

Thus we estimate $\|\nabla_j(Q_i-R_i)\|_2^2$ by the empirical average of  
$\{\hat\delta_{i^b,j^b,l^b}^2:1\le b\le B\}$
where $\{(i^b,j^b,l^b):1\le b\le B\}$ are independently sampled index triplets.  Here $B$ is the bootstrap sample size. Denote this estimate as $\hat\delta^2(\lambda)$ for each $\lambda\in\mathcal L$.

Finally, we choose $\lambda$ as the largest value in a given candidate set such that,
for some small constant $\varepsilon$ (e.g., $\varepsilon=0.05$),
$$n\hat\delta^2(\lambda)\le \varepsilon \hat\sigma^2(\lambda)\,,$$
where $\hat\sigma^2(\lambda)$ is the corresponding estimate of $\sigma_{n_\tr}^2$.

\subsection{Argmin Confidence Set}\label{subsec:5.4-argmin-conf}
In this subsection, we shift our focus to inference on the location of the minimum of $\vartheta$, rather than the minimum value itself. Specifically, the inference target is $\Theta = \arg\min_s \vartheta_s$, the set of indices that achieve the minimum value of $\vartheta$. This problem is closely related to tasks such as identifying the univariate regression model with the highest marginal correlation or selecting the best prediction function based on a validation dataset. The confidence lower bound for the maximum value of the mean vector, constructed in the previous subsection, can be readily transformed into a confidence set for the argmin of the mean vector.

The idea is similar to the difference-based model confidence set approach introduced in \Cref{subsec:5.1-model-conf-set}.
For each $r\in[m]$, let $\vartheta^{(r)}=(\vartheta_r-\vartheta_s:s\neq r)\in \mathbb R^{m-1}$ and $X_{i,s}^{(r)}=X_{i,r}-X_{i,s}$ for $i\in[n]$, $s\in [m]\setminus\{r\}$.
Thus we construct a confidence set for $\arg\min_s\vartheta_s$ as follows.
$$
\hat\Theta = \{r\in[m]:~H_0^{(r)}\text{ is not rejected}\}\,,
$$
where $H_0^{(r)}$ is the hypothesis testing problem \eqref{eq:5.3-many-means} for the mean vector $\vartheta^{(r)}$ and sample $(X_i^{(r)}:i\in[n])$.
Formally, we have the following.
\begin{corollary}
\label{cor:5.3-argmin-conf-set}
Assume that for all $r\in[m]$, $|X_{i,s}^{(r)}|\le B$ almost surely with a positive constant $B$, and that
$\Sigma=\mathbb E(X_1-\vartheta)(X_1-\vartheta)^T$ has eigenvalues bounded and bounded away from zero by positive constants.  If $\hat\Theta$ is constructed by testing each $H_0^{(r)}$ using the rejection rule \eqref{eq:5.3-rej-rule} corresponding to the mean vector $\vartheta^{(r)}$ and data $(X_i^{(r)}:i\in [n])$.   Then for $\lambda_n=o(\sqrt{n})$ we have
$$
\mathbb P(r\in \hat\Theta)\ge 1-\beta-o(1) ~~~\forall~~r\in\Theta\,.
$$
\end{corollary}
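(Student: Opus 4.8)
The plan is to reduce the claim, for each fixed $r\in\Theta$, to the asymptotic one-sided validity of the test \eqref{eq:5.3-rej-rule} when it is applied to the shifted sample $(X_i^{(r)}:i\in[n])$ with mean vector $\vartheta^{(r)}$, and then to quote \Cref{pro:5.3-suff-cond-argmin}, \Cref{thm:5.3-argmin-clt}, and \Cref{cor:5.3-var_est_argmin}. First I would note that $r\in\Theta$ means $\vartheta_r=\min_s\vartheta_s$, so $\max\vartheta^{(r)}=\max_{s\neq r}(\vartheta_r-\vartheta_s)\le 0$, i.e.\ $H_0^{(r)}$ holds; hence $\mathbb P(r\notin\hat\Theta)=\mathbb P(H_0^{(r)}\text{ is rejected})$, and it suffices to bound this last probability by $\beta+o(1)$. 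Throughout, $Q_i^{(r)},R_i^{(r)},T_n^{(r)},\hat\sigma_n^{(r)},\sigma_n^{(r)},\bar\ell_n^{(r)}$ denote the versions of the corresponding objects from \Cref{subsec:5.3-argmin} built from the sample $(X_i^{(r)})$.

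Second, I would verify the two hypotheses of \Cref{pro:5.3-suff-cond-argmin} for the pair $(\vartheta^{(r)},(X_i^{(r)}))$, uniformly in $r$. Boundedness $|X_{i,s}^{(r)}|\le B$ a.s.\ is assumed. For the covariance condition, write $X_i^{(r)}=A_rX_i$ where $A_r\in\mathbb R^{(m-1)\times m}$ is the difference matrix whose rows are $e_r^\top-e_s^\top$ over $s\neq r$, so that $\Sigma^{(r)}:=\mathbb E[(X_1^{(r)}-\vartheta^{(r)})(X_1^{(r)}-\vartheta^{(r)})^\top]=A_r\Sigma A_r^\top$. A direct computation shows that for $w\in\mathbb S_{m-2}$ one has $A_r^\top w=e_r-\tilde w$, where $\tilde w\in\mathbb R^m$ is $w$ with a zero inserted in coordinate $r$; hence $\|A_r^\top w\|^2=1+\|w\|_2^2\in[1,2]$ and
\[
w^\top\Sigma^{(r)}w=(A_r^\top w)^\top\Sigma(A_r^\top w)\in\big[\lambda_{\min}(\Sigma),\,2\lambda_{\max}(\Sigma)\big]=:[c_1,c_2]\subset(0,\infty),
\]
with $c_1,c_2$ independent of $r$ and $n$ because $\Sigma$ has eigenvalues bounded and bounded away from zero. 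Thus \Cref{pro:5.3-suff-cond-argmin} applies and gives, whenever $\lambda_n=o(\sqrt n)$, that $\sigma_n^{(r)}\asymp 1$, that the Lindeberg condition \eqref{eq:4.1-lindeberg} holds for $\bar\ell_n^{(r)}$, and that $\sqrt n\,\|\nabla_j Q_i^{(r)}\|_2=o(\sigma_n^{(r)})$ for $j\neq i$, with constants one may take uniform in $r$ (only a pointwise-in-$r$ statement is actually needed for the conclusion).

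Third, feeding these into \Cref{thm:5.3-argmin-clt} and \Cref{cor:5.3-var_est_argmin} (the latter applied to the shifted data, using $\hat\sigma_n^{(r)}/\sigma_n^{(r)}\to 1$ and Slutsky) yields $(\sqrt n\,\hat\sigma_n^{(r)})^{-1}\sum_{i=1}^n(Q_i^{(r)}-R_i^{(r)})\rightsquigarrow N(0,1)$, where $R_i^{(r)}=\sum_{s\neq r}\hat w_s^{(-i)}\vartheta_s^{(r)}$. Since $(\hat w_s^{(-i)})_{s\neq r}$ is a probability vector, $R_i^{(r)}\le\max\vartheta^{(r)}\le 0$, so $T_n^{(r)}=\tfrac1n\sum_i Q_i^{(r)}\le\tfrac1n\sum_i(Q_i^{(r)}-R_i^{(r)})$, and therefore
\[
\mathbb P\big(H_0^{(r)}\text{ rejected}\big)=\mathbb P\Big(T_n^{(r)}\ge \tfrac{\hat\sigma_n^{(r)}}{\sqrt n}u_\beta\Big)\le\mathbb P\Big(\tfrac{1}{\sqrt n\,\hat\sigma_n^{(r)}}\sum_{i=1}^n(Q_i^{(r)}-R_i^{(r)})\ge u_\beta\Big)\longrightarrow \mathbb P(N(0,1)\ge u_\beta)=\beta,
\]
so $\mathbb P(r\in\hat\Theta)=1-\mathbb P(H_0^{(r)}\text{ rejected})\ge 1-\beta-o(1)$ for every $r\in\Theta$. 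The only step that is not a transcription of results already proved for a single mean vector is the uniform lower bound $w^\top\Sigma^{(r)}w\ge c_1>0$ over the simplex and over all $r$ — the potential degeneracy of the difference map $A_r$ is the thing to watch — and it is settled by the identity $\|A_r^\top w\|^2=1+\|w\|_2^2\ge 1$; once this is in place the argument is routine.
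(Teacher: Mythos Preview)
Your proof is correct and follows essentially the same route as the paper: reduce to \Cref{pro:5.3-suff-cond-argmin} and \Cref{cor:5.3-var_est_argmin} applied to the differenced data $(X_i^{(r)})$, with the only nontrivial verification being the lower bound on the variance over the simplex. The paper phrases this directly as $\sigma_n^2=(\bar w^{(r)})^T\Sigma\,\bar w^{(r)}$ with $\bar w^{(r)}=(1,-\bar w_2,\dots,-\bar w_m)^T$, which is exactly your observation that $A_r^\top w=e_r-\tilde w$ has squared norm in $[1,2]$; your write-up simply makes the eigenvalue bookkeeping more explicit.
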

One minor difference between \Cref{cor:5.3-argmin-conf-set} and \Cref{pro:5.3-suff-cond-argmin} is the minimum eigenvalue condition. This is because, letting $r=1$ for simplicity, the variance of $\mathbb E(Q_i^{(r)}|X_i^{(r)})$ is $(\bar w^{(r)})^T\Sigma \bar w^{(r)}$, where
$\bar w^{(r)}=(1,-\bar w_2,-\bar w_3,...,-\bar w_m)^T$  and $\bar w_s$ the expected value of $\hat w_s^{(r,-i)}$ (the counterpart of $\hat w_s^{(-i)}$ for $(\vartheta^{(r)},X_i^{(r)})$).  Thus the variance is bounded and bounded away from zero whenever all eigenvalues of $\Sigma$ are so.

\subsection{Asymptotics of Cross-Conformal Inference}\label{subsec:5.5-cross-conf}
In this subsection, we use the tools developed in \Cref{sec:bousquet} and \Cref{sec:4-clt} to study the asymptotic properties of \emph{cross-conformal prediction}, a variant of split conformal prediction methods.  
In general, conformal prediction is a general framework that uses the exchangeability between sample points to convert arbitrary point estimators into prediction sets that enjoy certain worst-case coverage guarantee under very mild conditions.  To keep the presentation concise and self-contained, we will focus on a regression setting, where each sample point $X_i=(Y_i,Z_i)$ consists of a response $Y_i\in\mathcal Y$ and a covariate $Z_i\in\mathcal Z$. The goal is to construct a prediction set $\hat C\subseteq\mathcal Y\times\mathcal Z$ from an i.i.d. sample $(Y_i,Z_i)_{i=1}^n$ such that, for another i.i.d. sample point $(Y_{n+1},Z_{n+1})$,
\begin{equation}
  \label{eq:5.5-conf-coverage}  
  \mathbb P(Y_{n+1}\in\hat C(Z_{n+1}))\ge 1-\beta\,,
\end{equation}
for a nominal non-coverage level $\beta\in(0,1)$. Here $\hat C(z)\coloneqq\{y\in\mathcal Y:~(y,z)\in\hat C\}$, and hence \eqref{eq:5.5-conf-coverage} is equivalent to 
$$
\mathbb P((Y_{n+1},Z_{n+1})\in\hat C)\ge 1-\beta\,.
$$
Therefore, the coverage guarantee in \eqref{eq:5.5-conf-coverage} is \emph{marginal}, which averages over the randomness of both $(Y_{n+1},Z_{n+1})$ and $\D_n=\{(Y_i,Z_i):1\le i\le n\}$.

One popular method to construct such a $\hat C$ satisfying \eqref{eq:5.5-conf-coverage} is the \emph{split conformal} method, which works as follows.
\begin{enumerate}
\item Let $n_\tr\in[1,n)$ be a \emph{fitting sample size}, and $n_\te = n-n_\tr$ be the corresponding \emph{calibration sample size}.
    \item Split the sample $\D_n$.
    Let $\D_\tr = \D_{n_\tr}$ and $\D_\te = \{(Y_i,Z_i):i\in [n]\setminus[n_\tr]\}$,
     which correspond to the fitting subsample and calibration subsample, respectively.
     \item Fit a function $s:\mathcal Y\times\mathcal Z\mapsto \mathbb R$ using $\D_\tr$.
     \item For $(y,z)\in\mathcal Y\times\mathcal Z$, define
     $$\hat p_{\rm sc}(y,z)=\frac{1}{n_\te+1}\sum_{i\in [n]\setminus[n_\tr]}\mathds{1}[s(Y_i,Z_i)< s(y,z)]+\frac{1}{n_\te+1}\,.$$
     \item  Output 
     \begin{equation*}
     \hat C = \{(y,z): \hat p_{\rm sc}(y,z)\le (n_{\te}+1)^{-1}\lceil(n_
     {\te}+1)(1-\beta)\rceil\}\,.
     \end{equation*}
\end{enumerate}

The function $s(x)$, known in the conformal prediction literature as the ``conformity score function'', measures the level of conformity (or agreement) of a sample point $x$ with respect to the fitting subsample $\D_\tr$.  It usually consists of two components: a fitting part and an evaluation part.  A typical example of $s$ is 
$s(y,z)=|y-\hat f(z)|$, where $\hat f$ is a fitted regression function obtained from $\D_\tr$. It can be shown that, by exchangeability among $\{s(Y_i,Z_i):n_\tr+1\le i\le n+1\}$, $\hat C_{\rm sc}$ satisfies \eqref{eq:5.5-conf-coverage} for an arbitrary choice of $s$.  The quantity $\hat p_{\rm sc}(y,z)$ in Step 4 of the procedure above is called the ``split conformal $p$-value'' (a variant of conformal $p$-value.)

The split conformal method is simple and universal.  However, its practical usefulness is often limited by two concerns: the reduced sample size used for model fitting and the added randomness due to sample splitting. Both concerns can be alleviated by the natural extension to a cross-validated version of split conformal.

The connection between split conformal and cross-validation is almost obvious: we fit a model from the fitting sub-sample and evaluate the average conformity on the calibration sub-sample.  Then it seems natural to follow the cross-validation scheme to repeat this procedure by rotating the sample splitting through $K$ folds and average over the resulting split conformal $p$-values.  This leads to the \emph{cross-conformal prediction}, as formally described below using the notation introduced in \Cref{subsec:prelim}.
\begin{enumerate}
    \item Split the data $\D_n$ into $K$ folds.
    \item For a given $(y,z)\in\mathcal Y\times\mathcal Z$, for each $k\in[K]$,
    compute
    $$\hat p_{k}(y,z)=\frac{1}{n_\te}\sum_{i\in I_{n,k}}\mathds{1}[s(Y_i,Z_i;\D_{n,-k})<s(y,z;\D_{n,-k})]\,,$$
    where $s(\cdot,\cdot;\D)$ is the conformity score function fitted from $\D$.
    \item Let 
    $$
    \hat p_{\rm{cc}}(y,z)=\frac{1}{K}\sum_{k=1}^K\hat p_{k}(y,z)\,.
    $$
    \item Output
    $$\hat C_{{\rm cc}} = \{(y,z):\hat p_{\rm cc}(y,z)\le 1-\beta\}\,.
    $$
\end{enumerate}
In the above description of cross-conformal prediction, we omitted the rounding, ignored the $1/(n_\te+1)$ term in the definition of $\hat p_k(y,z)$, and changed the normalization factor from $1/(n_\te+1)$ to $1/n_\te$. These will only cause negligible differences in the resulting quantities and will disappear in the asymptotic analysis below.  In return, we gain a perfect matching between the cross-conformal $p$-value $\hat p_{\rm cc}$ and a general cross-validation statistic considered in this article.

Due to the dependence caused by the fold rotation, the fold-specific conformal $p$-values $\hat p_k(y,z)$ are often dependent and hence it is hard to claim the finite-sample distribution-free coverage guarantee \eqref{eq:5.5-conf-coverage} for the cross-conformal prediction set $\hat C_{\rm cc}$.  However, in practice it is often observed that cross-conformal prediction sets offer accurate coverage with favorable efficiency (measured as the size of the prediction set) compared to the split conformal prediction set, thanks to a more efficient use of the data.

Here we use the tools developed in \Cref{sec:bousquet} and \Cref{sec:4-clt} to prove an asymptotic coverage guarantee for the cross-conformal prediction under two mild assumptions.  In the rest of this subsection, we will use $x$ to denote $(y,z)$, and $X_i=(Y_i,Z_i)$.
\begin{assumption}
    \label{asm:5.5-conf-score-stab}
    The conformity score function $s(x;\D)$ satisfies the following conditions.
    \begin{enumerate}
        \item [(a)] $s(x;\D_n)$ is symmetric in the entries of $\D_n$ for all values of $n$.
        \item [(b)]  The conditional density of $s(X_0;\D_n)$ given $\D_n$ is upper bounded by a finite constant $c$ almost surely.
    \end{enumerate} 
\end{assumption}

Part (a) of \Cref{asm:5.5-conf-score-stab} is similar to the symmetry condition required for the loss function $\ell(X_0,\D_n)$ in \Cref{sec:bousquet} and \Cref{sec:4-clt}.  The bounded density condition in Part (b) is needed to establish the stability of the indicator function $\mathds{1}[s(X_i;\D_{n,-k})<s(x;\D_{n,-k})]$.  The bounded density condition can be satisfied if $X=(Y,Z)$ and $Y=f(Z)+\epsilon$ where $\epsilon$ is an independent noise with bounded density.

\begin{theorem}
    \label{thm:5.5-cross-conf}
    Under \Cref{asm:5.5-conf-score-stab}, if
       $\|\nabla_1 s(X_0;\D_n)\|_q\le \epsilon_n$ for $q\ge 1$ and a sequence $\epsilon_n$, then
    the cross-conformal $p$-value satisfies
    $$
    d_{W_2}\left[\hat p_{\rm cc}(X_{n+1})\,,~U(0,1)\right]\lesssim n^{-1/2}+\epsilon_n^{q/(2+2q)}+n^{1/2}\epsilon_n^{q/(2+q)}\,,
    $$
    where $d_{W_2}(\cdot,\cdot)$ is the Wasserstein-2 distance between two probability distributions.

    As a consequence, if
$\epsilon_n=o(n^{-\frac{1}{2}-\frac{1}{q}})$ then
$$\hat p_{\rm cc}(Y_{n+1},Z_{n+1})\rightsquigarrow U(0,1)\,.$$ 
\end{theorem}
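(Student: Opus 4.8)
The plan is to recognize $\hat p_{\rm cc}(X_{n+1})$ as a $K$-fold cross-validation risk estimate and then run the familiar two-step argument: the CV estimate concentrates around its random centering $\bar R_{\cv,n,K}$, and $\bar R_{\cv,n,K}$ is itself an average of $K$ correlated but individually \emph{exact} uniforms, hence close (in $L_2$) to a single uniform. Concretely, write $X_{n+1}$ for the conformal test point and fold it into the loss: set $\ell(X_i,\D_{n,-k})\coloneqq\mathds 1[s(X_i;\D_{n,-k})<s(X_{n+1};\D_{n,-k})]$, a bounded loss that is symmetric in the entries of $\D_n$ by part~(a) of \Cref{asm:5.5-conf-score-stab}, with $X_{n+1}$ playing the role of an auxiliary evaluation point independent of $\D_n$. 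Then $\hat p_{\rm cc}(X_{n+1})=\hat R_{\cv,n,K}$, and the random centering is $\bar R_{\cv,n,K}=\frac1K\sum_{k=1}^K F_{n,-k}\big(s(X_{n+1};\D_{n,-k})\big)$, where $F_{n,-k}(t)=\mathbb P\big(s(X_0;\D_{n,-k})\le t\,\big|\,\D_{n,-k}\big)$. By part~(b) of \Cref{asm:5.5-conf-score-stab} each $F_{n,-k}$ is continuous and $c$-Lipschitz, so the probability integral transform together with $X_{n+1}\perp\D_{n,-k}$ gives that each $U_k\coloneqq F_{n,-k}(s(X_{n+1};\D_{n,-k}))$ is \emph{exactly} $U(0,1)$. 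Since $d_{W_2}(\hat p_{\rm cc}(X_{n+1}),U(0,1))\le \|\hat p_{\rm cc}(X_{n+1})-U_1\|_2\le \|\hat p_{\rm cc}(X_{n+1})-\bar R_{\cv,n,K}\|_2+\frac1K\sum_{k\ge 2}\|U_k-U_1\|_2$, it suffices to bound these two contributions.

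\textbf{First contribution: $\|\hat p_{\rm cc}-\bar R_{\cv,n,K}\|_2$.} I first bound the perturb-one stability of the indicator loss. Perturbing a training point $X_j$ flips $\ell(X_0,\D_{n,-k})$ only when the threshold $s(X_{n+1};\cdot)$ falls within a window of half-width $W\coloneqq|\nabla_j s(X_0;\cdot)|+|\nabla_j s(X_{n+1};\cdot)|$ around $s(X_0;\cdot)$ --- this is the elementary fact that if $a_1<b_1$ but $a_2\ge b_2$ with $|a_1-a_2|\le\delta_a$, $|b_1-b_2|\le\delta_b$, then $|a_1-b_1|\le\delta_a+\delta_b$. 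For any $t>0$, $\mathbb P(\text{flip})\le\mathbb P(W>t)+\mathbb P(|s(X_0;\cdot)-s(X_{n+1};\cdot)|\le t)$; the first term is $\le(2\epsilon_{n_\tr}/t)^q$ by Markov and the score stability (applied to $X_0$ and, with the same bound, to the fresh point $X_{n+1}$), and the second is $\le 2ct$ by conditioning on $(\D_{n,-k},X_{n+1})$ and using the bounded density in part~(b) of \Cref{asm:5.5-conf-score-stab}. Optimizing $t\asymp\epsilon_{n_\tr}^{q/(q+1)}$ gives $\mathbb P(\text{flip})\lesssim\epsilon_{n_\tr}^{q/(q+1)}$, hence, because $\nabla_j\ell$ is $\{-1,0,1\}$-valued, $\|\nabla_j\ell(X_0,\D_{n_\tr})\|_2=\mathbb P(\text{flip})^{1/2}\lesssim\epsilon_{n_\tr}^{q/(2q+2)}$. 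Feeding this into the general-$K$ version of \Cref{thm:bousquet-consistency} obtained via the free-nabla trick (equivalently, writing $\hat p_{\rm cc}-\bar R_{\cv,n,K}=\frac1n\sum_i\bar\xi_i+\frac1n\sum_i(\xi_i-\bar\xi_i)$, bounding the i.i.d.\ bounded part by $O(n^{-1/2})$ and the residual by Part~4 of \Cref{lem:4.4-useful-fact-loss-stab} with $q=2$) yields $\|\hat p_{\rm cc}-\bar R_{\cv,n,K}\|_2\lesssim n^{-1/2}+\epsilon_{n_\tr}^{q/(2q+2)}$.

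\textbf{Second contribution: $\|U_k-U_1\|_2$.} Write $s_k\coloneqq s(X_{n+1};\D_{n,-k})$ and split $U_k-U_1=[F_{n,-k}(s_k)-F_{n,-1}(s_k)]+[F_{n,-1}(s_k)-F_{n,-1}(s_1)]$. The second bracket is $\le c|s_k-s_1|$ by Lipschitzness; since $\D_{n,-k}$ and $\D_{n,-1}$ are both i.i.d.\ samples of size $n_\tr$ we have $\mathbb E[s_k\mid X_{n+1}]=\mathbb E[s_1\mid X_{n+1}]$, so comparing each of $s_k,s_1$ to this common de-randomized score via Part~1 of \Cref{lem:4.4-useful-fact-loss-stab} (with $\ell\leftrightarrow s$, $X_0\leftrightarrow X_{n+1}$) gives $\|s_k-s_1\|_2\lesssim\sqrt n\,\epsilon_{n_\tr}$. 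For the first bracket, conditioning on everything except $X_0$ it equals $\mathbb E_{X_0}[\mathds 1(s(X_0;\D_{n,-k})\le s_k)-\mathds 1(s(X_0;\D_{n,-1})\le s_k)]$, which is at most $\mathbb P_{X_0}(\Delta(X_0)>t)+\mathbb P_{X_0}(|s(X_0;\D_{n,-k})-s_k|\le t)$ with $\Delta(X_0)\coloneqq|s(X_0;\D_{n,-k})-s(X_0;\D_{n,-1})|$; the second piece is $\le 2ct$ (bounded density again), and for the first I use $\|\Delta(X_0)\|_q\lesssim\sqrt n\,\epsilon_{n_\tr}$ (same Lemma~\ref{lem:4.4-useful-fact-loss-stab}(1)/Rio estimate, comparing both scores to $\mathbb E[s(X_0;\D_{n_\tr})\mid X_0]$), Markov, and $\|\mathbb P_{X_0}(\cdot)\|_2\le\mathbb P(\cdot)^{1/2}$. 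Optimizing $t\asymp(\sqrt n\,\epsilon_{n_\tr})^{q/(q+2)}$ gives $\|F_{n,-k}(s_k)-F_{n,-1}(s_k)\|_2\lesssim(\sqrt n\,\epsilon_{n_\tr})^{q/(q+2)}\le n^{1/2}\epsilon_{n_\tr}^{q/(q+2)}$, which dominates the $\sqrt n\,\epsilon_{n_\tr}$ term; hence $\|U_k-U_1\|_2\lesssim n^{1/2}\epsilon_{n_\tr}^{q/(q+2)}$. Summing and combining with the first contribution gives the stated bound $n^{-1/2}+\epsilon_{n_\tr}^{q/(2q+2)}+n^{1/2}\epsilon_{n_\tr}^{q/(q+2)}$ (with $\epsilon_{n_\tr}\asymp\epsilon_n$ for fixed $K$), and the final weak-convergence statement follows since $\epsilon_n=o(n^{-1/2-1/q})$ drives all three terms to $0$ (using $(q/2+1)/(q+2)=1/2$ for the last).

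\textbf{Main obstacle.} The delicate part is the second contribution: showing that the average of the $K$ exact-uniform-but-correlated fold $p$-values stays $L_2$-close to a single one. The difficulty is that we only control the score stability in $L_q$, whereas the quantities of interest are genuinely discrete (indicators, CDF mismatches), so bridging the two requires a truncation/anti-concentration trade-off --- this is precisely what produces the fractional exponents $q/(2q+2)$ and $q/(q+2)$. Two points need care: the window $W$ (resp.\ the gap $\Delta(X_0)$) is not independent of the score value it is being compared against, which is why one must condition $X_0$ out before invoking the bounded-density bound; and one must verify that moving between two leave-one-fold-out datasets is a sequence of legitimate perturb-one modifications, so that the $\sqrt n$ (not $n$) rate of \Cref{lem:4.4-useful-fact-loss-stab}(1) applies.
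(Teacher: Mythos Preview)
Your proof is correct and follows the same two-step architecture as the paper's: first bound $\|\hat p_{\rm cc}-\frac1K\sum_k U_k\|_2$ by transferring the score stability to the indicator loss (your flip-probability bound is exactly the paper's \Cref{lem:5.5-stab-transfer}(a), and the free-nabla expansion you mention is precisely how the paper handles the cross terms), then bound $\|U_k-U_1\|_2$ and use that $U_1$ is an exact uniform. The one place you diverge is the second step. The paper stays at the perturb-one scale: it bounds $\|\nabla_i Q(X_{n+1};\D_n)\|_2\lesssim\epsilon_n^{q/(2+q)}$ by the same truncation/anti-concentration trade-off but with $\Delta=|\nabla_i(s(X_0)-s(X_{n+1}))|$ a \emph{single}-point perturbation (so $\|\Delta\|_q\le 2\epsilon_n$ directly by hypothesis), and only then sums via Efron--Stein to get $\|U_k-U_1\|_2\lesssim\sqrt n\,\epsilon_n^{q/(2+q)}$. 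You instead compare $U_k$ and $U_1$ in one shot via your CDF/score split, which forces you to control a \emph{many}-point perturbation $\Delta(X_0)=|s(X_0;\D_{n,-k})-s(X_0;\D_{n,-1})|$, for which you invoke Rio/\Cref{lem:4.4-useful-fact-loss-stab}(1) to get $\|\Delta(X_0)\|_q\lesssim\sqrt n\,\epsilon_{n_\tr}$ (and likewise $\|s_k-s_1\|_2\lesssim\sqrt n\,\epsilon_{n_\tr}$). That step, as stated, requires $q\ge 2$; for $1\le q<2$ Rio does not apply and you would need to fall back on the crude triangle bound $\|\Delta\|_q\le n_\tr\epsilon_{n_\tr}$, which still recovers the target after optimization since $q/(q+2)\le 1/2$ there. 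The paper's perturb-one-then-Efron--Stein route avoids this case distinction and works uniformly for all $q\ge 1$.
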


In practice, it is often the case that $\epsilon_n\gtrsim n^{-1}$, thus the condition $\epsilon_n=o(n^{-1/2-1/q})$ is plausible for $q>2$.
The requirement of $\epsilon_n=o(n^{-1/2-1/q})$ can be further relaxed if we strengthen the stability condition to $\nabla_1 s(X_0;\D_n)$ being $(\epsilon_n,\alpha)$-sub-Weibull. In this case it suffices to require $\epsilon_n=\tilde o(n^{-1/2})$, where $\tilde o(\cdot)$ means ``$o(\cdot)$ up to a polylog factor''.

\begin{proof}[Proof of \Cref{thm:5.5-cross-conf}]
Define
$$Q(x,\D)=\mathbb P\left[s(X_0;\D)<s(x;\D)\bigg| x,\D\right]\,.$$
Intuitively $Q(X_{n+1},\D_{n,-k})$ plays the role of $R(\D_{n,-k})$ as in \Cref{thm:4.1-clt-rand}, by treating $\mathds{1}[s(X_i;\D_{n,-k})<s(X_{n+1};\D_{n,-k})]$ as $\ell(X_i,\D_{n,-k})$ for $i\in I_{n,k}$.  Here we keep in mind that such an
$\ell(\cdot;\cdot)$ depends on an external random quantity $X_{n+1}$.

The proof consists of two steps.
The first step is to show that, in the same spirit of \Cref{thm:4.1-clt-rand},
\begin{equation*}
    \hat p_{\rm cc}(X_{n+1}) = \frac{1}{K}\sum_{k=1}^K Q(X_{n+1},\D_{n,-k})+o_P(1)\,.
\end{equation*}
In fact, letting $\xi_i=\mathds{1}[s(X_i;\D_{n,-k_i})<s(X_{n+1};\D_{n,-k_i})]-Q(X_{n+1};\D_{n,-k_i})$, then we have
\begin{align}
 &    \left\|\hat p_{\rm cc}(X_{n+1})-\frac{1}{K}\sum_{k=1}^K Q(X_{n+1};\D_{n,-k})\right\|_2^2\nonumber\\
 =&\left\|\frac{1}{n}\sum_{i=1}^n \xi_i\right\|_2^2\nonumber\\
 =&\frac{1}{n^2}\sum_{i=1}^n\|\xi_i\|_2^2 +\frac{1}{n^2}\sum_{i\neq j}\mathbb E (\xi_i\xi_j)\nonumber\\
 \le & n^{-1}+\frac{1}{n^2}\sum_{i\neq j}\mathbb E(\xi_i\xi_j)\nonumber\\
 \lesssim & n^{-1}+\epsilon_n^{q/(1+q)}\,.\label{eq:5.5-step1}
\end{align}
To bound $\mathbb E\xi_i\xi_j$ in the final step of the inequality above, observe that $\mathbb E \xi_i\xi_j\neq 0$ only if $k_i\neq k_j$ and in this case using the ``free-nabla'' lemma (\ref{lem:4.4-free-nabla}), we have, using the bounded density assumption,
$$
|\mathbb E \xi_i\xi_j| = |\mathbb E \nabla_j\xi_i\nabla_i\xi_j|\le \|\nabla_i\xi_j\|_2\|\nabla_j\xi_i\|_2=O(\epsilon_n^{q/(1+q)})\,,
$$
where the last equation follows from \Cref{lem:5.5-stab-transfer}.

The next step is to show that 
$$\frac{1}{K}\sum_{k=1}^{K}Q(X_{n+1};\D_{n,-k})\rightsquigarrow U(0,1)\,.$$
By construction, for each individual $k$, $s(X_0;\D_{n,-k})$ and $s(X_{n+1};\D_{n,-k})$ have the same conditional distribution given $\D_{n,-k}$.  As a result we have
\begin{equation}\label{eq:5.5-cond-unif}
\left(Q(X_{n+1};\D_{n,-k})\bigg|\D_{n,-k}\right)\sim U(0,1)\,.
\end{equation}
Let $Q_n(x) = \mathbb E Q(x;\D_n)$. 
We have, using the martingale decomposition and Efron--Stein (\Cref{thm:4.1-Efron--Stein})
\begin{align*}
\|Q_n(X_{n+1};\D_n)-Q_n(X_{n+1})\|_2^2\le \sum_{i=1}^n \|\nabla_i Q(X_{n+1};\D_n)\|_2^2=O(n \epsilon_n^{2q/(2+q)})\,.
\end{align*}
Therefore, for any pair of $k,k'\in[K]$ we have
$$
\|Q(X_{n+1},\D_{n,-k})-Q(X_{n+1};\D_{n,-k'})\|_2=O(n^{1/2}\epsilon_n^{q/(2+q)})\,,
$$
which further implies that
\begin{equation}\label{eq:5.5-step2}
\left\|Q(X_{n+1};\D_{n,-1})-\frac{1}{K}\sum_{k=1}^{K}Q(X_{n+1};\D_{n,-k})\right\|_2=O(n^{1/2}\epsilon_n^{q/(2+q)})\,.    
\end{equation}
The claimed result follows from \eqref{eq:5.5-step1} and \eqref{eq:5.5-step2} because \eqref{eq:5.5-cond-unif} implies that $Q(X_{n+1};\D_{n,-1})\sim U(0,1)$.
\end{proof}

\begin{lemma}
    \label{lem:5.5-stab-transfer}
    Under \Cref{asm:5.5-conf-score-stab}, there exists a constant $c_1$ such that 
    \begin{enumerate}
        \item [(a)] $\|\nabla_1\mathds{1}[s(X_0;\D_n)<s(X_{n+1};\D_n)]\|_2\le c_1 \epsilon_n^{q/(2+2q)}$.
        \item [(b)]     $\|\nabla_1 Q(X_{n+1};\D_n)\|_2\le c_1\epsilon_n^{q/(2+q)}$.
    \end{enumerate}
\end{lemma}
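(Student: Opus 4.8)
The plan is to reduce everything to controlling how a single indicator $\mathds{1}[s(X_0;\D_n)<s(X_{n+1};\D_n)]$ can flip when $X_1$ is replaced by $X_1'$. Write $\Delta_0=\nabla_1 s(X_0;\D_n)$, $\Delta_{n+1}=\nabla_1 s(X_{n+1};\D_n)$ and $W=s(X_0;\D_n)-s(X_{n+1};\D_n)$; then the perturbed indicator is $\mathds{1}[W<\Delta_0-\Delta_{n+1}]$, which agrees with $\mathds{1}[W<0]$ unless $W$ lies between $0$ and $\Delta_0-\Delta_{n+1}$, so
$$\big|\nabla_1\mathds{1}[s(X_0;\D_n)<s(X_{n+1};\D_n)]\big|\le \mathds{1}\big[|W|\le |\Delta_0|+|\Delta_{n+1}|\big]\,.$$
Two distributional facts will be used throughout: (i) conditionally on $\D_n$, the scores $s(X_0;\D_n)$ and $s(X_{n+1};\D_n)$ are i.i.d.\ with density at most $c$ by \Cref{asm:5.5-conf-score-stab}(b) (in particular atomless, so strict vs.\ non-strict inequality is immaterial), hence $W$ also has conditional density at most $c$; and (ii) because $X_{n+1}$ is just another independent copy of $X_0$ --- it is ``external'' only from the viewpoint of the CV split --- we have $\|\Delta_{n+1}\|_q=\|\Delta_0\|_q\le\epsilon_n$.

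For part~(a), since $|\nabla_1\mathds{1}[\,\cdot\,]|$ takes values in $\{0,1\}$, $\|\nabla_1\mathds{1}[\,\cdot\,]\|_2^2\le \mathbb{P}(|W|\le |\Delta_0|+|\Delta_{n+1}|)$. For any $u>0$ I would split this as $\mathbb{P}(|W|\le 2u)+\mathbb{P}(|\Delta_0|>u)+\mathbb{P}(|\Delta_{n+1}|>u)$; fact~(i) bounds the first term by $4cu$, and Markov's inequality together with fact~(ii) bounds each of the others by $u^{-q}\epsilon_n^q$. Balancing at $u\asymp\epsilon_n^{q/(q+1)}$ gives $\|\nabla_1\mathds{1}[\,\cdot\,]\|_2^2\lesssim\epsilon_n^{q/(q+1)}$, that is, the claimed $\epsilon_n^{q/(2+2q)}$ rate.

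Part~(b) exploits the extra averaging built into $Q$. Since $Q(x;\D)=\mathbb{E}_{X_0}\mathds{1}[s(X_0;\D)<s(x;\D)]$ for a fresh $X_0$, one has $\nabla_1 Q(X_{n+1};\D_n)=\mathbb{E}\big[\nabla_1\mathds{1}[s(X_0;\D_n)<s(X_{n+1};\D_n)]\,\big|\,\D_n,X_1',X_{n+1}\big]$. Squaring and introducing an independent fresh copy $X_0'$, the two resulting indicator factors become conditionally independent \emph{precisely because the conditioning $\sigma$-field includes $X_{n+1}$}, so
$$\mathbb{E}\big[(\nabla_1 Q(X_{n+1};\D_n))^2\big]\le \mathbb{E}\big[p^2\big],\qquad p\coloneqq \mathbb{P}\big(|W|\le |\Delta_0|+|\Delta_{n+1}|\,\big|\,\D_n,X_1',X_{n+1}\big)\,.$$
Bounding $p$ by the same truncation as in part~(a), but now \emph{conditionally} on $(\D_n,X_1',X_{n+1})$, gives $p\le 2cu+\mathbb{P}(|\Delta_0|>u\mid \D_n,X_1')+\mathds{1}[|\Delta_{n+1}|>u]$; squaring (using $(a+b+c)^2\le 3(a^2+b^2+c^2)$ and $\mathds{1}^2=\mathds{1}$), taking expectations and invoking fact~(ii) yields $\mathbb{E}[p^2]\lesssim u^2+u^{-q}\epsilon_n^q$, which is minimized at $u\asymp \epsilon_n^{q/(q+2)}$ to give $\mathbb{E}[(\nabla_1 Q)^2]\lesssim \epsilon_n^{2q/(q+2)}$, exactly the claimed $\epsilon_n^{q/(2+q)}$ bound. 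The gain of $u^2$ over the $u$ appearing in part~(a) --- which is what upgrades the exponent from $q/(2+2q)$ to $q/(2+q)$ --- is the benefit of the outer integration over $X_0$ combined with the conditional independence of the two fresh copies once $X_{n+1}$ is frozen.

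The main obstacle is careful bookkeeping of the conditioning. At each step one must check that the $\sigma$-field being conditioned on is large enough to freeze $s(X_{n+1};\D_n)$ (so it acts as a fixed level for the bounded-density argument) and, in part~(b), to make the fresh copies $X_0,X_0'$ conditionally independent, yet small enough that $s(X_0;\D_n)$ still has its $\le c$ conditional density and that $X_{n+1}$ genuinely behaves as an i.i.d.\ copy so that fact~(ii) applies to $\Delta_{n+1}$ as well as to $\Delta_0$. Beyond this, nothing is needed except Markov's inequality and the bounded-density and stability hypotheses of \Cref{thm:5.5-cross-conf} and \Cref{asm:5.5-conf-score-stab}.
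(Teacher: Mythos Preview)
Your proposal is correct and follows essentially the same route as the paper: bound the indicator flip by the event $\{|s(X_0)-s(X_{n+1})|\le \Delta\}$, split at a truncation level, apply the bounded-density assumption to the small-gap piece and Markov to the large-$\Delta$ piece, then optimize. The only cosmetic difference is in part~(b), where the paper simply bounds $|\nabla_1 Q|\le c\epsilon+\mathds{1}(\Delta\ge\epsilon)$ pointwise and then takes the $L_2$ norm via triangle inequality, whereas you detour through a fresh copy $X_0'$ and conditional independence --- a step that is not actually needed, since $|\nabla_1 Q|\le p$ already follows from the triangle inequality inside the conditional expectation.
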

\begin{proof}[Proof of \Cref{lem:5.5-stab-transfer}]
We use shorthand notation to denote $s(\cdot;\D_n)$ and $s(\cdot;\D_n^1)$ by
$s(\cdot)$ and $s'(\cdot)$, respectively.  Let $\Delta=|\nabla_1(s(X_0)-s(X_{n+1}))|$.

By definition
\begin{align}
    &\left|\nabla_1\mathds{1}[s(X_0;\D_n)<s(X_{n+1};\D_n)]\right|\nonumber\\
    =& \left|\mathds{1}[s(X_0)<s(X_{n+1})]-\mathds{1}[s'(X_0)<s'(X_{n+1})]\right|\nonumber\\
    \le & \mathds{1}\left[|s(X_0)-s(X_{n+1})|\le \Delta\right]\,,\label{eq:5.5-hxy}
\end{align}
because the binary difference is nonzero only if the
gap $s(X_0)-s(X_{n+1})$ is dominated by the perturbation, whose absolute value equals $\Delta$.

Now we prove Part (a) as follows.
For any $\epsilon>0$ we have
\begin{align*}
  &  \|\nabla_1\mathds{1}[s(X_0;\D_n)<s(X_{n+1};\D_n)]\|_2^2\\
  \le & \left\|\mathds{1}\left[|s(X_0)-s(X_{n+1})|\le \Delta\right]\right\|_2^2\\
  = & \mathbb P\left[|s(X_0)-s(X_{n+1})|\le \Delta\right]\\
  \le &\mathbb P(|s(X_0)-s(X_{n+1})|\le \epsilon)+\mathbb P(\Delta\ge \epsilon)\\
  \lesssim & \epsilon + \frac{\epsilon_n^q}{\epsilon^q}\\
  \lesssim &\epsilon_n^{q/(1+q)}\,,
\end{align*}
where the first inequality follows from \eqref{eq:5.5-hxy}, the second from union bound, the third from the bounded conditional density of $s(X_0)$ given everything except $X_0$, and the last by choosing $\epsilon=\epsilon_n^{q/(1+q)}$.

For Part (b), the proof is similar except that taking expectation over $X_0$ before taking the $L_2$ norm increases the stability.
For any $\epsilon>0$ we have
\begin{align}
    |\nabla_1 Q(X_{n+1};\D_n)| =&\left|\mathbb E_{X_0}\left\{ \mathds{1}[s(X_0)<s(X_{n+1})]-\mathds{1}[s'(X_0)<s'(X_{n+1})]\right\}\right|\nonumber\\
    \le & \mathbb E_{X_0}\mathds{1}[|s(X_0)-s(X_{n+1})|\le \Delta]\nonumber\\
    \le &\mathbb E_{X_0}\mathds{1}[|s(X_0)-s(X_{n+1})|\le \epsilon]+\mathds{1}(\Delta\ge \epsilon)\nonumber\\
    \le & c\epsilon + \mathds{1}(\Delta\ge \epsilon)\,.\nonumber
\end{align}
Now using triangle inequality we have
$$
\|\nabla_1 Q(X_{n+1};\D_n)\|_2\le c\epsilon + [\mathbb P(\Delta\ge \epsilon)]^{1/2}\le c\epsilon+\frac{\epsilon_n^{q/2}}{\epsilon^{q/2}}\,.
$$
The claimed result follows by choosing $\epsilon=\epsilon_n^{q/(2+q)}$.
\end{proof}


\subsection{Bibliographic Notes}\label{subsec:5.6-bib}

\subsubsection{Model Confidence Sets}
The model confidence set problem appeared in the statistics and econometrics literature in various forms, including the ``Fence Method'' \citep{jiang2008fence}, the penalized regression tuning confidence set \citep{gunes2012confidence}, model confidence sets via backward $F$-tests \citep{hansen2011model,ferrari2015confidence}.
Among these methods, the Fence Method is closest to the model confidence sets developed in \Cref{subsec:5.1-model-conf-set}, as it also computes a quality measure for each candidate model and accounts for the uncertainty of such measures.  The model confidence sets developed in  \Cref{subsec:5.1-model-conf-set} are applicable in high-dimensional settings and have more flexible applications. 
The idea of using normal approximations to assess the uncertainty of CV model selection appeared as early as \cite{yang2006comparing}, who also pointed out necessity of, and challenges in, using the difference instead of individual losses.
The CV model confidence sets based on high-dimensional normal approximations are rigorously studied in \cite{lei2020cross} for single sample splits and in \cite{kissel2022high} for $K$-fold CV under stability conditions.  In particular, \cite{kissel2022high} contains numerical examples demonstrating the advantage of the difference-based model confidence set ($\hat\Theta$ in \eqref{eq:5.1-cvc}) over  the original confidence set based on simultaneous confidence interval of individual models ($\hat\Theta_0$ in \eqref{eq:5.1-cvc-naive}).

The single-model selection estimator $\hat J_{\rm cvc}$, based on the ``most parsimonious element in the confidence set'', was proposed and studied in \cite{lei2020cross}, who provided a consistency proof of $\hat J_{\rm cvc}$ in a fixed-dimensional best subset selection setting for the difference-based CV confidence set.
The model selection estimator $\hat J_{\rm cvc}$ is a rigorously justifiable approach to correct for the overfitting issue of $\hat J_{\cv}$.  See \cite{efron1997improvements,tibshirani2009bias,lim2016estimation} for some examples of existing heuristic approaches.  Another method that improves the $K$-fold CV model selection consistency is the ``profile electoral college CV'' \citep[PEC-CV,][]{zhan2022profile}, which considers the winning frequency of each candidate model under repeated $K$-fold splits and varying values of $K$.  This method exhibits promising empirical performance and can also be used to visualize the level of model selection uncertainty.

As mentioned at the beginning of \Cref{subsec:5.2-cvc-consist}, another potential use of the model confidence set is to provide stopping rules for sequential model building procedures.  An early form of this application appeared in \cite{lei2020cross} in comparing linear vs quadratic models.  It would be possible to apply this idea to similar settings, such as variants of forward stepwise regression \citep{wieczorek2022model,kissel2024forward}.

\subsubsection{Testing Many Means}
The inference of maximum values in a vector considered in \Cref{subsec:5.3-argmin} is connected to many threads in the literature.  The sample-splitting and stabilization idea is partially inspired by the ``adaptive bootstrap'' approach in \cite{mckeague2015adaptive} for detecting marginal correlations. The connection to selective inference is straightforward; a general introduction to selective inference can be found in \cite{taylor2015statistical}. A more directly relevant selective inference work on inference for the max value is \cite{reid2017post}.

It is natural to extend the maximum value inference to high-dimensional mean testing, for which a vast literature exists; see \cite{huang2022overview} for a recent overview of this topic.  A particularly relevant work is \cite{chernozhukov2019inference}, who used high-dimensional Gaussian comparison to achieve asymptotically valid inference. 

The argmin confidence set problem considered in \Cref{subsec:5.4-argmin-conf} also has many connections in the literature and is a subject of active development.  The problem of identifying the optimal population in a finite collection from noisy observations has been studied as early as \cite{gibbons1977selecting,gupta1979multiple}, with further developments such as \cite{futschik1995confidence}, \cite{hansen2011model}, \cite{mogstad2024inference}, \cite{dey2024anytime}, and \cite{kim2025locally}.
Other related problems include discrete MLE \citep{hammersley1950estimating,choirat2012estimation} and rank verification \citep{hall2009using,xie2009confidence,hung2019rank}.

The testing-many-means problem and the related argmin inference problem can also be viewed from a nuisance parameter perspective.  Let  $\mathbb S_{m-1}$ be the $(m-1)$-dimensional simplex. Then for any $w\in\mathbb S_{m-1}$ define $\theta_w=\vartheta^T w\le \max\vartheta$. Then any valid confidence lower bound of $\theta_w$ is also a valid confidence lower bound for $\max\vartheta$.  The method developed in \Cref{subsec:5.3-argmin} uses a data-driven version of a particular $w^*$, the indicator of the maximum entry of $\vartheta$:
$$w^*=\arg\max_{w\in\mathbb S_{m-1}}\vartheta^T w\,.$$
This idea can be taken one step further: the current target $w^*$ aims to maximize $w^T\vartheta$ and does not account for the variance of $w^T X$.  A strictly more efficient target $w^\dag$ is
$$
w^\dag =\arg\max_{w\in\mathbb S_{m-1}}\frac{\vartheta^T w}{\sqrt{w^T\Sigma w}}\,,
$$
where $\Sigma=\mathbb E(X-\vartheta)(X-\vartheta)^T$ is the noise covariance matrix.  The objective function in the definition of $w^\dag$ above is known as the \emph{Sharpe ratio} \citep{palomar2025portfolio}.  It can also be modified to other similar forms such as
$$
w^\dag =\arg\max_{w\in\mathbb S_{m-1}}\vartheta^T w-\lambda w^T\Sigma w\,,
$$
for some penalty parameter $\lambda$.  Then it would be possible to extend the procedure in \Cref{subsec:5.3-argmin} to such weight vectors $w^\dag$, with a potentially improved bias-variance trade-off.

The softmax statistic $Q_i$ in \Cref{subsec:5.3-argmin} is an example of making and using stable algorithms for statistical inference.  The idea of using stable nuisance parameter estimates in semiparametric inference without de-biasing or cross-fitting has been explored in \cite{chen2022debiased}. In particular, the use of soft-min for stabilized optimal policy choice has been studied in \cite{chen2023inference}.   The presented method of data-driven choice of $\lambda$ works by numerically verifying the desired stability level.  An important theoretical and methodological research topic is to estimate and test the stability of a given algorithm.  This is challenging; see \cite{kim2023black,luo2024algorithmic} for some early results.  In the next section we present additional results on stability implied by regularization.

\subsubsection{Conformal Prediction}
For a general  introduction to conformal prediction, we refer the reader to the books \cite{VovkGS05,angelopoulos2024theoretical}.  Some early references on conformal prediction for outlier detection and regression include \cite{LeiRW13,LeiW14,lei2018distribution}.  The split conformal method is also known as \emph{inductive conformal prediction} \citep{papadopoulos2002inductive,LeiRW14}.  The cross-conformal prediction method was introduced by \cite{vovk2015cross} and analyzed in \cite{vovk2018cross}, achieving a coverage level of approximately $1-2\alpha$.   A variant of cross-conformal, called CV+, was introduced and analyzed in \cite{barber2021predictive}, with asymptotic coverage $1-\alpha$ under a similar but different set of stability  and bounded density conditions.  A key component in our proof for cross-conformal is the transfer of stability from the conformity score function to the ranking indicator function (\Cref{lem:5.5-stab-transfer}), where the inequality \eqref{eq:5.5-hxy} is adapted from a similar analysis in \cite{hu2024two}.

\newpage

\section{Stability of Estimators}\label{sec:6-stability}
In this section we examine the stability of several general classes of estimators. To simplify discussion, we will focus on the case where the loss function $\ell(x,f)$ is Lipschitz in $f$, so that the stability of loss function is implied by the stability of $\hat f$ itself.  Although we do not specify the parameter space $\mathcal F$, it would be helpful to think of it as either a finite-dimensional Euclidean space or a Hilbert space so that the gradients and Hessians can be defined in the usual sense.

So far, the least squares regression estimate in \Cref{subsec:5.2-cvc-consist} is the only example in which stability has been checked.  This example can be extended to the general class of asymptotically linear estimators.  Recall that an estimator $\hat f$ is \emph{asymptotically linear} with influence function $h(\cdot)$ if
$$
\hat f(\D_n) = f_0+\frac{1}{n}\sum_{i=1}^n h(X_i)+o_P(n^{-1/2})\,,
$$
where the $o_P(n^{-1/2})$ should be interpreted in terms of the norm of the residual vector.   Then for such asymptotically linear estimators, we have
$$\nabla_1 \hat f(\D_n)=\frac{1}{n}(h(X_1)-h(X_1'))+o_P(n^{-1/2})=o_P(n^{-1/2})\,,$$
provided that $\mathbb P(h(X_1)<\infty)=1$.

However, this is still heuristic, as $o_P(n^{-1/2})$ is still weaker than the $L_2$ and sub-Weibull stability conditions required in the previous sections.  Typically, establishing such stability conditions is nontrivial and may require specific tools depending on the model and estimator. For example, random matrix theory may be useful in establishing $L_2$-norm stability of the least squares estimate.

We will consider the more general and abstract approach of establishing stability of estimators via regularization.  For example, penalized $M$-estimators such as ridge regression and support vector machines are known for their stability due to their strongly convex optimization formulation.  In \Cref{subsec:6.1-stab-sgd-1,subsec:6.2-stab-sgd-2}, we make this more rigorous and general for the stochastic gradient descent algorithm.  In  \Cref{subsec:6.3-stab-diff}, we illustrate the subtlety of the stability of the difference loss functions (see \Cref{subsec:5.1-model-conf-set} for the context) in a prototypical nonparametric regression example, where the stability is achieved by another form of regularization: truncated basis expansion.  In \Cref{subsec:6.4-stab-online} we consider the stability conditions involved in the consistency analysis of online rolling validation in \Cref{subsec:3.3-rv}, focusing on a combination of stochastic gradient descent and nonparametric basis expansion, known as the sieve SGD (introduced in \eqref{eq:3.2-sieve-sgd} in \Cref{subsec:3.2-online-model-selection}).  Other related results in analyzing the stability of M-estimators, as well as alternative ways to construct stable estimators, such as resampling and bootstrap-aggregating, are discussed in \Cref{subsec:6.6-bib}.

\subsection{Stability of Stochastic Gradient Descent}\label{subsec:6.1-stab-sgd-1}
Consider the M-estimation problem
$$\hat f=\arg\min_{f\in\mathcal F}\sum_{i=1}^n \varphi(f;X_i)$$
for some objective function $\varphi:\mathcal F\times\X\mapsto \mathbb R$, and the stochastic gradient descent algorithm (SGD):
\begin{enumerate}
    \item Initialize $\hat f_0=0$
    \item For $t=0,...,n-1$, update $$\hat f_{t+1}=\hat f_t-\alpha_{t+1}\dot\varphi(\hat f_t;X_{t+1})\,,$$
  where $(\alpha_t:1\le t\le n)$ is a sequence of learning rates, and $\dot\varphi(\cdot;x)$ is the gradient of $\varphi(\cdot;x)$ viewed as a function of $ f$.
   \item The final estimate $\hat f(\D_n)=\hat f_n$.
\end{enumerate}
One may also consider the averaged final estimate $\hat f(\D_n)=n^{-1}\sum_{t=1}^n \hat f_t$. The stability results derived below can be carried over to the averaged case.  The SGD estimate may appear to violate the symmetry condition in \Cref{asm:symmetric-f}. This issue can be resolved by randomizing the order of sample points in the SGD update sequence.

The motivation of analyzing (approximate) M-estimators obtained by SGD is quite natural. First, M-estimators cover a wide range of parametric and nonparametric estimators such as the maximum likelihood estimator, the least squares estimator, local polynomial regression, their robust counterparts, as well as machine learning tools such as kernel regression, support vector machine, and neural networks.  Second, the SGD algorithm is arguably the most popular method for large-scale M-estimation.  Under convexity and smoothness conditions on the objective function $\varphi$, the simple form of SGD allows precise characterization of the algorithmic behavior of the estimated parameter and hence facilitates a general and abstract stability analysis. 

As seen in the ridge regression, the strong convexity of the objective function $\varphi(\cdot,x)$ plays an important role in the stability of the resulting estimator. 
\begin{definition}
    [Strong convexity]\label{def:strong_convex} Let $\gamma$ be a positive number. A function $h:\mathcal F\mapsto \mathbb R$ is $\gamma$-strongly convex if
    $h(u+v)-h(u)\ge \langle\dot h(u),~v\rangle+\frac{\gamma}{2}\|v\|^2$, where $\|\cdot\|$ denotes the Euclidean or Hilbert space norm.
\end{definition}

\begin{assumption}
    [Smoothness and convexity of objective function]\label{asm:6.1-smooth_convex}
    There exists positive constants $(\gamma, C_0, \beta)$ such that for all $x\in\X$
    \begin{enumerate}
        \item [(a)] $\varphi(\cdot;x)$ is $\gamma$-strongly convex.
        \item [(b)] $\varphi(\cdot;x)$ is $C_0$-Lipschitz
        \item [(c)] $\dot\varphi(\cdot;x)$ is $\beta$-Lipschitz        \end{enumerate}
\end{assumption}
Part (c) of \Cref{asm:6.1-smooth_convex} is also known as  ``$\beta$-smoothness''.  These are among the standard assumptions in the analysis of the convergence and stability of SGD.  In deep neural networks, it is unrealistic to assume convexity of $\varphi(\cdot;x)$, but we can assume that these conditions hold within a local convergence basin.  Parts (a) and (c) require that $\gamma\le \beta$. When $\gamma=\beta$, $\varphi(\cdot;x)$ is close to a quadratic function. The gap between $\gamma/\beta$ and $1$ reflects how close $\varphi(\cdot;x)$ is to quadratic; the conditions are more stringent when a large value of $\gamma/\beta$ is required.

\begin{example}[Ridge Regression]\label{exa:6.1-ridge} 
Consider $x=(y,z)$ and $\X\subseteq \mathbb B(r_x)$, $\mathcal F=\mathbb B(r_f)$ ($\mathbb B(r)$ denotes a centered ball with radius $r$.)
$$\varphi(f;x)=\frac{1}{2}(y-z^T f)^2+\frac{\lambda}{2}\|f\|^2\,.$$
Then $\varphi(\cdot;x)$ satisfies \Cref{asm:6.1-smooth_convex} with $\gamma=\lambda$, $C_0=r_x^2(1+r_f)+\lambda r_f$, and $\beta=r_x^2+\lambda$. \end{example}

In practice, the objective function $\varphi(\cdot,;x)$ usually is not strongly convex without a regularizing penalty term such as the ridge penalty.  It is still possible to establish similar stability results for SGD if $\mathbb E_X\varphi(\cdot;X)$ is strongly convex.  We provide a detailed example in \Cref{subsec:6.4-stab-online} for the sieve SGD estimator introduced in \Cref{subsec:3.2-online-model-selection}.  

The main consequence of \Cref{asm:6.1-smooth_convex} is the following contractivity result of SGD.
\begin{proposition}[Contractivity of SGD]
    \label{pro:6.1-contractive-sgd}
    Assume $\varphi$ satisfies parts (a), (c) of \Cref{asm:6.1-smooth_convex}, and $\alpha\in (0,2/(\beta+\gamma))$. 
    The SGD update mapping $G_{\varphi,\alpha,x}:\mathcal F\mapsto\mathcal F$
    $$
    G_{\varphi,\alpha,x}(f) = f -\alpha\dot\varphi(f;x)
    $$
    satisfies
    \begin{equation}\label{eq:6.1-sgd-contractivity}
    \|G_{\varphi,\alpha,x}(u)-G_{\varphi,\alpha,x}(v)\|\le \left(1-\frac{\beta\gamma}{\beta+\gamma}\alpha\right)\|u-v\|\,.
    \end{equation}
\end{proposition}
The proof of \Cref{pro:6.1-contractive-sgd}, though elementary, is non-trivial.  We will not provide a proof here.  See the bibliographic notes in \Cref{subsec:6.6-bib} for further references.  However, a one-line proof can be obtained, with a different contraction factor, if we assume that $\varphi(\cdot,x)$ is twice differentiable: Let $\ddot{\varphi}(\cdot,x)$ be the Hessian matrix, which is lower bounded by $\gamma I$ because of strong convexity. Then, by the mean value theorem we have
$G_{\varphi,\alpha,x}(u)-G_{\varphi,\alpha,x}(v) = (I-\alpha\ddot\varphi(w,x))(u-v)$
for some $w\in[u,v]$, where $[u,v]$ denotes the line segment joining $u$ and $v$.

\begin{theorem}[First-order stability of SGD]\label{thm:6.1-first-order-stab-sgd}
    Under \Cref{asm:6.1-smooth_convex}, if the learning rate satisfies $\alpha_t=\beta^{-1}t^{-a}$ for an $a\in(0,1)$ and
    $$\frac{\gamma}{\beta+\gamma}\ge \frac{a(1-a)}{1-2^{-(1-a)}}\frac{\log n}{n^{1-a}}\,,$$
    then for any $i\in [n]$ we have $$\left\|\nabla_i \hat f_n\right\|\le \frac{2^{1+a}C_0}{\beta}n^{-a}\,.$$
\end{theorem}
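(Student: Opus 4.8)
The plan is to track how a single perturbation $\nabla_i$ propagates through the SGD iterates and is damped by the contractivity from \Cref{pro:6.1-contractive-sgd}. First I would fix an index $i\in[n]$ and compare two runs of SGD: the original one producing $\hat f_n$, and the perturbed one producing $\hat f_n^i$, which uses the same data except that $X_i$ is replaced by $X_i'$. Write $\delta_t = \|\hat f_t - \hat f_t^i\|$. For $t < i$ the two runs are identical so $\delta_t = 0$; at step $t=i$ the two updates use different sample points, and since $\varphi(\cdot;x)$ is $C_0$-Lipschitz we have $\|\dot\varphi(\cdot;x)\|\le C_0$, so $\delta_i = \alpha_i\|\dot\varphi(\hat f_{i-1};X_i) - \dot\varphi(\hat f_{i-1};X_i')\|\le 2\alpha_i C_0$. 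For every subsequent step $t > i$, both runs apply the \emph{same} update map $G_{\varphi,\alpha_t,X_t}$ to their respective iterates, so \Cref{pro:6.1-contractive-sgd} gives $\delta_t \le (1 - \frac{\beta\gamma}{\beta+\gamma}\alpha_t)\delta_{t-1}$, provided $\alpha_t < 2/(\beta+\gamma)$, which holds since $\alpha_t = \beta^{-1}t^{-a}\le\beta^{-1}\le 2/(\beta+\gamma)$.

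Iterating this recursion yields
\begin{align*}
\delta_n \le 2\alpha_i C_0 \prod_{t=i+1}^n\left(1 - \frac{\beta\gamma}{\beta+\gamma}\alpha_t\right) \le 2\alpha_i C_0 \exp\left(-\frac{\beta\gamma}{\beta+\gamma}\sum_{t=i+1}^n \alpha_t\right)\,,
\end{align*}
using $1-x\le e^{-x}$. Now $\alpha_i = \beta^{-1}i^{-a}$ and $\sum_{t=i+1}^n \alpha_t = \beta^{-1}\sum_{t=i+1}^n t^{-a} \ge \beta^{-1}\int_{i+1}^{n+1} s^{-a}\,ds = \frac{1}{\beta(1-a)}\big((n+1)^{1-a} - (i+1)^{1-a}\big)$. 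The worst case over $i$ for the bound $\delta_n$ is a trade-off: small $i$ makes $\alpha_i$ large but the exponential damping long; large $i$ (close to $n$) makes $\alpha_i$ small, $\alpha_n = \beta^{-1}n^{-a}$, but almost no damping. I would show that in all cases $\delta_n \le \frac{2^{1+a}C_0}{\beta}n^{-a}$: for $i$ near $n$ the factor $\alpha_i C_0 \le \beta^{-1}C_0 n^{-a}$ directly (up to the constant absorbing $i$ vs $n$ within a factor of $2^a$ since $i\ge n/2$ say), while for smaller $i$ the exponential term must be shown to compensate the larger $\alpha_i$. The hypothesis $\frac{\gamma}{\beta+\gamma}\ge \frac{a(1-a)}{1-2^{-(1-a)}}\frac{\log n}{n^{1-a}}$ is exactly what is needed so that the exponent $\frac{\beta\gamma}{\beta+\gamma}\cdot\frac{1}{\beta(1-a)}\big(n^{1-a}-i^{1-a}\big)$ dominates $\log(i^{-a}/n^{-a}) = a\log(n/i)$ whenever $i \le n/2$; the constant $1-2^{-(1-a)}$ arises from bounding $n^{1-a} - i^{1-a}$ below by a multiple of $n^{1-a}$ on the dyadic range $i\in(n/2^{k+1}, n/2^k]$ and summing the geometric-type estimate.

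Finally, since the loss stability we want is $\|\nabla_i\hat f_n\| = \delta_n$ (the statement is phrased directly in terms of $\nabla_i\hat f_n$), this completes the argument. The main obstacle is the case analysis over $i$: one must carefully verify that the stated lower bound on $\gamma/(\beta+\gamma)$ is sufficient to make the exponential damping beat the growth of $\alpha_i$ uniformly over all $i\in[n]$, in particular over the dyadic blocks where $i$ ranges over $(n/2^{k+1}, n/2^k]$. This is a routine but slightly delicate real-analysis estimate; everything else (the propagation recursion, the use of contractivity, the Lipschitz bound at the perturbed step) is mechanical given \Cref{pro:6.1-contractive-sgd} and \Cref{asm:6.1-smooth_convex}. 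One should also double-check the edge effects when $i = n$ (no damping factors at all, $\delta_n = 2\alpha_n C_0 = 2\beta^{-1}C_0 n^{-a}\le 2^{1+a}\beta^{-1}C_0 n^{-a}$, fine) and when $i$ is very small (where the exponential must carry the entire burden).
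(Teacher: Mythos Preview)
Your proposal is correct and follows essentially the same route as the paper: bound the perturbation at step $i$ by $2C_0\alpha_i$ via the Lipschitz property, iterate the contractivity of \Cref{pro:6.1-contractive-sgd}, exponentiate, and split at $i=n/2$. One simplification worth noting: the dyadic decomposition you sketch for $i\le n/2$ is unnecessary---the paper simply bounds $i^{-a}\le 1$ on that range and observes $(n+1)^{1-a}-(i+1)^{1-a}\ge(1-2^{-(1-a)})(n+1)^{1-a}$ directly from $i+1\le(n+1)/2$, so the hypothesis gives exponent $\ge a\log n$ and hence $\delta_n\le\tfrac{2C_0}{\beta}n^{-a}$ in one step; the constant $1-2^{-(1-a)}$ comes from this single split, not from summing over dyadic blocks.
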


The intuition behind \Cref{thm:6.1-first-order-stab-sgd} is straightforward.
If $X_i$ is perturbed to $X_i'$, the SGD algorithm will remain the same up to time $i$, and future updates apply to the perturbed solution at time $i$. If $i$ is small, $X_i$ appears early in the SGD iteration, and the perturbation on $\hat f_i$ will shrink exponentially in later steps due to the regularity conditions in \Cref{asm:6.1-smooth_convex}.  If $i$ is large, the learning rate is already very small, limiting the perturbation from changing $X_i$ to $X_i'$.

In classical theoretical convex stochastic approximation, the most popular learning rates are $a\in(1/2,1]$, ensuring $\sum_t\alpha_t= \infty$ and $\sum_t \alpha_t^2<\infty$ \citep{nesterov2013introductory,bubeck2015convex}.  The range of $a$ allowed by \Cref{thm:6.1-first-order-stab-sgd} covers $\alpha\in(1/2,1)$ but not $\alpha=1$. This is due to the much weaker contraction upper bound, since $\sum_{i=1}^t i^{-a}$ grows polynomially in $t$ when $a<1$ while only logarithmically when $a=1$. A potential sharper stability result for $a=1$, or showing its absence, remains an open problem.

As mentioned in the intuition above, the proof of \Cref{thm:6.1-first-order-stab-sgd}
is a direct consequence of the following lemma, which establishes the multiple-step contractivity of SGD updates under \Cref{asm:6.1-smooth_convex}.
\begin{lemma}\label{lem:6.1-first-order-error}
Under \Cref{asm:6.1-smooth_convex} and the conditions in \Cref{thm:6.1-first-order-stab-sgd}, we have for all $t\ge i$
$$
\left\|\nabla_i\hat f_t\right\|\le\frac{2C_0}{\beta}i^{-a}\exp\left[-\frac{\gamma}{(1-a)(\beta+\gamma)}\left\{(t+1)^{1-a}-(i+1)^{1-a}\right\}\right]\,.
$$
\end{lemma}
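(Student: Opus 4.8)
\textbf{Proof proposal for Lemma~\ref{lem:6.1-first-order-error}.}

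The plan is to set up a one-step recursion for $\|\nabla_i\hat f_t\|$ and then unroll it into a product, which we bound by the claimed exponential. First I would fix $i$ and consider the two SGD trajectories: the original $(\hat f_t)$ driven by $(X_1,\dots,X_n)$ and the perturbed one $(\hat f_t^i)$ driven by $\D_n^i$, which replaces $X_i$ by $X_i'$. Since the two sequences agree for $t<i$, we have $\nabla_i\hat f_t=0$ for $t<i$. At step $t=i$ the update gives $\hat f_i-\hat f_i^i=\hat f_{i-1}-\alpha_i\dot\varphi(\hat f_{i-1};X_i)-\hat f_{i-1}+\alpha_i\dot\varphi(\hat f_{i-1};X_i')=-\alpha_i[\dot\varphi(\hat f_{i-1};X_i)-\dot\varphi(\hat f_{i-1};X_i')]$, whose norm is at most $2\alpha_i C_0$ by the $C_0$-Lipschitz bound on $\varphi(\cdot;x)$ (which controls $\|\dot\varphi\|\le C_0$). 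With $\alpha_i=\beta^{-1}i^{-a}$ this is $\le (2C_0/\beta)i^{-a}$, matching the prefactor in the statement.

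For $t>i$ both trajectories are driven by the \emph{same} data point $X_{t}$, so the $t$-th update is $\hat f_{t}=G_{\varphi,\alpha_{t},X_{t}}(\hat f_{t-1})$ and $\hat f_{t}^i=G_{\varphi,\alpha_{t},X_{t}}(\hat f_{t-1}^i)$, with the identical map. Then Proposition~\ref{pro:6.1-contractive-sgd} (applicable once we check $\alpha_t<2/(\beta+\gamma)$, which holds for all $t\ge 1$ since $\alpha_t\le\beta^{-1}\le 2/(\beta+\gamma)$) yields
$$
\|\nabla_i\hat f_{t}\|\le\Bigl(1-\frac{\beta\gamma}{\beta+\gamma}\alpha_{t}\Bigr)\|\nabla_i\hat f_{t-1}\|
=\Bigl(1-\frac{\gamma}{\beta+\gamma}t^{-a}\Bigr)\|\nabla_i\hat f_{t-1}\|\,.
$$
Iterating from $t=i$ to the current index gives $\|\nabla_i\hat f_t\|\le\frac{2C_0}{\beta}i^{-a}\prod_{s=i+1}^{t}\bigl(1-\frac{\gamma}{\beta+\gamma}s^{-a}\bigr)$. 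Using $1-x\le e^{-x}$ turns the product into $\exp\bigl(-\frac{\gamma}{\beta+\gamma}\sum_{s=i+1}^{t}s^{-a}\bigr)$, and the integral comparison $\sum_{s=i+1}^{t}s^{-a}\ge\int_{i+1}^{t+1}u^{-a}\,du=\frac{1}{1-a}\{(t+1)^{1-a}-(i+1)^{1-a}\}$ produces exactly the exponent in the lemma. Deriving Theorem~\ref{thm:6.1-first-order-stab-sgd} from this is then just the special case $i\in[n]$, $t=n$: the worst case over $i$ is balanced between small $i$ (strong contraction over many steps) and $i\asymp n$ (weak contraction but small learning rate); the hypothesis $\frac{\gamma}{\beta+\gamma}\ge\frac{a(1-a)}{1-2^{-(1-a)}}\frac{\log n}{n^{1-a}}$ is precisely what guarantees the contraction accumulated between $i$ and $n$ dominates the $(i/n)^{-a}$ growth of the prefactor, leaving the uniform bound $\frac{2^{1+a}C_0}{\beta}n^{-a}$.

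I expect the main obstacle to be the bookkeeping in the last step: showing that $i^{-a}\exp\bigl[-\frac{\gamma}{(1-a)(\beta+\gamma)}\{(n+1)^{1-a}-(i+1)^{1-a}\}\bigr]$ is maximized (up to constants) at $i\asymp n$ and equals $O(n^{-a})$ there, uniformly in $i\in[n]$. This requires splitting into the regime where $i$ is a constant fraction of $n$ — where $(n+1)^{1-a}-(i+1)^{1-a}$ may be small but $i^{-a}\asymp n^{-a}$ directly — and the regime $i\le n/2$, where one checks that the factor $\exp[-\frac{\gamma}{(1-a)(\beta+\gamma)}\{(n+1)^{1-a}-(i+1)^{1-a}\}]$ is small enough (here the $(1-2^{-(1-a)})$ term quantifies $(n+1)^{1-a}-(n/2+1)^{1-a}\gtrsim (1-2^{-(1-a)})n^{1-a}$) to absorb the at-most-$n^a$-fold increase of $i^{-a}$ relative to $n^{-a}$; this is exactly where the stated lower bound on $\gamma/(\beta+\gamma)$ enters, and where the factor $2^{1+a}$ in the conclusion comes from. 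The strong-convexity and smoothness hypotheses are used only through Proposition~\ref{pro:6.1-contractive-sgd} and the $C_0$-Lipschitz bound, so no further structural input on $\varphi$ is needed.
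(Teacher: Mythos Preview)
Your proposal is correct and follows essentially the same route as the paper: bound the initial perturbation $\|\nabla_i\hat f_i\|\le 2C_0\alpha_i$ via the Lipschitz property, then iterate the contractivity estimate of Proposition~\ref{pro:6.1-contractive-sgd} for $t>i$, convert the product to an exponential via $1-x\le e^{-x}$, and finish with the integral comparison for $\sum_{s=i+1}^t s^{-a}$. Your additional discussion of how Theorem~\ref{thm:6.1-first-order-stab-sgd} follows (splitting $i\le n/2$ versus $i>n/2$) is also exactly the paper's argument.
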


\begin{proof}
    [Proof of \Cref{lem:6.1-first-order-error}]
    By construction,
$\|\nabla_i\hat f_i\|= \alpha_i\|\dot\varphi(\hat f_{i-1};X_i)-\dot\varphi(\hat f_{i-1};X_i')\|\le 2C_0\alpha_i = \frac{2C_0}{\beta}i^{-a}$.

By contractivity of SGD (\Cref{pro:6.1-contractive-sgd}), for all $t > i$,
\begin{align*}
    \|\nabla_i\hat f_t\|\le & \frac{2C_0}{\beta}i^{-a}\prod_{j=i+1}^t\left(1-\frac{\beta\gamma}{\beta+\gamma}\alpha_j\right)\\
    \le &\frac{2C_0}{\beta}i^{-a} \exp\left(-\frac{\beta\gamma}{\beta+\gamma}\sum_{j=i+1}^t \alpha_j\right)\\
    = &\frac{2C_0}{\beta}i^{-a} \exp\left(-\frac{\gamma}{\beta+\gamma}\sum_{j=i+1}^t j^{-a}\right)\\
    \le &\frac{2C_0}{\beta}i^{-a} \exp\left(-\frac{\gamma}{\beta+\gamma}\int_{x=i+1}^{t+1} x^{-a}dx\right)\\
    = & \frac{2C_0}{\beta}i^{-a}\exp\left[-\frac{\gamma}{(1-a)(\beta+\gamma)}\left\{(t+1)^{1-a}-(i+1)^{1-a}\right\}\right]\,.\qedhere
\end{align*}

\end{proof}
    \begin{proof}[Proof of \Cref{thm:6.1-first-order-stab-sgd}]
    If $i>n/2$, $$\|\nabla_i \hat f_n\|\le \|\nabla_i\hat f_{i}\|\le \frac{(n/2)^{-a}}{\beta}2C_0\,.$$
    If $i\le n/2$, according to \Cref{lem:6.1-first-order-error} with $t=n$ we have
    \begin{align*}
    \|\nabla_i\hat f_n\|\le & \frac{2C_0}{\beta}\exp\left[-\frac{\gamma}{(1-a)(\beta+\gamma)}\left\{(n+1)^{1-a}-(i+1)^{1-a}\right\}\right]\\
    \le &\frac{2C_0}{\beta}\exp\left[-\frac{\gamma}{(1-a)(\beta+\gamma)}(n+1)^{1-a}(1-2^{-(1-a)})\right]\\
    \le & \frac{2C_0}{\beta}\exp(-a\log n)\,.
    \qedhere
    \end{align*}
\end{proof}

\begin{remark}[Extension to Multi-epoch SGD]
    \label{rem:6.1-multi-epoch}
    In practice, SGD is more commonly implemented in multiple epochs.  The results and proof technique of single-epoch SGD presented above may be useful in establishing the stability of multi-epoch SGD algorithms.  For a simple example, consider a second epoch using the same learning rate sequence. Let $\hat f_t^{(e)}$ denote the estimate $\hat f$ at step $t$ in epoch $e$ with $e\in\{1,2\}$.
    Let $\hat f_t^{(e),i}$ be the corresponding estimate with $X_i$ replaced by $X_i'$ in both epochs, and $\tilde f_t^{(e),i}$ the estimate with $X_i$ in the first epoch and $X_i'$ in the second epoch.
    Then
    $\|\nabla_i \hat f_n^{(2)}\|\le \|\hat f_n^{(2)}-\tilde f_n^{(2),i}\|+\|\tilde f_n^{(2),i}-\hat f_n^{(2),i}\|$.
    \Cref{thm:6.1-first-order-stab-sgd} implies that $\|\hat f_n^{(2)}-\tilde f_n^{(2),i}\|\lesssim n^{-\alpha}$, and contractivity of SGD implies that $\|\tilde f_n^{(2),i}-\hat f_n^{(2),i}\|\le \|\nabla_i \hat f_n^{(1)}\|\lesssim n^{-a}$. Thus we conclude $\nabla_i \hat f_n^{(2)}\lesssim n^{-a}$.  The same argument extends to any finite number of epochs.  A more refined stability analysis of multi-epoch SGD under general learning-rate schemes is an interesting direction for future work.
\end{remark}


\subsection{Second-Order Stability}\label{subsec:6.2-stab-sgd-2}
The second-order stability concerns the quantity $\nabla_i\nabla_j\ell(X_0,\D_n)$ for $i\neq j\in[n]$, and is used in the deterministic centering CLT in \Cref{subsec:4.2-cv-clt-det}.  Although the discussion in \Cref{subsec:4.2-cv-clt-det} emphasizes that it is more important to compare $\nabla_i\nabla_j R(\D_n)$ with $\nabla_i R(\D_n)$,  an upper bound on $\nabla_i\nabla_j\ell(X_0,\D_n)$ would still be useful, as it provides an upper bound on $\nabla_i\nabla_j R(\D_n)$.
Now we control $\nabla_i\nabla_j\hat f$, which, by construction of SGD, equals $\nabla_j\nabla_i\hat f$. 

We first show how to control the second-order loss stability through the second-order parameter stability.
\begin{proposition}
    \label{pro:6.2-2nd-order-parameter-to-loss}
    If the loss function $\ell(x,f):\X\times\mathcal F\mapsto \mathbb R$ 
    is $c_0$-Lipschitz, and $\dot\ell(x,\cdot)$ is $c_1$-Lipschitz for all $x\in\X$,
    then
    $$|\nabla_i\nabla_j\ell(x,\hat f_n)|\le c_0\|\nabla_i\nabla_j\hat f_n\|+2c_1\epsilon_{n,i,j}^2\,,$$
    where $\epsilon_{n,i,j}=\max\{\|\nabla_j\hat f_n\|\,,~\|\nabla_i\hat f_n^j\|\,,~\|\nabla_j\hat f_n^i\|\}$, and
     $\hat f_n^i$ ($\hat f_n^j$) denotes the estimate obtained with $X_i$ ($X_j$) replaced by its i.i.d. copy.
\end{proposition}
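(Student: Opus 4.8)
The plan is to rewrite $\nabla_i\nabla_j\ell(x,\hat f_n)$ as a second-order finite difference of the smooth map $f\mapsto\ell(x,f)$ evaluated at four perturbed fits, and then to bound the Taylor remainders using the two Lipschitz hypotheses. I would write $f_{00}=\hat f_n$, $f_{10}=\hat f_n^i$, $f_{01}=\hat f_n^j$, and let $f_{11}=\hat f_n^{ij}$ denote the estimate obtained by replacing both $X_i$ and $X_j$ with their i.i.d.\ copies. By the definition of the perturb-one operator in \eqref{eq:2.2-nabla},
\[
\nabla_i\nabla_j\ell(x,\hat f_n)=\ell(x,f_{00})-\ell(x,f_{01})-\ell(x,f_{10})+\ell(x,f_{11}),
\]
and the increments obey the exact identities $f_{00}-f_{01}=\nabla_j\hat f_n$, $f_{10}-f_{11}=\nabla_j\hat f_n^i$, $f_{01}-f_{11}=\nabla_i\hat f_n^j$, and $\bigl(f_{00}-f_{01}\bigr)-\bigl(f_{10}-f_{11}\bigr)=\nabla_i\nabla_j\hat f_n$. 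These four identities are the only place where the structure of $\hat f_n$ enters; everything else is analysis of $\ell(x,\cdot)$.

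\textbf{First-order expansion of the two inner differences.} Since $\dot\ell(x,\cdot)$ is $c_1$-Lipschitz, integrating $\dot\ell$ along the segment joining the two points gives the quadratic bound $|\ell(x,u)-\ell(x,v)-\langle\dot\ell(x,v),u-v\rangle|\le\tfrac{c_1}{2}\|u-v\|^2$. I would apply this to the pair $(f_{00},f_{01})$ with base point $f_{01}$, and to $(f_{10},f_{11})$ with base point $f_{11}$, obtaining
\[
\nabla_i\nabla_j\ell(x,\hat f_n)=\langle\dot\ell(x,f_{01}),\nabla_j\hat f_n\rangle-\langle\dot\ell(x,f_{11}),\nabla_j\hat f_n^i\rangle+r,
\]
with $|r|\le\tfrac{c_1}{2}\bigl(\|\nabla_j\hat f_n\|^2+\|\nabla_j\hat f_n^i\|^2\bigr)\le c_1\epsilon_{n,i,j}^2$ by the definition of $\epsilon_{n,i,j}$. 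Then I would split the linear part as $\langle\dot\ell(x,f_{01}),\,\nabla_j\hat f_n-\nabla_j\hat f_n^i\rangle+\langle\dot\ell(x,f_{01})-\dot\ell(x,f_{11}),\,\nabla_j\hat f_n^i\rangle$. In the first term, $\nabla_j\hat f_n-\nabla_j\hat f_n^i=\nabla_i\nabla_j\hat f_n$ and $\|\dot\ell(x,\cdot)\|\le c_0$ (because $\ell(x,\cdot)$ is $c_0$-Lipschitz), so its absolute value is at most $c_0\|\nabla_i\nabla_j\hat f_n\|$; in the second term, $\|\dot\ell(x,f_{01})-\dot\ell(x,f_{11})\|\le c_1\|f_{01}-f_{11}\|=c_1\|\nabla_i\hat f_n^j\|$, so by Cauchy--Schwarz it is at most $c_1\|\nabla_i\hat f_n^j\|\,\|\nabla_j\hat f_n^i\|\le c_1\epsilon_{n,i,j}^2$. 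Adding the remainder bound yields $|\nabla_i\nabla_j\ell(x,\hat f_n)|\le c_0\|\nabla_i\nabla_j\hat f_n\|+2c_1\epsilon_{n,i,j}^2$.

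\textbf{On the main difficulty.} There is no deep obstacle; the proof is essentially bookkeeping of first-order Taylor remainders. The one genuinely substantive choice is which of the four corners serves as the base point of each expansion: expanding the inner differences around $f_{01}$ and $f_{11}$ (rather than around $f_{00}$) is precisely what makes exactly the three quantities $\|\nabla_j\hat f_n\|$, $\|\nabla_i\hat f_n^j\|$, $\|\nabla_j\hat f_n^i\|$ appear — and, e.g., not $\|\nabla_i\hat f_n\|$ — while keeping the leading constant equal to $2$; a different base point would give an equivalent but differently shaped bound. It is also worth stressing that the displayed decomposition is an \emph{exact} identity, so smallness of the second-order parameter term $\nabla_i\nabla_j\hat f_n$ is not required for the estimate to hold and is instead supplied separately, via the second-order SGD contraction bounds of this subsection, when the proposition is invoked for the deterministic-centering CLT.
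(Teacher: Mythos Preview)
Your proof is correct and follows essentially the same strategy as the paper: linearize the two inner first differences, then split the resulting linear part into a $\nabla_i\nabla_j\hat f_n$-term (bounded via $\|\dot\ell\|\le c_0$) and a gradient-difference cross term (bounded via the $c_1$-Lipschitz property of $\dot\ell$). The only technical variation is that you use Taylor's theorem with the explicit quadratic remainder, whereas the paper applies the mean value theorem to get intermediate points $\tilde f\in[\hat f_n,\hat f_n^i]$, $\tilde f'\in[\hat f_n^j,\hat f_n^{ij}]$ and then bounds $\|\tilde f-\tilde f'\|\le 2\epsilon_{n,i,j}$; both routes arrive at exactly the same constant $2c_1$.
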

One example of such $\epsilon_{n,i,j}$ is given in \Cref{lem:6.1-first-order-error}, which implies that $\epsilon_{n,i,j}^2$ has order $n^{-2a}$.  The main result in this subsection is to show that $\|\nabla_i\nabla_j\hat f_n\|$ can also be bounded by $n^{-2a}$ up to a logarithmic factor.

\begin{proof}
    [Proof of \Cref{pro:6.2-2nd-order-parameter-to-loss}]
    Let $\hat f_n^{ij}$ be the estimate obtained with $(X_i,X_j)$ replaced by their i.i.d. copies.
    By the mean value theorem, there exists $\tilde f\in[\hat f_n,~\hat f_n^i]$ and $\tilde f'\in[\hat f_n^j,~\hat f_n^{ij}]$ such that
    \begin{align*}
    |\nabla_i\nabla_j\ell(x,\hat f_n)| = & \left|\langle\hat f_n-\hat f_n^i\,,~\dot\ell(x,\tilde f)\rangle-\langle \hat f_n^j-\hat f_n^{ij}\,,~\dot\ell(x,\tilde f')\rangle\right|\\
    =& \left|\langle\nabla_i\nabla_j \hat f_n\,,~\dot\ell(x,\tilde f)\rangle +
    \langle \nabla_i \hat f_n^j\,,~ \dot\ell(x,\tilde f)-\dot\ell(x,\tilde f')\rangle\right|\\
    \le & c_0\|\nabla_i\nabla_j\hat f_n\| + c_1\|\nabla_i\hat f_n^j\| \cdot\|\tilde f-\tilde f'\|\,.
    \end{align*}
    The claimed result follows from the fact that
    \begin{align*}
        \|\tilde f-\tilde f'\|\le & \max\left\{\|\hat f_n-\hat f_n^j\|\,,~\|\hat f_n-\hat f_n^{ij}\|\,,~\|\hat f_n^i-\hat f_n^j\|\,,\|\hat f_n^i-\hat f_n^{ij}\|\right\}\le  2\epsilon_{n,i,j}\,. \qedhere
    \end{align*}
\end{proof}

Our main additional condition for the second-order stability of SGD estimates is the smoothness of Hessian of $\varphi(\cdot;x)$.
\begin{assumption}\label{asm:6.2-smooth-hessian}
 The Hessian matrix of $\varphi(\cdot;x)$, denoted $\ddot\varphi(\cdot;x)$, is $C_2$-Lipschitz for a constant $C_2$ and all $x$:
 $$
 \|\ddot\varphi(f;x)-\ddot\varphi(f';x)\|_{\rm op}\le C_2\|f-f'\|
 $$
 for all $f\,,f'\in \mathcal F$ and $\|\cdot\|_{\rm op}$ denotes the operator norm.
\end{assumption}
It may be possible to avoid such a smooth-Hessian condition using a similar co-coerciveness argument as in the proof of \Cref{pro:6.1-contractive-sgd}.  Here we work under \Cref{asm:6.2-smooth-hessian} for simplicity and transparency.

\begin{example}\label{exa:6.2-hessian} Ridge logistic regression: $x=(y,z)\in \mathbb B(r_x)$, and $ f\in \mathbb B(r_f)$.
$$\varphi( f;x)=-y (z^T f)+\log(1+e^{z^T f})+\frac{1}{2}\lambda\| f\|^2\,.$$
Then $\varphi$ satisfies \Cref{asm:6.1-smooth_convex} and \Cref{asm:6.2-smooth-hessian} with $\gamma=\lambda$, $C_0=r_x+\lambda r_ f$, $\beta=r_x^2/4+\lambda$, and $C_2=r_x^3$.
\end{example}

\begin{theorem}[Second-Order Stability of SGD]\label{thm:6.2-second-order-stab-sgd}
    Under the same assumptions as in \Cref{thm:6.1-first-order-stab-sgd}, assume in addition that $\varphi$ satisfies \Cref{asm:6.2-smooth-hessian} such that $\alpha_t\gamma\le 1$ for all $t$, then
    $$\|\nabla_i\nabla_j\hat f\|\le cn^{-2a}\log n\,,$$
    for a constant $c$ depending on $(a,\beta,\gamma,C_0,C_2)$.
\end{theorem}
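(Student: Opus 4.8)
The plan is to couple four SGD trajectories, run on the datasets $\D_n$, $\D_n^i$, $\D_n^j$, $\D_n^{ij}$ obtained by replacing $X_i$, $X_j$, or both by i.i.d.\ copies, and to track $E_t\coloneqq\|\nabla_i\nabla_j\hat f_t\|=\|(\hat f_t-\hat f_t^j)-(\hat f_t^i-\hat f_t^{ij})\|$. Assume without loss of generality $i<j$. Since $X_j$ has not been used at times $t<j$, the trajectories pair up ($\hat f_t^j=\hat f_t$, $\hat f_t^{ij}=\hat f_t^i$), so $E_t=0$ for $t<j$. The core of the argument is a one-step recursion. At $t=j$ the update injects the discrepancy: a direct computation gives $\nabla_i\nabla_j\hat f_j=-\alpha_j(A-A')\,\nabla_i\hat f_{j-1}$ with $A,A'$ averaged Hessians of $\varphi(\cdot;X_j),\varphi(\cdot;X_j')$, whence $E_j\le 2\beta\alpha_j\|\nabla_i\hat f_{j-1}\|$ using $\|\ddot\varphi\|_{\rm op}\le\beta$. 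For $t>j$ all four trajectories share the update map $g_t(f)=f-\alpha_t\dot\varphi(f;X_t)$; writing $g_t(u)-g_t(v)=(I-\alpha_t H)(u-v)$ with $H$ an averaged Hessian along $[v,u]$ and taking second differences yields
\begin{equation*}
\nabla_i\nabla_j\hat f_t=\bigl(I-\alpha_t H_t^{(1)}\bigr)\nabla_i\nabla_j\hat f_{t-1}+\alpha_t\bigl(H_t^{(2)}-H_t^{(1)}\bigr)\nabla_j\hat f_{t-1}^i\,,
\end{equation*}
where $H_t^{(1)}$, $H_t^{(2)}$ are averaged Hessians of $\varphi(\cdot;X_t)$ along $[\hat f_{t-1}^j,\hat f_{t-1}]$ and $[\hat f_{t-1}^{ij},\hat f_{t-1}^i]$. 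From $\gamma I\preceq\ddot\varphi\preceq\beta I$ and $\alpha_t\beta\le1$ we get $\|I-\alpha_t H_t^{(1)}\|_{\rm op}\le 1-\alpha_t\gamma$ (cf.\ \Cref{pro:6.1-contractive-sgd}), and from \Cref{asm:6.2-smooth-hessian} we get $\|H_t^{(2)}-H_t^{(1)}\|_{\rm op}\le C_2\max\{\|\nabla_i\hat f_{t-1}\|,\|\nabla_i\hat f_{t-1}^j\|\}$, because matched points on the two segments differ by a convex combination of $\nabla_i\hat f_{t-1}$ and $\nabla_i\hat f_{t-1}^j$.

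Next I would feed in the first-order bounds. Each of $\hat f$, $\hat f^i$, $\hat f^j$ is an SGD run on i.i.d.\ data, so \Cref{lem:6.1-first-order-error} applies to all of them, giving $\|\nabla_i\hat f_{t-1}\|$, $\|\nabla_i\hat f_{t-1}^j\|$, and $\|\nabla_j\hat f_{t-1}^i\|$ bounds of the form $\tfrac{2C_0}{\beta}(\cdot)^{-a}\exp[-c\{t^{1-a}-(\cdot+1)^{1-a}\}]$ with $c=\gamma/((1-a)(\beta+\gamma))$. The source term at step $t$ is thus at most $\alpha_t C_2$ times a product of two such factors, i.e.\ of order $t^{-a}i^{-a}j^{-a}\exp[-c\{2t^{1-a}-(i+1)^{1-a}-(j+1)^{1-a}\}]$. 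Unrolling the recursion from $t=j$ to $t=n$ and using $\prod_{\ell=t+1}^n(1-\alpha_\ell\gamma)\le\exp[-c'\{(n+1)^{1-a}-(t+1)^{1-a}\}]$ for a constant $c'>0$, I would obtain
\begin{equation*}
E_n\;\lesssim\;i^{-a}j^{-a}\,e^{-\tilde c\,(n+1)^{1-a}}\Bigl(e^{\tilde c\,(i+1)^{1-a}}+e^{\tilde c\,((i+1)^{1-a}+(j+1)^{1-a})}\textstyle\sum_{t=j+1}^{n}t^{-a}e^{-\tilde c\,t^{1-a}}\Bigr)
\end{equation*}
with $\tilde c=\min(c,c')$. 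The point is that the net exponent in $t$ inside the sum is $-\tilde c\,t^{1-a}$ (the contraction contributes $+\tilde c\,t^{1-a}$, the two first-order factors contribute $-2\tilde c\,t^{1-a}$), so the sum is controlled by its $t\approx j$ endpoint, $\lesssim (\tilde c(1-a))^{-1}e^{-\tilde c\,j^{1-a}}$, via comparison with $\int t^{-a}e^{-\tilde c\,t^{1-a}}dt$; simplifying, $E_n\lesssim (\tilde c(1-a))^{-1}i^{-a}j^{-a}\exp[-\tilde c\{(n+1)^{1-a}-(i+1)^{1-a}\}]$.

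Finally I would conclude by a dichotomy on $i$. If $i\ge n/2$ then $j>i\ge n/2$ forces $i^{-a}j^{-a}\lesssim n^{-2a}$ while the exponential is at most $1$. If $i<n/2$ then $(i+1)^{1-a}\le\theta n^{1-a}$ for a fixed $\theta<1$, so the exponential is $\le\exp[-\tilde c(1-\theta)n^{1-a}]$; combined with the lower bound on $\gamma$ imposed in \Cref{thm:6.1-first-order-stab-sgd} (equivalently on $\tilde c$), this is a fixed power of $1/n$ and is $\lesssim n^{-2a}$. Together the two cases give $E_n=O(n^{-2a})$, and the extra $\log n$ in the statement absorbs the slack in the borderline regime $\gamma\asymp(\log n)/n^{1-a}$, where $1/\tilde c$ is no longer $O(1)$ and the endpoint estimate must be carried out keeping the $\gamma$-dependence explicit. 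I expect this last point --- performing the summation uniformly in $1\le i<j\le n$ and tracking the behavior as $\gamma$ approaches the threshold --- to be the main obstacle, while the recursion and its reduction to \Cref{lem:6.1-first-order-error} are routine once the coupling is in place.
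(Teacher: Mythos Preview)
Your proposal follows the same route as the paper: couple the four trajectories, derive a contractive recursion for $\|\nabla_i\nabla_j\hat f_t\|$ with source controlled via \Cref{asm:6.2-smooth-hessian} and \Cref{lem:6.1-first-order-error}, unroll, and dichotomize on $i$. Two bookkeeping differences are worth noting. By using averaged Hessians (the integral mean value theorem) rather than a single intermediate point, your Hessian-difference bound involves only $\nabla_i$-factors, so your source term is just the cross term $i^{-a}j^{-a}\delta_{t,i}\delta_{t,j}$; the paper's cruder segment-distance estimate $\|\tilde f-\tilde f'\|\le\max\{\ldots\}$ also produces an $i^{-2a}\delta_{t,i}^2$ contribution (their Term~(II)), which you avoid. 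And by retaining the factor $e^{-\tilde c t^{1-a}}$ in the summation, your endpoint estimate actually yields $E_n\lesssim c^{-1}i^{-a}j^{-a}\delta_{n,i}(c)$, hence $O(n^{-2a})$ with no logarithm when $\gamma$ is fixed; the paper's $\log n$ arises from bounding $\delta_{k-1,j}(c)\le1$ before summing $\sum_k k^{-a}$, not from the threshold regime you speculate about. That last attribution is off: at the borderline $\gamma\asymp(\log n)/n^{1-a}$ neither argument delivers $O(n^{-2a}\log n)$ uniformly, but since the theorem's constant is allowed to depend on the fixed parameter $\gamma$, this regime is not at issue.
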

The proof of \Cref{thm:6.2-second-order-stab-sgd} is lengthy and deferred to \Cref{subsec:6.5-proof}. Similar to the proof of first-order stability,  the key idea is to establish a contractivity result analogous to \eqref{eq:6.1-sgd-contractivity} for the second-order difference. This is enabled by the smoothness of the Hessian (\Cref{asm:6.2-smooth-hessian}).  A technical challenge is handling the remainder term of the first-order Taylor expansion of $\dot\varphi$, which must be tracked across all SGD iterations.

\subsection{Difference Loss Stability}\label{subsec:6.3-stab-diff}
According to \Cref{rem:5.1-difference_loss_standardize}, a technical condition needed for the validity of the difference-based CV model confidence set involves stability conditions on the standardized loss differences, which is used to establish high-dimensional Gaussian comparison for the cross-validated risk differences. This often becomes tricky because the loss differences usually have vanishing variances, and standardization means dividing the difference loss function by a vanishing standard deviation, which could make the stability condition less likely to hold.  However, a more optimistic argument is that this condition retains its original intuition: taking a $\nabla$ operator reduces the magnitude of the difference loss by more than a factor of $1/\sqrt{n}$, even though the difference loss itself may be vanishing.  In this subsection we illustrate this intuition through a simplified nonparametric regression problem using truncated basis expansion estimates.

Ignoring constant factors, we use the symbol ``$\lesssim$'' to denote ``upper bound up to a constant factor'' and ``$a\asymp b$'' for ``$a\lesssim b$ and $b\lesssim a$''.

Let $(X_i:1\le i\le n)$ be i.i.d. vectors with $X_i=(Y_i,Z_i)\in\mathbb R^1\times\mathbb R^{\infty}$, such that
\begin{equation}\label{eq:6.3-regression_example}
Y_i=\sum_{j=1}^\infty f_j Z_{ij} +\epsilon_i\,.
\end{equation}
The assumptions on the data distribution are the following.
\begin{assumption}
    \label{asm:6.3-reg-model-cond}
\begin{enumerate}
    \item $\epsilon_i$ is independent of $Z_i$, has mean zero, finite variance $\sigma_\epsilon^2$, and is uniformly bounded by a constant;
    \item  $Z_{ij}$ is mean zero, has unit variance, is uniformly bounded by a constant, and the components are mutually independent;
    \item  The regression coefficients $f_j\asymp j^{-\frac{1+a}{2}}$ for some $a>0$ for all $j\ge 1$.
\end{enumerate}
\end{assumption}
The conditions in \Cref{asm:6.3-reg-model-cond} are stronger than technically necessary and are made for a simpler presentation.
The uniform bound on $\epsilon$ and $Z$ can be relaxed to sub-Weibull tail conditions, and the independence between the entries of $Z$ can be relaxed to $\mathbb E(Z_{ij}|Z_{i1},...,Z_{i,j-1})=0$ for all $j$. The independence between $\epsilon$ and $Z$ can be relaxed in a similar fashion.
The assumed exact rate of $f_j$ in Part 3 is more stringent than the usual Sobolev ellipsoid condition which only assumes upper bound on $|f_j|$.  We need the lower bound on $|f_j|$ to track the bias and variance precisely.  More discussion can be found in \Cref{subsec:6.6-bib}. 

For each $j$, we can naturally estimate $f_j$ by
$$
\hat f_j=\frac{1}{n}\sum_{i=1}^n Y_i Z_{ij}\,.
$$
We consider a truncated estimate of $f=(f_j:j\ge 1)$. Let $J$ be a positive integer,
$$\hat f^{(J)}=(\hat f_j^{(J)}:j\ge 1)$$
where
\begin{equation}\label{eq:6.3-trunc-est}
\hat f_j^{(J)}\coloneqq \hat f_j \mathds{1}(j\le J)\,,~~\forall~j\ge 1\,.
\end{equation}
The tuning parameter $J$ controls the number of basis functions used in the truncated estimation.

The optimal choice of $J$ depends on the sample size.
For each candidate fitting procedure $r$, let $J_r$ be a positive integer varying with the sample size $n$ such that $\lim_{n\rightarrow\infty}J_{r}=\infty$ at a speed specified by the estimator $r$.  For example $J_{r}=n^{\tau_r}$, where $\{\tau_r: r\in[m]\}$ is a collection of positive numbers corresponding to different growth rates of $J_{r}$ as the sample size $n$ increases. 
We will use $\hat f^{(r)}$ to denote $\hat f^{(J_r)}$ for convenience and the estimators will be evaluated under the squared loss: $\ell((y,z),f)=(y-f(z))^2$.


Now we are ready to quantify the stability conditions for the difference loss $\ell^{(r,s)}(X_0,\D_n)$ and the difference risk $R^{(r,s)}(\D_n)$.  

\begin{proposition}[Difference Loss Stability]\label{prop:6.3-lossstab}
Assume the regression model \eqref{eq:6.3-regression_example} and the conditions for $\epsilon$, $Z$, $f$ in \Cref{asm:6.3-reg-model-cond}. If $J_r<J_s$ and $J_r/J_s$ either converges to $0$ or stay bounded away from $0$ as $n$ increases, then there is a positive constant $\kappa$ depending only on the model such that
\begin{itemize}
    \item[(a)] \begin{equation}\label{eq:6.3-first-order-verbose}
    \sqrt{n}\frac{\nabla_i\ell^{(r,s)}(X_0,\D_n)}{\sigma_n^{(r,s)}} \text{ is }
    \left[\left(\sqrt{\frac{J_sJ_r^a}{n}}+\frac{J_sJ_r^{a/2}}{n}\right)\kappa\,,~2\right]\text{-SW}\,;
\end{equation}
    \item[(b)] \begin{equation}
    \label{eq:6.3-risk-stab-verbose}
    n\frac{\nabla_i \left[R^{(r)}(\D_n)-R^{(s)}(\D_n)\right]}{\sigma_n^{(r,s)}} \text{ is }
    \left(\sqrt{\frac{J_sJ_r^a}{n}}\kappa\,,~2\right)\text{-SW}\,.
\end{equation}
\end{itemize}
\end{proposition}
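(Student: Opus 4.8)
The plan is to reduce both stability quantities to explicit low-degree polynomials in the coordinatewise errors $\hat f_j-f_j=\frac1n\sum_{l=1}^n\eta_{lj}$, where $\eta_{lj}\coloneqq Y_lZ_{lj}-f_j$, and then to track the sub-Weibull parameters through the resulting sums and products using \Cref{pro:2.5-sum-prod-sw} together with the conditional Rio inequality \Cref{lem:4.4-rio_cond_i.i.d.}. Write $\mathcal T=\{j:J_r<j\le J_s\}$ for the shell on which $\hat f^{(r)}$ and $\hat f^{(s)}$ differ, $\Delta=\hat f^{(s)}-\hat f^{(r)}$ (supported on $\mathcal T$, with $\Delta_j=\hat f_j$), and $\delta^{(\cdot)}=\hat f^{(\cdot)}-f$. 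Since each model is linear, $Y_0-\hat f^{(\cdot)}(Z_0)=\epsilon_0-\langle\delta^{(\cdot)},Z_0\rangle$, and factoring a difference of squares gives
\[
\ell^{(r,s)}(X_0,\D_n)=\langle\Delta,Z_0\rangle\bigl(2\epsilon_0-\langle\delta^{(r)}+\delta^{(s)},Z_0\rangle\bigr),
\]
while $\mathrm{Cov}(Z_0)=I$ and $\mathbb E[\epsilon_0\mid Z_0]=0$ yield $R^{(r)}(\D_n)=\sigma_\epsilon^2+\|\delta^{(r)}\|^2$, so that $R^{(r)}(\D_n)-R^{(s)}(\D_n)=-\langle\Delta,\delta^{(r)}+\delta^{(s)}\rangle=\sum_{j\in\mathcal T}f_j^2-\sum_{j\in\mathcal T}(\hat f_j-f_j)^2$. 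The first step is to pin down the normalizer: taking $\mathbb E[\cdot\mid X_0]$ in the first display and then conditioning on $Z_0$ shows $\mathrm{Var}(\ell^{(r,s)}_n(X_0))\ge 4\sigma_\epsilon^2\sum_{j\in\mathcal T}f_j^2$ (the $\epsilon_0$-linear part alone), and a Cauchy--Schwarz bound matches it up to constants, so under \Cref{asm:6.3-reg-model-cond} and the assumed relation between $J_r$ and $J_s$ one gets $\sigma_n^{(r,s)}\asymp(\sum_{j\in\mathcal T}f_j^2)^{1/2}\asymp J_r^{-a/2}$; this two-sided control is what makes normalizing a vanishing-variance difference loss legitimate.

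For part (b) I would then expand, using $\nabla_i\hat f_j=\frac1n(\eta_{ij}-\eta_{ij}')$,
\[
n\,\nabla_i\bigl[R^{(r)}(\D_n)-R^{(s)}(\D_n)\bigr]=-\frac1n\sum_{j\in\mathcal T}(\eta_{ij}-\eta_{ij}')\Bigl(2\sum_{l\ne i}\eta_{lj}+\eta_{ij}+\eta_{ij}'\Bigr).
\]
The dominant piece is $-\frac2n\sum_{l\ne i}W_l$ with $W_l=\sum_{j\in\mathcal T}(\eta_{ij}-\eta_{ij}')\eta_{lj}$; conditionally on $(X_i,X_i')$ the $W_l$ are i.i.d.\ with mean zero, and $W_l$ is a weighted sum of independent bounded variables times $Y_l$, hence sub-Weibull with scale proportional to $\|(\eta_{ij}-\eta_{ij}')_{j\in\mathcal T}\|$. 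Applying \Cref{lem:4.4-rio_cond_i.i.d.} to the sum over $l$, and then integrating out the well-concentrated order-$\sqrt{|\mathcal T|}$ frozen scale, makes $\sum_{l\ne i}W_l$ sub-Weibull with scale $\kappa\sqrt{n|\mathcal T|}$ and exponent $2$; after the $1/n$ scaling and division by $\sigma_n^{(r,s)}\asymp J_r^{-a/2}$ this is $(\kappa\sqrt{J_sJ_r^a/n},2)$-SW, using $|\mathcal T|\le J_s$. The remaining piece $-\frac1n\sum_{j\in\mathcal T}(\eta_{ij}^2-\eta_{ij}'^2)$ is sub-Weibull with scale $\lesssim|\mathcal T|/n\le J_s/n$, which is $\le\kappa\sqrt{J_sJ_r^a/n}$ once $J_s\lesssim n$; combining proves (b).

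For part (a) the plan is to apply the discrete product rule $\nabla_i(AB)=(\nabla_iA)\,B+A^i\,(\nabla_iB)$ with $A=\langle\Delta,Z_0\rangle$ and $B=2\epsilon_0-\langle\delta^{(r)}+\delta^{(s)},Z_0\rangle$, superscripts denoting evaluation on $\D_n^i$. Here $\nabla_iA=\frac1n\sum_{j\in\mathcal T}(\eta_{ij}-\eta_{ij}')Z_{0j}$ and $\nabla_iB=-\frac1n\sum_{j\le J_s}c_j(\eta_{ij}-\eta_{ij}')Z_{0j}$ with $c_j\in\{1,2\}$: conditionally on $(X_i,X_i')$ these are weighted sums of independent bounded variables, hence sub-Weibull with scales $\kappa\sqrt{|\mathcal T|}/n$ and $\kappa\sqrt{J_s}/n$; and $B$ has scale $O(1)$ because $\mathbb E\|\delta^{(r)}+\delta^{(s)}\|^2\asymp J_s/n+J_r^{-a}=O(1)$. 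The delicate factor is $A^i=\langle\Delta^i,Z_0\rangle$: given $\Delta^i$ it is sub-Gaussian with scale $\|\Delta^i\|$, and one must \emph{not} use the crude almost-sure bound $\|\Delta^i\|\le c\sqrt{|\mathcal T|}$ but instead control $\mathbb E\|\Delta^i\|^q$ by splitting $\|\Delta^i\|^2=\sum_{j\in\mathcal T}(\hat f_j^i)^2$ into its bias contribution $\sum_{j\in\mathcal T}f_j^2\asymp J_r^{-a}$ and its variance contribution $\asymp|\mathcal T|/n$, which makes $A^i$ sub-Weibull with scale $\asymp(J_r^{-a}+|\mathcal T|/n)^{1/2}$. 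Multiplying out via \Cref{pro:2.5-sum-prod-sw}, and isolating the $O(1/n)$ dependence of $A^i$ on $X_i'$ so the two factors are conditionally independent given $Z_0$, $\nabla_i\ell^{(r,s)}$ comes out sub-Weibull with scale $\lesssim\frac{\sqrt{J_s}}{n}+\bigl(J_r^{-a}+\frac{J_s}{n}\bigr)^{1/2}\frac{\sqrt{J_s}}{n}\lesssim\frac{\sqrt{J_s}}{n}+\frac{J_s}{n^{3/2}}$; multiplying by $\sqrt n$ and dividing by $\sigma_n^{(r,s)}\asymp J_r^{-a/2}$ gives exactly the scale $\sqrt{J_sJ_r^a/n}+J_sJ_r^{a/2}/n$ claimed in \Cref{prop:6.3-lossstab}, with exponent $2$.

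The main obstacle is the control of $A^i$ (equivalently of $\|\Delta^i\|$) in part (a): it is precisely the separation of the deterministic-bias and stochastic-variance contributions to $\|\Delta^i\|^2$ in $L_q$ — rather than bounding it by its worst case — that produces the second, $J_sJ_r^{a/2}/n$, term in the stated scale, and this is where \Cref{asm:6.3-reg-model-cond} on the exact polynomial decay of $f_j$ is used. The rest is bookkeeping: propagating sub-Weibull exponents through the several products while keeping the scales sharp (applying \Cref{lem:4.4-rio_cond_i.i.d.} within each block of independent coordinates so as not to inflate exponents beyond $2$), verifying the $|\mathcal T|\le J_s$ and $J_s\lesssim n$ reductions, and observing that since $Z_{ij}$ and $\epsilon_i$ are bounded while $\sum_j f_j^2<\infty$, every factor in sight is a polynomial of bounded degree in sub-Gaussian atoms.
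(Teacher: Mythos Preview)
Your proposal is correct and follows essentially the same route as the paper: the same factorization $\ell^{(r,s)}=\langle\Delta,Z_0\rangle\bigl(2\epsilon_0-\langle\delta^{(r)}+\delta^{(s)},Z_0\rangle\bigr)$, the same lower bound $\sigma_n^{(r,s)}\gtrsim\|f_{J_r,J_s}\|$, the same product-rule expansion of $\nabla_i$, and---crucially---the same bias/variance splitting of $\|\Delta\|^2$ into $\sum_{j\in\mathcal T}f_j^2$ plus a stochastic part of order $|\mathcal T|/n$, which you correctly flag as the step that produces the $J_sJ_r^{a/2}/n$ term. The only cosmetic differences are that the paper separates the $2\epsilon_0$ contribution into its own term (giving three pieces $A_1,A_2,A_3$ rather than your two), and that for part~(b) the paper organizes the sum as a martingale over the coordinate index $j$ whereas you decompose over the sample index $l$ as conditionally i.i.d.\ given $(X_i,X_i')$; both yield the same $(n^{-1/2}\sqrt{|\mathcal T|},2)$-SW control, and your version is arguably cleaner. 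One small caveat: your identification $\sigma_n^{(r,s)}\asymp J_r^{-a/2}$ is specific to the regime $J_r/J_s\to 0$; in the bounded-ratio case $\|f_{J_r,J_s}\|\asymp J_s^{-(1+a)/2}\sqrt{J_s-J_r}$, but carrying $|\mathcal T|$ rather than $J_s$ through your intermediate bounds and dividing by this $\sigma_n^{(r,s)}$ gives the stated scale just the same (the paper likewise only sketches this case).
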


The proof of \Cref{prop:6.3-lossstab} is deferred to \Cref{subsec:6.5-proof}.  The idealized analytical form of the model and the estimator enables us to track bias and variance terms precisely, as well as their perturb-one stability.

The optimal choice of $J$ with minimum risk is $J^*\asymp n^{1/(1+a)}$. Thus, for the scaling parameter on the RHS of \eqref{eq:6.3-first-order-verbose} and \eqref{eq:6.3-risk-stab-verbose} to be $o(1)$, one needs $J_sJ_r^{a}\ll n= (J^*)^{1+a}$.  This is possible if both $J_s$ and $J_r$ are no larger than $J^*$, or $J_s$ is slightly larger than $J^*$ but $J_r$ is sufficiently small. In other words, the stability condition is more likely to hold when comparing over-smoothing estimators.  This condition is violated if $J_r\ge J^*$, suggesting that the stability condition required by the CLT may not hold for under-smoothing estimators.  Such subtleties in the stability condition of the difference loss resemble the well-known behavior of the regular CV: it can eliminate inferior estimators with large bias (over-smoothing) but struggles to distinguish slightly overfitting estimators.

When $J_r/J_s$ stays bounded away from $0$, the sufficient stability condition required by the CLT becomes $J_s\ll J^*$, meaning that if the two estimators under comparison are too similar, the stability conditions can only hold for over-smoothing estimators.


\subsection{Stability of Online Algorithms}\label{subsec:6.4-stab-online}

In this subsection, we provide examples of the stability of online algorithms, specifically addressing \eqref{eq:3.3-wrv-stability} in \Cref{asm:3.3-rolling_stability}.
We focus on a simplified setting where the basis functions are given by the individual coordinates of the covariate vector $Z$, that is, $\phi_j(Z_i) = Z_{ij}$. Under this simplification, the basis expansion estimator in \eqref{eq:3.2-sieve-sgd} reduces to the setting considered in \Cref{subsec:6.3-stab-diff}.

\textbf{Notation Declaration:} In this subsection we will explicitly track the evolution of each coordinate of $\hat f$ over time.  Thus, we need to use the following notation.
\begin{itemize}
    \item $f=(f_j:j\ge 1)$ denotes the true vector of coefficients.
    \item $\hat f_i = (\hat f_{i,j}:j\ge 1)$ denotes the estimated vector of coefficients at the end of step $i$, whose $j$th coordinate is $\hat f_{i,j}$. Note that this notation $\hat f_i$ has a different meaning from the notation ``$\hat f_j$'' in \Cref{subsec:6.3-stab-diff} (where $\hat f_j$ denotes the $j$th coordinate of the estimated coefficient $\hat f$.)  Generally speaking, in this subsection the symbol ``$i$'' (and similarly ``$t$'' and ``$n$'') is used to denote a time point or sample size, whereas the symbol ``$j$'' is used to denote a coordinate of $Z$ or $f$.
    \item $\hat f_i'$ and $\hat f_{i,j}'$ correspond to the perturb-one version of $\hat f_i$ and $\hat f_{i,j}$ respectively, with the perturbed sample point clear from context.
    \item $(J_n:n\ge 1)$ is a sequence of positive numbers indicating the number of basis functions used by sieve SGD at sample size $n$. 
\end{itemize}

\paragraph{Batch truncated series estimate}
To warm up, 
first consider the truncated estimator $\hat f^{(J_n)}$ given in \eqref{eq:6.3-trunc-est}. The derivation from the previous subsection implies that 
$\mathbb E\left\{[\nabla_i\hat f_t(X_{t+1})]^2|\D_i\right\}\lesssim J_t t^{-2}$. Thus if $J_t\asymp t^{\tau}$ then \eqref{eq:3.3-wrv-stability} holds with $b=1-\tau/2$. If the true coefficient $f$ belongs to a Sobolev ellipsoid:
\begin{equation}\label{eq:6.4-sobo}
W(\varpi,L)\coloneqq\left\{f:~\sum_{j\ge 1}j^{2\varpi}f_j^2 \le L\right\}    
\end{equation}
for a pair of positive constants $(\varpi,L)$, then the minimax optimal error rate is achieved by $J_n\asymp n^{1/(1+2\varpi)}$.  Now consider two estimators with basis cardinality sequences $J_{n,1}\asymp n^{\tau_1}$ and $J_{n,2}\asymp n^{\tau_2}$ respectively, with $J_{n,1}$ being the better choice.   The sufficient stability condition for wRV to consistently select the better sequence $J_{n,1}$ is quite similar to that in \Cref{subsec:6.3-stab-diff}: $\tau_2$ must be either (i) smaller than $\tau^*=1/(1+2\varpi)$ while $\tau_1$ is equal to or slightly larger than $\tau^*$, 
or (ii) slightly larger than $\tau^*$ while $\tau_1$ is sufficiently small.  In other words, wRV, like CV, is theoretically more reliable when eliminating highly biased estimators (small $\tau_2$ versus moderate $\tau_1$), or when comparing two very different estimators (moderately large $\tau_2$ and small $\tau_1$).


\paragraph{Sieve SGD}
To illustrate a more challenging and less explored regime of online estimator stability, we 
consider the sieve SGD estimator defined in \eqref{eq:3.2-sieve-sgd}.  Due to its relatively recent introduction, theoretical analyses of convergence rates for general sieve SGD algorithms with non-optimal tuning values are not yet available. Thus, the stability analysis of sieve SGD is still an ongoing line of research.  Here we provide some initial results in this direction.

In the
simplified setting with data generated from \eqref{eq:6.3-regression_example} and basis $\phi_j(Z_i)=Z_{ij}$, the sieve SGD update \eqref{eq:3.2-sieve-sgd} reduces to
\begin{align}
    \hat f_0 = &0\,,\nonumber\\
    \hat f_{i} = & \hat f_{i-1} + \alpha_i (Y_i-\langle \hat f_{i-1},~Z_i\rangle) \Lambda_i Z_i\,,\quad i\ge 1\,,\label{eq:6.4-sieve-sgd-simple}
\end{align}
where $\alpha_i$ is the learning rate, $\Lambda_i$ is a diagonal truncation/shrinkage matrix whose $j$th diagonal entry equals $j^{-2w}\mathds{1}(j\le J_i)$, and $J_i$ is the number of basis functions used at sample size $i$.  As discussed in \Cref{subsec:3.2-online-model-selection}, the tuning parameter of sieve SGD is the entire sequence $(J_i:i\ge 1)$, which typically takes the form $J_i\asymp i^\tau$.   The multiplier $j^{-2w}$ provides coordinate-wise shrinkage, which was introduced in the original sieve SGD algorithm.

\begin{proposition}
    \label{pro:6.4-stab-sieve-sgd}
    Assume i.i.d. data from \eqref{eq:6.3-regression_example} satisfying \Cref{asm:6.3-reg-model-cond}.  Let $\{\hat f_i:i\ge 1\}$ be the sieve SGD estimate \eqref{eq:6.4-sieve-sgd-simple} with basis size $J_i=i^{\tau}$, learning rate $\alpha_i=ci^{-a}$, and shrinkage exponent $w>1/2$, such that $a+2w\tau<1$ and $c<\sum_{j\ge 1}j^{-2w}$.  Suppose $|Y_i-\langle \hat f_{i-1},~Z_i\rangle|$ is uniformly bounded by a constant almost surely for all $i\ge 1$ then we have, for any $t\ge i\ge 1$
    $$
    \mathbb E\left[\|\nabla_i \hat f_t(Z_{t+1})\|^2\bigg|\D_i\right]\lesssim t^{-2a}\,,~~\text{almost surely}\,.
    $$
\end{proposition}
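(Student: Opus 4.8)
The plan is to combine the affine structure of the sieve SGD recursion with a supermartingale-type contraction argument, in the spirit of \Cref{thm:6.1-first-order-stab-sgd} but adapted to the coordinatewise shrinkage $\Lambda_i$. Write the update \eqref{eq:6.4-sieve-sgd-simple} as $\hat f_s = M_s \hat f_{s-1} + \alpha_s Y_s \Lambda_s Z_s$ with $M_s = I - \alpha_s \Lambda_s Z_s Z_s^\top$. Fix $i$ and perturb $X_i \mapsto X_i'$. For $s<i$ the iterates agree; at step $i$ the affine term is perturbed; for $s>i$ the affine terms $\alpha_s Y_s \Lambda_s Z_s$ do not involve $X_i$ and cancel under $\nabla_i$, while the matrix $M_s$ is common. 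Hence $\nabla_i \hat f_s = M_s \nabla_i \hat f_{s-1}$ for $s>i$, so $\nabla_i \hat f_t = M_t M_{t-1}\cdots M_{i+1}\, \nabla_i \hat f_i$ for $t\ge i$, and each $\nabla_i \hat f_s$ is supported on the coordinates $\{1,\dots,J_s\}$ since $(J_s)$ is nondecreasing. Because $Z_{t+1}$ is independent of $(\D_t, X_i')$ with $\mathbb E[Z_{t+1}Z_{t+1}^\top]=I$, we have $\mathbb E[\,[\nabla_i\hat f_t(Z_{t+1})]^2 \mid \D_t, X_i'] = \|\nabla_i \hat f_t\|^2$, so it suffices to show $\mathbb E[\|\nabla_i \hat f_t\|^2 \mid \D_i] \lesssim t^{-2a}$.

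For the base case, $\nabla_i \hat f_i = \alpha_i\big[(Y_i-\langle \hat f_{i-1},Z_i\rangle)\Lambda_i Z_i - (Y_i'-\langle \hat f_{i-1},Z_i'\rangle)\Lambda_i Z_i'\big]$, and the assumed a.s.\ boundedness of the residuals and of $Z$ give $\|\nabla_i \hat f_i\|^2 \lesssim \alpha_i^2 \sum_j j^{-4w} \lesssim \alpha_i^2 = c^2 i^{-2a}$, where $\sum_j j^{-4w}<\infty$ because $w>1/2$. For the one-step recursion, condition on $\mathcal G_{s-1}:=\sigma(\D_{s-1},X_i')$ and set $v=\nabla_i\hat f_{s-1}$ (which is $\mathcal G_{s-1}$-measurable). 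A direct computation using $\mathbb E[Z_sZ_s^\top]=I$, the mutual independence and bounded fourth moments $\mu_4$ of the entries of $Z_s$, and $\mathrm{tr}(\Lambda_s^2)\le \sum_j j^{-4w}=:c_4<\infty$, yields
\[
\mathbb E\big[\|\nabla_i\hat f_s\|^2 \mid \mathcal G_{s-1}\big] = \|v\|^2 - 2\alpha_s\, v^\top \Lambda_s v + \alpha_s^2 \sum_j v_j^2\big(\mathrm{tr}(\Lambda_s^2)+(\mu_4-1)j^{-4w}\mathbb 1(j\le J_s)\big) \le (1+c_4'\alpha_s^2)\|v\|^2 - 2\alpha_s\, v^\top\Lambda_s v,
\]
with $c_4'=c_4+\mu_4-1$.

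The crux is to turn the quadratic form $v^\top\Lambda_s v$ into a genuine $\Omega(\alpha_s)$ contraction. Since $v$ is supported on $\{1,\dots,J_{s-1}\}$ one has the crude bound $v^\top\Lambda_s v \ge J_{s-1}^{-2w}\|v\|^2 \asymp s^{-2w\tau}\|v\|^2$; together with $a+2w\tau<1$ this makes $\sum_s \alpha_s J_s^{-2w}$ diverge, but it is \emph{not} enough when $a<\min(1/2,2w\tau)$, since then the error term $c_4'\alpha_s^2\|v\|^2$ overwhelms $\alpha_s v^\top\Lambda_s v$. The resolution, and the technical heart of the argument, is to show by induction on $s$ that the perturbation stays concentrated on low-frequency coordinates: the mass injected at each step has per-coordinate second moment proportional to $j^{-4w}$, strictly steeper than flat, and the higher coordinates, though only slowly contracted, receive vanishingly little mass; consequently a constant fraction of $\|\nabla_i\hat f_{s-1}\|^2$ always sits on coordinates $j$ with $j^{-2w}=\Omega(1)$, i.e.\ $v^\top\Lambda_s v \gtrsim \|v\|^2$. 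Here the hypotheses $w>1/2$ (so $\sum_j j^{-2w},\sum_j j^{-4w}<\infty$) and $c<\sum_j j^{-2w}$ are exactly what keep the coordinate profile of $\mathbb E[(\nabla_i\hat f_s)_j^2\mid\D_i]$ from drifting to high frequencies and keep the iteration stable. Granting this, the recursion upgrades to $\mathbb E[\|\nabla_i\hat f_s\|^2\mid\mathcal G_{s-1}] \le (1-c'\alpha_s+c_4'\alpha_s^2)\|\nabla_i\hat f_{s-1}\|^2$ for a positive constant $c'$.

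Iterating from $s=i+1$ to $t$ and using the base case then gives, for $i$ past a fixed threshold (the finitely many small $i$ absorbed into the constant),
\[
\mathbb E\big[\|\nabla_i\hat f_t\|^2 \mid \D_i\big] \lesssim i^{-2a}\prod_{s=i+1}^{t}\big(1-c'\alpha_s+c_4'\alpha_s^2\big) \lesssim i^{-2a}\exp\Big(-\tfrac{c'}{2}\sum_{s=i+1}^{t}\alpha_s\Big).
\]
Since $\alpha_s=cs^{-a}$ with $a<1$, $\sum_{s=i+1}^t\alpha_s \asymp t^{1-a}-i^{1-a}$, and a short computation shows $i^{-2a}\exp(-\tfrac{c'c}{2}(t^{1-a}-i^{1-a}))\lesssim t^{-2a}$ uniformly over $i\le t$; the boundary case $t=i+1$ is checked directly, using that the single factor times $i^{-2a}$ is $\le C(i+1)^{-2a}$ because $\alpha_{i+1}\asymp i^{-a}\gg i^{-1}$. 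Combined with the reduction in the first paragraph, this proves the claim. The main obstacle is the third step: replacing the weak, frequency-dependent bound $v^\top\Lambda_s v\gtrsim J_s^{-2w}\|v\|^2$ by a uniform $v^\top\Lambda_s v\gtrsim\|v\|^2$ via a careful inductive control of the coordinate profile of the perturbation — this is where the shrinkage exponent and the learning-rate constant enter essentially.
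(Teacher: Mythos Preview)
Your proposal has a genuine gap at the step you yourself flag as the ``technical heart'': the claim that a constant fraction of $\|\nabla_i\hat f_{s-1}\|^2$ always sits on low-frequency coordinates, so that $v^\top\Lambda_s v\gtrsim\|v\|^2$ uniformly in $s$. You do not prove this; you only argue heuristically and then write ``Granting this''. The inductive control of the coordinate profile you sketch is not obviously tractable, and nothing in the statement's hypotheses is set up to make it easy.

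The paper's proof shows this step is simply unnecessary. Instead of working with $\eta_t=\nabla_i\hat f_t$ directly, it changes variables to $\tilde\eta_t=\Lambda_t^{-1/2}\eta_t$ and $\tilde Z_t=\Lambda_t^{1/2}Z_t$, so the recursion becomes $\tilde\eta_t=(I-\alpha_t\tilde Z_t\tilde Z_t^\top)\tilde\eta_{t-1}$. The point is that $\|\tilde Z_t\|^2=\sum_{j\le J_t}j^{-2w}Z_{tj}^2$ is \emph{pathwise} bounded (this is where $w>1/2$ and bounded $Z$ enter), and the condition on $c$ guarantees $\alpha_t\|\tilde Z_t\|^2\le 1$. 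Expanding the squared norm then gives the \emph{pathwise} inequality
\[
\|\tilde\eta_t\|^2=\|\tilde\eta_{t-1}\|^2-(2\alpha_t-\alpha_t^2\|\tilde Z_t\|^2)(\tilde\eta_{t-1}^\top\tilde Z_t)^2\le\|\tilde\eta_{t-1}\|^2-\alpha_t(\tilde\eta_{t-1}^\top\tilde Z_t)^2,
\]
with no $O(\alpha_t^2)\|\tilde\eta_{t-1}\|^2$ error term at all: the $\alpha_t^2$ contribution has the favorable sign. Taking conditional expectation and using the \emph{crude} eigenvalue bound $\tilde\eta_{t-1}^\top\Lambda_t\tilde\eta_{t-1}\ge J_t^{-2w}\|\tilde\eta_{t-1}\|^2$ yields a contraction factor $1-\alpha_tJ_t^{-2w}\asymp 1-ct^{-(a+2w\tau)}$, and the hypothesis $a+2w\tau<1$ makes $\sum_t t^{-(a+2w\tau)}$ diverge like $t^{1-a-2w\tau}$; the $t^{-2a}$ conclusion then follows exactly as in \Cref{lem:6.1-first-order-error}.

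In other words, your diagnosis that ``the crude bound is not enough'' is an artifact of staying in the original coordinates, where the $c_4'\alpha_s^2\|v\|^2$ term you computed does compete with $\alpha_s v^\top\Lambda_s v$ in part of the parameter range. The coordinate change kills that competition outright and lets the crude bound go through. This is also where the hypothesis $a+2w\tau<1$ actually does work in the argument; in your sketch it appears only in passing and the final contraction rate you write, $1-c'\alpha_s$, does not use it.
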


\Cref{pro:6.4-stab-sieve-sgd} is an example of stability without strong convexity.  It should be compared to \Cref{thm:6.1-first-order-stab-sgd}, which shows a similar rate under strong convexity of the objective function. The argument relies on taking the conditional expectation of the SGD objective function given the previous estimate, and exploiting the strong convexity of this conditional expectation.  The proof of \Cref{pro:6.4-stab-sieve-sgd}, along with discussion of potential improvements, is presented in \Cref{subsec:6.5-proof}.

\begin{remark}
    \label{rem:6.4-bounded-residual}
  \Cref{pro:6.4-stab-sieve-sgd} involves several technical conditions, mainly for simplicity of presentation.  The most restrictive assumption is the uniform boundedness of the residual $Y_i-\langle \hat f_{i-1},~Z_i\rangle$. This can be ensured if we further restrict the estimates $\hat f_i$ to a bounded set such as a Sobolev ellipsoid, realizable by a further projection step at the end of each iteration.  Alternatively, one may obtain weaker versions of \Cref{pro:6.4-stab-sieve-sgd}, such as moment or sub-Weibull bounds instead of an $L_\infty$-norm bound.
\end{remark}

\begin{remark}
    \label{rem:6.4-implication}
In order to illustrate the relevance of \Cref{pro:6.4-stab-sieve-sgd} in the context of \Cref{thm:3.3-wrv-consistency}, we need to characterize the risk convergence rate of sieve SGD estimates obtained from general combinations of tuning parameters.  Such results are not yet directly available in the current literature due to the recent appearance of sieve SGD.  However, there is evidence that sieve SGD may have the same convergence rate as the batch mode truncated basis expansion estimate using $J_n$ basis functions at sample size $n$.  Proceeding with this hypothesis and assuming the true coefficient $f$ belongs to the Sobolev ellipsoid $W(\varpi,L)$ in \eqref{eq:6.4-sobo}, consider two sieve SGD estimators with $J_i = i^{\tau_r}$, $\alpha_i = c i^{-a}$ for $r=1,2$. Suppose $\tau_1=\tau^*=1/(1+2\varpi)$ is a better choice than $\tau_2$.  Using the batch mode error rate and stability bound in \Cref{pro:6.4-stab-sieve-sgd}, the stability requirements in \Cref{thm:3.3-wrv-consistency} reduce to
\begin{enumerate}
    \item $a>1/2+\varpi\tau_2$ when $\tau_2<\tau_1$ (under-fitting);
    \item $a>1-\tau_2/2$ when $\tau_2>\tau_1$ (over-fitting).
\end{enumerate}
A caveat here is that such choices of $(a, \tau_1, \tau_2)$ are incompatible with the requirement of $a+2w\tau <1$ in \Cref{pro:6.4-stab-sieve-sgd} unless we allow for $w<1/4$.   We further conjecture that this requirement is an artifact of the current proof and can be improved using a more refined argument.  Intuitively, a larger $w$ implies stronger shrinkage, hence greater stability.  Evidence supporting this conjecture is that one can establish consistency of sieve SGD with $w=0$. See \Cref{subsec:6.6-bib} for relevant literature.
\end{remark}

\subsection{Technical Proofs}\label{subsec:6.5-proof}
\begin{proof}[Proof of \Cref{thm:6.2-second-order-stab-sgd}]
     Assume $i<j$. Let $\hat f_t^{i}$ be the estimate at time $t$ with $X_i'$ as the $i$th sample point. Define $\hat f_t^{ij}$ correspondingly.

     Then
\begin{align}
\left\|\nabla_i\nabla_j\hat f_{j}\right\|= & \left\|\hat f_{j} - \hat f^i_{j} - \hat f^j_{j}+\hat f^{ij}_{j}\right\|\nonumber\\
=&\big\|\hat f_{j-1}-\alpha_{j}\dot\varphi(\hat f_{j-1};X_j) -
\hat f^i_{j-1}+\alpha_{j}\dot\varphi(\hat f^i_{j-1};X_j) \nonumber\\
&-\hat f_{j-1}+\alpha_{j}\dot\varphi(\hat f_{j-1};X_j') +
\hat f^i_{j-1}-\alpha_{j}\dot\varphi(\hat f^i_{j-1};X_j')\big\| \nonumber\\
=&\alpha_{j}\left\|\dot\varphi(\hat f_{j-1};X_j)-\dot\varphi(\hat f^i_{j-1};X_j)-\dot\varphi(\hat f_{j-1};X_j')+\dot\varphi(\hat f^i_{j-1};X_j')\right\|\nonumber\\
\le & 2\beta\alpha_{j}  \|\nabla_i\hat f_{j-1}\|\,,\label{eq:6.2-eta_j}\,,
\end{align}
where the last inequality follows from the Lipschitz property of $\dot\varphi(\cdot;x)$.

For $t>j$,
\begin{align}
  \nabla_i\nabla_j\hat f_{t}= & \hat f_{t} - \hat f^i_{t} - \hat f^j_{t}+\hat f^{ij}_{t}\nonumber\\
=&\hat f_{t-1}-\alpha_{t}\dot\varphi(\hat f_{t-1};X_t) -
\hat f^i_{t-1}+\alpha_{t}\dot\varphi(\hat f^i_{t-1};X_t) \nonumber\\
&-\hat f^j_{t-1}+\alpha_{t}\dot\varphi(\hat f^j_{t-1};X_t) +
\hat f^{ij}_{t-1}-\alpha_{t}\dot\varphi(\hat f^{ij}_{t-1};X_t) \nonumber\\
=&\nabla_i\nabla_j \hat f_{t-1} - \alpha_t\left[\dot\varphi(\hat f_{t-1};X_t)-\dot\varphi(\hat f_{t-1}^i;X_t)-\dot\varphi(\hat f_{t-1}^j;X_t)+\dot\varphi(\hat f_{t-1}^{ij};X_t)\right]\,.\label{eq:6.2-nabla2-f-eq1}
\end{align}
Using the mean value theorem, there exist $\tilde f_{t-1}\in[\hat f_{t-1},\hat f_{t-1}^i]$ and $\tilde f_{t-1}'\in[\hat f_{t-1}^j,\hat f_{t-1}^{ij}]$ such that
\begin{align*}
&  \dot\varphi(\hat f_{t-1};X_t)-\dot\varphi(\hat f_{t-1}^i;X_t)-\dot\varphi(\hat f_{t-1}^j;X_t)+\dot\varphi(\hat f_{t-1}^{ij};X_t)\\
=& \ddot\varphi(\tilde  f_{t-1};X_t)(\hat f_{t-1}-\hat f_{t-1}^i)-\ddot\varphi(\tilde f_{t-1}';X_t)(\hat f_{t-1}^j-\hat f_{t-1}^{ij})\\
=&\ddot\varphi(\tilde  f_{t-1};X_t) \nabla_i\nabla_j\hat f_{t-1}+\left[\ddot\varphi(\tilde f_{t-1};X_t)-\ddot\varphi(\tilde f_{t-1}';X_t)\right](\hat f_{t-1}^j-\hat f_{t-1}^{ij})\,.
\end{align*}

Plugging this into the RHS of \eqref{eq:6.2-nabla2-f-eq1}, we obtain
\begin{align*}
  \nabla_i\nabla_j\hat f_t = \left[I-\alpha_t \ddot\varphi(\tilde  f_{t-1};X_t)\right] \nabla_i\nabla_j\hat f_{t-1}+\alpha_t\left[\ddot\varphi(\tilde f_{t-1};X_t)-\ddot\varphi(\tilde f_{t-1}';X_t)\right](\hat f_{t-1}^j-\hat f_{t-1}^{ij})\,.
\end{align*}

Let $\eta_t=\|\nabla_i\nabla_j\hat f_t\|$, we get
\begin{equation}
\eta_t\le (1-\alpha_t\gamma)\eta_{t-1}+2C_2 \alpha_t\|\tilde  f_{t-1}-\tilde f_{t-1}'\|\cdot\|\hat f_{t-1}^j-\hat f_{t-1}^{ij}\|\,,\label{eq:6.2-eta-rec-1}
\end{equation}
where the second term on the RHS uses Lipschitz property of $\ddot\varphi$ (with Lipschitz constant $C_2$).

To control $\tilde f_{t-1}-\tilde f_{t-1}'$, apply \Cref{lem:6.1-first-order-error} directly to $\hat f_{t-1}-\hat f_{t-1}^j$ and $\hat f_{t-1}^i-\hat f_{t-1}^{ij}$, and then use triangle inequality on $\hat f_{t-1}-\hat f_{t-1}^{ij}=(\hat f_{t-1}-\hat f_{t-1}^j)+(\hat f_{t-1}^j-\hat f_{t-1}^{ij})$ and $\hat f^i_{t-1}-\hat f_{t-1}^{j} = (\hat f_{t-1}^i-\hat f_{t-1})+(\hat f_{t-1}-\hat f_{t-1}^j)$, then we have,
\begin{align*}
 & \|\tilde f_{t-1}-\tilde f_{t-1}'\|\\
 \le & \max\left\{\|\hat f_{t-1}-\hat f_{t-1}^j\|\,,~\|\hat f_{t-1}-\hat f_{t-1}^{ij}\|\,,~\|\hat f_{t-1}^i-\hat f_{t-1}^j\|\,,~\|\hat f_{t-1}^i-\hat f_{t-1}^{ij}\|\right\}\\
\le 
& \frac{2C_0}{\beta}\left(\delta_{t-1,i}(c)i^{-a}+\delta_{t-1,j}(c)j^{-a}\right)\,,
\end{align*}
where $c=\frac{\gamma}{(1-a)(\beta+\gamma)}$ and
$$
\delta_{t,j}(c) = \exp\left[-c((t+1)^{1-a}-(j+1)^{1-a})\right]\,.
$$

For $t\ge j+1$, define 
\begin{align*}
\varrho_{i,j,t}=&\frac{8C_2C_0^2}{\beta^3}t^{-a}\left[\delta_{t-1,i}(c)i^{-a}+\delta_{t-1,j}(c)j^{-a}\right]\delta_{t-1,i}(c)i^{-a}\,,
\end{align*}
then \eqref{eq:6.2-eta-rec-1} implies, for $t\ge j+1$,
\begin{equation}\label{eq:6.2-eta-rec-2}
\eta_{t}\le (1-\alpha_{t}\gamma)\eta_{t-1}+\varrho_{i,j,t}\,.
\end{equation}

We now seek to upper-bound $\eta_n$ using the recursion \eqref{eq:6.2-eta-rec-1}. To do so, let $c'=\gamma/[(1-a)\beta]\ge c$. Thus
\begin{align}
\eta_n \le & (1-\alpha_{n}\gamma)\eta_{n-1}+\varrho_{i,j,n}\nonumber\\
\le & (1-\alpha_{n}\gamma)\left[(1-\alpha_{n-1}\gamma)\eta_{n-2}+\varrho_{i,j,n-1}\right]+\varrho_{i,j,n}\nonumber\\
\le & \left[\prod_{k=j+1}^n(1-\alpha_k\gamma)\right] \eta_j + \sum_{k=j+1}^{n} \left[\prod_{l=k+1}^n(1-\alpha_l\gamma)\right] \varrho_{i,j,k}\nonumber\\
\le & \delta_{n,j}(c')\eta_j+
\sum_{k=j+1}^{n} \delta_{n,k}(c')\varrho_{i,j,k}\nonumber\\
=& \underbrace{\delta_{n,j}(c')\eta_j}_{(III)}+
\underbrace{\frac{8C_2C_0^2}{\beta^3}\sum_{k=j+1}^{n} \delta_{n,k}(c')\delta_{k-1,i}^2(c) k^{-a}i^{-2a}}_{(II)}\nonumber\\
&~~~+\underbrace{\frac{8C_2C_0^2}{\beta^3}\sum_{k=j+1}^{n} \delta_{n,k}(c')\delta_{k-1,i}(c)\delta_{k-1,j}(c) k^{-a}i^{-a}j^{-a}}_{(I)}\,,\label{eq:6.2-eta-rec-3}
\end{align}
where the last inequality appeals to the fact that for positive integers $j<t$, $\prod_{k=j+1}^t (1-\alpha_k \gamma)\le \exp\left[-\frac{\gamma}{(1-a)\beta}\left((t+1)^{1-a}-(j+1)^{1-a}\right)\right]$.

\textbf{Term (I):} Using the fact that $\delta_{k-1,i}(c)\ge \delta_{k-1,i}(c')$, $\delta_{n,k}(c)\delta_{k-1,i}(c)\le \delta_{n,i}(c)e^{c(2^{1-a}-1)}$, and $\delta_{k-1,j}(c)\le 1$, we have
\begin{align}
    (I)\le & \frac{8C_2C_0^2}{\beta^3}e^{c(2^{1-a}-1)}i^{-a}j^{-a}\delta_{n,i}(c)\sum_{k=j+1}^n k^{-a}\nonumber\\
    \le & \frac{8C_2C_0^2}{(1-a)\beta^3}e^{c(2^{1-a}-1)}i^{-a}j^{-a}\delta_{n,i}(c)(n^{1-a}-j^{1-a})\,.\label{eq:6.2-term-I-init}
\end{align}

Let $c_2=(3-a)/[c(1-a)]$.
When $i\le n-c_2 (n+1)^a \log n$, we have
\begin{align*}
  (n+1)^{1-a}-(i+1)^{1-a} = & (n+1)^{1-a}\left[1-\left(1-\frac{n-i}{n+1}\right)^{1-a}\right]\\
  \ge & (n+1)^{1-a}(1-a)\frac{n-i}{n+1}\\
  = & c_2(1-a) \log n\,,
\end{align*}
where the second lines uses $(1-x)^b\le 1-bx$ for $x\in[0,1)$ and $b\in(0,1)$. Thus 
\eqref{eq:6.2-term-I-init} implies
\begin{align}
   (I)\le & \frac{8 C_2C_0^2 }{(1-a)\beta^3}e^{c(2^{1-a}-1)}i^{-a}j^{-a}n^{-(3-a)}(n^{1-a}-j^{1-a})\nonumber \\
    \le & \frac{8 C_2C_0^2 }{(1-a)\beta^3}e^{c(2^{1-a}-1)} n^{-2}\,.  \label{eq:6.2-term-I-small-i}
\end{align}

When $i\ge n-c_2 (n+1)^a\log n$.  Suppose $n$ is large enough that $n-c_2(n+1)^a\log n\ge n/2$. Use the following upper bounds in the RHS of \eqref{eq:6.2-term-I-init}: 
$k^{-a}\le j^{-a}\le i^{-a}\le 2^{a} n^{-a}$, $\delta_{n,i}(c)\le 1$, we have
\begin{align}
   (I)\le  & \frac{8C_2 C_0^2 }{(1-a)\beta^3}e^{c(2^{1-a}-1)}2^{3a}n^{-3a}(n-j)\nonumber\\
    \le & \frac{8C_2 C_0^2 }{(1-a)\beta^3}e^{c(2^{1-a}-1)}2^{3a}n^{-3a}(n-i)\nonumber\\
    \le & \frac{8C_2 C_0^2 }{(1-a)\beta^3}e^{c(2^{1-a}-1)}2^{3a}n^{-3a}c_2(n+1)^a\log n\nonumber\\
    \le & \frac{8C_2 C_0^2 }{(1-a)\beta^3}e^{c(2^{1-a}-1)}2^{3a+1}c_2 n^{-2a}\log n\,.\label{eq:6.2-term-I-i-large}
\end{align}
Combining \eqref{eq:6.2-term-I-small-i} and \eqref{eq:6.2-term-I-i-large}, we conclude that
\begin{equation}
    \label{eq:6.2-term-I-final}
    (I)\le c(a,\beta,\gamma,C_0,C_2) n^{-2a}\log n\,,
\end{equation}
for $n$ large enough.

\textbf{Term (II):}
The second term in  \eqref{eq:6.2-eta-rec-3} is upper bounded by, for large enough $n$,
\begin{align}
  (II)\le &\frac{8 C_2 C_0^2 }{\beta^3}i^{-2a} \sum_{k=j+1}^n k^{-a} \delta_{n,k}(c)
 \delta_{k-1,i}(c)\nonumber\\
 \le & \frac{8 C_0^2 C_2}{(1-a)\beta^3} e^{c(2^{1-a}-1)}i^{-2a} \delta_{n,i}(c)(n^{1-a}-j^{1-a}) \nonumber\\
 \le & c(a,\beta,\gamma,C_0,C_2) n^{-2a}\log n\,,\label{eq:6.2-term-II-final}
\end{align}
where the first inequality follows from $\delta_{k-1,i}(c)\le 1$, the second and third follow from the same argument as in term (I).

\textbf{Term (III):}
\begin{align}
(III)=  &\delta_{n,j}(c')\eta_j\nonumber\\
\le & 2\beta\alpha_j \|\nabla_i\hat f_{j-1}\|\delta_{n,j}(c')\nonumber\\
\le & \frac{4C_0}{\beta}j^{-a}i^{-a}\delta_{j-1,i}(c')\delta_{n,j}(c')\nonumber\\
\le & \frac{4C_0}{\beta}e^{c(2^{1-a}-1)}j^{-a}i^{-a}\delta_{n,i}(c)\nonumber\\
\le & \frac{4C_0}{\beta}e^{c(2^{1-a}-1)}i^{-2a}\delta_{n,i}(c)\,,\label{eq:6.2-term-III-init}
\end{align}
where the first inequality follows from \eqref{eq:6.2-eta_j}, the second from \Cref{lem:6.1-first-order-error}, the third from $\delta_{j-1,i}(c)\ge \delta_{j-1,i}(c')$ and the same argument used in the treatment of term (I), and the fourth one from the fact that $j>i$. 

Finally, a similar argument partitioning $i$ according to $i\le n/2$ and $i\ge n/2$ as in the proof of \Cref{thm:6.1-first-order-stab-sgd} leads to the conclusion
\begin{equation}\label{eq:6.2-term-III-final}
	(III)\le c(a,\beta,\gamma,C_0,C_2) n^{-2a}\,.
\end{equation}
The proof concludes by plugging \eqref{eq:6.2-term-I-final}, \eqref{eq:6.2-term-II-final}, and \eqref{eq:6.2-term-III-final} into \eqref{eq:6.2-eta-rec-3}.
\end{proof}

\begin{proof}[Proof of \Cref{prop:6.3-lossstab}]
Denote
$$
\delta_j =\frac{1}{n}\sum_{i=1}^n Y_i Z_{ij}- f_j = \frac{1}{n}\sum_{i=1}^n\left[\epsilon_iZ_{ij}+f(Z_i)Z_{ij}- f_j\right]\,,
$$
which satisfies
$\mathbb E\delta_j=0$ and ${\rm Var}(\delta_j)=\sigma_j^2/n$ with $\sigma_j^2\in [\sigma_\epsilon^2,\sigma_\epsilon^2+c]$ for a constant $c$ depending only on $f$.

For  positive integers $u<v$, define $$
\| f_{u,v}\|^2=\sum_{j=u+1}^v  f_j^2\,.
$$
The condition $ f_j\asymp j^{-(1+a)/2}$ implies that $$\sqrt{u-v}\cdot v^{-(1+a)/2}\lesssim\| f_{u,v}\|\lesssim \sqrt{u-v}\cdot u^{-(1+a)/2}\,.$$

We use a shorthand notation for the difference of two loss functions $\ell^{(r)}(X_0,\D_n)$ and $\ell^{(s)}(X_0,\D_n)$:
\begin{align*}
T\coloneqq &(Y_0-\hat f^{(r)}(Z_0))^2-(Y_0-\hat f^{(s)}(Z_0))^2\\
 =& 2\epsilon_0(\hat f^{(s)}(Z_0)-\hat f^{(r)}(Z_0)) +\left[2f(Z_0)-\hat f^{(r)}(Z_0)-\hat f^{(s)}(Z_0)\right]\left[\hat f^{(s)}(Z_0)-\hat f^{(r)}(Z_0)\right]\,.
\end{align*}

We first provide a lower bound of $\sigma_n^{(r,s)}$, which, by definition, equals
$\sqrt{\var[\mathbb E(T|X_0)]}$.  Recall  that $\D_n=(X_i:1\le i\le n)$.
A straightforward algebra leads to
$$
\mathbb E(T|X_0) = 2\epsilon_0\sum_{j=J_r+1}^{J_s} f_j Z_{0j} + h(Z_0)
$$
where $h(Z_0)$ is a deterministic function of $Z_0$. As a result
\begin{align*}
  {\rm Var}\left[\mathbb E(T|X_0)\right]\ge& \var\left[2\epsilon_0\sum_{j=J_r+1}^{J_s} f_j Z_{0j}\right]=4\sigma_\epsilon^2\sum_{j=J_r+1}^{J_s}f_j^2\,,
\end{align*}
implying that
\begin{equation}\label{eq:6.3-sigma_ss_lower}
\sigma_{n}^{(r,s)} \ge 2\sigma_\epsilon\|f_{J_r,J_s}\|\,.
\end{equation}

Now take the $\nabla_i$ operator. To derive a simple expression of $\nabla_i T$, we first expand $T$:
\begin{align*}
  T = & 2\epsilon_0\sum_{j=J_r+1}^{J_s}\hat f_j Z_{0j}\\
  &+\left[-2\sum_{j=1}^{J_r}\delta_j Z_{0j}+\sum_{j=J_r+1}^{J_s}( f_j-\delta_j) Z_{0j}+2\sum_{j=J_s+1}^\infty  f_j Z_{0j}\right]\sum_{j=J_r+1}^{J_s}\hat f_j Z_{0j}\,,\end{align*}
  so that \begin{align*}
  \nabla_i T = & 2\epsilon_0\sum_{j=J_r+1}^{J_s}(\nabla_i \delta_j) Z_{0j}\\
 &+\left[-2\sum_{j=1}^{J_r}(\nabla_i\delta_j) Z_{0j}-\sum_{j=J_r+1}^{J_s}(\nabla_i\delta_j) Z_{0j}\right]\sum_{j=J_r+1}^{J_s}\hat f_j Z_{0j}\\
 &+\left[-2\sum_{j=1}^{J_r}\delta_j Z_{0j}+\sum_{j=J_r+1}^{J_s}( f_j-\delta_j) Z_{0j}+2\sum_{j=J_s+1}^\infty  f_j Z_{0j}\right]\sum_{j=J_r+1}^{J_s}(\nabla_i\delta_j) Z_{0j}\\
\eqqcolon& A_1+A_2+A_3\,,
\end{align*}
where $\nabla_i\delta_j=n^{-1}(Y_i' Z'_{ij}-Y_i Z_{ij})$.

Through a simple exercise using \Cref{thm:2.5-sw-mcdiarmid} and \Cref{thm:rio}, one can show that $Y_i$ is $(\kappa,1/2)$-SW, and $\delta_j$ is $(\kappa/\sqrt{n},1)$-SW for a constant $\kappa$ depending on the model only.

Then \Cref{pro:2.5-sum-prod-sw} and \Cref{thm:2.5-sw-mcdiarmid} further imply the following (with a potentially different value of constant $\kappa$):
\begin{align} A_1 & \text{ is } \left(\frac{\sqrt{J_s-J_r}}{n}\kappa,~1\right)\text{-SW}\,,\label{eq:6.3-A1}\\
   A_2 & \text{ is } \left[\frac{\sqrt{J_s}}{n}\left(\| f_{J_r,J_s}\|+\frac{\sqrt{J_s-J_r}}{\sqrt{n}}\right)\kappa\,,~2\right]\text{-SW}\,,\label{eq:6.3-A2}\\
   A_3 & \text{ is } \left[\left(\frac{\sqrt{J_s}}{\sqrt{n}}+\| f_{J_r,\infty}\|\right)\frac{\sqrt{J_s-J_r}}{n}\kappa\,,~2\right]\text{-SW}\,.\label{eq:6.3-A3}
\end{align}



In order to get concrete bounds for $\|f_{J_r,J_s}\|$, we first consider the scenario $J_r/J_s\rightarrow 0$ as $n\rightarrow\infty$. 

In the case $J_r/J_s\rightarrow 0$ we have 
\begin{align*}
\| f_{J_r,J_s}\|\asymp & J_r^{-a/2}\,.
\end{align*}
Plug this into \eqref{eq:6.3-sigma_ss_lower}, \eqref{eq:6.3-A1}, \eqref{eq:6.3-A2}, \eqref{eq:6.3-A3} we obtain
\eqref{eq:6.3-first-order-verbose}.

Now we work on the risk stability.  Under our model, for any truncation value $J$ we have
$$R(\D_n)=\sum_{j=1}^J \delta_j^2+\sum_{j=J+1}^\infty f_j^2\,,$$
and
$$
R^{(r)}(\D_n)-R^{(s)}(\D_n) = -\sum_{j=J_r+1}^{J_s} \delta_j^2+\sum_{j=J_r+1}^{J_s}f_j^2\,,
$$
hence
$$\nabla_i \left[R^{(r)}(\D_n)-R^{(s)}(\D_n)\right]=-2\sum_{j=J_r+1}^{J_s}\nabla_i(\delta_j^2)\,.$$

By definition
$$
\nabla_i(\delta_j^2) = \delta_j(\nabla_i\delta_j)+\delta_j^i(\nabla_i\delta_j)
$$
which is $(n^{-3/2}\kappa,~3/2)$-SW.  Now apply \Cref{thm:2.5-loocv-concentration-sw} to the sequence $(\sum_{j=J_r+1}^J\nabla_i(\delta_j^2):J_r<J\le J_s)$ with respect to the filtration $\{\D_n\cup\{X_{i1},...,X_{i,J}\}:J_r<J\le J_s\}$ to conclude that
$$
\nabla_i \left[R^{(r)}(\D_n)-R^{(s)}(\D_n)\right] \text{ is } \left(n^{-3/2}J_s^{1/2}\kappa\,,~2\right)\text{-SW}
$$
and hence \eqref{eq:6.3-risk-stab-verbose} follows.

In the other case, $J_r\ge  c J_s$ for a constant $c>0$. Now 
$\|f_{J_r,J_s}\|\asymp J_s^{-(1+a)/2}\sqrt{J_s-J_r}$ and the remaining proof is the same as in the  case of $J_r/J_s\rightarrow 0$.
\end{proof}

\begin{proof}[Proof of \Cref{pro:6.4-stab-sieve-sgd}]
Let $\eta_t=\nabla_i \hat f_t$.
For $t>i$, we have, by construction
\begin{equation}
    \label{eq:6.4-eta-recursion}
\eta_t = \eta_{t-1}-\alpha_t \Lambda_t Z_t Z_t^T\eta_{t-1}\,.
\end{equation}
Let $\tilde\eta_t=\Lambda_t^{-1/2}\eta_t$ and $\tilde Z_t = \Lambda_t^{1/2} Z_t$ (with $0/0=0$), then \eqref{eq:6.4-eta-recursion} becomes
\begin{equation}
    \label{eq:6.4-tilde-recursion}
    \tilde\eta_t=\tilde\eta_{t-1}-\alpha_t \tilde Z_t\tilde Z_t^T\tilde\eta_{t-1}\,.
\end{equation}

We first control $\|\tilde\eta_i\|$ in a similar fashion as in \Cref{lem:6.1-first-order-error}:
\begin{equation}
    \|\tilde\eta_i\| =  \left\|\alpha_i(Y_i-\langle \hat f_{i-1},~Z_i\rangle)\tilde Z_i-\alpha_i(Y_i'-\langle \hat f_{i-1},~Z_i'\rangle)\tilde Z_i'\right\|
    \lesssim  \alpha_i\,,    \label{eq:6.4-eta_i-bound}
\end{equation}
because, by assumption, $\|\tilde Z_i\|\le C$ for $w>1/2$ and $Y_i-\langle \hat f_{i-1},~Z_i\rangle$ is uniformly bounded.

Taking squared norm on both sides of \eqref{eq:6.4-tilde-recursion}, we have
\begin{align*}
    \|\tilde \eta_t\|^2 = & \|\tilde\eta_{t-1}\|^2 - (2\alpha_t-\alpha_t^2\|\tilde Z_t\|^2)\tilde\eta_{t-1}^T\tilde Z_t\tilde Z_t^T\tilde\eta_{t-1}^T\nonumber\\
    \le &\|\tilde\eta_{t-1}\|^2 - \alpha_t\tilde\eta_{t-1}^T\tilde Z_t\tilde Z_t^T\tilde\eta_{t-1}^T\,,
\end{align*}
because $\alpha_t\|\tilde Z_t\|^2\le 1$ by the choice of constant $c$.

Dividing both sides by $\|\tilde\eta_{t-1}\|^2$ and take conditional expectation given $\D_{t-1}$, we have
\begin{equation}
    \label{eq:6.4-square-norm-rec}
    \mathbb E\left[\frac{\|\tilde\eta_t\|^2}{\|\tilde\eta_{t-1}\|^2}\bigg|\D_{t-1}\right]\le 1-\alpha_t\frac{\tilde\eta_{t-1}^T\Lambda_t\tilde\eta_{t-1}}{\|\tilde\eta_{t-1}\|^2}\le 1-\alpha_t J_t^{-2w}\le e^{-\alpha_t J_t^{-2w}}\,,
\end{equation}
where the second inequality follows from that the minimum effective eigenvalue (those multiplied to the non-zero entries of $\tilde \eta_{t-1}$) is at least $J_t^{-2w}$.

Now using the telescoping version of \eqref{eq:6.4-square-norm-rec} we obtain
\begin{align*}
    \mathbb E\left[\|\tilde\eta_t\|^2\big|\D_i\right]\le &\|\tilde\eta_i\|^2\exp\left(-\sum_{l=i+1}^t\alpha_l J_l^{-2w}\right)\nonumber\\
    =&\|\tilde\eta_i\|^2\exp\left(-c\sum_{l=i+1}^t l^{-a-2w\tau}\right)\nonumber\\
    \lesssim &i^{-2a}\exp\left[-c'\left((t+1)^{1-a-2w\tau}-(i+1)^{1-a-2w\tau}\right)\right]\,.
\end{align*}
The claim follows from the same argument as in \Cref{lem:6.1-first-order-error} by considering $i\le t/2$ and $i\ge t/2$.
\end{proof}

\begin{remark}[Potential improvements]
Inspecting the proof of \Cref{pro:6.4-stab-sieve-sgd}, the main source of potential (and likely) looseness responsible for the counterintuitive dependence on $w$ seems to be bounding 
$\tilde\eta_{t-1}^T\Lambda_t\tilde\eta_{t-1}\ge J_t^{-2w}\|\tilde\eta_{t-1}\|_2^2$
in \eqref{eq:6.4-square-norm-rec}.  The diagonal entries of $\Lambda_t$ have very different magnitudes. On the other hand, the shrinkage mechanism in the sieve SGD update suggests that $|\hat f_{t,j}|$ decays at a rate of $j^{-2w}$ as $j$ increases.  Thus $\tilde\eta_{t-1,j}=j^w(\hat f_{t-1,j}-\hat f_{t-1,j}')$ should carry a scaling of $j^{-w}$.  So intuitively, in the product $\tilde\eta_{t-1}^T\Lambda_t\tilde\eta_{t-1}$, large entries in $\Lambda_t$ are aligned with large entries of $\tilde\eta_{t-1}$ and it is overly conservative to use a lower bound based on the smallest positive eigenvalue of $\Lambda_t$.
\end{remark}
\subsection{Bibliographic Notes}\label{subsec:6.6-bib}
Stability of regularized M-estimators that are suitable for the conditions in \Cref{sec:bousquet} and \Cref{sec:4-clt} has been established in \cite{bousquet2002stability} and \cite{austern2020}, respectively.  These results are concerned with the global optimum of the M-estimation problem, without taking into account the optimization error. First-order stability of SGD under strong convexity and smooth gradients has been established in \cite{hardt2016train}, which is covered as a special case of our \Cref{thm:6.1-first-order-stab-sgd}.  Our \Cref{pro:6.1-contractive-sgd} corresponds to Lemma 3.7.3 of \cite{hardt2016train}, which provides an elegant proof. The second-order stability of SGD presented in \Cref{subsec:6.2-stab-sgd-2} was first developed in \cite{kissel2022high}.

The stability of (the standardized) difference loss function considered in \Cref{subsec:6.3-stab-diff} is adapted from \cite{kissel2022high}, which contains numerical examples of the first and second-order stability of the difference loss function.  The stability of the difference risk function in part (b) of \Cref{prop:6.3-lossstab} is new and will lead to a deterministic centering CLT for the CV estimate of expected difference risks.

As an example illustrating the necessity of part 3 of \Cref{asm:6.3-reg-model-cond},
\cite{zhang2023targeted} considered a similar nonparametric regression setting, where the coefficients $f_j$ alternate between zero and $j^{-2}$ according to $\lfloor\log_3[\log_2(j)]\rfloor$ being even or odd. In that paper, the purpose was to study the ability of cross-validation to pick the correct model from two candidates $J_r$ and $J_s$ with $J_r=J_s-1$.  If we use this sequence $(f_j:j\ge 1)$ in the setting of \Cref{prop:6.3-lossstab} and assume $J_s\ll n^{1/5}$, then the proof can be adapted to show that  $\nabla_1\ell^{(r,s)}(X_0,\D_n)/\sigma_n^{(r,s)} =o_P(1)$ if and only if $f_{J_s}\neq 0$.
Such a dichotomy originates from the variance of the loss difference $\ell^{(r,s)}(X_0,\D_n)$. When $f_{J_s}=0$, the difference between the two true models is zero and any variance in the loss difference will be from the random fluctuation in the estimated coefficient $\hat f_{J_s}$.  On the other hand, when $f_{J_s}\neq 0$, there is a bias term contributing to the loss difference.  We do not expect Gaussian approximation to hold for the former case due to the severe degeneracy of the loss difference.

The original idea of sieve SGD first appeared in \cite{dieuleveut2016nonparametric}.
The form of sieve SGD considered in this article was proposed in \cite{zhang2022sieve}, which establishes its near optimality in terms of error rates and storage cost (see also \cite{quan2024optimal}).  
A version of sieve SGD implemented without shrinkage ($w=0$) has been shown to achieve the optimal convergence rate in \cite{chen2024stochastic}, providing evidence for potential improved stability analysis with better dependence on the shrinkage parameter.  
The stability property in \Cref{pro:6.4-stab-sieve-sgd} is adapted from \cite{zhang2023online}.

Another important general method for designing stable algorithms is resampling.  In \cite{buhlmann2002analyzing}, it has been shown that the bootstrap-aggregating (bagging) method can turn unstable algorithms such as decision trees into stable ones.  Similar bagging ideas are also used in stability-based variable selection \citep{meinshausen2010stability,liu2010stability}. A more systematic study of bagging and resampling methods for algorithmic stability in the context of predictive inference has been initiated in \cite{soloff2024bagging,soloff2024stability}.

\newpage

\section{Miscellaneous}\label{sec:7-misc}
\subsection{Repeated CV}\label{subsec:7.1-rep-cv}
A popular and useful variant of \(K\)-fold CV is \emph{repeated} \(K\)-fold CV, in which the fold partition is regenerated multiple times and the model selection decision is based on the average of the CV risk estimates across repetitions. Early versions of this idea include ``repeated learning–testing'' \citep{burman1989comparative} (see also \cite{Geisser75}) and ``multifold cross-validation'' \citep{Zhang93}. It is straightforward to verify that averaging the CV risks over multiple random fold splits still yields the same unbiased estimator of \(\mu_{n_{\mathrm{tr}}}\), while averaging can reduce the variance of the risk estimate. Quantifying this variance reduction formally remains an interesting open problem. The main technical challenge is to account for the additional correlation of fitting and testing samples across different rounds of fold splits.

\cite{zhang2015cross} proposed another use of repeated CV: instead of averaging CV risk estimates, consider each candidate model’s \emph{selection frequency} (the proportion of fold splits in which it wins). This voting scheme has been shown to achieve risk consistency in several nonstandard and challenging settings, including choosing between AIC and BIC when the underlying model class can be parametric or nonparametric, and in high-dimensional regimes where the quality gap between two models shrinks as the sample size increases; see also \cite{zhan2022profile} for extensions to multiple choices of \(K\).

A corresponding extension of the CV model confidence sets in \Cref{subsec:5.1-model-conf-set} would compute a separate confidence set for each split. Aggregating multiple confidence sets in a principled and computationally efficient way is nontrivial; see \cite{gasparin2024merging} for a recent attempt.

\subsection{Choice of $K$}\label{subsec:7.2-choose-k}
A practical issue in using cross-validation is the choice of \(K\): which value of \(K\) yields the best model selection? At first glance, the question seems somewhat circular: we plan to use \(K\)-fold CV to select the best model (or, more generally, a fitting procedure), yet doing so appears to require first selecting \(K\)---itself a modeling choice. In other words, to solve the model selection problem, we would first need to solve an even more complicated model selection problem. This observation suggests that there is no single, definitive answer to the choice of \(K\) for cross-validation. The aim of this subsection is therefore to collect a few simple facts, known concerns, and practical rules of thumb about selecting \(K\).

For model selection consistency, it is often recommended to use \emph{reversed} \(K\)-fold CV (see \Cref{rem:3.1-arbitrary-split}) with a diverging \(K\), so that most of the sample is used for evaluation \citep{yang2007consistency,zhang2015cross,arlot2010survey}. However, this scheme is effective only in ``strong-signal'' settings, where the candidate models are well separated (e.g., in the sense of \Cref{asm:3.1-stochastic_domination}). The caveats of this training--validation split are clear: (i) it does not handle the important case in which the difference between models is small; (ii) the quality of models fitted on a much smaller training sample may not reflect their quality at the original sample size. This challenge is described succinctly in \cite{zhang2015cross}:
\begin{quote}
    Simply speaking, in terms of comparing the predictive performances of two modeling procedures, a large enough evaluation set is preferred to account for the randomness in the prediction assessment, but at the same time we must make sure that the relative performance of the two model selection procedures at the reduced sample size resembles that at the full sample size. This typically forces the training size to be not too small. Therefore, the choice of splitting ratio needs to balance the above two conflicting directions.
\end{quote}

How should we choose \(K\) in practice? We suggest the following rule of thumb:
\begin{itemize}
\item Use leave-one-out CV (LOO-CV) if computationally feasible;
 \item  Otherwise, use \(10\)-fold CV if feasible.
  \item Otherwise, use \(5\)-fold CV.%
\end{itemize}
The rationale is simple: we aim to keep the training sample size as close as possible to the full sample size. In many tuning-parameter selection problems, the optimal choice scales as a monomial of the sample size with an exponent between \(-1\) and \(1\). Thus the optimal choices based on the full sample size \(n\) and the training size \(n_{\mathrm{tr}}\) differ by a factor between \(1-1/K\) and \(K/(K-1)\), which plateaus quickly once \(K\) is moderately large. \cite{arlot2016choice} provide a more thorough theoretical investigation of the choice of \(K\) in least-squares density estimation, reaching a similar conclusion: the variance of a quantity central to model selection depends on \(K\) through the factor \(1 + 4/(K-1)\). Hence \(K=5\) is a good balance between computational cost and statistical accuracy. 

Meanwhile, the lack of model selection consistency for these choices of \(K\) (\(K=n,10,5\)) can be addressed via the model confidence set approach in \Cref{sec:5-applications}, or by alternatives such as repeated-CV voting \citep{zhang2015cross,zhan2022profile}.

\subsection{The Rashomon Effect and Occam Dilemma}\label{subsec:7.3-rashomon}
The uncertainty of CV risk estimation and the associated model selection confidence set are closely related to the \emph{Rashomon effect} and the \emph{Occam dilemma}---two of the three major changes in modern statistical learning highlighted in Breiman’s landscape paper \cite{breiman2001statistical}\footnote{The third change, \emph{Bellman}, refers to the blessing (or curse) of dimensionality.}:
\begin{itemize}
    \item [-] the \emph{Rashomon effect} refers to the existence of many models with similar, nearly optimal predictive accuracy;
    \item [-] the \emph{Occam dilemma} refers to the tension between model simplicity and predictive accuracy.
\end{itemize}
See \cite{rudin2025leo} for a more detailed discussion and a comprehensive literature review. Here we focus on the connection to the present article: cross-validation and stability.

\paragraph{The Rashomon Effect}  
The presence of multiple nearly optimal models has been documented by many researchers over the past decade. For example, \cite{nevo2017identifying,fisher2019all} estimate the \emph{Rashomon set}---the collection of candidate models that are nearly optimal---by including all models whose empirical risk is within a small tolerance of the empirical risk minimizer, and then use this set to study variable importance. These methods require a user-specified tolerance, whose choice can substantially affect the output. The CV model confidence set developed in \Cref{subsec:5.1-model-conf-set}, especially the version based on the difference loss, serves a similar purpose but with a more interpretable tuning parameter---the miscoverage level \(\beta\).

The Rashomon set can also be used to construct more stable predictors with comparable near-optimal accuracy. For instance, one may average the models in the Rashomon set: because each constituent is near-optimal, predictive performance is preserved, and stability often improves through averaging. This idea can be refined via weighted averages, choosing weights to optimize stability or variance, or adapting them to covariates when models perform differently across regions of the feature space.

\paragraph{The Occam Dilemma}
The potential conflict between simplicity and predictive accuracy is of practical importance, since interpretability and accuracy are both core objectives in statistical learning and model selection. When the Rashomon set contains a simple model, the decision is straightforward: choose the simple model(s) in the set---an instance of Occam’s razor, and consistent with the perspective in \Cref{subsec:5.2-cvc-consist}. \cite{semenova2022existence,semenova2023path} develop methods to assess whether a simple model exists within the Rashomon set and to find one via a sequence of Rashomon sets induced by progressively reduced hypothesis spaces.

The Occam dilemma is more of a concern when none of the models in the Rashomon set is simple, indicating that the underlying relationship between the response and the covariates cannot be captured by the simple candidate models under consideration.
In such cases, it is reasonable to \emph{prioritize predictive accuracy over model simplicity and interpretability}.
Even when no simple model is available, we can still aim for the selected complex model to be \emph{stable}. One reason to prefer simplicity is its estimation stability, which reduces variance in both estimation and prediction. A complex model that is stable can likewise enjoy a favorable bias-variance trade-off. Random forests and other bagging methods are prominent examples. Thus, stability in a complex model can partially offset the loss of simplicity. A stable and accurate (albeit complex) model is worth preserving: it represents our best approximation to future observations and may even guide the discovery of a simpler, equally accurate model. Hence, stability offers a principled bridge between predictive
accuracy and interpretability.

  \bibliographystyle{plainnat}
  \bibliography{cv-stab}
\end{document}